\newtheorem{theorem}{Theorem}[section]
\newtheorem{corollary}{Corollary}[section]
\newtheorem{definition}{Definition}[section]
\newtheorem{lemma}{Lemma}[section]
\newtheorem{proposition}{Proposition}[section]
\newtheorem{remark}{Remark}[section]
\numberwithin{equation}{section}
\newcommand{\real}{{\mathbb R}}
\newcommand{\B}{{\mathcal B}}
\newcommand{\E}{{\mathcal E}}
\newcommand{\F}{{\mathcal F}}
\newcommand{\M}{{\mathcal M}}
\newcommand{\N}{{\mathcal N}}
\newcommand{\R}{{\mathcal R}}
\newcommand{\U}{{\mathcal U}}
\renewcommand{\P}{{\mathcal P}}
\renewcommand{\H}{{\mathcal H}}
\newcommand{\h}{\mathsf{h}}
\newcommand{\Tr}{\mbox{\rm Tr}}
\newcommand{\tr}{\mbox{\rm tr}}
\newcommand{\8}{\infty}
\newcommand{\el}{\ell}
\newcommand{\be}{\begin{eqnarray*}}
\newcommand{\ee}{\end{eqnarray*}}
\newcommand{\beq}{\begin{equation}}
\newcommand{\eeq}{\end{equation}}
\newcommand{\beqn}{\begin{equation*}}
\newcommand{\eeqn}{\end{equation*}}
\newcommand{\bsp}{\begin{split}}
\newcommand{\esp}{\end{split}}
\begin{document}

\begin{frontmatter}



\title{Haagerup noncommutative Orlicz spaces}


\author{Turdebek N. Bekjan}

\address{Astana IT University,  Nur-Sultan 010000, Kazakhstan}

\begin{abstract}
Let $\mathcal{M}$  be a $\sigma$-finite von Neumann algebra equipped with a
 normal  faithful state $\varphi$,  and let $\Phi$ be a growth function. We consider  Haagerup noncommutative Orlicz spaces  $L^\Phi(\M,\varphi)$ associated with $\M$ and $\varphi$, which are analogues of Haagerup
$L^p$-spaces. We show that $L^\Phi(\M,\varphi)$  is independent of $\varphi$ up
to isometric isomorphism. We prove the Haagerup's reduction theorem and the duality theorem  for this  spaces. As application of these results, we extend some noncommutative martingale inequalities in the
tracial case to  the Haagerup noncommutative Orlicz space case.

\end{abstract}

\begin{keyword}
Noncommutative Orlicz spaces, $\sigma$-finite von Neumann algebras, reduction,
crossed products,  martingale inequalities, Haagerup
$L^p$-spaces.

 \MSC[2020] Primary 46L51,; Secondary  46L52

\end{keyword}

\end{frontmatter}

\section{Introduction}

The theory of noncommutative $L^p$-spaces (in the tracial case) was begin in the 20th century 50's by Segal \cite{S} and Dixmier \cite{Di} (for more details see \cite{FK,PX}). Its extension is the theory of Orlicz spaces assoiated with a semi-finite von Neumann algebra $\M$, which was presented  in \cite{Bi,Ku,M1,M2} (see also \cite{AB}).  Al-Rashed and Zegarli$\rm\acute{n}$ski \cite{AZ} introduced  the noncommutative Orlicz spaces associated
to a normal faithful state on a semifinite von Neumann algebra. In \cite{ACA},  the authors considered a certain class of non commutative Orlicz spaces,
associated with arbitrary faithful normal locally-finite weights on a semi-finite von Neumann algebra $\M$.

Around 1980's the theory of noncommutative $L^p$-spaces generalized
to type III von Neumann algebras due to the efforts of Haagerup \cite{H1},
Hilsum \cite{Hi}, Araki and Masuda \cite{AM}, Kosaki \cite{Ko} and Terp \cite{Te1} (also see \cite{H2,PX,Te}). Labuschagne \cite{L} constructed analogues of noncommutative Orlicz spaces  for type III algebras. This
spaces have some properties  of Haagerup
$L^p$-spaces, but it is not clear that this Orlicz spaces coincide with the usual in the tracial
case and  duality theorem holds for this spaces.

The main purpose of this article is to establish the basic theory of Haagerup's non-commutative Orlicz space. Applying some techniques in \cite{ACA}, some ideas in \cite{L} and the concept of weakly noncommutative Orlicz spaces in \cite{BCLJ}, we define a family of  noncommutative Orlicz spaces $L^\Phi(\M,\varphi)$ that is associated to every $\sigma$-finite von Neumann algebra $\mathcal{M}$  equipped with a
 normal  faithful state $\varphi$  for any choice of growth function  $\Phi$, which is an analogue of the family of Haagerup
$L^p$-spaces. The key aspect
of this construction that $L^\Phi(\M,\varphi)$  is independent of $\varphi$ up
to isometric isomorphism. We extend the Haagerup's reduction theorem  to the Orlicz spaces case. Using this result and convergence of noncommutative martingales, we proved the duality theorem for $L^\Phi(\M,\varphi)$. As application of this reduction theorem, we extend some  theorems  for
noncommutative martingales to   Haagerup noncommutative Orlicz spaces.

The organization of the paper is as follows. In Section 2, we
give some definitions and  related results of  noncommutative Orlicz spaces associated with growth functions, noncommutative  Orlicz spaces associated with a weight and noncommutative  weak Orlicz spaces associated with a weight. We define  Haagerup noncommutative Orlicz spaces  in Section 3. In Section 4, we give reduction theorem for Haagerup noncommutative Orlicz spaces. The duality theorem for $L^\Phi(\M,\varphi)$ will prove in Section 5. Finally, in Section 6, we prove some  theorems  of
noncommutative martingales in  Haagerup noncommutative Orlicz space case.

\section{Preliminaries}

If $\Phi$ is a
continuous and nondecreasing function from $[0,\infty)$ onto itself, then $\Phi$ is called  a growth function.

Throughout this paper, we always assume that for a growth function $\Phi$, $t\Phi'(t)$ is also a growth function. A growth function  $\Phi$ is said to satisfy the $\bigtriangleup_{2}$-condition  for all $t$, written  as $\Phi\in\bigtriangleup_{2}$, if there is $K>2$ such
 that $\Phi(2t)\leq K\Phi(t)$ for  all  $t\ge0 $. It is easy to check that $\Phi \in \triangle_2$ if and only if for any $a > 0$ there is a constant $C_a>0$ such that $\Phi(a t)\leq C_a \Phi(t)$ for all $t>0$.

  $\Phi$ is called  to satisfy the $\Delta_\frac{1}{2}$-condition
 for all $t$, written  as $\Phi\in\Delta_\frac{1}{2}$, if there is a constant $0<K<1$ such that $\Phi(\frac{t}{2})\leq K\Phi(t)$ for all $t\ge0$.

For a growth function $\Phi$, set $a=\sup\{t:\;\Phi(t)=0\}$. Then $a<\infty$ and $\Phi(t)=0$ for all $t\in [0,a]$. Without of loss generality, we may assume that $\Phi(t)>0$ for all $t>0$ (otherwise replace $\Phi$ by $\Phi(a+\cdot)$). We define
\be
a_{\Phi} = \inf_{t>0} \frac{t \Phi'(t)}{\Phi (t)}\quad \text{and} \quad b_{\Phi} = \sup_{t>0} \frac{t \Phi'(t)}{\Phi (t)}.
\ee
Then  $\Phi \in \triangle_2$  if and only if $b_{\Phi}< \8$, and $\Phi\in\Delta_\frac{1}{2}$ if and only if $a_{\Phi}>0$.

 Let  $\Phi$ be a growth function,  $1\leq n<\infty$. We set $\Phi^{(n)}(t)=\Phi(t^n)$. Then
$a_{\Phi^{(n)}}=np_\Phi$, $b_{\Phi^{(n)}}=nq_\Phi$.
If $\Phi\in\Delta_2\cap\Delta_\frac{1}{2}$,
then there exists a natural number $n\in \mathbb{N}$ such that $\Phi^{(m)}$ is
a convex growth function for every $m\ge n$ (see \cite[Theorem 1.5]{AB}).

For a growth function $\Phi\in\Delta_2\cap\Delta_\frac{1}{2}$,  the  inverse $\Phi^{-1}$ is uniquely defined on $[0,\8)$.  Set
\be
\varphi_\Phi(t)=\left\{\begin{array}{cc}
                         \frac{1}{\Phi^{-1}(\frac{1}{t})} & \mbox{if}\;t>0 \\
                         0 &\mbox{if}\; t=0
                       \end{array}
\right..
\ee

In what follows,  we always denote by  $\Phi$ a growth function satisfies $\Phi\in\Delta_2\cap\Delta_\frac{1}{2}$.


\subsection{ Noncommutative Orlicz spaces}

In this paper $\R$ always denotes  a semifinite  von Neumann algebra on
the Hilbert space $\mathcal{H}$ with a faithful normal semifinite  trace
$\nu$. Let
 $L_0(\R)$ be the set of all $\nu$-measurable operators. Then  $L_0(\R)$ is a $\ast$-algebra with respect to the strong sum and strong product
(see  \cite{FK,PX}.  Set $R^+=\{x\in\R:\;x\ge0\}$ and $L_0(\R)^+=\{x\in L_0(\R):\;x\ge0\}$.

Let $\P(\R)$ be the lattice of projections of $\R$. For given $\varepsilon>0$ and $\delta>0$, we define
\be
V(\varepsilon, \delta)=\left\{\begin{array}{c}
x\in L_0(\R)\;:\; \exists\;e\in\P(\R)\;
  \mbox{such that}\;
   e(H)\subset D(x),\\ \|xe\|\le\varepsilon\ \mbox{and}\
  \tau(e^\perp)\le\delta  \end{array}\right\}.
\ee

We denote by $P_\mathcal{K}$ the
orthogonal projection from $\mathcal{H}$ onto a closed subspace $\mathcal{K}\subset
\mathcal{H}$.  Let $x\in \M$ and $x=u|x|$ be its the  polar
decomposition. Then $u^*u=P_{(\ker x)^\perp}$  and $uu^*=P_{\overline{{\rm im} x}}$
(with ${\rm im} x=x(\mathcal{H})$). We call $r(x)=u^*u$ and $\ell(x)=uu^*$  are
the right and  left supports of
$x$, respectively. It is clear that if x is self-adjoint, then $r(x) = l(x)$.  We call this common projection is  the support of
$x$ and denote by $s(x)$.

Let $S^+(\R)=\{x\in\R^+\;:\; \tau(s(x))<\8\}$ and $S(\R)$ be
the linear span of $S^+(\R)$.

For $x\in L_0(\R)$, we define the   distribution function $\lambda (x)$ of $x$ as following: $\lambda_{t}(x)=\nu(e_{(t,\infty)}(|x|))$ for $t>0,$ where $e_{(t,\infty)}(|x|)$ is the spectral projection of $|x|$ in the interval $(t,\infty),$ and the  the   generalized singular
numbers   $\mu (x)$ of $x$  as $\mu_t(x)=\inf\{s>0:\; \lambda_s(x)\le t\}$ for $t >0$. The $\mu(x)$ is continuous from the right on $(0,\8)$.

\beq\label{eq:generalized singular}
\mu_t(x)=\inf\big\{\|xe\|\;:\; e\in\P,\;\tau(e^\perp)\le
 t\big\}= \inf\big\{\varepsilon\;:\; x\in V(\varepsilon,t)\big\},
\eeq
where for any projection $e$ we let $e^\perp=1-e$
 (for more details see \cite{FK}).
\begin{definition}\label{Orlicz spaces}
The noncommutative Orlicz space on $(\R,\nu)$ defined by
\be
L^\Phi(\R)=\{x\in
L_0(\R):\nu(\Phi(|x|))=\int_0^{\nu(1)}\Phi(\mu_t(x))dt<\infty\}.
\ee
 For
$x\in
L^\Phi(\R)$,
we define
$$
\|x\|_{\Phi}=\inf\{\lambda>0:\nu(\Phi(\frac{|x|}{\lambda}))\le1\}.
$$
\end{definition}
Using the same method in the proof of \cite[Lemma 2.3 and Theorem 2.4]{AB}, we obtain that
 $L^\Phi(\R)$ is a quasi-Banach  space, and if $x\in L^{\Phi^{(p)}}(\R),$
$y\in L^{\Phi^{(q)}}(\R)$, then
\beq\label{eq:holder}
\|xy\|_{\Phi^{(r)}}\le\|x\|_{\Phi^{(p)}}\|y\|_{\Phi^{(q)}},
\eeq
where $0< r,p,q\leq\infty$ and $r^{-1}=p^{-1}+q^{-1}$.

The next result is similar to \cite[Proposition 2]{ACA}. For easy reference, we give its proof.
\begin{lemma}\label{lem:density-nu-finite}
The $S(\R)$ is dense in $L^{\Phi}(\R)$.
\end{lemma}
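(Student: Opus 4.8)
The plan is to reduce to positive elements and then approximate each one by a bounded spectral truncation with finite-trace support. Since $L^\Phi(\R)$ is a linear space and $S(\R)$ is the linear span of $S^+(\R)$, it suffices to approximate a fixed $x\in L^\Phi(\R)^+$: an arbitrary $x$ is a linear combination of the positive and negative parts of its real and imaginary parts, each of which lies in $L^\Phi(\R)$ because $\mu_t(a^{\pm})\le\mu_t(a)$ for self-adjoint $a$ forces $\nu(\Phi(a^{\pm}))\le\nu(\Phi(a))<\infty$.

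For $x\in L^\Phi(\R)^+$ I would set, via the Borel functional calculus,
\[
x_n=g_n(x),\qquad g_n(\lambda)=\begin{cases}0,&0\le\lambda\le 1/n,\\ \lambda,&1/n<\lambda\le n,\\ n,&\lambda>n.\end{cases}
\]
Then $0\le x_n\le n\cdot 1$, so $x_n\in\R^+$, and its support is $e_{(1/n,\infty)}(x)$. To see $x_n\in S^+(\R)$, observe that on the range of $e_{(1/n,\infty)}(x)$ one has $x\ge 1/n$, so monotonicity of $\Phi$ gives $\Phi(x)\ge\Phi(1/n)\,e_{(1/n,\infty)}(x)$, where $\Phi(1/n)>0$ by the standing assumption that $\Phi(t)>0$ for $t>0$. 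Taking traces yields $\nu\big(e_{(1/n,\infty)}(x)\big)\le\nu(\Phi(x))/\Phi(1/n)<\infty$, hence $x_n\in S^+(\R)$.

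Next I would estimate the modular of the error. Since $x-x_n=(\mathrm{id}-g_n)(x)$ and $0\le(\mathrm{id}-g_n)(\lambda)\le\lambda\,\mathbf 1_{[0,1/n]\cup(n,\infty)}(\lambda)$, monotonicity of $\Phi$ gives the operator inequality $\Phi(x-x_n)\le\Phi(x)q_n$, where $q_n=\mathbf 1_{[0,1/n]\cup(n,\infty)}(x)$. The projections $q_n$ decrease to $\mathbf 1_{\{0\}}(x)$ (the high part $e_{(n,\infty)}(x)$ vanishes by $\nu$-measurability of $x$), and $\Phi(x)\mathbf 1_{\{0\}}(x)=\Phi(0)\mathbf 1_{\{0\}}(x)=0$; equivalently $\Phi(x)(1-q_n)$ increases to $\Phi(x)$. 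As $\nu(\Phi(x))<\infty$, normality of $\nu$ gives $\nu(\Phi(x)(1-q_n))\uparrow\nu(\Phi(x))$, whence $\nu(\Phi(x-x_n))\le\nu(\Phi(x)q_n)=\nu(\Phi(x))-\nu(\Phi(x)(1-q_n))\to 0$.

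Finally I would upgrade modular convergence to quasi-norm convergence using $\Phi\in\Delta_2$. For fixed $\varepsilon>0$, the equivalent form of the $\Delta_2$-condition recorded in Section~2 supplies $C_{1/\varepsilon}>0$ with $\Phi(\lambda/\varepsilon)\le C_{1/\varepsilon}\Phi(\lambda)$ for all $\lambda\ge 0$, so $\nu\big(\Phi((x-x_n)/\varepsilon)\big)\le C_{1/\varepsilon}\,\nu(\Phi(x-x_n))\to 0$; thus $\nu\big(\Phi((x-x_n)/\varepsilon)\big)\le 1$ for all large $n$, i.e.\ $\|x-x_n\|_\Phi\le\varepsilon$ eventually, and since $\varepsilon$ was arbitrary, $\|x-x_n\|_\Phi\to 0$. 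The routine parts are the reduction to positive elements and the functional-calculus bookkeeping; the two points needing care are the finite-trace estimate for the spectral projections (precisely where positivity $\Phi(t)>0$ for $t>0$ is used) and the passage from modular to quasi-norm convergence, for which the $\Delta_2$-condition is indispensable.
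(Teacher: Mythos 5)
Your proof is correct. It rests on the same underlying idea as the paper's --- truncating the spectrum of the operator below $1/n$ (to get finite-trace support) and above $n$ (to get boundedness) --- but the technical route is genuinely different. The paper proves density in two separate stages (first that $\R\cap L^{\Phi}(\R)$ is dense, then that $S(\R)$ is dense in $\R\cap L^{\Phi}(\R)$), works with arbitrary, not necessarily positive, $x$ throughout, and drives both stages with the Fack--Kosaki machinery: convergence in measure, the pointwise convergence and domination of $\mu_t(xe_n-x)$ by $\mu_t(x)$, and the Lebesgue dominated convergence theorem on $(0,\infty)$. You instead reduce to positive elements and perform a single two-sided spectral truncation, controlling the error by the commuting operator inequality $\Phi(x-x_n)\le \Phi(x)q_n$ and normality of the trace, with no appeal to convergence in measure or dominated convergence. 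Your version has the merit of being more self-contained and of making explicit two points the paper leaves implicit: the Chebyshev-type bound $\nu(e_{(1/n,\infty)}(x))\le \nu(\Phi(x))/\Phi(1/n)$ (which is exactly where the standing assumption $\Phi(t)>0$ for $t>0$ enters, and which the paper also needs to conclude $xf_n^{\perp}\in S(\R)$), and the $\Delta_2$ upgrade from modular convergence to quasi-norm convergence. What the paper's route buys in exchange is the intermediate statement that $\R\cap L^{\Phi}(\R)$ is dense in $L^{\Phi}(\R)$, which is reused implicitly elsewhere (e.g.\ in the proof of Lemma \ref{lem:density-orlicz}). The only step you state slightly too tersely is the reduction to positive elements: the inequality $\mu_t(a^{\pm})\le\mu_t(a)$ handles the positive and negative parts of a self-adjoint element, but you should also note that $\mathrm{Re}\,x,\ \mathrm{Im}\,x\in L^{\Phi}(\R)$, which follows from linearity of $L^{\Phi}(\R)$ together with $\mu_t(x^*)=\mu_t(x)$; this is routine and does not affect the validity of the argument.
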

\begin{proof} $\R\cap L^{\Phi}(\R)$ is dense in $L^{\Phi}(\R)$. Indeed,
set $e_n=e_{(0,n]}(|x|)$ and $x_n=xe_n$, for any $n\in\mathbb{N}$.  Then $x_n\in\R$ and
$x-x_n=xe^\bot_n$. Therefore, $(x-x_n)e_n=0$. Since $x$
is measurable, $\lim_{n\rightarrow\8}\nu(e^\bot_n)=0$. It follows that  $xe_{n}- x\rightarrow0$ in measure as $n\rightarrow\8$.
Using \cite[Lemma 3.1]{FK}, we get for any $t>0$, $\mu_t(xe_{n}- x)\rightarrow0$  as $n\rightarrow\8$. On the other hand, by \cite[Lemma 2.5 (vi)]{FK},  $\mu_t(xe_{n}- x)\le\mu_t(x)$ for all $t>0$. Applying \cite[Corollary 2.8]{FK} and Lebesgue dominated convergence theorem, we get
\be
\lim_{n\rightarrow\8}\tau(\Phi(|xe_n-x|))=\lim_{n\rightarrow\8}\int_{0}^{\8}\Phi(\mu_t(xe_n-x)dt=0.
\ee
Therefore, $\lim_{n\rightarrow\8}\|xe_n-x\|_\Phi=0$. Thus $\R\cap L^{\Phi}(\R)$ is dense in $L^{\Phi}(\R)$.

Let  $x\in S(\R)$. By \cite[Lemma 2.5(i), 2.6 and Corollary 2.8]{FK}, we have that
\be
\nu(\Phi(|x|)=\int_{0}^{\nu(s(x))}\Phi(\mu_t(x))dt\le\int_{0}^{\nu(s(x))}\Phi(\|x\|)dt<\8.
\ee
Hence, $S(\R)\subset \R\cap L^{\Phi}(\R)$. So, it sufficient to prove
that $S(\R)$ is dense in $\R\cap L^{\Phi}(\R)$.  Let $x\in \R\cap L^{\Phi}(\R)$, and
let $f_n=e_{(0,\frac{1}{n}]}(|x|)$, for any $n\in\mathbb{N}$. Then $|x|f_n$ decreases to $0$ in the norm of $\R$ as $n\rightarrow\8$, and so
$\mu_t(|x|f_n)$ decreases to $0$ as $n\rightarrow\8$ for any $t>0$. From this it follows that $\nu(\Phi(|x|f_n)\rightarrow0$  as $n\rightarrow\8$. Hence, $\lim_{n\rightarrow\8}\|x-xf_n^\bot\|_\Phi=\lim_{n\rightarrow\8}\|xf_n\|_\Phi=0$. On the other hand,
 $xf_n^\bot\in S(\R)$. Thus, $S(\R)$ is dense in $\R\cap L^{\Phi}(\R)$.
\end{proof}

\subsection{Noncommutative Orlicz spaces associated with a weight}

 We denote by $L_{loc}(\R)$ by the set of all
locally-measurable  operators affiliated with $\R$. It is well-known that $L_{loc}(\R)$ is a $\ast$-algebra with respect to the strong sum and strong product and
$L_{0}(\R)$  is a $*$-subalgebra in $L_{loc}(\R)$ (see \cite{MC1,MC2}). Set  $L_{loc}(\R)^+=\{x\in L_{loc}(\R):\;x\ge0\}$. Let
\be
\tilde{\nu}(x)=\sup\{\nu(y):\;y\in \R^+,\;y\le x\}, \qquad x\in L_{loc}(\R)^+.
\ee
Then $\tilde{\nu}$ is an extension of $\nu$ to $L_{loc}(\R)^+$ (see \cite[$\S$4.1]{MC2}). The extension  will be denoted still by $\nu$.

\begin{definition}
 \begin{enumerate}[\rm(i)]
 \item A {\it   weight} on $\R$ is a map $\omega: \R^+\to [0, \infty]$
satisfying
 \be
 \omega(x+\lambda y)=\omega(x)+\lambda\omega(y),\qquad\forall\;x, y\in \R^+,\ \forall\; \lambda\in\mathbb{R}
\ee
(where $0.\8=0$).
\item A  weight $\omega$ is said to be {\it normal} if
$\sup_i\omega(x_i)=\omega(\sup_i x_i)$ for any bounded increasing net
$(x_i)$ in $\R^+$, {\it faithful} if $\omega(x)=0$ implies $x=0$, {\it
semifinite} if the linear span $\R_\omega$ of the cone $\R_\omega^+=\{x \in\R^+:\; \omega(x)<\8\}$ is
dense in $\R$ with respect to the ultra-weak topology, and {\it locally finite} if for any non-zero
$x\in\R^+$ there is a non-zero $y\in\R^+$ such that $y\le x$ and
$0<\omega(y)<\infty$.
\end{enumerate}
\end{definition}
Let $\omega$  be a faithful normal semifinite weight on $\R$. Then  $\omega$  has a Radon-Nikodym derivative $D_{\omega}$ with respect to
$\nu$ such that $\omega(\cdot)=\nu(D_{\omega}\cdot)$ (see \cite{PT}).
The weight $\omega$ is locally finite if and only if the operator  $D_{\omega}$  is locally measurable (see \cite{Tr}). We recall that normal locally finite weight is semi-finite (see \cite[Lemma 1]{Tr}).  In the sequel, unless otherwise specified, we always denote by  $\omega$  a faithful normal locally finite weight on $\R$.
 Let $\Phi^{-1}:[0,\8)\rightarrow[0,\8)$ be the inverse of $\Phi$.  If $\alpha\in[0,1]$, then for any $x\in L_{loc}(\R)$,
\be
\Phi^{-1}(D_{\omega})^\alpha x\Phi^{-1}(D_{\omega})^{1-\alpha},\;\Phi\big(|\Phi^{-1}(D_{\omega})^\alpha x\Phi^{-1}(D_{\omega})^{1-\alpha}|\big)\in L_{loc}(\R)
\ee
(see \cite[Section 5]{MC1}).
Set
\be
\R^{\Phi}_{\alpha,\omega}=\left\{
\begin{array}{l}
 \Phi^{-1}(D_{\omega})^\alpha x\Phi^{-1}(D_{\omega})^{1-\alpha}:\;x\in\R, \\
 \Phi^{-1}(D_{\omega})^\alpha x\Phi^{-1}(D_{\omega})^{1-\alpha}
\in L^\Phi(\R)
\end{array}
\right\}
\ee
and
\be
\|\Phi^{-1}(D_{\omega})^\alpha x\Phi^{-1}(D_{\omega})^{1-\alpha}\|_{\Phi,\alpha,\omega}=\|\Phi^{-1}(D_{\omega})^\alpha x\Phi^{-1}(D_{\omega})^{1-\alpha}\|_\Phi.
\ee
Then $(\R^{\Phi}_{\alpha,\omega}, \|\cdot\|_{\Phi,\alpha,\omega})$ is a quasi-normed  space.

\begin{definition} Let $\omega$  be a faithful normal semifinite weight on $\R$ and $\alpha\in[0,1]$. The completion of $(\R^{\Phi}_{\alpha,\omega}, \|\cdot\|_{\Phi,\alpha,\omega})$ is called the Orlicz space associated with $\Phi,\;\R$ and $\omega$, it is denoted by $L^{\Phi}_{\alpha,\omega}(\R)$.
\end{definition}

Similar to the N-function case in  \cite{ACA}, we have the following results.
\begin{lemma}\label{lem:density-orlicz} If $D$ be a positive nonsingular operator  in $L^1(\R)$ and $\alpha\in[0,1]$, then $\Phi^{-1}(D)^{\alpha}\R\Phi^{-1}(D)^{1-\alpha}$ is dense in $L^{\Phi}(\R)$.
\end{lemma}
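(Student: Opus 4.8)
The plan is to combine the density of $S(\R)$ established in Lemma \ref{lem:density-nu-finite} with a spectral truncation of $D$. Since $S(\R)$ is dense in $L^\Phi(\R)$, it suffices to approximate an arbitrary $y\in S(\R)$ in the quasi-norm $\|\cdot\|_\Phi$ by elements of $\Phi^{-1}(D)^\alpha\R\Phi^{-1}(D)^{1-\alpha}$. Such a $y$ is bounded and, taking $p$ to be the join of the supports of the finitely many positive summands defining it, satisfies $y=pyp$ with $\nu(p)<\8$. Because $D$ is positive and nonsingular, the spectral projections $e_n=e_{[1/n,n]}(D)$ increase strongly to $1$, and I set $y_n=e_ny e_n$.

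First I would check that each $y_n$ already lies in the target set. On the range of $e_n$ the operator $D$ takes values in $[1/n,n]$, so $\Phi^{-1}(D)$ lies between the positive constants $\Phi^{-1}(1/n)$ and $\Phi^{-1}(n)$; hence $b_n:=\Phi^{-1}(D)^{-\alpha}e_n$ and $c_n:=e_n\Phi^{-1}(D)^{-(1-\alpha)}$ are bounded Borel functions of $D$, that is, elements of $\R$. Since all these operators are functions of $D$ and $\Phi^{-1}(D)$ is nonsingular, we have $\Phi^{-1}(D)^\alpha b_n=e_n$ and $c_n\Phi^{-1}(D)^{1-\alpha}=e_n$, so with $x_n:=b_n y c_n\in\R$ one obtains $y_n=\Phi^{-1}(D)^\alpha x_n\Phi^{-1}(D)^{1-\alpha}\in\Phi^{-1}(D)^\alpha\R\Phi^{-1}(D)^{1-\alpha}$; moreover $y_n\in S(\R)\subset L^\Phi(\R)$.

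It remains to prove $\|y-y_n\|_\Phi\to0$, and this is where the real work lies. Writing $y-y_n=e_n^\perp y+e_ny e_n^\perp$ and using $\mu_t(ab)\le\|a\|\mu_t(b)$ together with $\mu_{2t}(a+b)\le\mu_t(a)+\mu_t(b)$ gives the uniform bound $\mu_t(y-y_n)\le 2\mu_{t/2}(y)$; the $\Delta_2$-condition then yields $\int_0^\8\Phi(2\mu_{t/2}(y))\,dt\le 2K\,\nu(\Phi(|y|))<\8$, which provides an integrable majorant valid for all $n$. For the pointwise convergence of the integrand I would show $e_n^\perp y\to0$ and $e_ny e_n^\perp\to0$ \emph{in measure}: the positive operators $y^*e_n^\perp y$ decrease strongly to $0$ and, thanks to $y=pyp$, have finite trace, so normality of $\nu$ forces $\nu(e_{(s,\8)}(y^*e_n^\perp y))\to0$ for every $s>0$, i.e. convergence in measure, with the second corner handled symmetrically via $e_n^\perp y^* y e_n^\perp$. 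By \cite[Lemma 3.1]{FK} this gives $\mu_t(y-y_n)\to0$ for a.e. $t$, so Lebesgue's dominated convergence theorem yields $\nu(\Phi(|y-y_n|))\to0$, and the $\Delta_2$-condition upgrades this modular convergence to $\|y-y_n\|_\Phi\to0$. The main obstacle is precisely this step: producing a single $\Delta_2$-controlled majorant uniform in $n$ and verifying the measure convergence of the off-diagonal corners through the normality/spectral argument; once these are in place, dominated convergence finishes the proof.
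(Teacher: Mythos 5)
Your approximation argument is essentially the paper's: both proofs truncate with the spectral projections $e_n=e_{(1/n,n]}(D)$, observe that $\Phi^{-1}(D)^{-\alpha}e_n$ and $e_n\Phi^{-1}(D)^{\alpha-1}$ are bounded so that $e_n\R e_n\subset\Phi^{-1}(D)^{\alpha}\R\Phi^{-1}(D)^{1-\alpha}$, and then show $e_nye_n\to y$ in $\|\cdot\|_\Phi$ for $y\in S(\R)$, invoking Lemma~\ref{lem:density-nu-finite}. Your route to the convergence (corner decomposition, the uniform majorant $\mu_t(y-y_n)\le 2\mu_{t/2}(y)$ controlled by $\Delta_2$, and convergence in measure of $e_n^\perp y$ via normality of $\nu$) is sound and only cosmetically different from the paper's (which gets convergence in measure from $\|ye_n-y\|_1\to0$ and dominates by $\mu_t(y)$ directly); both finish with dominated convergence.

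There is, however, one genuine omission: you never verify that the \emph{whole} set $\Phi^{-1}(D)^{\alpha}\R\Phi^{-1}(D)^{1-\alpha}$ is contained in $L^{\Phi}(\R)$. Your argument only produces approximants $y_n=e_nye_n$ that happen to lie in $S(\R)$, so what you have literally shown is that $\Phi^{-1}(D)^{\alpha}\R\Phi^{-1}(D)^{1-\alpha}\cap L^{\Phi}(\R)$ is dense. The containment is part of the assertion (``$A$ is dense in $X$'' presupposes $A\subset X$) and is what makes the map $T_\alpha$ in Theorem~\ref{thm:eqivalent-orliczspace} well defined on all of $\R^{\Phi}_{\alpha,\omega}$. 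The fix is short and is exactly the middle portion of the paper's proof: since $\nu\big(\Phi^{(1/\alpha)}(\Phi^{-1}(D)^{\alpha})\big)=\nu(D)<\infty$, one has $\Phi^{-1}(D)^{\alpha}\in L^{\Phi^{(1/\alpha)}}(\R)$ and likewise for the exponent $1-\alpha$, so the H\"older-type inequality \eqref{eq:holder} gives
$\|\Phi^{-1}(D)^{\alpha}x\Phi^{-1}(D)^{1-\alpha}\|_{\Phi}\le\|x\|\,\|\Phi^{-1}(D)^{\alpha}\|_{\Phi^{(1/\alpha)}}\|\Phi^{-1}(D)^{1-\alpha}\|_{\Phi^{(1/(1-\alpha))}}<\infty$
for every $x\in\R$ (the cases $\alpha=0,1$ being immediate). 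Adding this paragraph makes your proof complete.
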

\begin{proof} Set $e_n=e_{(\frac{1}{n},n]}(D)$, for any $n\in\mathbb{N}$. Then $e_n$ increases strongly to $1$ and $\tau(e_n)<\8$, for any $n\in\mathbb{N}$. Let $x\in S(\R)$. Then there is a projection $e$ in $\R$ such that $\tau(e)<\8$ and $ex=xe=x$. Hence, $x\in L^1(\R)$. By \cite[Lemma 2.1]{J}, we get
 $\lim_{n\rightarrow\8}\|xe_{n}-x\|_{1}=0$. It follows that  $xe_{n}- x\rightarrow0$ in measure as $n\rightarrow\8$.
By \cite[Lemma 3.1]{FK}, we get for any $t>0$, $\mu_t(xe_{n}- x)\rightarrow0$  as $n\rightarrow\8$. Using \cite[Lemma 2.5 (vi)]{FK},  we obtain that  $\mu_t(xe_{n}- x)\le\mu_t(x)$ for all $t>0$. Hence, by \cite[Corollary 2.8]{FK} and Lebesgue dominated convergence theorem, we get
\be
\lim_{n\rightarrow\8}\nu(\Phi(|xe_n-x|))=\lim_{n\rightarrow\8}\int_{0}^{\nu(e)}\Phi(\mu_t(xe_n-x)dt=0.
\ee
Therefore, $\lim_{n\rightarrow\8}\|xe_n-x\|_{\Phi}=0$. Similarly,
$\lim_{n\rightarrow\8}\|e_n x-x\|_{\Phi}=0$. So,
\be
\lim_{n\rightarrow\8}\|e_nxe_n-x\|_{\Phi}\leq K[\lim_{n\rightarrow\8}\|xe_n-x\|_{\Phi}+\lim_{n\rightarrow\8}\|e_{n}x-x\|_{\Phi}]=0,
\ee
where $K$ is the constant of the quasi-norm $\|\cdot\|_\Phi$.
It follows that the closure of $\cup_{n=1}^{\8}e_n\R e_n$ in $L^{\Phi}(\R)$ contains $S(\R)$. By Lemma \ref{lem:density-nu-finite}, $\cup_{n=1}^{\8}e_n\R e_n$ is dense in $L^{\Phi}(\R)$.

Next, we prove that $\Phi^{-1}(D)^{\alpha}\R\Phi^{-1}(D)^{1-\alpha}\subset L^{\Phi}(\R)$.  Let $y\in\R$. If $\alpha\in(0,1)$, then
\be
\lim_{n\rightarrow\8}\nu(\Phi^{(\frac{1}{\alpha})}(|\Phi^{-1}(D)^{\alpha}-e_n\Phi^{-1}(D)^{\alpha}|)=\lim_{n\rightarrow\8}\nu(D-De_n)=0.
\ee
It follows that $\lim_{n\rightarrow\8}\|\Phi^{-1}(D)^{\alpha}-e_n\Phi^{-1}(D)^{\alpha}\|_{\Phi^{(\frac{1}{\alpha})}}=0$. By \eqref{eq:holder}, we get
\be
\begin{array}{rl}
&\|\Phi^{-1}(D)^{\alpha}y\Phi^{-1}(D)^{1-\alpha}-e_n\Phi^{-1}(D)^{\alpha}y\Phi^{-1}(D)^{1-\alpha}\|_{\Phi}\\
&\qquad\le\|(1-e_n)\Phi^{-1}(D)^{\alpha}\|_{\Phi^{(\frac{1}{\alpha})}}\|y\Phi^{-1}(D)^{1-\alpha}\|_{\Phi^{(\frac{1}{1-\alpha})}}\\
&\qquad\le \|y\|\|(1-e_n)\Phi^{-1}(D)^{\alpha}\|_{\Phi^{(\frac{1}{\alpha})}}\|\Phi^{-1}(D)^{1-\alpha}\|_{\Phi^{(\frac{1}{1-\alpha})}}.
\end{array}
\ee
Hence, $\lim_{n\rightarrow\8}\|\Phi^{-1}(D)^{\alpha}y\Phi^{-1}(D)^{1-\alpha}-e_n\Phi^{-1}(D)^{\alpha}y\Phi^{-1}(D)^{1-\alpha}\|_{\Phi}=0$. Similarly, $\lim_{n\rightarrow\8}\|\Phi^{-1}(D)^{\alpha}y\Phi^{-1}(D)^{1-\alpha}-\Phi^{-1}(D)^{\alpha}y\Phi^{-1}(D)^{1-\alpha}e_n\|_{\Phi}=0$. From these it follows that $\lim_{n\rightarrow\8}\|\Phi^{-1}(D)^{\alpha}y\Phi^{-1}(D)^{1-\alpha}-e_n\Phi^{-1}(D)^{\alpha}y\Phi^{-1}(D)^{1-\alpha}e_n\|_{\Phi}=0$. On the other hand,
$e_n\Phi^{-1}(D)^{\alpha},\;\Phi^{-1}(D)^{1-\alpha}e_n\in\R$, and so for all $n\in\mathbb{N}$,
\be
e_n\Phi^{-1}(D)^{\alpha}y\Phi^{-1}(D)^{1-\alpha}e_n\in e_n\R e_n.
\ee
 Therefore, $\Phi^{-1}(D)^{\alpha}y\Phi^{-1}(D)^{1-\alpha}\in L^{\Phi}(\R)$. In the case $\alpha=0,1$, this result also holds. Thus $\Phi^{-1}(D)^{\alpha}\R\Phi^{-1}(D)^{1-\alpha}\subset L^{\Phi}(\R)$.

Finally, we prove that $\Phi^{-1}(D)^{\alpha}\R\Phi^{-1}(D)^{1-\alpha}$ is dense in $L^{\Phi}(\R)$. For this it is sufficient to prove that $\cup_{n=1}^{\8}e_n\R e_n\subset\Phi^{-1}(D)^{\alpha}\R\Phi^{-1}(D)^{1-\alpha}$. Since for any $ n\in\mathbb{N}$, $e_n\Phi^{-1}(D)^{-\alpha},\;\Phi^{-1}(D)^{\alpha-1}e_n\in\R$, we have that
\be
\begin{array}{rl}
e_n\R e_n&=\Phi^{-1}(D)^{\alpha}\Phi^{-1}(D)^{-\alpha}e_n\R e_n\Phi^{-1}(D)^{\alpha-1}\Phi^{-1}(D)^{1-\alpha}\\
&=\Phi^{-1}(D)^{\alpha}e_n\Phi^{-1}(D)^{-\alpha}\R \Phi^{-1}(D)^{\alpha-1}e_n\Phi^{-1}(D)^{1-\alpha}\\
&\subset\Phi^{-1}(D)^{\alpha}\R\Phi^{-1}(D)^{1-\alpha}.
\end{array}
\ee
It follows that $\cup_{n=1}^{\8}e_n\R e_n\subset\Phi^{-1}(D)^{\alpha}\R\Phi^{-1}(D)^{1-\alpha}$.
\end{proof}

\begin{theorem}\label{thm:eqivalent-orliczspace} If $\omega$  is a faithful normal semifinite weight on $\R$ such that its the Radon-Nikodym derivative $D_{\omega}$ with respect to
$\nu$ satisfy $D_{\omega}\in L^1(\R)$ and $\alpha\in[0,1]$, then $L^{\Phi}_{\alpha,\omega}(\R)$ and  $L^{\Phi}(\R)$  are isometrically isomorphic.
\end{theorem}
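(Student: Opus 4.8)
The plan is to realize $L^\Phi_{\alpha,\omega}(\R)$ concretely inside $L^\Phi(\R)$ and to observe that the entire analytic content of the statement has already been packaged into Lemma \ref{lem:density-orlicz}. The key point is that $\R^\Phi_{\alpha,\omega}$ is, by its very definition, a linear subspace of $L^\Phi(\R)$ --- it is the set of those operators $\Phi^{-1}(D_\omega)^\alpha x\Phi^{-1}(D_\omega)^{1-\alpha}$, $x\in\R$, that happen to lie in $L^\Phi(\R)$ --- and the quasi-norm $\|\cdot\|_{\Phi,\alpha,\omega}$ is nothing but the restriction of $\|\cdot\|_\Phi$ to this subspace.

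First I would check that Lemma \ref{lem:density-orlicz} applies with $D=D_\omega$: since $\omega$ is faithful its Radon--Nikodym derivative $D_\omega$ is positive and nonsingular, and by hypothesis $D_\omega\in L^1(\R)$. The lemma then delivers two facts simultaneously. On the one hand, $\Phi^{-1}(D_\omega)^\alpha\R\,\Phi^{-1}(D_\omega)^{1-\alpha}\subset L^\Phi(\R)$, so the membership condition in the definition of $\R^\Phi_{\alpha,\omega}$ is automatically satisfied and $\R^\Phi_{\alpha,\omega}=\Phi^{-1}(D_\omega)^\alpha\R\,\Phi^{-1}(D_\omega)^{1-\alpha}$. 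On the other hand, this set is dense in $L^\Phi(\R)$.

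With these two facts the conclusion is immediate. Because the two quasi-norms agree on $\R^\Phi_{\alpha,\omega}$ by definition, the space $(\R^\Phi_{\alpha,\omega},\|\cdot\|_{\Phi,\alpha,\omega})$ is isometrically a linear subspace of the complete quasi-Banach space $(L^\Phi(\R),\|\cdot\|_\Phi)$. The completion of a subspace of a complete space, taken with the induced quasi-norm, is canonically identified with its closure; hence $L^\Phi_{\alpha,\omega}(\R)$ is isometrically the closure of $\R^\Phi_{\alpha,\omega}$ in $L^\Phi(\R)$. By the density just established this closure equals all of $L^\Phi(\R)$, so the canonical embedding extends to a surjective isometry $\bar\iota\colon L^\Phi_{\alpha,\omega}(\R)\to L^\Phi(\R)$, which is the desired isometric isomorphism.

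There is no serious obstacle: the genuine work --- that the products lie in $L^\Phi(\R)$ and are dense, via the approximation by $\cup_n e_n\R e_n$ and the Hölder inequality \eqref{eq:holder} --- is entirely carried out in Lemma \ref{lem:density-orlicz} (itself resting on Lemma \ref{lem:density-nu-finite}). The only item requiring a word of care is purely formal, namely that the standard ``extend an isometry to the completion and identify it with the closure of its image'' argument remains valid for quasi-normed rather than normed spaces; this causes no difficulty, since completions and continuous extensions of isometries exist equally well in the quasi-normed setting.
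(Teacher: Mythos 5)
Your proposal is correct and follows essentially the same route as the paper: both arguments reduce the theorem to Lemma \ref{lem:density-orlicz}, identify $\R^{\Phi}_{\alpha,\omega}$ isometrically with the dense subspace $\Phi^{-1}(D_{\omega})^{\alpha}\R\,\Phi^{-1}(D_{\omega})^{1-\alpha}$ of $L^{\Phi}(\R)$, and extend the resulting isometry between dense subspaces to the completions. Your added remark that the inclusion $\Phi^{-1}(D_{\omega})^{\alpha}\R\,\Phi^{-1}(D_{\omega})^{1-\alpha}\subset L^{\Phi}(\R)$ makes the membership condition in the definition of $\R^{\Phi}_{\alpha,\omega}$ automatic is a useful clarification that the paper leaves implicit.
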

\begin{proof} We define $T_\alpha:\R^{\Phi}_{\alpha,\omega}\rightarrow \Phi^{-1}(D_{\omega})^{\alpha}\R\Phi^{-1}(D_{\omega})^{1-\alpha}$ by
\be
T_\alpha(\Phi^{-1}(D_{\omega})^{\alpha}x\Phi^{-1}(D_{\omega})^{1-\alpha})=\Phi^{-1}(D_{\omega})^{\alpha}x\Phi^{-1}(D_{\omega})^{1-\alpha},
\ee
for any $\Phi^{-1}(D_{\omega})^{\alpha}x\Phi^{-1}(D_{\omega})^{1-\alpha}\in\R^{\Phi}_{\alpha,\omega}$.
Then $T_\alpha$ is a linear isometry from $\R^{\Phi}_{\alpha,\omega}$ to $\Phi^{-1}(D_{\omega})^{\alpha}\R\Phi^{-1}(D_{\omega})^{1-\alpha}$. By the definition of $L^{\Phi}_{\alpha,\omega}(\R)$ and Lemma \ref{lem:density-orlicz}, we know that $\R^{\Phi}_{\alpha,\omega}$ is dense in $L^{\Phi}_{\alpha,\omega}(\R)$ and  $\Phi^{-1}(D_{\omega})^{\alpha}\R\Phi^{-1}(D_{\omega})^{1-\alpha}$ is dense in  $L^{\Phi}(\R)$. Hence, we can extend $T_\alpha$ to an isometric isomorphism between $L^{\Phi}_{\alpha,\omega}(\R)$ and  $L^{\Phi}(\R)$.
\end{proof}

\subsection{Noncommutative  weak Orlicz spaces associated with a weight}

In  \cite{BM1}, the noncommutative  weak Orlicz spaces associated with a weight has been defined for N-function and gave its some properties. Next, we extend  it for the
growth function case.
The noncommutative weak Orlicz space $L^{\Phi,\8} (\R)$ is defined as following:
\be
L^{\Phi,\8} (\R) = \left\{ x \in L_{0}(\R):\;  \sup_{t > 0}
t \Phi( \mu_t (x)) < \8 \right\},
\ee
equipped with
\be
\|x\|_{\Phi,\8} = \inf \left \{c>0:\; t \Phi( \mu_t (x)/c ) \leq 1,\; \forall t>0 \right\}.
\ee
It is clear that for $x\in L^{\Phi,\8} (\R)$,
\beq\label{eq:DistrSingularEquiv}
\|x\|_{\Phi,\8} = \inf \left\{c>0:\; \varphi_\Phi(t) \mu_t (x)/c  \leq 1,\; \forall t>0 \right\}=\sup_{t>0}\varphi_\Phi(t) \mu_t (x).
\eeq
Using the same method as in the prof of  \cite[Proposition 3.3]{BCLJ}, we obtain that  $L^{\Phi,\8} (\R)$  is a quasi-Banach space.
We denote the closure
 of $S(\R)$ in $L^{\Phi,\8}(\R)$ by $L^{\Phi,\8}_0(\R)$.

\begin{lemma}\label{lem:holder-weak}   Let $0< r,p,q\leq\infty$ such
that $r^{-1}=p^{-1}+q^{-1}$. Then  there is a constant $C>0$ such that for any
$x\in L^{\Phi^{(p)},\8}(\mathcal{M})$ and $y\in L^{\Phi^{(q)},\8}(\mathcal{M})$,
\be
\|xy\|_{\Phi^{(r)},\8}\le C\|x\|_{\Phi^{(p)},\8}\|y\|_{\Phi^{(q)},\8}.
\ee
\end{lemma}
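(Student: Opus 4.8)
The plan is to reduce the operator inequality to a pointwise estimate for generalized singular numbers via the representation of the quasi-norm in \eqref{eq:DistrSingularEquiv}. First I would record the identity $(\Phi^{(s)})^{-1}(u)=\Phi^{-1}(u)^{1/s}$, valid for $0<s<\infty$, which is immediate from $\Phi^{(s)}(v)=\Phi(v^s)$; consequently $\varphi_{\Phi^{(s)}}(t)=1/(\Phi^{(s)})^{-1}(1/t)=\varphi_\Phi(t)^{1/s}$. Together with \eqref{eq:DistrSingularEquiv} this yields, for $z\in L^{\Phi^{(s)},\8}(\M)$,
\[
\|z\|_{\Phi^{(s)},\8}=\sup_{t>0}\varphi_\Phi(t)^{1/s}\mu_t(z),
\]
so all three quasi-norms are expressed through the single function $\varphi_\Phi$. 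The relation $r^{-1}=p^{-1}+q^{-1}$ then becomes the factorisation $\varphi_\Phi(t)^{1/r}=\varphi_\Phi(t)^{1/p}\,\varphi_\Phi(t)^{1/q}$, which is the algebraic heart of the argument.

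Next I would invoke the submultiplicativity of generalized singular numbers, $\mu_{a+b}(xy)\le\mu_a(x)\mu_b(y)$ (see \cite[Lemma 2.5]{FK}), in the form $\mu_s(xy)\le\mu_{s/2}(x)\mu_{s/2}(y)$ for every $s>0$. Fixing $s>0$ I bound $\varphi_\Phi(s)^{1/r}\mu_s(xy)\le\varphi_\Phi(s)^{1/r}\mu_{s/2}(x)\mu_{s/2}(y)$ and split the weight $\varphi_\Phi(s)^{1/r}$ as $\varphi_\Phi(s)^{1/p}\varphi_\Phi(s)^{1/q}$, attaching one factor to $\mu_{s/2}(x)$ and the other to $\mu_{s/2}(y)$. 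To make each factor recognisable as a term in the corresponding weak quasi-norm evaluated at $s/2$, I must pass from $\varphi_\Phi(s)$ to $\varphi_\Phi(s/2)$.

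The main obstacle, and the place where the standing hypotheses enter, is exactly this comparison: I need a constant $C>0$ with $\varphi_\Phi(s)\le C\,\varphi_\Phi(s/2)$ for all $s$, equivalently $\Phi^{-1}(2u)\le C\,\Phi^{-1}(u)$, i.e.\ $\Phi^{-1}\in\Delta_2$. This follows from $\Phi\in\Delta_{\frac12}$: passing to the inverse reverses the indices, $a_{\Phi^{-1}}=1/b_\Phi$ and $b_{\Phi^{-1}}=1/a_\Phi$, so $a_\Phi>0$ (that is, $\Phi\in\Delta_{\frac12}$) is equivalent to $b_{\Phi^{-1}}<\8$ (that is, $\Phi^{-1}\in\Delta_2$). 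With this doubling the chain reads
\[
\varphi_\Phi(s)^{1/r}\mu_s(xy)\le C^{1/r}\big(\varphi_\Phi(s/2)^{1/p}\mu_{s/2}(x)\big)\big(\varphi_\Phi(s/2)^{1/q}\mu_{s/2}(y)\big)\le C^{1/r}\|x\|_{\Phi^{(p)},\8}\|y\|_{\Phi^{(q)},\8},
\]
and taking the supremum over $s>0$ gives the assertion with constant $C^{1/r}$. Finally I would dispose of the endpoint cases $p=\8$ or $q=\8$ directly: there $\Phi^{(\8)}$ is read as $\M$ with the operator norm, the weight $\varphi_\Phi(t)^{1/\8}$ is identically $1$, and $\mu_s(xy)\le\|x\|\,\mu_s(y)$ yields the inequality at once (indeed with constant $1$), so these cases are subsumed.
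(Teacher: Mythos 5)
Your proof is correct, but it follows a genuinely different route from the paper's. The paper works directly with the modular quantity $\sup_{t>0}t\,\Phi^{(r)}(\mu_t(xy))$: after the submultiplicativity step $\mu_{2t}(xy)\le\mu_t(x)\mu_t(y)$ it passes to a convexifying power $\Phi^{(n)}$, splits the product $\mu_t(x)^{r/n}\mu_t(y)^{r/n}$ by the Young-type inequality $ab\le\tfrac{r}{p}a^{p/r}+\tfrac{r}{q}b^{q/r}$, and uses convexity of $\Phi^{(n)}$ to distribute $\Phi$ over the resulting sum; the constant that comes out is $2^{n/r}$. You instead exploit the Marcinkiewicz-type representation \eqref{eq:DistrSingularEquiv} together with the exact identity $\varphi_{\Phi^{(s)}}=\varphi_\Phi^{1/s}$, so that the exponent relation $r^{-1}=p^{-1}+q^{-1}$ turns into a literal factorisation of the weight, and the only analytic input beyond $\mu_{s}(xy)\le\mu_{s/2}(x)\mu_{s/2}(y)$ is the doubling estimate $\Phi^{-1}(2u)\le C\Phi^{-1}(u)$, which you correctly trace to $\Phi\in\Delta_{1/2}$ (equivalently $a_\Phi>0$, giving $C=2^{1/a_\Phi}$). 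Your argument buys a cleaner multiplicative structure and avoids invoking the existence of a convex power $\Phi^{(n)}$ altogether, at the price of leaning on the equivalence \eqref{eq:DistrSingularEquiv} for each of $\Phi^{(p)},\Phi^{(q)},\Phi^{(r)}$ (harmless, since $a_{\Phi^{(s)}}=sa_\Phi>0$ and $b_{\Phi^{(s)}}=sb_\Phi<\infty$ for finite $s$); the paper's argument stays entirely inside the definition of the quasi-norm. Your explicit treatment of the endpoints $p=\infty$ or $q=\infty$ via $\mu_s(xy)\le\|x\|\mu_s(y)$ is also a small improvement, as the paper's displayed computation tacitly assumes $p,q<\infty$.
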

\begin{proof} Let $x\in L^{\Phi^{(p)},\8}(\mathcal{M})$ and $y\in L^{\Phi^{(q)},\8}(\mathcal{M})$. There exists a natural number $n\in \mathbb{N}$ such that $\Phi^{(n)}$ is
a convex growth function.
By \eqref{eq:DistrSingularEquiv} and \cite[Lemma 2.5 (vii)]{FK}, we get
\be
\begin{array}{rl}
 \sup_{t>0}t \Phi^{(r)}( \mu_t (xy))&= \sup_{t>0}2t \Phi^{(r)}( \mu_{2t} (xy))\\
 &\le2\sup_{t>0}t \Phi( \mu_{t} (x)^r\mu_{t} (y)^r) \\
&=2\sup_{t>0}t \Phi^{(n)}(\mu_{t}(x)^\frac{r}{n}\mu_{t}(y)^\frac{r}{n})\\
&\leq2\sup_{t>0}t \Phi^{(n)}(\frac{r}{p}\mu_t(x)^{\frac{p}{n}}+\frac{r}{q}\mu_t(y)^{\frac{q}{n}})\\
&\le 2[\frac{r}{p}\sup_{t>0}t \Phi^{(p)}(\mu_t(x))+\frac{r}{q}\sup_{t>0}t \Phi^{(q)}(\mu_t(y))]\\
&\le \frac{r}{p}\sup_{t>0}t \Phi^{(p)}(\mu_t(2^{\frac{n}{p}}x))+\frac{r}{q}\sup_{t>0}t \Phi^{(q)}(\mu_t(2^{\frac{n}{q}}y))
\end{array}
\ee
Therefore,
\be
\sup_{t>0}t \Phi^{(r)}( \mu_t (\frac{xy}{2^\frac{n}{r}\|x\|_{\Phi^{(p)},\8}\|y\|_{(\Phi)^{(q)},\8}}))\le1.
\ee
So, it follows that $\|xy\|_{\Phi^{(r)},\8}\le 2^\frac{n}{r}\|x\|_{\Phi^{(p)},\8}\|y\|_{(\Phi)^{(q)},\8}$.
\end{proof}

Let
\be
\R^{\Phi,\8}_{\alpha,\omega}=\left\{
\begin{array}{l}
\Phi^{-1}(D_{\omega})^{\alpha}x\Phi^{-1}(D_{\omega})^{1-\alpha}:\;x\in\R,\\
\Phi^{-1}(D_{\omega})^{\alpha}x\Phi^{-1}(D_{\omega})^{1-\alpha}\in L^{\Phi,\8} (\R)
\end{array}
\right\}
\ee
and
\be
\|\Phi^{-1}(D)^{\alpha}x\Phi^{-1}(D)^{1-\alpha}\|_{\Phi,\8,\alpha,\omega}=\|\Phi^{-1}(D_{\omega})^\alpha x\Phi^{-1}(D_{\omega})^{1-\alpha}\|_{\Phi,\8},
\ee
where $D_{\omega}$ as in Subsection 2.2.
Then $\R^{\Phi,\8}_{\alpha,\omega}$ is a quasi normed space.
Similar to \cite[Proposition 3.2]{BCLJ}, we have that for any $x \in \R^{\Phi}_{\alpha,\omega}$,
\beq\label{eq:olicz-weakorlicz}
\| x\|_{\Phi,\8,\alpha,\omega} \le \| x \|_{\Phi,\alpha,\omega}.
\eeq

\begin{definition} Let $\omega$  be a faithful normal semifinite weight on $\R$ and $\alpha\in[0,1]$. We call the completion of $(\R^{\Phi,\8}_{\alpha,\omega}, \|\cdot\|_{\Phi,\8,\alpha,\omega})$ is  the weak  Orlicz space associated with $\Phi,\R$ and $\omega$, denote by $L^{\Phi,\8}_{\alpha,\omega}(\R)$.
\end{definition}

We will need the following results (for more details see \cite{BM1}).

\begin{lemma}\label{lem:density}
Let $D$ be a positive nonsingular operator  in $L^1(\R)$. If $\alpha\in[0,1]$, then $\Phi^{-1}(D)^{\alpha}\R\Phi^{-1}(D)^{1-\alpha}$ is dense in $L^{\Phi,\8}_0(\R)$.
\end{lemma}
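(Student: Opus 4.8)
The plan is to mirror the three-step structure of the proof of Lemma \ref{lem:density-orlicz}, transferring all the estimates obtained there for $\|\cdot\|_\Phi$ into the weak quasi-norm $\|\cdot\|_{\Phi,\8}$. The device that makes this transfer possible is the domination $\|x\|_{\Phi,\8}\le\|x\|_\Phi$, valid for every $x\in L^\Phi(\R)$: indeed, for any $\lambda>\|x\|_\Phi$ one has $\int_0^{\nu(1)}\Phi(\mu_s(x)/\lambda)\,ds\le 1$, and since $\mu_\cdot(x)$ is non-increasing, $t\,\Phi(\mu_t(x)/\lambda)=\int_0^t\Phi(\mu_t(x)/\lambda)\,ds\le\int_0^{\nu(1)}\Phi(\mu_s(x)/\lambda)\,ds\le1$ for every $t>0$, whence $\|x\|_{\Phi,\8}\le\lambda$; letting $\lambda\downarrow\|x\|_\Phi$ gives the claim (this is the unweighted analogue of \eqref{eq:olicz-weakorlicz}, cf. \cite[Proposition 3.2]{BCLJ}). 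Consequently, whenever a sequence converges in $\|\cdot\|_\Phi$ inside $L^\Phi(\R)$, it also converges in $\|\cdot\|_{\Phi,\8}$.

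First I would recall, with $e_n=e_{(\frac{1}{n},n]}(D)$ as in Lemma \ref{lem:density-orlicz}, that for $x\in S(\R)$ the proof of that lemma already yields $\|e_nxe_n-x\|_\Phi\to0$. Since $x$ and each $e_nxe_n\in e_n\R e_n$ lie in $L^\Phi(\R)$, the domination above gives $\|e_nxe_n-x\|_{\Phi,\8}\to0$; as $e_nxe_n\in\bigcup_n e_n\R e_n$, this shows that $S(\R)$ is contained in the $\|\cdot\|_{\Phi,\8}$-closure of $\bigcup_n e_n\R e_n$.

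Next I would establish the inclusion $\Phi^{-1}(D)^\alpha\R\,\Phi^{-1}(D)^{1-\alpha}\subset L^{\Phi,\8}_0(\R)$. Fix $y\in\R$ and set $z=\Phi^{-1}(D)^\alpha y\,\Phi^{-1}(D)^{1-\alpha}$. Lemma \ref{lem:density-orlicz} shows $z\in L^\Phi(\R)$ and $\|e_nze_n-z\|_\Phi\to0$, while $e_nze_n\in e_n\R e_n\subset S(\R)$ because $\tau(e_n)<\8$ forces $\tau(s(e_nze_n))<\8$. Applying the domination once more gives $\|e_nze_n-z\|_{\Phi,\8}\to0$, so $z\in\overline{S(\R)}^{\,\Phi,\8}=L^{\Phi,\8}_0(\R)$. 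Finally, the purely algebraic inclusion $\bigcup_n e_n\R e_n\subset\Phi^{-1}(D)^\alpha\R\,\Phi^{-1}(D)^{1-\alpha}$ is norm-free and carries over verbatim from the last display of the proof of Lemma \ref{lem:density-orlicz} (using $e_n\Phi^{-1}(D)^{-\alpha},\ \Phi^{-1}(D)^{\alpha-1}e_n\in\R$). Combining the three facts, $S(\R)\subset\overline{\bigcup_n e_n\R e_n}^{\,\Phi,\8}\subset\overline{\Phi^{-1}(D)^\alpha\R\,\Phi^{-1}(D)^{1-\alpha}}^{\,\Phi,\8}$, and taking closures yields $L^{\Phi,\8}_0(\R)=\overline{S(\R)}^{\,\Phi,\8}\subset\overline{\Phi^{-1}(D)^\alpha\R\,\Phi^{-1}(D)^{1-\alpha}}^{\,\Phi,\8}$; together with the second step this is exactly the asserted density.

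The one point that needs genuine care — and the reason the statement is phrased in terms of $L^{\Phi,\8}_0(\R)$ rather than all of $L^{\Phi,\8}(\R)$ — is that the weak quasi-norm is a supremum over $t$, so the dominated-convergence argument driving Lemma \ref{lem:density-orlicz} has no direct counterpart here, and $S(\R)$ is \emph{not} expected to be dense in the full space $L^{\Phi,\8}(\R)$. The proof sidesteps this precisely by working inside the $S(\R)$-closure, where every approximant is itself an element of $S(\R)\subset L^\Phi(\R)$ and the strong estimates of Lemma \ref{lem:density-orlicz} are available to control the weak norm through $\|\cdot\|_{\Phi,\8}\le\|\cdot\|_\Phi$. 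Verifying that this domination holds on all of $L^\Phi(\R)$ — the short computation indicated above — is therefore the crux on which the whole reduction rests.
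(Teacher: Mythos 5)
The paper itself gives no proof of this lemma --- it is stated as a quoted result and deferred to the reference \cite{BM1} --- so there is no in-paper argument to compare against line by line. Your proof is, however, correct as a self-contained derivation from the paper's own ingredients: the contractive embedding $\|x\|_{\Phi,\8}\le\|x\|_{\Phi}$ for $x\in L^{\Phi}(\R)$ (your short computation from monotonicity of $\mu_{\cdot}(x)$ is sound, and it is exactly the unweighted form of \eqref{eq:olicz-weakorlicz}), plus the three estimates already carried out in the proof of Lemma \ref{lem:density-orlicz}: the approximation of $S(\R)$ by $\bigcup_n e_n\R e_n$, the inclusion $\Phi^{-1}(D)^{\alpha}\R\Phi^{-1}(D)^{1-\alpha}\subset L^{\Phi}(\R)$ with $\|e_nze_n-z\|_{\Phi}\to0$, and the algebraic inclusion $\bigcup_n e_n\R e_n\subset\Phi^{-1}(D)^{\alpha}\R\Phi^{-1}(D)^{1-\alpha}$. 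Each difference you pass through the domination does lie in $L^{\Phi}(\R)$ (both terms are in $S(\R)$ or in $L^{\Phi}(\R)$ by Lemma \ref{lem:density-orlicz}), and the identification $e_n\R e_n\subset S(\R)$ is legitimate since the corner is a von Neumann algebra whose positive elements have support dominated by $e_n$ with $\tau(e_n)<\8$. Your closing observation --- that the argument only reaches $L^{\Phi,\8}_0(\R)$ because everything is funneled through the $S(\R)$-closure --- correctly identifies why the lemma is stated for $L^{\Phi,\8}_0(\R)$ and not for the full weak space. The only caveat is that I cannot certify that this is the same route taken in \cite{BM1}, but as a proof of the statement it is complete.
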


\begin{theorem}\label{thm:eqivalent-weakspace} Let $\omega$  be a faithful normal semifinite weight on $\R$ such that its the Radon-Nikodym derivative $D_{\omega}$ with respect to
satisfy $D_{\omega}\in L^1(\R)$.
 If $\alpha\in[0,1]$, then $L^{\Phi,\8}_{\alpha,\omega}(\R,\nu)$ and  $L^{\Phi,\8}_0(\R)$  are isometrically isomorphic.
\end{theorem}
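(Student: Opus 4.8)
The plan is to follow the proof of Theorem~\ref{thm:eqivalent-orliczspace} essentially line by line, substituting the weak Orlicz quasi-norm $\|\cdot\|_{\Phi,\8}$ for $\|\cdot\|_\Phi$ and the density statement Lemma~\ref{lem:density} for Lemma~\ref{lem:density-orlicz}. First I would define
\[
T_\alpha:\R^{\Phi,\8}_{\alpha,\omega}\longrightarrow \Phi^{-1}(D_{\omega})^{\alpha}\R\Phi^{-1}(D_{\omega})^{1-\alpha}
\]
to be the identity on representatives, that is,
\[
T_\alpha\big(\Phi^{-1}(D_{\omega})^{\alpha}x\Phi^{-1}(D_{\omega})^{1-\alpha}\big)=\Phi^{-1}(D_{\omega})^{\alpha}x\Phi^{-1}(D_{\omega})^{1-\alpha},\qquad x\in\R.
\]
By the very definition of the quasi-norm $\|\cdot\|_{\Phi,\8,\alpha,\omega}$, which assigns to such an element the value $\|\Phi^{-1}(D_{\omega})^{\alpha}x\Phi^{-1}(D_{\omega})^{1-\alpha}\|_{\Phi,\8}$, the map $T_\alpha$ is a linear isometry of $\R^{\Phi,\8}_{\alpha,\omega}$ onto the subspace $\Phi^{-1}(D_{\omega})^{\alpha}\R\Phi^{-1}(D_{\omega})^{1-\alpha}$ of $L^{\Phi,\8}(\R)$.

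Next I would identify the relevant dense subspaces on each side. On the domain side, $\R^{\Phi,\8}_{\alpha,\omega}$ is dense in its completion $L^{\Phi,\8}_{\alpha,\omega}(\R)$ by construction. On the target side, the hypotheses give exactly what Lemma~\ref{lem:density} requires: since $\omega$ is faithful, its Radon--Nikodym derivative $D_{\omega}$ is positive and nonsingular, and by assumption $D_{\omega}\in L^1(\R)$; hence Lemma~\ref{lem:density} applies with $D=D_{\omega}$ and shows that $\Phi^{-1}(D_{\omega})^{\alpha}\R\Phi^{-1}(D_{\omega})^{1-\alpha}$ is dense in $L^{\Phi,\8}_0(\R)$. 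I would emphasize that, in contrast with Theorem~\ref{thm:eqivalent-orliczspace}, the correct target here is the closure $L^{\Phi,\8}_0(\R)$ of $S(\R)$ and not the full space $L^{\Phi,\8}(\R)$, since $S(\R)$ is in general not dense in the weak Orlicz space.

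Finally, a linear isometry between dense subsets of two complete spaces extends uniquely and continuously to an isometric isomorphism of their closures (isometries are uniformly continuous, so the argument is valid in the quasi-Banach setting). Since $L^{\Phi,\8}_0(\R)$ is a closed subspace of the quasi-Banach space $L^{\Phi,\8}(\R)$, hence complete, I would conclude that $T_\alpha$ extends to an isometric isomorphism from $L^{\Phi,\8}_{\alpha,\omega}(\R)$ onto $L^{\Phi,\8}_0(\R)$, which is the assertion. The only genuine content is the density furnished by Lemma~\ref{lem:density}; everything else is the standard completion argument. Accordingly, I expect the single point requiring a word of care to be the observation that the image is dense in $L^{\Phi,\8}_0(\R)$ rather than in all of $L^{\Phi,\8}(\R)$, which is precisely why the isomorphism must be stated with $L^{\Phi,\8}_0(\R)$.
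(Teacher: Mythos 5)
Your proof is correct and is essentially the argument the paper intends: the paper omits an explicit proof of Theorem~\ref{thm:eqivalent-weakspace} (deferring to \cite{BM1}), but the later proof of Theorem~\ref{lem:isometric} reveals that the intended argument is exactly your tautological isometry $T_\alpha$ on $\R^{\Phi,\8}_{\alpha,\omega}$ combined with the density from Lemma~\ref{lem:density} and the standard extension-by-completeness, mirroring the proof of Theorem~\ref{thm:eqivalent-orliczspace}. Your remark that the target must be $L^{\Phi,\8}_0(\R)$ rather than all of $L^{\Phi,\8}(\R)$ is also the correct and necessary observation.
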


\subsection{Noncommutative martingale spaces}

In what follows, we always denote $(\R_{n})_{n\geq 1}$ an increasing sequence of von Neumann subalgebras of $\R$ whose union  $\cup_{n\geq 1} \R_{n}$ generates $\R$ in the $w^*$-topology. For every $n \ge 1,$ the restriction $\nu |_{\R_n}$ of $\nu$ to $\R_{n}$ remains semi-finite, still denoted by $\nu,$ and we assume that there exists a trace preserving conditional expectation $\E_n$ from $\R$ onto $\R_n.$ In this case, $(\R_{n})_{n\geq 1}$ is called a filtration of $\R$. Note that $\E_n$ extends to a contractive projection from $L^p(\R)$ onto $L^p(\R_n)$ for all $1\le p\le\8$.

We denote by $L_0[0, \gamma)$ the space of Lebesgue measurable ($\mu$-measurable) real-valued functions $f$ on $[0, \gamma)$ such that $\mu(\{\omega \in [0,\gamma) :\; |f(\omega)| > s\})<\8$ for some $s$, where $0<\gamma\le\8$. The decreasing rearrangement function $f^*: [0, \gamma) \mapsto [0, \gamma)$ for $f \in L_0 [0, \gamma)$ is defined by
$$
f^*(t) = \inf\{s > 0 : \; \mu ( \{\omega \in [0,\gamma) :\; |f (\omega)| > s\}) \le t\}
$$
for $t \ge 0$. If $f, g \in L_0[0, \gamma)$ such that $\int_{0}^{t} f^* (s) ds \le \int_{0}^{t}g^* (s) ds$ for all $t \ge 0$, $f$ is said to be {\it majorized} by $g$, denoted by $f \preccurlyeq g$.

 Let $E$ be a Banach subspace of $L_0 [0, \gamma),$ simply called a Banach function space on $[0,\gamma)$ in the sequel. $E$ is said to be  symmetric if, for $f\in E$ and $g\in L_0[0,\gamma)$ such that $g^* (t) \le f^*(t)$ for all $t \ge 0,$ one has $g\in E$ and $\|g\|_{E}\le\|f\|_{E}$. A  symmetric   Banach function space $E$ on $[0,\gamma)$  is said to have the Fatou property if for every net $(f_{i})_{i\in I}$ in $E$ satisfying
$0\le f_i\uparrow$   and $\sup_{i\in I}\|f_i\|_E < \infty$, the supremum $x =\sup_{i\in I} f_i$ exists in $E$ and $\|f_i\|_E\uparrow\|f\|_E$. We say that $E$ has order continuous norm if for every net $(f_{i})_{i\in I}$ in $E$ such that $f_i\downarrow0$ we have $\|f_i\|_E\downarrow0$.
The K\"{o}the dual of $E$ is given by
$$
E^{\times} =\bigg \{f \in  L_0[0,\gamma):\; \sup_{\|g\|_E \le 1} \int_0^\gamma |f(t) g(t)| dt < \infty \bigg \}
$$
with the norm $\|f\|_{E^\times} = \sup_{\|g\|_E \le 1} \int_0^\gamma |f(t)g(t)| dt$. $E^{\times}$ is a symmetric Banach function space on $[0,\gamma),$ has the Fatou property.
A symmetric Banach space $E$ on $[0,\gamma)$ has the Fatou property
if and only if $ E = E^{\times\times}$ isometrically. It has order continuous norm if and only if
it is separable, which is also equivalent to the statement $E^{*} = E^{\times}$

Let $E$ be a symmetric   Banach function space on $[0,\gamma)$. Recall that  $E$ is called an interpolation space in $(L^p[0,\gamma),L^q[0,\gamma))\;(1\le p\le q\le8)$, if
\be
L^p[0,\gamma) \cap L^q[0,\gamma)\subset E \subset L^p[0,\gamma) + L^q[0,\gamma),
\ee
and  $T$ is a bounded linear operator from $L^p[0,\gamma) + L^q[0,\gamma)$ to $L^p[0,\gamma) + L^q[0,\gamma)$ such that
\be
T [L^p[0,\gamma)] \subset L^p[0,\gamma)\qquad\mbox{and}\qquad T [L^q[0,\gamma)] \subset L^q[0,\gamma),
\ee
we have $T [E] \subset E$ and
\be
\| T\|_{E\rightarrow E} \le C (\|T\|_{L^p[0,\gamma)\rightarrow L^p[0,\gamma)} + \|T\|_{L^q[0,\gamma)\rightarrow L^q[0,\gamma)}),
\ee
 for some $C>0$. In this case, we write $E \in \mathrm{Int}(L^p,L^q)$ (cf. \cite{BS,BL,KPS}).

 We define
\be
E(\R)=\{x  \in L_0(\R) :\;\mu_{.}(x)\in
E\}\quad(\nu(1)=\gamma);
\ee
\be
\|x\|_{E(\R)}=\|\mu_{.}(x)\|_{E}, \quad \quad x
\in E(\R).
\ee
Then $(E(\R),\|.\|_{E(\R)})$ is a  Banach
space (  \cite{DDP1,X}).

If $E$ is a symmetric Banach function space on $[0,\gamma)$ that belongs to $\mathrm{Int}(L^1,L^\8)$, then for every $n \ge 1$, $\E_n$ is bounded from $E(\R)$ onto $E(\R_n)$.

A noncommutative martingale with respect to $(\R_{n})_{n\geq 1}$ is a sequence $x=(x_{n})_{n\geq 1}$ in $L_1 (\R) + \R$ such that $\E_n(x_{n+1})= x_n$ for any $n \ge 1$. If in addition, all $x_n$'s are in $E(\R)$ then $x$ is called an $E(\R)$-martingale. In this case, if $\|x\|_E=\sup_{n\geq 1}\|x_{n}\|_{E(\R)} <\infty$, then $x$ is said to be a bounded $E(\R)$-martingale.

 For two nonnegative  quantities $A$ and $B,$ by $A \lesssim B$ we mean that there exists an absolute constant $C>0$ such that $A \leq C B,$ and by $A \approx B$ that $A \lesssim B$ and $B \lesssim A$.

  For $x_\8\in E(\R)$, we denote $x_n = \E_n(x_\8)\;(n \ge 1)$. Then $x=(x_{n})_{n\geq 1}$ is  a bounded $E(\R)$-martingale. We
observe that conversely, if $E \in \mathrm{Int}(L^p, L^q)$ for $1 < p \le q <\8$ and satisfy the Fatou property, then any bounded $E(\R)$-martingale $x = (x_n)_{n\ge1}$ is of the form $(\E_n(x_\8))_{n\ge1}$ where $x_\8\in E(\R)$ satisfying
$\|x\|_E\thickapprox \|x_\8\|_{E(\R)}$, with equality if $E$ is an exact interpolation space (see \cite{RWX}).

\section{Haagerup noncommutative Orlicz spaces}

We keep all notations introduced in the last section. Our references  for modular theory  are \cite{PT,T}, for the Haagerup noncommutative
$L^p$-spaces are \cite{H1,Te}.
 In this paper, we always assume that $\mathcal{M}$  is  a $\sigma$-finite von Neumann algebra on a complex
Hilbert space $\mathcal{H}$, equipped with a distinguished
 normal  faithful state $\varphi$.
 The one parameter modular automorphism
group of $\mathcal{M}$ associated with $\varphi$ denoted by  $\{\sigma_{t}^\varphi\}_{t\in\real}$. Let
$$
\mathcal{N}=\mathcal{M}\rtimes_{\sigma^\varphi}\real
$$
be the crossed product of $\mathcal{M}$ by
$\{\sigma_{t}^\varphi\}_{t\in\mathbb{R}}$. Then
$\mathcal{N}$ is the semi-finite von Newmann algebra acting on the Hilbert space
$L^{2}(\mathbb{R},\mathcal{H})$, generated by
$$
\left\{\pi(x):\;x\in\mathcal{M}\right\}\cup\left\{\lambda(s):\;s\in \mathbb{R}\right\},
$$
where $\pi(x)$ and $\lambda(s)$ defined as following: for all $\xi\in
L^{2}(\mathbb{R},\mathcal{H})$ and for all $t\in\mathbb{R}$,
$$
(\pi(x)\xi)(t)=\sigma_{-t}^\varphi(x)\xi(t),\qquad (\lambda(s)\xi)(t)=\xi(t-s).
$$
The operators $\pi(x)$ and $\lambda(t)$ satisfy
\be
\lambda(t)\pi(x)\lambda(t)^*=\pi(\sigma_{t}^\varphi(x)), \qquad\forall t\in \real, \;\forall x\in\M.
\ee
Hence,
$$
\sigma_{t}^\varphi(x)=\lambda(t)x\lambda^{\ast}(t), \qquad x\in\mathcal{M},\; t\in\mathbb{R}.
$$
We identify $\mathcal{M}$ with its image $\pi(\mathcal{M})$ in $\mathcal{N}$.

Let $\{\hat{\sigma}_{t}\}_{t\in\mathbb{R}}$ be the dual action of $\mathbb{R}$
on $\mathcal{N}$. Then it is a one parameter automorphism group of $\mathbb{R}$ on
$\mathcal{N},$ implemented by the unitary representation
$\{W_{t}\}_{t\in\mathbb{R}}$ of $\mathbb{R}$ on $L^{2}(\mathbb{R},\mathcal{H}:$
\beq\label{dual action}
\hat{\sigma}_{t}(x)=W(t)xW^{\ast}(t),\qquad \forall x\in\mathcal{N},\;\forall t\in\mathbb{R},
\eeq
where the  $W(t)$ is defined by
$$
W(t)(\xi)(s)=e^{-its}\xi(s),\qquad \forall\xi\in L^{2}(\mathbb{R},\mathcal{H}),\;\forall
s,\;t\in\mathbb{R}.
$$
Recall that the dual action $\hat{\sigma}_{t}$ is uniquely determined by the
following conditions: for any $x\in\M$ and $s\in\mathbb{R}$,
$$
\hat{\sigma}_{t}(x)=x \qquad\mbox{and}\qquad
\hat{\sigma}_{t}(\lambda(s))=e^{-ist}\lambda(s),\quad \forall t\in\mathbb{R}.
$$
Therefore,
$$
\mathcal{M}=\{x\in\mathcal{N}: \; \hat{\sigma}_{t}(x)=x,\;\forall
t\in\mathbb{R}\}.
$$
On the other hand, there exists the unique
 normal semi-finite faithful trace $\tau$ on $\mathcal{N}$ satisfying
$$
\tau\circ\hat{\sigma}_{t}=e^{-t}\tau,\qquad \forall t\in\mathbb{R}.
$$
Since any  normal semi-finite faithful weight $\psi$ on
$\mathcal{M}$ induces a dual normal semi-finite weight $\hat{\psi}$ on
$\mathcal{N}$. Then $\hat{\psi}$ admits a Radon-Nikodym derivative with
respect to $\tau$ (cf. \cite{PT}). Hence, the dual weight
$\hat{\varphi}$ of our distinguished state $\varphi$ has the
Radon-Nikodym derivative $D$ with respect to $\tau$. Then
\beq\label{eq:D-tau}
\hat{\varphi}(x)=\tau(Dx),\qquad x\in \mathcal{N}^{+}.
\eeq
Recall that $D$ is an invertible positive selfadjoint operator on
$L^{2}(\mathbb{R},\mathcal{H}),$ affiliated with $\mathcal{N}$, and that the
regular representation $\lambda(t)$ above is given by
\beq\label{eq:lambda}
\lambda(t)=D^{it},\qquad \forall t\in \mathbb{R}.
\eeq
Recall that $L_{0}(\mathcal{N},\tau)$
denotes the topological $\ast$-algebra of all operators on
$L^{2}(\mathbb{R},\mathcal{H})$ measurable with respect to
$(\mathcal{N},\tau).$ Then the Haagerup noncommutative $L^{p}$-spaces,
$0<p\leq\infty$, are defined by
$$
L^{p}(\mathcal{M},\varphi)=\{x\in L_{0}(\mathcal{N},\tau):
\;\hat{\sigma}_{t}(x)=e^{-\frac{t}{p}}x,\;\forall t\in\mathbb{R}\}.
$$
The $L^{p}(\mathcal{M},\varphi)$ are closed selfadjoint linear subspaces of
$L_{0}(\mathcal{N},\tau)$.  Let $x\in L^{p}(\mathcal{M},\varphi)$ and $x=u|x|$
be its polar decomposition. Then
 $$
 u\in\mathcal{M}\qquad\mbox{and}\qquad|x|\in L^{p}(\mathcal{M},\varphi).
 $$
Recall that
$$
L^{\infty}(\mathcal{M},\varphi)=\mathcal{M}.
$$
As mentioned previously, for any $\psi\in\mathcal{M}_{\ast}^{+}$, its dual
weight $\hat{\psi}$ has a Radon-Nikodym derivative $D_{\psi}$ with respect to
$\tau$ such that
$$
\hat{\psi}(x)=\tau(h_{\psi}x),\qquad x\in\mathcal{N}^{+}.
$$
Then $h_{\psi}\in L_{0}(\mathcal{N},\tau)$ and $\hat{\sigma}_{t}(h_{\psi})=e^{-t}h_{\psi}$ for all $t\in\mathbb{R}$.
Thus $h_{\psi}\in L^{1}(\mathcal{M},\varphi)^{+}$.
Hence, this correspondence
between $\mathcal{M}_{\ast}^{+}$ and $ L^{1}(\mathcal{M},\varphi)^{+}$ extends
to a bijection between $\mathcal{M}_{\ast}$ and $ L^{1}(\mathcal{M},\varphi).$
So that for $\psi\in\mathcal{M}_{\ast}, $ if  $\psi=u|\psi|$ is its polar
decomposition, the corresponding  $h_{\psi} \in L^{1}(\mathcal{N},\tau)$ admits
the polar decomposition
\beq\label{eq:polardecomposition-haagerupL1}
h_{\psi}=u|h_{\psi}|=uh_{|\psi|}.
\eeq
Then the norm on
$L^{1}(\mathcal{M},\varphi),$ is defined as
\beq\label{eq:haagerup-L1}
\parallel h_{\psi}\parallel_{1}=|\psi|(1)=\parallel
\psi\parallel_{\ast},\qquad \forall\psi\in\mathcal{M}_{\ast}.
\eeq
In this way,
$$
L^{1}(\mathcal{M},\varphi)=\mathcal{M}_{\ast}\;\mbox{isometrically}.
$$
For
$0<p<\infty$, set
$$
\parallel x\parallel_{p}=\| |x|^{p}\|_{1}^{\frac{1}{p}},\qquad \forall x\in
L^{p}(\mathcal{M},\varphi).
$$
Since $x\in L^{p}(\mathcal{M},\varphi)$ if and only if $|x|\in
L^{p}(\mathcal{M},\varphi)$, it follows that for $1\leq p<\infty$ (resp. $0<p<1$),
$$
(L^{p}(\mathcal{M},\varphi),\;\|\cdot\|_p)
$$
is a Banach space (resp. a quasi-Banach space). To describe duality of Haagerup noncommutative
$L^{p}$-space, we use the distinguished linear function on $L^{1}(\M,\varphi)$
which is defined by
\beq\label{eq:tr}
\tr(x)=\psi_{x}(1),\qquad \forall x\in L^{1}(\mathcal{M},\varphi),
\eeq
where $\psi_{x}\in\mathcal{M}_{\ast}$ is the unique normal functional
associated with $x$ by the above identification between $\mathcal{M}_{\ast}$
and $L^{1}(\M,\varphi)$. Then $tr$ is a continuous functional on
$L^{1}(\M,\varphi)$ satisfying
\beq\label{eq:trace-norm}
|\tr(x)|\leq \tr(|x|)=\| x\|_{1},\qquad \forall x\in L^{1}(\M,\varphi).
\eeq
Recall that
\beq\label{eq:trace-positive-norm}
\| x\|_{1}=\tr(x)=\tau(e_{(1,\8)}(x)),\qquad \forall x\in L^{1}(\M,\varphi)^+.
\eeq
Let
$$
1\leq p<\infty,\qquad\frac{1}{p}+\frac{1}{q}=1.
$$
Then $tr$ have the following  property:
\beq\label{eq:tr-Lp-Lq}
\tr(xy)=\tr(yx),\qquad \forall x\in L^{p}(\M,\varphi),\qquad\forall y\in
L^{q}(\M,\varphi).
\eeq
The bilinear form $(x,y)\mapsto
\tr(xy)$ defines a duality bracket between $L^{p}(\M,\varphi)$ and
$L^{q}(\M,\varphi)$, for which
\be
(L^{p}(\M,\varphi))'=L^{q}(\M,\varphi)\qquad  \mbox{isometrically for all}\;1\leq
p<\infty.
\ee

Moreover, our distinguished state $\varphi$ can be recovered from $tr$
(recalling that $D$ is the Radon-Nikodym derivative of $\hat{\varphi}$ with
respect to $\tau$),  that is,
\beq\label{eq:D-tr}
\varphi(x)=\tr(Dx),\qquad \forall x\in \mathcal{M}.
\eeq
For $0<p<\infty,\; 0\le \eta\le1$, we have that
\beq\label{eq:haagerup-dense-Lp}
L^{p}(\M,\varphi)=\mbox{the closure of}\; D^{\frac{1-\eta}{p}}\M D^{\frac{\eta}{p}}\;\mbox{in}\; L^{p}(\M,\varphi).
\eeq
For $0 < p <\8$ and any $x$ in $L^p(\M,\varphi)$, we have
\beq\label{eq:generalized singular-haagerup}
\mu_t(x)=t^{-\frac{1}{p}}\|x\|_p,\qquad t>0,
\eeq
where $\mu_t(\cdot)$  is relative to $(\N,\tau)$ (see \cite[Lemma 4.8]{FK}).

From \eqref{eq:generalized singular-haagerup} and \cite[Theorem 3.1]{L} it follows the following result.

\begin{theorem}\label{thm:Lp-convergence-measuere}
Let $0 < p \le\8$. Then $\mu_1(x)=\|x\|_p$ for all $x\in L^{p}(\M,\varphi)$.
In addition the uniform topology on $L^{p}(\M,\varphi)$ is homeomorphic to the topology of convergence in measure that $L^{p}(\M)$ inherits from $L_0(\N)$.
\end{theorem}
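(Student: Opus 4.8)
The plan is to read off both assertions from the scaling identity \eqref{eq:generalized singular-haagerup}, handling the range $0<p<\8$ and the endpoint $p=\8$ separately. First I would establish the identity $\mu_1(x)=\|x\|_p$. For $0<p<\8$ this is simply \eqref{eq:generalized singular-haagerup} evaluated at $t=1$, since $1^{-1/p}=1$. For $p=\8$ one has $L^\8(\M,\varphi)=\M$, and here \eqref{eq:generalized singular-haagerup} is unavailable, so I would compute $\mu_t$ relative to $(\N,\tau)$ by hand. The key point is that every nonzero projection $e\in\M$ is fixed by the dual action, $\hat{\sigma}_t(e)=e$, so the relation $\tau\circ\hat{\sigma}_t=e^{-t}\tau$ forces $\tau(e)=e^{-t}\tau(e)$ for all $t$, whence $\tau(e)\in\{0,\8\}$; faithfulness of $\tau$ then gives $\tau(e)=\8$ whenever $e\ne0$. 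Applying this to the spectral projections $e_{(s,\8)}(|x|)\in\M$ shows $\lambda_s(x)=\8$ for $s<\|x\|$ and $\lambda_s(x)=0$ for $s\ge\|x\|$, so that $\mu_t(x)=\|x\|=\|x\|_\8$ for every $t>0$; in particular $\mu_1(x)=\|x\|_\8$.

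For the topological assertion I would exploit that $L^p(\M,\varphi)$ is a linear subspace of $L_0(\N,\tau)$, so that for $x,y\in L^p(\M,\varphi)$ the difference again lies in $L^p(\M,\varphi)$ and the scaling identity yields $\mu_t(x-y)=t^{-1/p}\|x-y\|_p$ for all $t>0$ (with $\mu_t(x-y)=\|x-y\|_\8$ when $p=\8$, by the previous paragraph). By \eqref{eq:generalized singular} the topology of convergence in measure admits, as a neighbourhood basis at $0$, the sets $\{y:\mu_\delta(y)\le\e\}$, which are cofinal with the $V(\e,\delta)$. Intersecting with $L^p(\M,\varphi)$ and substituting the scaling identity turns the basic measure neighbourhood into the norm ball $\{y\in L^p(\M,\varphi):\|y\|_p\le\delta^{1/p}\e\}$; conversely every norm ball is recovered by fixing $\delta=1$ and letting $\e$ vary. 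Hence the two neighbourhood bases at $0$ coincide, and since both are translation-invariant topologies, the identity map is the desired homeomorphism.

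The one genuine subtlety, and the step I would treat most carefully, is the endpoint $p=\8$, where \eqref{eq:generalized singular-haagerup} does not apply and one must instead invoke the trace-scaling property $\tau\circ\hat{\sigma}_t=e^{-t}\tau$ to evaluate $\mu_t$ on the copy of $\M$ inside $\N$. Everything else is formal: the essential phenomenon is that on $L^p(\M,\varphi)$ the entire family $(\mu_t)_{t>0}$ reduces to a single fixed scalar multiple of $\|\cdot\|_p$, so that controlling the single parameter value $\delta=1$ already dominates the whole measure topology.
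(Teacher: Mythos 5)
Your proof is correct and follows the same essential mechanism as the paper, which derives the theorem from the scaling identity \eqref{eq:generalized singular-haagerup} together with a citation to \cite[Theorem 3.1]{L}. You simply supply directly the details the paper delegates to that citation — in particular the correct observation that every nonzero projection in $\M\subset\N$ has infinite trace (via $\tau\circ\hat{\sigma}_t=e^{-t}\tau$ and the fixed-point characterization of $\M$), which settles the endpoint $p=\8$, and the identification of the measure-topology neighbourhoods of $0$ in $L^p(\M,\varphi)$ with norm balls.
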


Since $D\in L^1(\N)$, we have that $\Phi^{-1}(D)^\alpha\M\Phi^{-1}(D)^{1-\alpha} \subset L^{\Phi,\8}(\N)$.
Indeed, by [23, Lemma 2.5 (iv), (vi)and (vii)], for any $x\in\M$,
$$
\begin{array}{rl}
    \mu_t (\Phi^{-1}(D)^\alpha x\Phi^{-1}(D)^{1-\alpha} ) & \leq \mu_{\frac{t}{2}} (\Phi^{-1}(D)^\alpha  )\mu_{\frac{t}{2}} ( x\Phi^{-1}(D)^{1-\alpha} ) \\
    & \leq \mu_{\frac{t}{2}} (\Phi^{-1}(D)^\alpha  )\|x\|\mu_{\frac{t}{2}} ( \Phi^{-1}(D)^{1-\alpha} )\\
    & \leq \|x\|\Phi^{-1}(\mu_{\frac{t}{2}} (D))^\alpha \Phi^{-1}( \mu_{\frac{t}{2}} (D) )^{1-\alpha}\\
    &=\|x\|\Phi^{-1}(\mu_{\frac{t}{2}} (D))
  \end{array}
$$
Using  $\Phi\in\bigtriangleup_{2}$, we obtain that
$$
\begin{array}{rl}
\sup_{t > 0}t \Phi(\mu_t (\Phi^{-1}(D)^\alpha x\Phi^{-1}(D)^{1-\alpha} ))&\le \sup_{t > 0}t \Phi(\|x\|\Phi^{-1}(\mu_{\frac{t}{2}} (D)))\\
&\le C_{\|x\|}\sup_{t > 0}t \mu_{\frac{t}{2}} (D))\\
&\le 2C_{\|x\|}\|D\|_1<\8.
\end{array}
$$
Hence, $\Phi^{-1}(D)^\alpha x\Phi^{-1}(D)^{1-\alpha} \in L^{\Phi,\8}(\N)$.

For $x\in\M$, we define
\be
\|\Phi^{-1}(D)^{\alpha}x\Phi^{-1}(D)^{1-\alpha}\|_{\Phi,\alpha} =\|\Phi^{-1}(D)^{\alpha}x\Phi^{-1}(D)^{1-\alpha}\|_{\Phi,\8}.
\ee

\begin{definition}  The completion of $(\Phi^{-1}(D)^{\alpha}\M\Phi^{-1}(D)^{1-\alpha}, \|\cdot\|_{\Phi,\alpha})$  is called the Haagerup noncommutative Orlicz space associted with $\Phi,\M$ and $\varphi$, we denote it  by $L^{\Phi,\alpha}(\M,\varphi)$.
\end{definition}

If $\Phi(t)=t^p$ with $0< p < \8$,  then from \eqref{eq:DistrSingularEquiv} and \eqref{eq:generalized singular-haagerup} it follows that
\be
L^{\Phi,\alpha}(\M,\varphi)=L^{p}(\M,\varphi)
\ee
By Lemma \ref{lem:density}, we get
\beq\label{eq:haagerub-subspace-weakorlicz}
L^{\Phi,\alpha}(\M,\varphi)\subset L^{\Phi,\8}_0(\N)
\eeq

\begin{proposition}\label{pro:haagerup-weak} Let $x\in L^{\Phi,\alpha}(\M,\varphi)$.
If  $\Phi(|x|)\in L^1(\M,\varphi)$, then
$$
\|x\|_{\Phi,\alpha} =\inf\{\lambda>0:\|\Phi\big(|\frac{x}{\lambda}|\big)\|_1\le1\}.
$$
\end{proposition}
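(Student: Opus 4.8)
The plan is to pull the entire computation back to the weak Orlicz norm on the semifinite algebra $(\N,\tau)$ and then to unwind its definition using the behaviour of generalized singular numbers under the functional calculus.

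First I would record the isometric identification $\|x\|_{\Phi,\alpha}=\|x\|_{\Phi,\8}$. By definition $L^{\Phi,\alpha}(\M,\varphi)$ is the completion of $(\Phi^{-1}(D)^{\alpha}\M\Phi^{-1}(D)^{1-\alpha},\|\cdot\|_{\Phi,\8})$, and \eqref{eq:haagerub-subspace-weakorlicz} realises this completion isometrically inside $L^{\Phi,\8}_0(\N)\subset L^{\Phi,\8}(\N)$. Hence for $x\in L^{\Phi,\alpha}(\M,\varphi)$ the number $\|x\|_{\Phi,\alpha}$ equals the weak Orlicz norm of $x$ viewed in $L^{\Phi,\8}(\N)$, with $\mu_t(x)$ computed relative to $(\N,\tau)$, and I may apply the defining formula \eqref{eq:DistrSingularEquiv},
\[
\|x\|_{\Phi,\8}=\inf\{\lambda>0:\;t\,\Phi(\mu_t(x)/\lambda)\le1,\ \forall\,t>0\}.
\]

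Next I would rewrite $t\,\Phi(\mu_t(x)/\lambda)$ through $\Phi(|x/\lambda|)$. Since $\mu_t(x)/\lambda=\mu_t(x/\lambda)=\mu_t(|x|/\lambda)$ and $\Phi$ is continuous, nondecreasing and vanishes at $0$, \cite[Corollary 2.8]{FK} yields $\Phi(\mu_t(|x|/\lambda))=\mu_t(\Phi(|x/\lambda|))$, whence $t\,\Phi(\mu_t(x)/\lambda)=t\,\mu_t(\Phi(|x/\lambda|))$ for all $t>0$. Therefore the condition defining $\|x\|_{\Phi,\8}$ is literally $\sup_{t>0}t\,\mu_t(\Phi(|x/\lambda|))\le1$, and
\[
\|x\|_{\Phi,\8}=\inf\{\lambda>0:\;\sup_{t>0}t\,\mu_t(\Phi(|x/\lambda|))\le1\}.
\]
No hypothesis beyond $x\in L^{\Phi,\8}(\N)$ is needed so far; the standing assumption $\Phi\in\triangle_2\cap\Delta_{\frac{1}{2}}$ is what guarantees that $\sup_{t>0}t\,\mu_t(\Phi(|x/\lambda|))$ is finite for each $\lambda>0$ and decreases to $0$ as $\lambda\to\8$, so that the infimum is taken over a nonempty set.

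It remains to identify $\sup_{t>0}t\,\mu_t(\Phi(|x/\lambda|))$ with the Haagerup $L^1$-norm $\|\Phi(|x/\lambda|)\|_1$, and here the hypothesis $\Phi(|x|)\in L^1(\M,\varphi)$ is decisive. By \eqref{eq:generalized singular-haagerup} (equivalently Theorem \ref{thm:Lp-convergence-measuere} with $p=1$), every $z\in L^1(\M,\varphi)^{+}$ satisfies $\mu_t(z)=t^{-1}\|z\|_1$, so that $t\,\mu_t(z)$ is constant in $t$ and the $L^1$-norm is recovered as $\|z\|_1=\sup_{t>0}t\,\mu_t(z)$. Reading $\sup_{t>0}t\,\mu_t(\Phi(|x/\lambda|))$ as $\|\Phi(|x/\lambda|)\|_1$ converts the constraint into $\|\Phi(|x/\lambda|)\|_1\le1$, and taking the infimum over $\lambda$ gives the asserted formula. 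I expect the real difficulty to sit exactly at this identification: the hypothesis forces the rigid profile $\mu_t(x)=\Phi^{-1}(\|\Phi(|x|)\|_1/t)$, and one must justify that the weak-type supremum $\sup_{t>0}t\,\mu_t(\Phi(|x/\lambda|))$ is legitimately the Haagerup $L^1$-norm of $\Phi(|x/\lambda|)$ — i.e. that on Haagerup $L^1$ the norm is the $L^{1,\8}$-quantity $\sup_{t}t\mu_t$ — rather than some other functional such as $\mu_1(\Phi(|x/\lambda|))$ or $\tau(e_{(1,\8)}(\Phi(|x/\lambda|)))$, which differ from it off $L^1(\M,\varphi)$. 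The remaining ingredients (homogeneity of $\mu$, passing from the pointwise-in-$t$ inequality to its supremum, and finiteness via $\triangle_2\cap\Delta_{\frac{1}{2}}$) are routine.
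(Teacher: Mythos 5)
Your argument is correct and follows essentially the same route as the paper's own proof: both reduce $\|x\|_{\Phi,\alpha}$ to the weak Orlicz norm \eqref{eq:DistrSingularEquiv}, commute $\Phi$ past the generalized singular numbers via the functional-calculus identity from \cite{FK} (the paper cites Lemma 2.5(iv) where you cite Corollary 2.8, but the content is the same), and then use the Haagerup rigidity $\mu_t(z)=t^{-1}\|z\|_1$ from \eqref{eq:generalized singular-haagerup} to convert $\sup_{t>0}t\,\mu_t(\Phi(|x/\lambda|))\le1$ into $\|\Phi(|x/\lambda|)\|_1\le1$. The only difference is presentational — you phrase it as a single chain of equalities of constraint sets while the paper proves the two inequalities separately — and, like the paper, you implicitly take $\Phi(|x/\lambda|)\in L^1(\M,\varphi)$ for the competing $\lambda$'s, which is built into the meaning of the infimum.
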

\begin{proof}  Since $\Phi\big(|x|\big)\in L^1(\M,\varphi)$, by \eqref{eq:generalized singular-haagerup}, we have that for any $t>0$,
\be
\mu_t(\Phi\big(|x|\big))=\frac{1}{t}\|\Phi\big(|x|\big)\|_1.
\ee
Let $\lambda>0$ such that $\|\Phi\big(|\frac{x}{\lambda}|\big)\|_1\le1$. Then
\be
\mu_t(\Phi\big(|\frac{x}{\lambda}|\big))\le \frac{1}{t}.
\ee
Using \cite[Lemma 2.5 (iv)]{FK}, we get that
\be
\sup_{t>0}\frac{1}{\Phi^{-1}(\frac{1}{t})}\mu_t(|x|)\le\lambda.
\ee
By \eqref{eq:DistrSingularEquiv}, $\|x\|_{\Phi,\8}\le\lambda$, and so
\be
\|x\|_{\Phi,\alpha}\le\inf\{\lambda>0:\|\Phi\big(|\frac{x}{\lambda}|\big)\|_1\le1\}.
\ee

Conversely, let $\lambda=\|x\|_{\Phi,\8}$. Applying \eqref{eq:DistrSingularEquiv} and \cite[Lemma 2.5 (iv)]{FK}, we deduce that
\be
t\mu_t(\Phi\big(|\frac{x}{\lambda}|\big))\le1,\qquad \forall t>0.
\ee
By \eqref{eq:generalized singular-haagerup},
$\|\Phi\big(|\frac{x}{\lambda}|\big)\|_1\le1$.
Hence,
\be
\inf\{\lambda>0:\|\Phi\big(| \frac{x}{\lambda}|\big)\|_1\le1\}\le\|x\|_{\Phi,\alpha}.
\ee
\end{proof}

\begin{remark}\label{rk:equivalent} Since $(\Phi^{-1}(D)^{\alpha}\M\Phi^{-1}(D)^{1-\alpha})^*=\Phi^{-1}(D)^{1-\alpha}\M\Phi^{-1}(D)^{\alpha}$, it follows that $L^{\Phi,\alpha}(\M,\varphi)=L^{\Phi,1-\alpha}(\M,\varphi)$.
\end{remark}

Next, in the finite von Neumann
algebras case, we will show in detail the relation
between  Haagerup noncommutative Orlicz spaces and the usual noncommutative Orlicz spaces. In the sequel, $\M$ always denotes a finite von Neumann algebra with a normal faithful trace $\nu$. Let $\phi$ be a normal faithful state
 on $\M$.  Then  there is a unique operator $g_{\phi}\in L_0(\M,\nu)^+$ such that $\phi(\cdot)=\nu(\cdot g_{\phi})$ (see \cite{PT}).
 Recall the automorphisms $\{\sigma_{t}^\nu\}_{t\in\real}$, $\{\sigma_{t}^\phi\}_{t\in\real}$
on $\M$, their dual
actions $\{\hat{\sigma}_{t}^\nu\}_{t\in\real}$  on $\mathcal{M}\rtimes_{\sigma^\nu}\real$ and $\{\hat{\sigma}_{t}^\phi\}_{t\in\real}$  on $\mathcal{M}\rtimes_{\sigma^\phi}\real$, and the
 normal semi-finite faithful traces $\tau_\nu$ on $\mathcal{M}\rtimes_{\sigma^\nu}\real$ and  $\tau_\phi$ on $\mathcal{M}\rtimes_{\sigma^\phi}\real$.
Let $\hat{\nu}$ be the dual weight of $\nu$  on
$\mathcal{M}\rtimes_{\sigma^\nu}\real$ and $\hat{\phi}$ be the dual weight of $\phi$  on
$\mathcal{M}\rtimes_{\sigma^\phi}\real$, and let  $D_{\nu}$ be the Radon-Nikodym derivative of $\hat{\nu}$ with
respect to $\tau_\nu$ and  $D_{\phi}$ be the Radon-Nikodym derivative of $\hat{\phi}$ with
respect to $\tau_\phi$.

\begin{theorem}\label{thm:haagerup-finite-vn} Let $0\le\alpha\le1$.  Then
\be
L^{\Phi,\alpha}(\M,\nu)=L^{\Phi}(\M)\otimes\Phi^{-1}(e^\cdot)=\bigg \{x\otimes\Phi^{-1}(e^\cdot):\;x\in L^{\Phi}(\M)\bigg \}
\ee
and
\be
\|x\otimes \Phi^{-1}(e^\cdot)\|_{\Phi,\alpha}=\|x\|_\Phi,\qquad \forall x\in L^{\Phi}(\M).
\ee
\end{theorem}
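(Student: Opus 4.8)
The plan is to use the fact that the modular group $\sigma^\nu$ of a trace is trivial, which collapses the crossed product to a tensor product, and then to compute the weak Orlicz gauge of the resulting generators explicitly. First I would record the concrete form of all objects attached to $(\M,\nu)$. Since $\sigma_t^\nu=\mathrm{id}$ for every $t$, the action defining $\N=\M\rtimes_{\sigma^\nu}\real$ is trivial, so $\N=\M\,\bar{\otimes}\,\mathrm{VN}(\real)$, and under the Fourier/Plancherel identification $\mathrm{VN}(\real)\cong L^{\8}(\real)$ one has $\pi(x)=x\ot 1$, $\lambda(t)=1\ot e^{it\cdot}$, hence $D=D_\nu=1\ot e^{\cdot}$ by \eqref{eq:lambda}. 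The scaling relation $\tau_\nu\circ\hat\sigma_t=e^{-t}\tau_\nu$ together with the normalization coming from the dual weight forces $\tau_\nu=\nu\ot(\,\cdot\,e^{-s}\,ds)$; I would pin down this normalization on $\M=\com$, where $e_{(1,\8)}(e^{\cdot})=\mathbf 1_{(0,\8)}$ has $\tau_\nu$-value $\int_0^\8 e^{-s}\,ds=1=\|e^\cdot\|_1$, consistent with \eqref{eq:trace-positive-norm}.

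With these identifications $\Phi^{-1}(D)=1\ot\Phi^{-1}(e^{\cdot})$ commutes with $\pi(x)=x\ot 1$, so for every $x\in\M$ and every $\alpha\in[0,1]$ we get $\Phi^{-1}(D)^\alpha\pi(x)\Phi^{-1}(D)^{1-\alpha}=x\ot\Phi^{-1}(e^{\cdot})$, manifestly independent of $\alpha$. This yields the set equality $\Phi^{-1}(D)^\alpha\M\Phi^{-1}(D)^{1-\alpha}=\{x\ot\Phi^{-1}(e^{\cdot}):x\in\M\}$ at the level of the generating subspace, and already accounts for the $\alpha$-independence recorded in Remark~\ref{rk:equivalent}.

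The heart of the argument is the norm identity $\|x\ot\Phi^{-1}(e^{\cdot})\|_{\Phi,\8}=\|x\|_\Phi$, the weak-Orlicz gauge on $(\N,\tau_\nu)$ on the left and the Luxemburg gauge on $(\M,\nu)$ on the right. Writing $b=|x\ot\Phi^{-1}(e^{\cdot})|=|x|\ot\Phi^{-1}(e^{\cdot})$ and using spectral calculus fibrewise, its $\tau_\nu$-distribution is $\tau_\nu(e_{(\epsilon,\8)}(b))=\int_\real\nu\big(e_{(\epsilon/\Phi^{-1}(e^{s}),\8)}(|x|)\big)e^{-s}\,ds$; the substitution $u=\Phi^{-1}(e^{s})$ converts this into $\int_0^\8 d(\epsilon/u)\,\Phi'(u)\Phi(u)^{-2}\,du$, where $d(\lambda)=\nu(e_{(\lambda,\8)}(|x|))$. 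Combining the reformulation $\|b\|_{\Phi,\8}=\inf\{c>0:\sup_{t>0}t\,\Phi(\mu_t(b)/c)\le1\}$ with the generalized-inverse identity $\sup_{t>0}t\,\Phi(\mu_t(b)/c)=\sup_{\epsilon>0}\Phi(\epsilon/c)\,\tau_\nu(e_{(\epsilon,\8)}(b))$, and comparing with the layer-cake expression $\nu(\Phi(|x|/c))=c^{-1}\int_0^\8\Phi'(r/c)\,d(r)\,dr$, the whole matter reduces to the scalar statement that the two decreasing functions $c\mapsto\sup_{\epsilon>0}\Phi(\epsilon/c)\,\tau_\nu(e_{(\epsilon,\8)}(b))$ and $c\mapsto\nu(\Phi(|x|/c))$ reach the value $1$ at the same $c$. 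I expect this to be the main obstacle: one must reconcile the weak gauge on the crossed product with the strong gauge on $\M$, and it is precisely here that $\Phi\in\Delta_2\cap\Delta_{\frac12}$ (equivalently $0<a_\Phi\le b_\Phi<\8$) is indispensable, as it is what lets one pass between $\Phi(\epsilon/c)$ and the averaging kernel $\Phi'(u)\Phi(u)^{-2}$. I would verify the identity first on the building blocks $x=\beta p$ (where $d(\lambda)=\nu(p)\mathbf 1_{[0,\beta)}(\lambda)$ and the distribution computes to $\nu(p)/\Phi(\epsilon/\beta)$), then on finite spectral sums, and pass to general $x\in\M$ by monotone and dominated convergence.

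Finally, since $\nu$ is finite we have $S(\M)=\M$, so Lemma~\ref{lem:density-nu-finite} shows $\M$ is $\|\cdot\|_\Phi$-dense in $L^\Phi(\M)$. The map $x\mapsto x\ot\Phi^{-1}(e^{\cdot})$ is then, by the norm identity, a $(\|\cdot\|_\Phi,\|\cdot\|_{\Phi,\alpha})$-isometry of $\M$ onto the generating subspace of $L^{\Phi,\alpha}(\M,\nu)$; by the definition of $L^{\Phi,\alpha}(\M,\varphi)$ as a completion it extends to an isometric isomorphism of the completions, carrying $L^\Phi(\M)$ onto $L^{\Phi,\alpha}(\M,\nu)=\{x\ot\Phi^{-1}(e^{\cdot}):x\in L^\Phi(\M)\}$ with $\|x\ot\Phi^{-1}(e^{\cdot})\|_{\Phi,\alpha}=\|x\|_\Phi$, as claimed.
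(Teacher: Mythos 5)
Your reduction follows the paper's strategy up to the crux: you trivialize the crossed product to $\M\,\bar\ot\,L^\8(\real)$ with $\tau_\nu=\nu\ot e^{-s}ds$ and $D=1\ot e^\cdot$, identify the generating subspace as $\{x\ot\Phi^{-1}(e^\cdot)\}$, and extend by density; all of that matches the paper and is fine. Where you diverge is the norm identity: the paper routes it through Proposition~\ref{pro:haagerup-weak} (reducing $\|\cdot\|_{\Phi,\8}$ to the Haagerup $L^1$-norm of $\Phi(|x/\lambda|)$ via $\mu_t(h)=t^{-1}\|h\|_1$ for $h\in L^1(\M,\nu)$ and the identity $\Phi(|x|\ot\Phi^{-1}(e^\cdot))=\Phi(|x|)\ot e^\cdot$), whereas you compute the $\tau_\nu$-distribution of $b=|x|\ot\Phi^{-1}(e^\cdot)$ directly. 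Your distribution formula $\tau_\nu(e_{(\epsilon,\8)}(b))=\int_0^\8 d(\epsilon/u)\,\Phi'(u)\Phi(u)^{-2}\,du$ is correct, as is the evaluation $\nu(p)/\Phi(\epsilon/\beta)$ on the building block $x=\beta p$.

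The genuine gap is the step you announce as the main obstacle and then do not carry out. For $x=\beta p$ your two gauges are $c\mapsto\nu(p)\,\Phi(\beta/c)$ (strong) and $c\mapsto\nu(p)\sup_{u>0}\Phi(\beta u/c)/\Phi(u)$ (weak), so "reaching the value $1$ at the same $c$ for all $\beta$ and all $\nu(p)$" forces $\sup_{u>0}\Phi(su)/\Phi(u)=\Phi(s)$ for all $s$ (after normalizing $\Phi(1)=1$), i.e.\ submultiplicativity of $\Phi$. This is \emph{not} a consequence of $\Phi\in\Delta_2\cap\Delta_{\frac12}$: take $\Phi(t)=t^2(2+\sin\log t)$, which is a growth function with $0<a_\Phi\le b_\Phi<\8$, yet at $s=e^{-\pi}$ one has $\sup_u\Phi(su)/\Phi(u)=3s^2$ while $\Phi(s)=2s^2$, so the two gauges hit $1$ at genuinely different values of $c$ for a suitable $\nu(p)$. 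So the scalar identity on which your whole argument rests is false at the stated level of generality, and invoking $\Delta_2\cap\Delta_{\frac12}$ cannot repair it; you would need an additional multiplicativity-type hypothesis on $\Phi$ (automatic for $\Phi(t)=t^p$). You should be aware that the paper's own proof leans on the same point — the passage from $\Phi(|\tfrac{x}{\lambda}\ot\Phi^{-1}(e^\cdot)|)$ to $\Phi(\tfrac{|x|}{\lambda})\ot e^\cdot$ requires $\Phi(a\Phi^{-1}(e^s))=\Phi(a)e^s$ fibrewise, which is the same multiplicativity — so you have correctly located the real difficulty, but your proposal does not resolve it.
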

\begin{proof}
It is known that $\mathcal{M}\rtimes_{\sigma^\nu}\real$ will up to Fourier transform correspond to $\M\otimes L^\8(\real)$, $\tau_\nu=\nu\otimes\int_{\real}\cdot e^{-t}dt$ and $D_{\nu}=1_{id}\otimes e^\cdot$. For any  $x\in\M$, we have that
\be
\begin{array}{rl}
\Phi^{-1}(D_\nu)^{\alpha}(x\otimes1)\Phi^{-1}(D_\nu)^{1-\alpha}&=(1_{id}\otimes\Phi^{-1}(e^\cdot)^{\alpha})(x\otimes1)1_{id}(\otimes\Phi^{-1}(e^\cdot)^{1-\alpha})\\
&=x\otimes\Phi^{-1}(e^\cdot).
\end{array}
\ee
It follows that
\be
\Phi\big(|\Phi^{-1}(D_\nu)^\alpha (x\otimes1)\Phi^{-1}(D_\nu)^{1-\alpha}|\big)&=\Phi(|x|\otimes\Phi^{-1}(e^\cdot))=\Phi(|x|)\otimes e^\cdot.
\ee
Since $\Phi(|x|)\in L^1(\M)$,
\be
\Phi(|x|)\otimes e^\cdot\in L^1(\M,\nu)\qquad\mbox{and}\qquad \|\Phi(|x|)\otimes e^\cdot\|_1=\|\Phi(|x|)\|_1
\ee
(see \cite[p. 62-63]{Te}).
Applying Proposition \ref{pro:haagerup-weak}, we get
\be
\begin{array}{rl}
\|x\otimes\Phi^{-1}(e^\cdot)\|_{\Phi,\alpha}&=\inf\{\lambda>0:\|\Phi\big(\frac{|x\otimes\Phi^{-1}(e^\cdot)|}{\lambda}\big)\|_1\le1\}\\
 &=\inf\{\lambda>0:\|\Phi\big(|\frac{x}{\lambda}\otimes\Phi^{-1}(e^\cdot)|\big)\|_1\le1\}\\
&=\inf\{\lambda>0:\|\Phi\big(\frac{|x|}{\lambda}\big)\|_1\le1\}\\
&=\inf\{\lambda>0:\nu(\Phi(\frac{|x|}{\lambda}))\le1\}=\|x\|_\Phi.
\end{array}
\ee
Since $\Phi^{-1}(D_\nu)^{\alpha}(\M\otimes1)\Phi^{-1}(D_\nu)^{1-\alpha}$ and $\M$ are dense in $L^{\Phi,\alpha}(\M,\nu)$ and $L^{\Phi}(\M)$  respectively, from the above equality we obtain the desired result.
\end{proof}

Denote the Coones derivative $[D_{\phi}:D_\nu]_t=u_t, \;\forall t\in \mathbb{R}$. Then $u_t=g_{\phi}^{it}$.  We define a unitary $u$ on $L^{2}(\mathbb{R},\mathcal{H})$ by
 \be
 (u\xi)(t)=g_{\phi}^{-it}\xi(t),\qquad t\in \mathbb{R},\qquad\xi\in L^{2}(\mathbb{R},\mathcal{H}).
 \ee
Let
\be
\theta:B(L^{2}(\mathbb{R},\mathcal{H}))\rightarrow B(L^{2}(\mathbb{R},\mathcal{H})),\qquad x\mapsto uxu^*,\qquad x\in B(L^{2}(\mathbb{R},\mathcal{H})).
\ee
Then
\beq\label{eq:theta}
\theta(x\otimes 1)=\pi_{\sigma^\phi}(x),\qquad \theta(1\otimes e^{it\cdot})=\pi_{\sigma^\phi}(g_{\phi}^{-it})D_{\phi}^{it},\qquad x\in\M,\qquad t\in \mathbb{R},
\eeq
where
\be
(\pi_{\sigma^\phi}(x)\xi)(t)=\sigma_{-t}^{\phi}(x)\xi(t),\qquad \xi\in
L^{2}(\mathbb{R},\mathcal{H}),\qquad t\in\mathbb{R}.
\ee
Recall that  $\theta$ defines a topological *-isomorphism  from
$\mathcal{M}\rtimes_{\sigma^\nu}\real$ to $\mathcal{M}\rtimes_{\sigma^{\phi}}\real$  such that $\tau_\nu=\tau_\phi\circ\theta$ (see \cite[Theorem 37]{Te}).

Let
\be
V_\nu(\varepsilon, \delta)=\left\{\begin{array}{c}
x\in L_0(\mathcal{M}\rtimes_{\sigma^\nu}\real)\;:\; \exists\;e\in\P(\mathcal{M}\rtimes_{\sigma^\nu}\real)\;
  \mbox{such that}\\
   e(H)\subset D(x),\: \|xe\|\le\varepsilon\ \mbox{and}\
  \tau_\nu(e^\perp)\le\delta  \end{array}\right\}
\ee
and
\be
V_{\phi}(\varepsilon, \delta)=\left\{\begin{array}{c}
x\in L_0(\mathcal{M}\rtimes_{\sigma^{\phi}}\real)\;:\; \exists\;e\in\P(\mathcal{M}\rtimes_{\sigma^{\phi}}\real)\;
  \mbox{such that}\\
   e(H)\subset D(x),\: \|xe\|\le\varepsilon\ \mbox{and}\
  \tau_\phi(e^\perp)\le\delta  \end{array}\right\}.
\ee
By \cite[Corollary II.38]{Te}, we know that the mapping
$\theta:\;\mathcal{M}\rtimes_{\sigma^\nu}\real\rightarrow\mathcal{M}\rtimes_{\sigma^{\phi}}\real$
extends to a topological $*$-isomorphism from
$L_0(\mathcal{M}\rtimes_{\sigma^\nu}\real)$ onto $L_0(\mathcal{M}\rtimes_{\sigma^{\phi}}\real)$. We still denote this extension by $\theta$, then it satisfies   that
\be
\theta(V_\nu(\varepsilon, \delta))=V_{\phi}(\varepsilon, \delta),\qquad \forall \varepsilon>0,\;\delta>0.
\ee
Hence, by \eqref{eq:generalized singular},
\beq\label{eq:singular-value}
\mu_t(x)=\mu_t(\theta(x)),\qquad \forall x\in L_0(\mathcal{M}\rtimes_{\sigma^\nu}\real),\; t>0.
\eeq

Since
\be
\theta(g_{\phi}\otimes e^\cdot)^{is}=\theta(g_{\phi}^{is}\otimes e^{is\cdot})=\theta((g_{\phi}^{is}\otimes1)(1\otimes e^{is\cdot})),
\ee
 by \eqref{eq:lambda} and \eqref{eq:theta}, we get
\be
\theta(g_{\phi}\otimes e^\cdot)^{is}=\theta(g_{\phi}^{is}\otimes1)\theta(1\otimes e^{is\cdot})=\pi(g_{\phi}^{is})\pi(g_{\phi}^{-is})D_{\phi}^{is}=D_{\phi}^{is},\qquad s\in \mathbb{R}.
\ee
Thus
\beq\label{eq:tilde-theta}
\theta(g_{\phi}\otimes e^\cdot)=D_{\phi}.
\eeq
\begin{theorem}\label{thm:haagerup-finite-state} Let $0\le\alpha\le1$.  Then for any $x\in\M$,
\be
\Phi^{-1}(D_\phi)^{\alpha}\pi_{\sigma^\phi}(x)\Phi^{-1}(D_\phi)^{1-\alpha}=\theta((\Phi^{-1}(g_{\phi})^{\alpha}x\Phi^{-1}(g_{\phi})^{1-\alpha})\otimes\Phi^{-1}(e^\cdot))
\ee
and
\be
\|\Phi^{-1}(D_\phi)^{\alpha}\pi_{\sigma^\phi}(x)\Phi^{-1}(D_\phi)^{1-\alpha}\|_{\Phi,\alpha}=\|\Phi^{-1}(g_{\phi})^{\alpha}x\Phi^{-1}(g_{\phi})^{1-\alpha}\|_\Phi.
\ee
Consequently, $\theta$ defines an $*$-isometric isomorphism from $L^{\Phi,\alpha}(\M,\nu)$ to $L^{\Phi,\alpha}(\M,\phi)$.
\end{theorem}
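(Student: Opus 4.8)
The plan is to prove the two displayed identities and then read off the $*$-isometric isomorphism by a density argument. Throughout I use the identification of $\mathcal{M}\rtimes_{\sigma^\nu}\real$ with $\M\otimes L^\8(\real)$ from the proof of Theorem~\ref{thm:haagerup-finite-vn}, under which $D_\nu=1\otimes e^\cdot$ and the generic element of $L^{\Phi,\alpha}(\M,\nu)$ is $y\otimes\Phi^{-1}(e^\cdot)$ with $y\in L^\Phi(\M)$, with $\|y\otimes\Phi^{-1}(e^\cdot)\|_{\Phi,\alpha}=\|y\|_\Phi$.

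For the first identity I would not compute $\theta$ forwards but instead apply $\theta^{-1}$ to the left-hand side. Since $\theta$ is a topological $*$-isomorphism between the $*$-algebras of measurable operators, it commutes with the Borel functional calculus of (possibly unbounded) positive operators; together with \eqref{eq:theta} (giving $\theta^{-1}(\pi_{\sigma^\phi}(x))=x\otimes1$) and \eqref{eq:tilde-theta} (giving $\theta^{-1}(D_\phi)=g_\phi\otimes e^\cdot$, hence $\theta^{-1}(\Phi^{-1}(D_\phi))=\Phi^{-1}(g_\phi\otimes e^\cdot)$), this yields
$$\theta^{-1}\big(\Phi^{-1}(D_\phi)^{\alpha}\pi_{\sigma^\phi}(x)\Phi^{-1}(D_\phi)^{1-\alpha}\big)=\Phi^{-1}(g_\phi\otimes e^\cdot)^{\alpha}(x\otimes1)\Phi^{-1}(g_\phi\otimes e^\cdot)^{1-\alpha}.$$
Thus the asserted identity is equivalent to the purely tracial statement
$$\Phi^{-1}(g_\phi\otimes e^\cdot)^{\alpha}(x\otimes1)\Phi^{-1}(g_\phi\otimes e^\cdot)^{1-\alpha}=\big(\Phi^{-1}(g_\phi)^{\alpha}x\Phi^{-1}(g_\phi)^{1-\alpha}\big)\otimes\Phi^{-1}(e^\cdot),$$
a relation about the functional calculus of the commuting product $g_\phi\otimes e^\cdot$ in $\M\otimes L^\8(\real)$. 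Establishing this relation is the main obstacle: because $\Phi^{-1}$ is not multiplicative one cannot naively split $\Phi^{-1}(g_\phi\otimes e^\cdot)$ as $\Phi^{-1}(g_\phi)\otimes\Phi^{-1}(e^\cdot)$, so I expect to have to exploit the $\hat\sigma^\nu$-homogeneity of $g_\phi\otimes e^\cdot$ (which, like $D_\nu$, is scaled by $e^{-t}$ under the dual action), approximate $\Phi^{-1}$ by elementary functions, and control everything in measure via the convergence lemmas of \cite{FK}.

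Granting the tracial identity, the norm equality is immediate: by \eqref{eq:singular-value} the generalized singular numbers are preserved by $\theta$, so
$$\big\|\Phi^{-1}(D_\phi)^{\alpha}\pi_{\sigma^\phi}(x)\Phi^{-1}(D_\phi)^{1-\alpha}\big\|_{\Phi,\8}=\big\|\big(\Phi^{-1}(g_\phi)^{\alpha}x\Phi^{-1}(g_\phi)^{1-\alpha}\big)\otimes\Phi^{-1}(e^\cdot)\big\|_{\Phi,\8},$$
and the right-hand side equals $\|\Phi^{-1}(g_\phi)^{\alpha}x\Phi^{-1}(g_\phi)^{1-\alpha}\|_\Phi$ by Theorem~\ref{thm:haagerup-finite-vn}, recalling that $\|\cdot\|_{\Phi,\alpha}$ is by definition $\|\cdot\|_{\Phi,\8}$.

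For the last assertion I would argue by density. As $\phi$ is a faithful normal state on the finite algebra $(\M,\nu)$, its density $g_\phi$ is a positive nonsingular element of $L^1(\M,\nu)$, so Lemma~\ref{lem:density-orlicz} (with $D=g_\phi$) makes $\Phi^{-1}(g_\phi)^{\alpha}\M\Phi^{-1}(g_\phi)^{1-\alpha}$ dense in $L^\Phi(\M)$; tensoring with $\Phi^{-1}(e^\cdot)$ and invoking Theorem~\ref{thm:haagerup-finite-vn} gives a dense subspace of $L^{\Phi,\alpha}(\M,\nu)$. The first identity shows that $\theta$ carries this subspace, as $x$ runs over $\M$, bijectively onto the defining dense subspace $\Phi^{-1}(D_\phi)^{\alpha}\pi_{\sigma^\phi}(\M)\Phi^{-1}(D_\phi)^{1-\alpha}$ of $L^{\Phi,\alpha}(\M,\phi)$; the norm equality shows this correspondence is isometric; and since $\theta$ preserves adjoints while Remark~\ref{rk:equivalent} interchanges the roles of $\alpha$ and $1-\alpha$, it is $*$-preserving. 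Extending by continuity then produces the $*$-isometric isomorphism $L^{\Phi,\alpha}(\M,\nu)\cong L^{\Phi,\alpha}(\M,\phi)$.
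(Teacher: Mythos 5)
Your reduction of the problem is correct and your treatment of the second identity and of the final assertion matches the paper: the norm equality follows from \eqref{eq:singular-value} together with Theorem~\ref{thm:haagerup-finite-vn}, and the $*$-isometric isomorphism is obtained by the same density argument (Lemma~\ref{lem:density-orlicz} on the tracial side, the defining dense subspace on the Haagerup side). The problem is the first identity: you reduce it, correctly, to the relation
\beqn
\Phi^{-1}(g_\phi\otimes e^\cdot)^{\alpha}(x\otimes1)\Phi^{-1}(g_\phi\otimes e^\cdot)^{1-\alpha}
=\big(\Phi^{-1}(g_\phi)^{\alpha}x\Phi^{-1}(g_\phi)^{1-\alpha}\big)\otimes\Phi^{-1}(e^\cdot),
\eeqn
you observe that this amounts to splitting $\Phi^{-1}(g_\phi\otimes e^\cdot)$ as $\Phi^{-1}(g_\phi)\otimes\Phi^{-1}(e^\cdot)$, and then you stop, saying you ``expect to have to'' exploit homogeneity under the dual action and approximate $\Phi^{-1}$ by elementary functions. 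That is not a proof; it is precisely the step where all the content of the theorem sits, and it is left open. Worse, the sketched strategy cannot close it: since $g_\phi\otimes1$ and $1\otimes e^\cdot$ commute, the joint functional calculus shows that the splitting holds if and only if $\Phi^{-1}(st)=\Phi^{-1}(s)\Phi^{-1}(t)$ for all $s$ in the spectrum of $g_\phi$ and all $t>0$ (the spectrum of $e^\cdot$ is all of $(0,\infty)$), and no amount of approximation of $\Phi^{-1}$ by elementary functions removes that multiplicativity constraint, which pins $\Phi^{-1}$ down to a power function on the relevant set.

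For comparison, the paper's own proof passes through exactly this point by writing
$\Phi^{-1}(g_\phi)^{\alpha}\otimes\Phi^{-1}(e^\cdot)^{\alpha}=\Phi^{-1}(g_\phi\otimes e^\cdot)^{\alpha}$ as a single unexplained step, and then pulling $\theta$ through the functional calculus via \eqref{eq:tilde-theta} and \eqref{eq:theta}; in other words, it assumes without comment the very identity you (rightly) flag as the obstacle. So you have correctly located the crux of the argument, but your proposal does not establish it, and the route you suggest for doing so would not succeed for a general growth function $\Phi$.
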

\begin{proof}
Using \eqref{eq:tilde-theta}, we get
\be
\begin{array}{rl}
   &\theta\big(\Phi^{-1}(g_{\phi})^{\alpha}x\Phi^{-1}(g_{\phi})^{1-\alpha}\otimes\Phi^{-1}(e^\cdot)\big) \\ &\qquad=\theta\big((\Phi^{-1}(g_{\phi})^{\alpha}\otimes\Phi^{-1}(e^\cdot)^{\alpha})(x\otimes1)(\Phi^{-1}(g_{\phi})^{1-\alpha}\otimes\Phi^{-1}(e^\cdot)^{1-\alpha})\big)  \\
   &\qquad=\theta\big(\Phi^{-1}(g_{\phi}\otimes e^\cdot)^{\alpha}(x\otimes1)\Phi^{-1}(g_{\phi}\otimes e^\cdot)^{1-\alpha}\big)  \\
   &\qquad=\Phi^{-1}\big(\theta(g_{\phi}\otimes e^\cdot )\big)^{\alpha}\theta(x\otimes1)\Phi^{-1}\big(\theta(g_{\phi}\otimes e^\cdot)\big)^{1-\alpha}  \\
   &\qquad=\Phi^{-1}(D_{\phi})^{\alpha}\pi_{\sigma^\phi}(x)\Phi^{-1}(D_{\phi})^{1-\alpha} \\
    &\qquad=\Phi^{-1}(D_{\phi})^{\alpha}\pi_{\sigma^\phi}(x)(x)\Phi^{-1}(D_{\phi})^{1-\alpha}.
\end{array}
\ee

By \eqref{eq:singular-value} and Theorem \ref{thm:haagerup-finite-vn}, we have that
\be
\begin{array}{rl}
&\|\Phi^{-1}(D_\phi)^{\alpha}\pi_{\sigma^\phi}(x)\Phi^{-1}(D_\phi)^{1-\alpha}\|_{\Phi,\alpha}\\
&\qquad=\|(\Phi^{-1}(g_{\phi})^{\alpha}x\Phi^{-1}(g_{\phi})^{1-\alpha})\otimes\Phi^{-1}(e^\cdot)\|_{\Phi,\alpha}\\
&\qquad=\|\Phi^{-1}(g_{\phi})^{\alpha}x\Phi^{-1}(g_{\phi})^{1-\alpha}\|_\Phi.
\end{array}
\ee
\end{proof}

By Theorem \ref{thm:eqivalent-orliczspace} and \ref{thm:haagerup-finite-state}, we get the following result.

\begin{corollary}\label{cor:isometric}
Let  $0 \le \alpha \le1$. Then   $L^{\Phi,\alpha}(\M,\phi)$ is isometrically isomorphic to $L^{\Phi,1}(\M,\phi)$.
\end{corollary}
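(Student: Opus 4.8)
The plan is to show that for every $\alpha\in[0,1]$ the space $L^{\Phi,\alpha}(\M,\phi)$ is isometrically isomorphic to the single fixed space $L^{\Phi}(\M)$, through an identification that carries no reference to $\alpha$; applying this once for the given $\alpha$ and once for $\alpha=1$ and composing the two isometries then yields the assertion. The two ingredients needed are exactly the ones already available: Theorem \ref{thm:haagerup-finite-state} transports the Haagerup Orlicz norm down to the finite algebra $\M$, while Theorem \ref{thm:eqivalent-orliczspace} removes the $\alpha$-dependence at that level.

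First I would record that $g_{\phi}\in L^1(\M)$: since $\M$ is finite with trace $\nu$ and $\phi$ is a normal faithful state, $\nu(g_{\phi})=\phi(1)=1<\8$, so the hypothesis of Theorem \ref{thm:eqivalent-orliczspace} is met with $\R=\M$, $\omega=\phi$ and $D_{\omega}=g_{\phi}$. Next, the norm identity furnished by Theorem \ref{thm:haagerup-finite-state},
\[
\|\Phi^{-1}(D_\phi)^{\alpha}\pi_{\sigma^\phi}(x)\Phi^{-1}(D_\phi)^{1-\alpha}\|_{\Phi,\alpha}=\|\Phi^{-1}(g_{\phi})^{\alpha}x\Phi^{-1}(g_{\phi})^{1-\alpha}\|_\Phi,
\]
exhibits the correspondence $\Phi^{-1}(D_\phi)^{\alpha}\pi_{\sigma^\phi}(x)\Phi^{-1}(D_\phi)^{1-\alpha}\mapsto\Phi^{-1}(g_{\phi})^{\alpha}x\Phi^{-1}(g_{\phi})^{1-\alpha}$ as a linear isometry from the dense subspace $\Phi^{-1}(D_\phi)^{\alpha}\pi_{\sigma^\phi}(\M)\Phi^{-1}(D_\phi)^{1-\alpha}$ of $L^{\Phi,\alpha}(\M,\phi)$ onto the dense subspace $\Phi^{-1}(g_{\phi})^{\alpha}\M\Phi^{-1}(g_{\phi})^{1-\alpha}$ of $L^{\Phi}_{\alpha,\phi}(\M)$; here the right-hand norm is precisely $\|\cdot\|_{\Phi,\alpha,\phi}$ by its very definition in Subsection 2.2. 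Passing to completions, this gives $L^{\Phi,\alpha}(\M,\phi)\cong L^{\Phi}_{\alpha,\phi}(\M)$ isometrically.

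Finally I would invoke Theorem \ref{thm:eqivalent-orliczspace} to obtain $L^{\Phi}_{\alpha,\phi}(\M)\cong L^{\Phi}(\M)$ isometrically, the decisive feature being that the target $L^{\Phi}(\M)$ is the ordinary Orlicz space and involves no $\alpha$. Combining the two isometries produces, for each $\alpha$, an isometric isomorphism $L^{\Phi,\alpha}(\M,\phi)\cong L^{\Phi}(\M)$; taking this for the given $\alpha$ and for $\alpha=1$ and composing the latter's inverse with the former yields the desired $L^{\Phi,\alpha}(\M,\phi)\cong L^{\Phi,1}(\M,\phi)$. No genuinely hard step remains, since the substantive content—the norm transfer and the underlying density and completion arguments—was already discharged in Theorems \ref{thm:haagerup-finite-state} and \ref{thm:eqivalent-orliczspace}; the only point requiring care is to confirm that the right-hand side of the displayed norm identity is \emph{literally} the $L^{\Phi}_{\alpha,\phi}(\M)$-norm, so that Theorem \ref{thm:eqivalent-orliczspace} applies verbatim.
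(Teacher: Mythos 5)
Your argument is correct and is exactly the route the paper takes: its proof of this corollary is the one-line citation of Theorem \ref{thm:eqivalent-orliczspace} and Theorem \ref{thm:haagerup-finite-state}, and your write-up simply fills in the details — the verification that $g_{\phi}\in L^1(\M)$, the identification of the right-hand side of the norm identity with $\|\cdot\|_{\Phi,\alpha,\phi}$, and the passage to completions. Nothing is missing.
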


As in \eqref{eq:tr}, we define the continuous functional $\tr$ on
$L^{1}(\M,\phi)$.  By \eqref{eq:D-tr},
\beq\label{eq:tr-trace-connection}
\tr_\phi(xD_{\phi})=\phi(x)=\nu(xg_{\phi}),\qquad\forall x\in \M.
\eeq

\begin{proposition}\label{pro} Let $0\le\alpha\le1$. Then  $\Phi(|x|)\in L^1(\M,\phi)$ for any $x\in L^{\Phi,\alpha}(\M,\phi)$.
\end{proposition}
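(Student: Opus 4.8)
The plan is to move the statement to the tracial picture, where Theorem~\ref{thm:haagerup-finite-vn} gives an explicit tensor description of the Orlicz space, and then transport the conclusion back to $(\M,\phi)$ by means of the $*$-isomorphism $\theta$. Recall that $\mathcal{M}\rtimes_{\sigma^\nu}\real$ corresponds to $\M\otimes L^\8(\real)$ with $D_\nu=1\otimes e^\cdot$, so the tracial crossed product is where the computations are transparent.

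So fix $y\in L^{\Phi,\alpha}(\M,\phi)$. By Theorem~\ref{thm:haagerup-finite-state}, $\theta$ is a surjective $*$-isometric isomorphism from $L^{\Phi,\alpha}(\M,\nu)$ onto $L^{\Phi,\alpha}(\M,\phi)$, so $y=\theta(z)$ for some $z\in L^{\Phi,\alpha}(\M,\nu)$, and by Theorem~\ref{thm:haagerup-finite-vn} we may write $z=a\otimes\Phi^{-1}(e^\cdot)$ with $a\in L^{\Phi}(\M)$. Since $\theta$ extends to a topological $*$-isomorphism between the algebras of measurable operators, it commutes with the absolute value and with the (continuous) functional calculus of $\Phi$; hence $\Phi(|y|)=\theta\big(\Phi(|z|)\big)$. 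Now $|z|=|a|\otimes\Phi^{-1}(e^\cdot)$, and the computation already carried out in the proof of Theorem~\ref{thm:haagerup-finite-vn} yields $\Phi(|z|)=\Phi(|a|)\otimes e^\cdot$. As $a\in L^\Phi(\M)$ we have $\nu(\Phi(|a|))<\8$, so $\Phi(|a|)$ lies in the tracial space $L^1(\M,\nu)$ and therefore $\Phi(|a|)\otimes e^\cdot$ belongs to the Haagerup space $L^1(\M,\nu)$ with $\|\Phi(|a|)\otimes e^\cdot\|_1=\|\Phi(|a|)\|_1<\8$ (see \cite[p.~62--63]{Te}).

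It then remains to see that $\theta$ carries the Haagerup space $L^1(\M,\nu)$ into $L^1(\M,\phi)$, for then $\Phi(|y|)=\theta\big(\Phi(|a|)\otimes e^\cdot\big)\in L^1(\M,\phi)$, as desired. This is the step I expect to require the most care, since the Haagerup $L^1$-spaces are cut out by a scaling condition under the dual actions rather than by an algebraic relation. Concretely, I would check that $\theta$ intertwines the two dual actions, $\theta\circ\hat\sigma^\nu_t=\hat\sigma^\phi_t\circ\theta$, by evaluating both sides on the generators $x\otimes1$ and $1\otimes e^{is\cdot}$ using \eqref{eq:theta}, \eqref{eq:tilde-theta} and the relation $\hat\sigma^\phi_t(D_\phi)=e^{-t}D_\phi$; combined with $\tau_\nu=\tau_\phi\circ\theta$, this forces $\theta$ to map $\{w:\hat\sigma^\nu_t(w)=e^{-t}w\}$ onto $\{w:\hat\sigma^\phi_t(w)=e^{-t}w\}$, that is, $L^1(\M,\nu)$ onto $L^1(\M,\phi)$ isometrically, which is precisely the compatibility of $\theta$ with the Haagerup $L^1$-spaces recorded in \cite{Te}. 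Once membership is established, Proposition~\ref{pro:haagerup-weak} becomes applicable to every element of $L^{\Phi,\alpha}(\M,\phi)$.
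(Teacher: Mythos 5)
Your proposal is correct and follows essentially the same route as the paper's own proof: reduce to the tracial crossed product via Theorem \ref{thm:haagerup-finite-vn} to get $\Phi(|z|)=\Phi(|a|)\otimes e^\cdot\in L^1(\M,\nu)$, then transport back through the $*$-isomorphism $\theta$, using that $\theta$ commutes with functional calculus and maps $L^1(\M,\nu)$ onto $L^1(\M,\phi)$. The only difference is that you sketch a verification that $\theta$ intertwines the dual actions (and hence preserves the Haagerup $L^1$-spaces), a fact the paper simply cites as known from Terp's notes.
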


\begin{proof}
 By Theorem \ref{thm:haagerup-finite-vn},
 $$
L^{\Phi,\alpha}(\M,\nu)=L^{\Phi}(\M)\otimes\Phi^{-1}(e^\cdot)=\bigg \{x\otimes\Phi^{-1}(e^\cdot):\;x\in L^{\Phi}(\M)\bigg \}
$$
and
$$
\|x\otimes \Phi^{-1}(e^\cdot)\|_{\Phi,\alpha}=\|x\|_\Phi,\qquad \forall x\in L^{\Phi}(\M).
$$
Hence, for any $y\in L^{\Phi,\alpha}(\M,\nu)$, there exists $x\in L^{\Phi}(\M)$ such that
$y=x\otimes\Phi^{-1}(e^\cdot)$. Therefore, $\Phi(|y|)=\Phi(|x|)\otimes e^t$. On the other hand,
 $$
L^1(\M,\nu)=L^1(\M)\otimes e^\cdot=\bigg \{z\otimes e^\cdot:\;z\in L^1(\M)\bigg \}
$$
and
$$
\|z\otimes \Phi^{-1}(e^\cdot)\|_1=\|z\|_1,\qquad \forall z\in L^1(\M).
$$
Since $\Phi(|x|)\in L^1(\M)$,
\begin{equation}\label{l_1}
\Phi(|y|)\in L^1(\M,\nu).
\end{equation}

Note that  the mapping
 $\theta$ is an isometric isomorphism to from $L^{\Phi,\alpha}(\M,\nu)$ onto $L^{\Phi,\alpha}(\M,\phi)$.
By \eqref{l_1}, for any $x\in L^{\Phi,\phi}(\M,\alpha)$, we have that $\theta^{-1}(\Phi(|x|))=\Phi(\theta^{-1}(|x|))\in L^1(\M,\nu)$. Recall that $\theta$ is also isometric isomorphism  from $L^1(\M,\nu)$ onto $L^1(\M,\phi)$. Hence, we obtain that $\Phi(|x|)\in L^1(\M,\phi)$.
\end{proof}

\section{Reduction theorem for Haagerup noncommutative Orlicz spaces}

We keep all notations introduced in the preceding sections.
 Let  $\M_1$ be a von Neumann
 subalgebra of $\M$ such that $\M_1$  is invariant under $\sigma^\varphi$ i.e.,
 $$
 \sigma_{t}^\varphi(\M_1)\subset\M_1,\;\qquad \forall t\in\real.
 $$
Set $\phi=\varphi|_{\M_1}$ be the restriction of $\varphi$ to $\M_1$. Using the fact that $\M_1$ is $\sigma^\varphi$-invariant, we obtain that the modular
automorphism group associated with $\phi$ is  $\sigma^\varphi|_{\M_1}$, i.e.,
\beq\label{automorphism-equal}
\sigma^\phi_t=\sigma^\varphi_t|_{\M_1},\qquad \forall t\in\real.
\eeq
Hence the crossed product
$$
\N_1=\M_1\rtimes_{\sigma^\phi}\mathbb{R}
$$
 is a von Neumann subalgebra
of
$$
\N=\mathcal{M}\rtimes_{\sigma^\varphi}\mathbb{R}.
$$
Let $\tau_1$ be the canonical  normal semi-finite faithful trace  on $\N_1$. Then $\tau_1$ is equal to the
restriction of $\tau$ to $\N_1$ (recalling that $\tau$  is the canonical trace on $\N$).  Let $\widehat{\varphi}$ and $\widehat{\phi}$ be the dual
weights of $\varphi$ and $\phi$ respectively. By \cite{C},
 the Radon-Nikodym derivative of $\widehat{\phi}$ with respect to $\tau_1$ is equal to $D$,
the Radon-Nikodym derivative of $\widehat{\varphi}$ with respect to $\tau$. Hence, we obtain that
\be
\Phi^{-1}(D)^{\alpha}\M_1\Phi^{-1}(D)^{1-\alpha}\subset\Phi^{-1}(D)^{\alpha}\M\Phi^{-1}(D)^{1-\alpha}.
\ee
Therefore, we have the following result.
\begin{proposition}\label{pro:Haagerup-subspace}  Let $0\le\alpha\le1$. Then   $L^{\Phi,\alpha}(\M_1,\phi)$ coincides isometrically with a subspace of  $L^{\Phi,\alpha}(\M,\varphi)$.
\end{proposition}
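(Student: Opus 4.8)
The plan is to exhibit the canonical inclusion of the generating subspaces as an isometry for the relevant quasi-norm and then pass to completions. Recall from the discussion preceding the statement that $\Phi^{-1}(D)^{\alpha}\M_1\Phi^{-1}(D)^{1-\alpha}\subset\Phi^{-1}(D)^{\alpha}\M\Phi^{-1}(D)^{1-\alpha}$, where the \emph{same} operator $D$ serves as the Radon--Nikodym derivative of both $\widehat{\phi}$ (with respect to $\tau_1$) and $\widehat{\varphi}$ (with respect to $\tau$), by the result of \cite{C}. Thus the algebraic dense subspaces generating $L^{\Phi,\alpha}(\M_1,\phi)$ and $L^{\Phi,\alpha}(\M,\varphi)$ are literally nested, and it remains only to compare the two quasi-norms $\|\cdot\|_{\Phi,\alpha}$.

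The key step is to verify that for every $y\in\Phi^{-1}(D)^{\alpha}\M_1\Phi^{-1}(D)^{1-\alpha}$ the generalized singular number function of $y$ is the same whether it is computed relative to $(\N_1,\tau_1)$ or relative to $(\N,\tau)$. Since $y$ is affiliated with the von Neumann subalgebra $\N_1$, the spectral projections $e_{(s,\infty)}(|y|)$ all lie in $\N_1$; because $\tau_1=\tau|_{\N_1}$, the two distribution functions coincide, $\tau_1\big(e_{(s,\infty)}(|y|)\big)=\tau\big(e_{(s,\infty)}(|y|)\big)$ for all $s>0$, and hence $\mu_t(y)$ is unchanged for every $t>0$; in particular $y$ is $\tau_1$-measurable precisely when it is $\tau$-measurable. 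By the identity \eqref{eq:DistrSingularEquiv} the weak Orlicz quasi-norm is a function of the singular numbers alone, $\|y\|_{\Phi,\8}=\sup_{t>0}\varphi_\Phi(t)\mu_t(y)$, so the value of $\|y\|_{\Phi,\8}$ is identical in $(\N_1,\tau_1)$ and in $(\N,\tau)$. By the definition of $\|\cdot\|_{\Phi,\alpha}$ this says exactly that the inclusion of generating subspaces is isometric.

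Granting this, the conclusion is routine. A linear isometry between quasi-normed spaces is isometric for the induced quasi-metrics, hence uniformly continuous, and so extends uniquely to an isometry of their completions; thus the inclusion extends to an isometric embedding $\iota\colon L^{\Phi,\alpha}(\M_1,\phi)\to L^{\Phi,\alpha}(\M,\varphi)$. Since $L^{\Phi,\alpha}(\M_1,\phi)$ is complete and $\iota$ is isometric, the image $\iota\big(L^{\Phi,\alpha}(\M_1,\phi)\big)$ is a complete, hence closed, subspace of $L^{\Phi,\alpha}(\M,\varphi)$; this identifies $L^{\Phi,\alpha}(\M_1,\phi)$ isometrically with a subspace of $L^{\Phi,\alpha}(\M,\varphi)$, as claimed.

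I expect the only genuine obstacle to be the singular-number invariance of the middle paragraph: one must be certain that the inner and outer computations of $\mu_t$ agree. The point that makes this clean is that the defining spectral projections of $|y|$ remain inside $\N_1$ and that $\tau_1$ is exactly the restriction of $\tau$, so no finer projection available in the larger algebra $\N$ can lower the relevant infimum. Everything else — the nesting of the generating sets, established just before the statement, and the passage to completions — is formal.
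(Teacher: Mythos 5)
Your proof is correct and follows essentially the same route as the paper, which simply records the nesting $\Phi^{-1}(D)^{\alpha}\M_1\Phi^{-1}(D)^{1-\alpha}\subset\Phi^{-1}(D)^{\alpha}\M\Phi^{-1}(D)^{1-\alpha}$ (via the identification, due to Connes, of the Radon--Nikodym derivative of $\widehat{\phi}$ with respect to $\tau_1$ with that of $\widehat{\varphi}$ with respect to $\tau$) and then states the proposition without further argument. The singular-number invariance you verify --- that $\mu_t(y)$ computed in $(\N_1,\tau_1)$ agrees with that computed in $(\N,\tau)$ because the spectral projections of $|y|$ lie in $\N_1$ and $\tau_1=\tau|_{\N_1}$ --- is precisely the detail the paper leaves implicit, and your argument for it, together with the standard passage to completions, is sound.
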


\begin{lemma}\label{lem:convergence} If  $(a_i)_{i\in I}$ is a bounded net in $\N$ converging strongly to $a$, then $xa_i\rightarrow xa$ in the norm of
 $L^{\Phi,\8}(\N)$ for any $x\in  L^{\Phi,\8}_0(\N)$.
\end{lemma}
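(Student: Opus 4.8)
The plan is to argue in three stages: reduce to the case $a=0$ and to a fixed $y\in S(\N)$, then prove convergence in measure, and finally upgrade convergence in measure to convergence in the quasi-norm $\|\cdot\|_{\Phi,\8}$.

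\textbf{Reductions.} First I would replace $a_i$ by $b_i=a_i-a$; since $\N$ is strongly closed and $(a_i)$ is bounded we have $a\in\N$, the net $(b_i)$ stays bounded, say $\|b_i\|\le M$, and converges strongly to $0$, while $xa_i-xa=xb_i$. So it suffices to prove $\|xb_i\|_{\Phi,\8}\to0$. For any $z\in L^{\Phi,\8}(\N)$ and $b\in\N$ the inequality $\mu_t(zb)\le\|b\|\,\mu_t(z)$ (see \cite{FK}) together with \eqref{eq:DistrSingularEquiv} gives $\|zb\|_{\Phi,\8}\le\|b\|\,\|z\|_{\Phi,\8}$; in particular each $xb_i$ lies in $L^{\Phi,\8}(\N)$. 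Given $\varepsilon>0$, since $x\in L^{\Phi,\8}_0(\N)=\overline{S(\N)}$ I may pick $y\in S(\N)$ with $\|x-y\|_{\Phi,\8}<\varepsilon$; then, with $K$ the quasi-norm constant,
\[
\|xb_i\|_{\Phi,\8}\le K\big(\|(x-y)b_i\|_{\Phi,\8}+\|yb_i\|_{\Phi,\8}\big)\le K\big(M\varepsilon+\|yb_i\|_{\Phi,\8}\big).
\]
Thus everything reduces to showing $\|yb_i\|_{\Phi,\8}\to0$ for a fixed $y\in S(\N)$.

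\textbf{Convergence in measure (the crux).} Fix $y\in S(\N)$. Its support has finite trace, so $\tau(y^*y)<\8$, whence $y\in L^2(\N,\tau)$, and its left support satisfies $d:=\tau(\ell(y))<\8$. I would first establish $\|yb_i\|_2\to0$. Writing
\[
\|yb_i\|_2^2=\tau(b_i^*\,y^*y\,b_i)=\tau\big(y^*y\,b_ib_i^*\big),
\]
and observing that $y^*y\in L^1(\N,\tau)^+$ defines a normal functional $\eta(\cdot)=\tau(y^*y\,\cdot)$, this equals $\eta(b_ib_i^*)$, which tends to $0$ along the net. Convergence in $L^2(\N,\tau)$ implies convergence in measure, so $\mu_t(yb_i)\to0$ for every $t>0$.

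I expect this step to be the main obstacle, for a subtle reason. The quantity $\eta(b_ib_i^*)$ is the square of the $\sigma$-strong seminorm of $b_i^*$, so what is really being used on the right factor is the strong convergence of the \emph{adjoints} $b_i^*$ (equivalently, the $*$-strong convergence of $b_i$), the symmetry $\tau(b_i^*hb_i)=\tau(hb_ib_i^*)$ of the trace being exactly what transfers the estimate from one side to the other. Concretely, for a bounded net that is null only in the strong (not $*$-strong) topology, $\|yb_i\|_{\Phi,\8}$ need not vanish — one sees this already on $B(\ell^2)$ with $y$ a rank-one projection — so the hypothesis ``strongly'' must be read in the $*$-strong sense at this point. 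One then checks the standard fact that for a bounded $*$-strongly null net and a fixed $L^2$-vector, multiplication on either side is norm-continuous.

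\textbf{Upgrading to the weak-Orlicz norm.} Finally I would use $\|yb_i\|_{\Phi,\8}=\sup_{t>0}\varphi_\Phi(t)\mu_t(yb_i)$ from \eqref{eq:DistrSingularEquiv} and split the supremum over three ranges of $t$. Since $\ell(yb_i)\le\ell(y)$, we have $\mu_t(yb_i)=0$ for $t\ge d$. For $t$ near $0$, the uniform bound $\mu_t(yb_i)\le M\|y\|$ together with $\varphi_\Phi(t)=1/\Phi^{-1}(1/t)\to0$ as $t\to0^+$ produces a $t_1\in(0,d)$ with $\varphi_\Phi(t)\mu_t(yb_i)<\varepsilon$ for all $t\le t_1$, uniformly in $i$. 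On $[t_1,d)$, the monotonicity of $t\mapsto\mu_t(yb_i)$ and of $t\mapsto\varphi_\Phi(t)$ gives $\varphi_\Phi(t)\mu_t(yb_i)\le\varphi_\Phi(d)\,\mu_{t_1}(yb_i)$, and the right-hand side tends to $0$ by the measure convergence just proved. Combining the three ranges yields $\limsup_i\|yb_i\|_{\Phi,\8}\le\varepsilon$, and since $\varepsilon>0$ was arbitrary, $\|yb_i\|_{\Phi,\8}\to0$, which completes the argument.
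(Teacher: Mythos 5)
Your proof is correct and takes a genuinely different route from the paper's. Both arguments reduce to a fixed $y\in S(\N)$ by density, but from there the paper factorizes $y=u|y|^{\frac{n-1}{n}}\cdot|y|^{\frac1n}$ for an $n$ making $\Phi^{(n)}$ convex with $a_{\Phi^{(n)}}>1$, places $|y|^{1/n}$ in $L_p(\N)\cap L_q(\N)$, uses the Boyd-index interpolation inclusions $L_p\cap L_q\subset E(\N)\subset L_p+L_q$ for $E=L^{\Phi^{(n)},\8}_0[0,\gamma)$ together with \cite[Lemma 2.3]{J} to get $\||y|^{1/n}a_i-|y|^{1/n}a\|_{\Phi^{(n)},\8}\to0$, and concludes with the weak-Orlicz H\"older inequality (Lemma~\ref{lem:holder-weak}). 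You replace all of that by a direct $L^2$-estimate yielding convergence in measure, followed by a hands-on splitting of $\sup_{t>0}\varphi_\Phi(t)\mu_t(yb_i)$ that exploits exactly the two features available for $y\in S(\N)$: the singular value function vanishes for $t\ge\tau(\ell(y))$, and $\varphi_\Phi(t)\to0$ as $t\to0^+$ controls the small-$t$ range uniformly in $i$. Your version is more elementary and self-contained; the paper's version is shorter on the page because the interpolation and H\"older machinery is already installed for later use.

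Your remark on the topology is a genuine catch rather than a defect of your argument. With only strong convergence of $(a_i)$ the statement as written fails: your $B(\ell^2)$ example (a rank-one projection $y$ against the adjoints of shift powers) is valid, since there $\mu_t(ya_i)=\chi_{[0,1)}(t)$ and $\|ya_i\|_{\Phi,\8}=1/\Phi^{-1}(1)$ for every $i$. The paper's own proof is exposed to the same point, because the right-multiplication convergence $\|ya_i-ya\|_p\to0$ it extracts from \cite[Lemma 2.3]{J} equally requires strong convergence of the adjoints; the computation $\|yb_i\|_2^2=\tau(y^*y\,b_ib_i^*)$ that you isolate is precisely where the $*$-strong hypothesis enters. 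In every application in the paper the net has the form $(\F_n(x))_n$ for normal conditional expectations $\F_n$, so $\F_n(x)^*=\F_n(x^*)$ converges strongly as well and the needed $*$-strong convergence is automatic; but the hypothesis of the lemma ought to be stated that way.
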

\begin{proof}  We choose $n\in\mathbb{N}$ such that  $\Phi^{(n)}$ is
a convex growth function and
$a_{\Phi^{(n)}}>1$. Let $E=L^{\Phi^{(n)},\8}_0[0,\gamma)$ be the weak Orlicz space on $[0,\gamma)$. Then $E$ is a symmetric Banach function space on $[0,\gamma)$ and the Boyd indices of $E$ satisfy
$1<p_E\le q_E<\8$ (see \cite[page 26-28]{M}). Let  $1< p<p_E \le q_E<q<\8$.
 By Lemma 4.5 in \cite{D}, it follows that
\beq\label{inclution}
L_p(\N)\cap L_q(\N)\subset E(\N)\subset L_p(\N)+ L_q(\N)
\eeq
with continuous inclusions.
If $y\in L_p(\N)\cap L_q(\N)$,  then using Lemma 2.3 in \cite{J}, we obtain that $ya_i\rightarrow ya$ in the norm of
$L_p(\N)$  and $ya_i\rightarrow ya$ in the norm of
$L_q(\N)$. Hence, $\lim_{i}\|ya_i-ya\|_{\Phi^{(n)},\8}=0$. Let $x\in S(\N)$, and let $x=u|x|$ be the polar decomposition of $x$. Since $|x|^\frac{1}{n}\in L_p(\N)\cap L_q(\N)$,
we have that $\lim_{i}\||x|^\frac{1}{n}a_i-|x|^\frac{1}{n}a\|_{\Phi^{(n)},\8}=0$. Therefore, by Lemma \ref{lem:holder-weak}, we deduce that
\be
\|xa_i-xa\|_{\Phi,\8}\le C\|u|x|^\frac{n-1}{n}\|_{\Phi^{(\frac{n}{n-1})},\8}\||x|^\frac{1}{n}a_i-|x|^\frac{1}{n}a\|_{\Phi^{(n)},\8}\rightarrow0.
\ee
Using the fact that $S(\N)$ is dense in $L^{\Phi,\8}_0(\N)$,  we obtain the desired result.

\end{proof}

We  need to give a brief description of  reduction theorem that
approximates a type III von Neumann algebra by finite ones  (cf. \cite{H1,H2}). Throughout this section, $G$ will denote the discrete subgroup
$\cup_{n\ge1}2^{-n}\mathbb{Z}$  of $\mathbb{R}$. We consider the discrete crossed product
\beq\label{defi:U}
\U =\mathcal{M}\rtimes_{\sigma^\varphi} G.
\eeq
 Recall that $\U$ is a von Neumann algebra on $\el_2(G,H)$
generated by the operators $\pi(x),\; x\in \M$ and $\lambda(t),\; s \in G$, which are defined by
$$
(\pi(x)\xi)(t)=\sigma_{-t}(x)\xi(t),\qquad (\lambda(s)\xi)(t)=\xi(t-s),\qquad \; \xi\in\el_2(G,H),\qquad \forall t \in G.
$$
Then $\pi$ is a normal faithful representation of $\M$ on
$\el_2(G,H)$ and we identify $\pi(\M)$ with $\M$.  The operators $\pi(x)$ and $\lambda(t)$ satisfy
the following commutation relation:
\beq\label{representation1}
\lambda(t)\pi(x)\lambda(t)^*=\pi(\sigma_{t}^\varphi(x)),\qquad\forall t\in G,\qquad\forall x\in \M.
\eeq
Let $\widehat{\varphi}$ be the dual weight of $\varphi$ on $\U$. Then $\widehat{\varphi}$ is again a faithful
normal state on $\U$ whose restriction on $\mathcal{M}$ is $\varphi$. The modular automorphism group of $\widehat{\varphi}$ is uniquely determined
by
\be
\sigma_{t}^{\widehat{\varphi}}(\pi(x))=\pi(\sigma_{t}^\varphi(x)),\qquad \sigma_{t}^{\widehat{\varphi}}(\lambda(s))=\lambda(s),\qquad x\in\M, \qquad s,\;t\in G.
\ee
Consequently, $\sigma_{t}^{\widehat{\varphi}}|_{\M}=\sigma_{t}^{\varphi}$, and so $\sigma_{t}^{\widehat{\varphi}}(\M)=\M$ for all $t\in\real$.
 We recall that there is a unique normal faithful conditional expectation $\F$ from
$\U$ onto $\M$ determined by
\beq\label{conditional-expectation1}
\F(\lambda(t)x)=\left\{\begin{array}{rl}
                           x & \mbox{if}\; t=0 \\
                           0 & \mbox{otherwise}
                         \end{array}
\right.,\qquad \forall x\in\M,\qquad\forall t\in G.
\eeq
It satisfies
\beq\label{eq:equation-dual weight-condition}
\widehat{\varphi}\circ\F = \widehat{\varphi},\qquad\sigma_{t}^{\widehat{\varphi}}\circ\F=\F\circ\sigma_{t}^{\widehat{\varphi}}, \qquad\forall t\in \real.
\eeq

We denote  by $\U_{\widehat{\varphi}}$ the centralizer of  $\widehat{\varphi}$  in $\U$. Then
$$
\U_{\widehat{\varphi}}=\{x\in\U:\qquad \sigma_{t}^{\widehat{\varphi}}(x)=x, \qquad \forall t\in \real\}.
$$
For each $n \in \mathbb{N}$ there exists a unique $b_n\in\mathcal{Z}(\U_{\widehat{\varphi}})$ such that
$$
0\le b_n\le2\pi\qquad \mbox{and}\qquad e^{ib_n}=\lambda(2^{-n}),
$$
 where $\mathcal{Z}(\U_{\widehat{\varphi}})$ the center of $\U_{\widehat{\varphi}}$. Let $a_n=2^nb_n$ and define  normal faithful positive functionals on $\U$ by
$$
\varphi_n(x)=\widehat{\varphi}(e^{-a_n}x),\qquad \forall x\in \U,\qquad \forall n\in\mathbb{N}.
$$
Then $\sigma_{t}^{\varphi_n}$ is $2^{-n}$-periodic and
\beq\label{eq:modular}
\sigma_{t}^{\varphi_n}(x)=e^{-ita_n}\sigma_{t}^{\widehat{\varphi}}(x)e^{ita_n},\qquad \forall x\in\U,\qquad\forall t\in \real,\qquad\forall n\in\mathbb{N}.
\eeq
Let $\U_n=\U_{\varphi_n}$. Then $\U_n$ is a finite von Neumann algebra equipped with the normal faithful
tracial state $\tau_n=\varphi_n|_{\U_n}$.
Define
$$
\F_n(x)=2^n\int_0^{2^{-n}}\sigma_{t}^{\varphi_n}(x)dt,\qquad \forall x\in\U.
$$
By the $2^{-n}$-periodicity of $\sigma_{t}^{\varphi_n}$, we have that
\beq\label{eq:conditionexpectation-n}
\F_n(x)=\int_0^1\sigma_{t}^{\varphi_n}(x)dt,\qquad \forall x\in\U.
\eeq
 Haagerup's reduction theorem (Theorem 2.1 and Lemma 2.7 in \cite{H2}) asserts that $\{\U_n\}_{n\ge1}$ is an
increasing sequence of von Neumann subalgebras of $\U$ with the following properties:
\begin{enumerate}[\rm(i)]
\item Each $\U_n$ is finite with the normal faithful
tracial state $\tau_n$;
\item   $\F_n$ is a faithful normal conditional expectation from $\U$ onto $\U_n$ such that
\beq\label{eq:equation-dual weight-condition-n}
\widehat{\varphi}\circ\F_n = \hat{\varphi},\quad  \sigma_{t}^{\widehat{\varphi}}\circ\F_n=\F_n\circ\sigma_{t}^{\widehat{\varphi}},\quad \F_n \circ\F_{n+1} = \F_n,\quad \forall t\in \real,\; \forall n\in\mathbb{N};
\eeq
\item For any $x\in\U$, $\F_n(x)$ converges to $x$ $\sigma$-strongly as $n\rightarrow\8$;
\item $\bigcup_{n\ge1} \U_n$ is $\sigma$-strongly dense in $\U$.
 \end{enumerate}

 Let $\mathfrak{N}=\U\rtimes_{\sigma^{\widehat{\varphi}}}\mathbb{R}$ and $\rho$ be the canonical  normal semi-finite faithful trace  on $\mathfrak{N}$. Then $\N=\mathcal{M}\rtimes_{\sigma^\varphi}\mathbb{R}$  is a von Neumann subalgebra
of $\mathfrak{N}$ and $\rho|_\N=\tau$  (recalling that $\tau$  is the canonical trace on $\N$).  Let $\widetilde{\varphi}$ and $\widetilde{\widehat{\varphi}}$ be the dual
weights of $\varphi$ and $\hat{\varphi}$ respectively. Then
 the Radon-Nikodym derivative of $\widetilde{\widehat{\varphi}}$ with respect to $\rho$ is equal to $D$ (recalling that
the Radon-Nikodym derivative of $\varphi$ with respect to $\tau$).

 \begin{theorem}\label{thm:Haagerup's reduction} Let $\U,\;\U_n,\;\F_n\; (\forall n\in \mathbb{N})$ be fixed as in the above and let $0 \le \alpha \le1$.
\begin{enumerate}[\rm(i)]
\item $\{L^{\Phi,\alpha}(\U_n,\hat{\varphi})\}_{n\ge1}$ is a  increasing sequence of subspaces of $L^{\Phi,\alpha}(\U,\hat{\varphi}))$;
\item $\bigcup_{n\ge1}L^{\Phi,\alpha}(\U_n,\hat{\varphi})$  is dense in  $L^{\Phi,\alpha}(\U,\hat{\varphi})$;
\item For each $n$, $L^{\Phi,\alpha}(\U_n,\hat{\varphi})$ is isometric to the usual
on noncommutative  Orlicz space $L^{\Phi}(\U_n)$.
\end{enumerate}
 \end{theorem}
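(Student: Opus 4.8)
The plan is to derive (i) and (iii) from the structural theorems of Section~3 and to reduce (ii) to the weak-Orlicz convergence Lemma~\ref{lem:convergence}. I work throughout inside $\mathfrak{N}=\U\rtimes_{\sigma^{\widehat{\varphi}}}\mathbb{R}$ with its trace $\rho$ and the derivative $D$, so that the pair $(\U,\widehat{\varphi})$ plays the role of $(\M,\varphi)$ in the general construction and Lemmas~\ref{lem:holder-weak} and~\ref{lem:convergence} apply with $\N$ replaced by $\mathfrak{N}$. For (i) I first record that each $\U_n$ is $\sigma^{\widehat{\varphi}}$-invariant: since $\F_n$ commutes with $\sigma^{\widehat{\varphi}}_t$ and $\U_n=\F_n(\U)$, one has $\sigma^{\widehat{\varphi}}_t(\U_n)=\F_n(\sigma^{\widehat{\varphi}}_t(\U))=\U_n$. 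Because $\widehat{\varphi}\circ\F_{n+1}=\widehat{\varphi}$, the modular group of $\widehat{\varphi}|_{\U_{n+1}}$ equals $\sigma^{\widehat{\varphi}}_t|_{\U_{n+1}}$, under which $\U_n$ again stays invariant. Applying Proposition~\ref{pro:Haagerup-subspace} to each inclusion in $\U_n\subset\U_{n+1}\subset\U$ then exhibits the spaces $L^{\Phi,\alpha}(\U_n,\widehat{\varphi})$ as an increasing chain of isometric subspaces of $L^{\Phi,\alpha}(\U,\widehat{\varphi})$, which is (i).

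For (iii) I would use that each $\U_n$ is a finite von Neumann algebra carrying the tracial state $\tau_n=\varphi_n|_{\U_n}$, while $\widehat{\varphi}|_{\U_n}$ is a normal faithful state on it. Theorem~\ref{thm:haagerup-finite-state}, applied with $\M=\U_n$, $\nu=\tau_n$ and $\phi=\widehat{\varphi}|_{\U_n}$, provides an isometric isomorphism $L^{\Phi,\alpha}(\U_n,\tau_n)\cong L^{\Phi,\alpha}(\U_n,\widehat{\varphi})$, and Theorem~\ref{thm:haagerup-finite-vn} identifies $L^{\Phi,\alpha}(\U_n,\tau_n)$ isometrically with $L^{\Phi}(\U_n)$. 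Composing the two isometries yields (iii).

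The substantive part is (ii). By the definition of $L^{\Phi,\alpha}(\U,\widehat{\varphi})$ the elements $\Phi^{-1}(D)^\alpha x\Phi^{-1}(D)^{1-\alpha}$ with $x\in\U$ are dense, and for such $x$ the element $\Phi^{-1}(D)^\alpha\F_n(x)\Phi^{-1}(D)^{1-\alpha}$ lies in the dense subspace $\Phi^{-1}(D)^\alpha\U_n\Phi^{-1}(D)^{1-\alpha}$ of $L^{\Phi,\alpha}(\U_n,\widehat{\varphi})$ (by the subspace identification of Proposition~\ref{pro:Haagerup-subspace}). Thus it suffices to show, with $u_n=\F_n(x)-x$, that $\|\Phi^{-1}(D)^\alpha u_n\Phi^{-1}(D)^{1-\alpha}\|_{\Phi,\8}\to0$. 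By property~(iii) of the reduction theorem and the $*$-preservation of $\F_n$, one has $u_n\to0$ $*$-strongly with $\|u_n\|\le 2\|x\|$. For $0<\alpha<1$ I would detach the left factor with Lemma~\ref{lem:holder-weak} (exponents $p=1/\alpha$, $q=1/(1-\alpha)$, $r=1$):
$$
\big\|\Phi^{-1}(D)^\alpha u_n\Phi^{-1}(D)^{1-\alpha}\big\|_{\Phi,\8}\le C\,\big\|\Phi^{-1}(D)^\alpha\big\|_{\Phi^{(1/\alpha)},\8}\,\big\|u_n\Phi^{-1}(D)^{1-\alpha}\big\|_{\Phi^{(1/(1-\alpha))},\8}.
$$
The first factor is a fixed finite constant, as in the computation before Proposition~\ref{pro:haagerup-weak}. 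For the second, note $\Phi^{-1}(D)^{1-\alpha}\in L^{\Phi^{(1/(1-\alpha))},\8}_0(\mathfrak{N})$ and, since the quasi-norm is $*$-invariant,
$$
\big\|u_n\Phi^{-1}(D)^{1-\alpha}\big\|_{\Phi^{(1/(1-\alpha))},\8}=\big\|\Phi^{-1}(D)^{1-\alpha}u_n^{*}\big\|_{\Phi^{(1/(1-\alpha))},\8}\longrightarrow0
$$
by Lemma~\ref{lem:convergence} applied to the fixed element $\Phi^{-1}(D)^{1-\alpha}$ and the bounded net $u_n^{*}\to0$ strongly. The cases $\alpha\in\{0,1\}$ are direct one-sided applications of Lemma~\ref{lem:convergence}. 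This gives the required convergence, hence (ii).

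The main obstacle is precisely this last convergence. Lemma~\ref{lem:convergence} only moves a bounded strongly convergent net past a fixed element on one side, whereas the approximant $\F_n(x)$ sits between the two unbounded weights $\Phi^{-1}(D)^\alpha$ and $\Phi^{-1}(D)^{1-\alpha}$. The device that resolves this is to first peel off one weight by the weak-Orlicz H\"older inequality and then use the $*$-invariance of the quasi-norm to turn the surviving two-sided factor into the one-sided form handled by Lemma~\ref{lem:convergence}. The only points requiring care are the membership $\Phi^{-1}(D)^{1-\alpha}\in L^{\Phi^{(1/(1-\alpha))},\8}_0(\mathfrak{N})$ and the upgrade from strong to $*$-strong convergence $\F_n(x)\to x$, both of which are routine given the facts already recorded.
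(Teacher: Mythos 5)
Your proposal is correct and follows essentially the same route as the paper: (i) from Proposition \ref{pro:Haagerup-subspace}, (iii) by composing Theorems \ref{thm:haagerup-finite-vn} and \ref{thm:haagerup-finite-state}, and (ii) from Lemma \ref{lem:convergence} together with the weak-Orlicz H\"older inequality of Lemma \ref{lem:holder-weak}. The only (immaterial) difference is in (ii): the paper first applies Lemma \ref{lem:convergence} to get $\Phi^{-1}(D)^{\alpha}\F_n(x)\to\Phi^{-1}(D)^{\alpha}x$ in $L^{\Phi^{(1/\alpha)},\infty}(\mathfrak{N})$ and then multiplies by the fixed right weight via H\"older (restricting to $\tfrac12\le\alpha\le1$ by Remark \ref{rk:equivalent}), whereas you peel off the left weight by H\"older first and pass to adjoints, which needs only the $*$-preservation of $\F_n$ and works for all $\alpha$.
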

\begin{proof}(i) By Proposition \ref{pro:Haagerup-subspace}, the result holds.

(ii) We assume that $\frac{1}{2}\le\alpha\le1$ (see Remark \ref{rk:equivalent}). If $x\in\U$, then $\F_n(x)$ converges to $x$  $\sigma$-strongly as $n\rightarrow\8$. By Lemma \ref{lem:convergence}, $\Phi^{-1}(D)^{\alpha}\F_n(x)\rightarrow \Phi^{-1}(D)^{\alpha}x$ in the norm of
 $L^{\Phi^{(\frac{1}{\alpha})},\8}(\mathfrak{N})$. Hence, using Lemma \ref{lem:holder-weak}, we get
 \be
 \Phi^{-1}(D)^{\alpha}\F_n(x)\Phi^{-1}(D)^{1-\alpha}\rightarrow \Phi^{-1}(D)^{\alpha}x\Phi^{-1}(D)^{1-\alpha}
 \ee
 in the norm of
 $L^{\Phi,\8}(\mathfrak{N})$. Therefore,  the desired result holds.

(iii) follows from Theorem \ref{thm:haagerup-finite-vn}.
\end{proof}

 Similar to Theorem \ref{thm:Lp-convergence-measuere}, we have the following result.

\begin{theorem}\label{thm:measure-convergence-equivalent} Let $0 \le \alpha \le1$.
Then
\be
\|x\|_{\Phi,\alpha}=\frac{\mu_1(x)}{\Phi^{-1}(1)},\qquad \forall x\in L^{\Phi,\alpha}(\M,\varphi).
\ee
In addition the norm topology in $L^{\Phi,\alpha}(\M)$ is homeomorphic to the topology of convergence in measure inherited from $L_0(\N)$.
\end{theorem}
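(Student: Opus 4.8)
The plan is to deduce the whole statement from a single fact: that $\Phi(|z|)\in L^1(\M,\varphi)$ for every $z\in L^{\Phi,\alpha}(\M,\varphi)$. Grant this for the moment. Applying \eqref{eq:generalized singular-haagerup} with $p=1$ to the positive element $\Phi(|z|)$ and combining it with \cite[Lemma 2.5(iv)]{FK} gives $\Phi(\mu_t(z))=\mu_t(\Phi(|z|))=t^{-1}\|\Phi(|z|)\|_1$, so that $\|\Phi(|z|)\|_1=\Phi(\mu_1(z))$. Using this for $z=x/\lambda$, the condition $\|\Phi(|x/\lambda|)\|_1\le1$ in Proposition \ref{pro:haagerup-weak} reads $\Phi(\mu_1(x)/\lambda)\le1$, i.e. $\lambda\ge\mu_1(x)/\Phi^{-1}(1)$; hence
\[
\|x\|_{\Phi,\alpha}=\inf\{\lambda>0:\ \|\Phi(|x/\lambda|)\|_1\le1\}=\frac{\mu_1(x)}{\Phi^{-1}(1)},
\]
which is the asserted norm identity.

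Thus the crux is the membership $\Phi(|z|)\in L^1(\M,\varphi)$, for which I would use the reduction apparatus of this section. By Proposition \ref{pro:Haagerup-subspace}, applied to the $\sigma^{\widehat{\varphi}}$-invariant subalgebra $\M$ of $\U=\M\rtimes_{\sigma^\varphi}G$, it is enough to prove it for $z\in L^{\Phi,\alpha}(\U,\widehat{\varphi})$. Fix such a $z$ and, by Theorem \ref{thm:Haagerup's reduction}(ii), pick $z_k\in L^{\Phi,\alpha}(\U_{n_k},\widehat{\varphi})$ with $\|z_k-z\|_{\Phi,\alpha}\to0$. Each $\U_{n_k}$ being finite, Proposition \ref{pro} gives $\Phi(|z_k|)\in L^1$ and $\|\Phi(|z_k|)\|_1=\Phi(\mu_1(z_k))$. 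Since $\|\cdot\|_{\Phi,\alpha}=\|\cdot\|_{\Phi,\8}$ dominates singular numbers through $\mu_t(y)\le\|y\|_{\Phi,\8}\,\Phi^{-1}(1/t)$ (see \eqref{eq:DistrSingularEquiv}), norm convergence forces $\mu_t(z_k-z)\to0$ for every $t>0$, i.e. $z_k\to z$ in measure; therefore $\Phi(|z_k|)\to\Phi(|z|)$ in measure while $\sup_k\|\Phi(|z_k|)\|_1<\8$. A Fatou argument for the Haagerup $L^1$-norm then places $\Phi(|z|)$ in $L^1(\M,\varphi)$. I expect this last passage to the limit -- guaranteeing that the measure-limit stays in the Haagerup $L^1$-space and keeps the homogeneity $\widehat{\sigma}_t(\Phi(|z|))=e^{-t}\Phi(|z|)$ -- to be the main obstacle, since for an individual generator $\Phi^{-1}(D)^\alpha y\Phi^{-1}(D)^{1-\alpha}$ this membership is not apparent.

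Finally, the topological assertion I would settle as in Theorem \ref{thm:Lp-convergence-measuere} (that is, through \cite[Theorem 3.1]{L}). One direction is already contained in the estimate $\mu_t(y)\le\|y\|_{\Phi,\alpha}\Phi^{-1}(1/t)$, which shows that on $L^{\Phi,\alpha}(\M,\varphi)\subset L^{\Phi,\8}_0(\N)$ (see \eqref{eq:haagerub-subspace-weakorlicz}) norm convergence entails convergence in measure. For the reverse I would use the rigidity established above: for $w\in L^{\Phi,\alpha}(\M,\varphi)$ the membership $\Phi(|w|)\in L^1(\M,\varphi)$ forces $\mu_t(w)=\Phi^{-1}(\Phi(\mu_1(w))/t)$, so that all singular numbers of $w$ vanish simultaneously with $\mu_1(w)$; applying this to $w=x_k-x$ and recalling $\|x_k-x\|_{\Phi,\alpha}=\mu_1(x_k-x)/\Phi^{-1}(1)$ shows that convergence in measure of a sequence in $L^{\Phi,\alpha}(\M,\varphi)$ towards a limit in the same space is equivalent to norm convergence. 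This is precisely the claimed homeomorphism.
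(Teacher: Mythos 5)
Your proposal is correct and follows essentially the same route as the paper: reduce everything to the membership $\Phi(|x|)\in L^1$ via the Haagerup reduction (approximation from the finite subalgebras $\U_n$ together with Proposition \ref{pro}), then read off the norm identity from Proposition \ref{pro:haagerup-weak} and \eqref{eq:generalized singular-haagerup}, and deduce the topological statement from that identity. The limit passage you flag as the main obstacle is exactly what the paper disposes of by citing Theorem \ref{thm:Lp-convergence-measuere}: since the $L^1$-norm topology coincides with the measure topology and $L^1(\M,\varphi)$ is closed in $L_0(\N)$, the measure-limit $\Phi(|x|)$ of the sequence $\Phi(|x_n|)\in L^1$ automatically lies in $L^1$ (no separate Fatou/homogeneity argument is needed).
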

\begin{proof} Let $x\in L^{\Phi,\alpha}(\M)$. By Theorem \ref{thm:Haagerup's reduction}, there is a sequence $(x_n)_{n\ge1}$ in $\bigcup_{n\ge1}L^{\Phi,\alpha}(\U_n)$ such that
\be
\|x-x_n\|_{\Phi,\alpha}=\|x-x_n\|_{\Phi,\8}\rightarrow0.
\ee
Since \cite[Proposition 3.3(1)]{BCLJ} remains valid in the case that $\Phi$ is a growth function,  it follows that $x_n \rightarrow x$ in measure. Then $\Phi(|x_n|) \rightarrow \Phi(|x|)$ in measure (see \cite{Ti}). On the other hand, from  (iii) in Theorem \ref{thm:Haagerup's reduction} and Proposition \ref{pro}, it follows that $\Phi(|x_n|) \in L^1(\U)$. Applying Theorem \ref{thm:Lp-convergence-measuere}, we obtain that $\Phi(|x|) \in L^1(\U)$.
Using Proposition \ref{pro:haagerup-weak},  \eqref{eq:generalized singular-haagerup} and \cite[Lemma 2.5]{FK}, we get that
\be
\begin{array}{rl}
\|x\|_{\Phi,\alpha} &=\inf\{\lambda>0:\|\Phi\big(|\frac{x}{\lambda}|)\|_1\le1\}\\
&=\inf\{\lambda>0:\mu_1(\Phi(|\frac{x}{\lambda}|\big))\le1\}\\
&=\inf\{\lambda>0:\Phi(\mu_1(\frac{x}{\lambda}))\le1\}\\
&=\inf\{\lambda>0:\mu_1(\frac{x}{\lambda})\le\Phi^{-1}(1)\}\\
&=\inf\{\lambda>0:\frac{\mu_1(x)}{\Phi^{-1}(1)}\le\lambda\}=\frac{\mu_1(x)}{\Phi^{-1}(1)}.
\end{array}
\ee
If $(x_n)_{n\ge1}$ is a sequence in $L^{\Phi,\alpha}(\M)$ such that $(x_n)_{n\ge1}$ converges  $x\in L^{\Phi,\alpha}(\M)$, then $\|x_n -x\|_{\Phi,\8}\rightarrow0$ as $n\rightarrow\8$. Hence, $x_n\rightarrow x$ in measure.

Conversely, if $(x_n)_{n\ge1}$ is a sequence in $L^{\Phi,\alpha}(\M)$  and  $x\in L_0(\N)$ such that $x_n \rightarrow x$ in measure, then $(x_n)_{n\ge1}$ is a Cauchy sequence in $L_0(\N)$. Using \cite[Lemma 3.1]{FK}, we know that $\mu_1(x_n-x_m)\rightarrow0$ as $n,m\rightarrow\8$.  It follows that
\be
\|x_n-x_m\|_{\Phi,\alpha}=\frac{\mu_1(x_n-x_m)}{\Phi^{-1}(1)}\rightarrow0\qquad \mbox{as}\;n,m\rightarrow\8.
\ee
Hence, there is an operator $y\in L^{\Phi,\alpha}(\M)$ such that $\|x_n -y\|_{\Phi,\8}\rightarrow0$ as $n\rightarrow\8$, and so $x_n \rightarrow y$  in measure. Therefore, $x=y$, i.e.  $x_n \rightarrow x$ in the norm of $L^{\Phi,\alpha}(\M)$.
\end{proof}

Let $\varphi'$ be another normal  faithful state on $\mathcal{M}$, and let $\{\sigma_{t}^{\varphi'}\}_{t\in\real}$ be the one parameter modular automorphism
group of $\mathcal{M}$ associated with $\varphi'$. Set
$$
\N'=\mathcal{M}\rtimes_{\sigma^{\varphi'}}\real.
$$
Let $\tau'$ be  the unique
 normal semi-finite faithful trace on  $\N'$ satisfying
$$
\tau'\circ\hat{\sigma}_{t}=e^{-t}\tau',\quad \forall t\in\mathbb{R}.
$$
We denote the Connes cocycle derivative by $[D_{\varphi'}:D_\varphi]_t=u_t\;(t\in\mathbb{R})$ and  define a unitary $u$ on $L^{2}(\mathbb{R},\mathcal{H})$ as following:
 \be
 (u\xi)(t)=u_{-t}\xi(t),\qquad t\in \mathbb{R},\quad\xi\in L^{2}(\mathbb{R},\mathcal{H}).
 \ee
Set
\be
\theta:B(L^{2}(\mathbb{R},\mathcal{H}))\rightarrow B(L^{2}(\mathbb{R},\mathcal{H})),\qquad x\mapsto uxu^*,\quad x\in B(L^{2}(\mathbb{R},\mathcal{H})).
\ee
Then $\theta$ is an automorphism and
\be
\theta(\pi(x))=\pi'(x),\quad \theta(\lambda(t))=\pi'(u^*_{t})\lambda(t);\qquad x\in\M,\quad t\in \mathbb{R},
\ee
where
\be
(\pi'(x)\xi)(t)=\sigma_{-t}^{\varphi'}(x)\xi(t),\qquad \xi\in
L^{2}(\mathbb{R},\mathcal{H}),\quad t\in\mathbb{R}.
\ee
In particular, $\theta$ defines a topological *-isomorphism  from
$\N$ to $\N'$ such that $\tau=\tau'\circ\theta$ (see \cite[Theorem II.38]{Te}).

Let
\be
V_\varphi(\varepsilon, \delta)=\left\{\begin{array}{c}
x\in L_0(\N)\;:\; \exists\;e\in\P(\mathcal{M}\rtimes_{\sigma^\varphi}\real)\;
  \mbox{such that}\\
   e(H)\subset D(x),\: \|xe\|\le\varepsilon\ \mbox{and}\
  \tau(e^\perp)\le\delta  \end{array}\right\}
\ee
and
\be
V_{\varphi'}(\varepsilon, \delta)=\left\{\begin{array}{c}
x\in L_0(\N)\;:\; \exists\;e\in\P(\mathcal{M}\rtimes_{\sigma^{\varphi'}}\real)\;
  \mbox{such that}\\
   e(H)\subset D(x),\: \|xe\|\le\varepsilon\ \mbox{and}\
  \tau(e^\perp)\le\delta  \end{array}\right\}.
\ee
By \cite[Corollary II.38]{Te}, we know that the mapping
$\theta:\;\mathcal{M}\rtimes_{\sigma^\varphi}\real\rightarrow\mathcal{M}\rtimes_{\sigma^{\varphi'}}\real$
extends to a topological *-isomorphism from
$L_0(\mathcal{M}\rtimes_{\sigma^\varphi}\real)$ onto $\rightarrow L_0(\mathcal{M}\rtimes_{\sigma^{\varphi'}}\real)$. We still denote this extension by $\theta$, then it satisfies   that
\be
\theta(V_\varphi(\varepsilon, \delta))=V_{\varphi'}(\varepsilon, \delta),\qquad \forall \varepsilon>0,\;\delta>0.
\ee
Hence, by \eqref{eq:generalized singular},
\beq\label{eq:sigularvalue-equivalent}
\mu_t(x)=\mu_t(\theta(x)),\qquad \forall x\in L_0(\mathcal{M}\rtimes_{\sigma^\varphi}\real),\; t>0.
\eeq
It follows that $\theta|_{L^{p}(\M,\varphi)}:L^{p}(\M,\varphi)\rightarrow L^{p}(\M,\varphi')\;(0<p<\8)$  is an  isometrically isomorphism.

\begin{theorem}\label{thm:independent of state} Let $0 \le \alpha \le1$.
Then
 $L^{\Phi,\alpha}(\U,\hat{\varphi})$ is isometrically isomorphic to $L^{\Phi,\alpha}(\U',\hat{\varphi}')$.
\end{theorem}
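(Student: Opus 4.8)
The plan is to lift, one level higher, the cocycle‑unitary argument that immediately precedes the theorem and that already produces the isometry $\theta\colon L^{p}(\M,\varphi)\to L^{p}(\M,\varphi')$, and then to let Theorem~\ref{thm:measure-convergence-equivalent} carry out the analytic work. First I would observe that the discrete analogue of that construction (the Connes cocycle $[D_{\varphi'}\!:\!D_\varphi]_t\in\M$, restricted to $t\in G$) gives a $*$-isomorphism $\U\cong\U'$ carrying $\hat\varphi$ to a faithful normal state on $\U'$; applying the preceding construction to this state and to $\hat\varphi'$ at the level of the second crossed products $\mathfrak{N}=\U\rtimes_{\sigma^{\hat\varphi}}\real$ and $\mathfrak{N}'=\U'\rtimes_{\sigma^{\hat\varphi'}}\real$ yields a topological $*$-isomorphism $\Theta\colon\mathfrak{N}\to\mathfrak{N}'$ that intertwines the two dual actions and preserves the canonical traces. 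In particular, exactly as in \eqref{eq:sigularvalue-equivalent}, trace preservation forces $\mu_t(y)=\mu_t(\Theta(y))$ for all $y\in L_0(\mathfrak{N})$ and $t>0$, so $\Theta$ is a homeomorphism for convergence in measure and maps $L^{\Phi,\8}_0(\mathfrak{N})$ isometrically onto $L^{\Phi,\8}_0(\mathfrak{N}')$.

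Next I would show that $\Theta$ sends the subspace generating $L^{\Phi,\alpha}(\U,\hat\varphi)$ onto the one generating $L^{\Phi,\alpha}(\U',\hat\varphi')$, in the spirit of the computation in Theorem~\ref{thm:haagerup-finite-state}. Since $\Theta$ commutes with the Borel functional calculus and restricts to a $*$-isomorphism of $\U$ onto $\U'$, it carries $\Phi^{-1}(h)^{\alpha}\U\,\Phi^{-1}(h)^{1-\alpha}$, with $h:=\Theta^{-1}(D')$, bijectively onto $\Phi^{-1}(D')^{\alpha}\U'\,\Phi^{-1}(D')^{1-\alpha}$. Once one knows that $\Phi^{-1}(h)^{\alpha}\U\,\Phi^{-1}(h)^{1-\alpha}$ is dense in $L^{\Phi,\alpha}(\U,\hat\varphi)$, Theorem~\ref{thm:measure-convergence-equivalent} closes the argument at once: both Orlicz norms equal $\mu_1(\cdot)/\Phi^{-1}(1)$, a quantity depending only on the singular numbers that $\Theta$ preserves, so $\Theta$ is isometric on this dense subspace and extends to the claimed isometric isomorphism.

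The heart of the matter, and the step I expect to be hardest, is precisely that density. The operator $h=\Theta^{-1}(D')$ is a positive nonsingular element of $L^{1}(\mathfrak{N})$ but is \emph{not} equal to $D$, the derivative attached to $\hat\varphi$: the two differ by the bounded Connes cocycle, so Lemma~\ref{lem:density} does not apply on the nose and $\Theta$ does not identify the \emph{standard} generating subspaces $\Phi^{-1}(D)^{\alpha}\U\Phi^{-1}(D)^{1-\alpha}$ and $\Phi^{-1}(D')^{\alpha}\U'\Phi^{-1}(D')^{1-\alpha}$. To remove this discrepancy I would invoke Haagerup's reduction, Theorem~\ref{thm:Haagerup's reduction}: on the finite corners $\U_n$ the state‑independence already obtained in Section~3 — Theorem~\ref{thm:haagerup-finite-state} together with Corollary~\ref{cor:isometric}, which identify $L^{\Phi,\alpha}(\U_n,\cdot)$ with the tracial space $L^{\Phi}(\U_n)$ regardless of the state — makes the twisted and standard generating subspaces interchangeable there, and Theorem~\ref{thm:Haagerup's reduction}(ii) propagates this equality of closures to the limit. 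A self‑contained alternative is to prove the density directly, approximating $\Phi^{-1}(h)^{\alpha}x\Phi^{-1}(h)^{1-\alpha}$ by elements of $\Phi^{-1}(D)^{\alpha}\U\Phi^{-1}(D)^{1-\alpha}$ via analytic continuation of the cocycle and controlling the error with the Hölder inequalities \eqref{eq:holder} and Lemma~\ref{lem:holder-weak}.
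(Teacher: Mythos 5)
Your proposal follows essentially the same route as the paper: the Connes-cocycle unitary yields a trace-preserving $*$-isomorphism $\Theta$ of the second crossed products $\mathfrak{N}\to\mathfrak{N}'$, which preserves generalized singular numbers and hence the $\|\cdot\|_{\Phi,\8}$-norm, and the mismatch between $\Theta^{-1}(D')$ and $D$ is handled exactly as you suggest, by Haagerup reduction combined with the finite-corner identifications of Section~3 and Theorem~\ref{thm:measure-convergence-equivalent}. The paper merely packages the final step as a Cauchy-sequence and convergence-in-measure argument on $\bigcup_{n}L^{\Phi,\alpha}(\U_n,\hat{\varphi})$ rather than as a density statement for the twisted generating subspace, but the substance is identical.
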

\begin{proof}
Set $\U' =\mathcal{M}\rtimes_{\sigma^{\varphi'}} G$. Denote by $\widehat{\varphi}'$  the dual weight of $\varphi'$ on $\U'$.
Now let $\mathfrak{N}'=\U'\rtimes_{\sigma^{\widehat{\varphi}'}}\mathbb{R}$ and $\rho'$ be the canonical  normal semi-finite faithful trace  on $\mathfrak{N}'$. Then $\N'$  is a von Neumann subalgebra
of $\mathfrak{N}'$ and $\rho'|_\N=\tau'$.  We denote by  $\widetilde{\varphi'}$ and $\widetilde{\widehat{\varphi}'}$  respectively the dual
weights of $\varphi'$ and $\hat{\varphi}'$. Then
 the Radon-Nikodym derivative of $\widetilde{\widehat{\varphi}'}$ with respect to $\rho'$ is equal to $D'$, where  $D'$ is
the Radon-Nikodym derivative of $\varphi'$ with respect to $\tau'$.

 Now let $\tilde{\theta}$ be an automorphism on $B(L^{2}(\mathbb{R},\mathcal{H}))$ defined by
\be
\tilde{\theta}(x)= uxu^*,\qquad x\in B(L^{2}(\mathbb{R},\mathcal{H})).
\ee
Then
\be
\tilde{\theta}(\pi_{\U}(x))=\pi'(x_{\U'}),\qquad \tilde{\theta}(\lambda(t))=\pi'(u^*_{t})\lambda(t);\qquad x\in\U,\qquad t\in \mathbb{R},
\ee
where
\be
(\pi_{\U}(x)\xi)(t)=\sigma_{-t}^{\widehat{\varphi}}(x)\xi(t),\qquad (\lambda(s)\xi)(t)=\xi(t-s),\qquad \xi\in
L^{2}(\mathbb{R},\mathcal{H}),\qquad t\in\mathbb{R}
\ee
and
\be
(\pi'_{\U'}(x)\xi)(t)=\sigma_{-t}^{{\widehat{\varphi}'}}(x)\xi(t),\qquad \xi\in
L^{2}(\mathbb{R},\mathcal{H}),\qquad t\in\mathbb{R}.
\ee
By \cite[Theorem 37]{Te}, $\theta$ defines an $*$-isomorphism  from
$\mathfrak{N}$ to $\mathfrak{N}'$ such that $\rho=\rho'\circ\theta$.  It is clear that  $\tilde{\theta}$ is an extension of $\theta$. We will denote still by $\theta$.

Let $\U_n'=\theta(\U_n)$ for any $n\in\mathbb{N}$. Define $\F'_n: \;\U'\rightarrow\U_n'$ by
\be
\F'_n(\theta(x))=\theta(\F_n(x),\qquad \forall x\in\U.
\ee
Since $\theta:\U\rightarrow\U'$ is  an $*$-isomorphism and $\sigma$-strongly continuous, we have that
\begin{enumerate}[\rm(i)]
\item Each $\U_n'$ is finite with the normal faithful
tracial state $\tau_n'$, where $\tau'_n(\theta(x))=\tau_n(x),\;\forall x\in\U$;
\item   $\F_n'$ is a faithful normal conditional expectation from $\U'$ onto $\U_n'$;
\item For any $x\in\U'$, $\F_n'(x)$ converges to $x$ $\sigma$-strongly as $n\rightarrow\8$;
\item $\bigcup_{n\ge1} \U_n'$ is $\sigma$-weakly dense in $\U'$.
 \end{enumerate}
Using the method in the proof of Theorem \ref{thm:Haagerup's reduction}, we obtain that
\begin{enumerate}[\rm(i)]
\item $\{L^{\Phi,\alpha}(\U_n',\hat{\varphi}')\}_{n\ge1}$ is a  increasing sequence of subspaces of $L^{\Phi,\alpha}(\U',\hat{\varphi}'))$;
\item $\bigcup_{n\ge1}L^{\Phi,\alpha}(\U_n',\hat{\varphi}')$  is dense in  $L^{\Phi,\alpha}(\U',\hat{\varphi}')$;
\item For each $n$, $L^{\Phi,\alpha}(\U_n',\hat{\varphi}')$ is isometric to the usual
on noncommutative  Orlicz space $L^{\Phi}(\U_n')$.
\end{enumerate}
It is clear that
 $\theta|_{L^{\Phi}(\U_n,\tau)}:L^{\Phi}(\U_n,\tau)\rightarrow L^{\Phi}(\U_n',\tau')$ is an isometrically isomorphic for each $n\in\mathbb{N}$.
Let $x\in L^{\Phi,\alpha}(\M)$. By Theorem \ref{thm:Haagerup's reduction}, there is a sequence $(x_n)_{n\ge1}$ in $\bigcup_{n\ge1}L^{\Phi,\alpha}(\U_n)$ such that
$\|x-x_n\|_{\Phi,\alpha}\rightarrow0$.
Then
\be
\|\theta(x_n)- \theta(x_m)\|_{\Phi,\alpha}=\|\theta(x_n-x_m)\|_{\Phi,\alpha}=\|x_n-x_m\|_{\Phi,\alpha}\rightarrow0\qquad\mbox{as} \;n,m\rightarrow\8.
\ee
Hence, there exists $y\in L^{\Phi,\alpha}(\U',\hat{\varphi}'))$ such that $\|\theta(x_n)- y\|_{\Phi,\alpha}\rightarrow0$. By Theorem \ref{thm:measure-convergence-equivalent}, we get that   $\theta(x_n)\rightarrow y$ in measure. On the other hand, by \eqref{eq:sigularvalue-equivalent} and \cite[Lemma 3.1]{FK}, we get $\theta(x_n)\rightarrow x$ in measure. Therefore, we deduce that
$\theta(x_n)=y$, and so $\theta(x)\in L^{\Phi,\alpha}(\U',\hat{\varphi}'))$. Using \eqref{eq:sigularvalue-equivalent} and Theorem \ref{thm:measure-convergence-equivalent}, we deduce that
$\|\theta(x)\|_{\Phi,\alpha}=\|x\|_{\Phi,\alpha}$. Since $\bigcup_{n\ge1}L^{\Phi,\alpha}(\U_n,\hat{\varphi})$ dense in $L^{\Phi,\alpha}(\U,\hat{\varphi})$ and $\bigcup_{n\ge1}L^{\Phi,\alpha}(\U_n',\hat{\varphi}')$ dense in $L^{\Phi,\alpha}(\U',\hat{\varphi}')$, we know that $\theta$ is  an isometrically isomorphic from $L^{\Phi,\alpha}(\U,\hat{\varphi})$ onto $L^{\Phi,\alpha}(\U',\hat{\varphi}')$.
\end{proof}

\begin{theorem}\label{lem:isometric}
Let $\U$ be fixed as  above and let $0 \le \alpha \le1$. Then   $L^{\Phi,\alpha}(\U)$ is isometric isomorphic to $L^{\Phi,1}(\U)$.
\end{theorem}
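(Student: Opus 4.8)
The plan is to realize both $L^{\Phi,\alpha}(\U)$ and $L^{\Phi,1}(\U)$ as completions of one and the same dense subspace of $L_0(\mathfrak{N})$, on which the two quasi-norms agree, so that the identity map extends to the desired isometric isomorphism. By Remark \ref{rk:equivalent} we may assume $\frac12\le\alpha\le1$, which places us in the range where Theorem \ref{thm:Haagerup's reduction}(ii) applies both to the exponent $\alpha$ and to the exponent $1$.

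First I would establish the finite-level equality: for every $n$,
$$
L^{\Phi,\alpha}(\U_n,\hat{\varphi})=L^{\Phi,1}(\U_n,\hat{\varphi})
$$
as subspaces of $L_0(\mathfrak{N})$, with identical quasi-norms. Each $\U_n$ is a finite von Neumann algebra carrying both the tracial state $\tau_n$ and the faithful normal state $\hat{\varphi}|_{\U_n}$, so Theorem \ref{thm:haagerup-finite-vn} (applied with $\M=\U_n$, $\nu=\tau_n$) shows that $L^{\Phi,\beta}(\U_n,\tau_n)=L^{\Phi}(\U_n)\otimes\Phi^{-1}(e^\cdot)$ is a single fixed subspace, independent of the exponent $\beta$. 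Theorem \ref{thm:haagerup-finite-state} then provides a single $\ast$-isomorphism $\theta_n$, built from the Connes cocycle $[D_{\hat{\varphi}}:D_{\tau_n}]_t$, which carries this one subspace onto $L^{\Phi,\beta}(\U_n,\hat{\varphi})$ for every $\beta$; since the domain and the map do not depend on $\beta$, neither does the image, and the displayed equality follows. Equivalently, this records that the isomorphism of Corollary \ref{cor:isometric} is, at each finite level, the identity on measurable operators.

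With this in hand, set $\mathcal{D}=\bigcup_{n\ge1}L^{\Phi,\alpha}(\U_n,\hat{\varphi})=\bigcup_{n\ge1}L^{\Phi,1}(\U_n,\hat{\varphi})$, the two unions now being literally the same set. By Theorem \ref{thm:Haagerup's reduction}(ii), $\mathcal{D}$ is dense in $L^{\Phi,\alpha}(\U)$ and also dense in $L^{\Phi,1}(\U)$. Moreover, Theorem \ref{thm:measure-convergence-equivalent} evaluates both quasi-norms on any $d\in\mathcal{D}$ as $\mu_1(d)/\Phi^{-1}(1)$, where $\mu_1$ is the generalized singular number relative to $(\mathfrak{N},\rho)$ and is therefore intrinsic to $L_0(\mathfrak{N})$; hence $\|d\|_{\Phi,\alpha}=\|d\|_{\Phi,1}$ for every $d\in\mathcal{D}$. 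Consequently the identity map on $\mathcal{D}$ is a surjective isometry between $(\mathcal{D},\|\cdot\|_{\Phi,\alpha})$ and $(\mathcal{D},\|\cdot\|_{\Phi,1})$, and since $\mathcal{D}$ is dense in each of the two complete spaces, it extends to an isometry $L^{\Phi,\alpha}(\U)\to L^{\Phi,1}(\U)$; this extension has dense range and, being isometric, closed range, so it is onto.

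The main obstacle is the finite-level equality of the second paragraph: one must verify that the construction underlying Theorem \ref{thm:haagerup-finite-state} genuinely yields the same concrete operator space for all exponents $\beta$, rather than merely isometrically isomorphic copies, so that the finite-level spaces for $\alpha$ and for $1$ coincide set-theoretically inside $L_0(\mathfrak{N})$. Once that identification is secured, the density statement from the reduction theorem together with the singular-value description of the quasi-norm in Theorem \ref{thm:measure-convergence-equivalent} combine routinely to upgrade the level-wise identity to a global isometric isomorphism.
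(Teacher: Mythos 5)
Your argument is correct, and at the strategic level it is the paper's argument --- reduce to the finite levels $\U_n$ via the reduction theorem and invoke the tracial identifications of Theorems \ref{thm:haagerup-finite-vn} and \ref{thm:haagerup-finite-state} --- but the mechanism you use to produce the global isomorphism is genuinely different and somewhat cleaner. The paper forms $T=T_1^{-1}T_\alpha$ from the abstract weak Orlicz spaces $L^{\Phi,\8}_{\alpha,\omega}(\mathfrak{N})$, checks $T(L^{\Phi,\alpha}(\U_n))=L^{\Phi,1}(\U_n)$ level by level, extends by density, and then needs a separate Cauchy-sequence argument to prove surjectivity. You instead upgrade Corollary \ref{cor:isometric} to a set-theoretic identity at each finite level: since $L^{\Phi,\beta}(\U_n,\tau_n)=L^{\Phi}(\U_n)\otimes\Phi^{-1}(e^\cdot)$ is $\beta$-independent and the map $\theta_n$ of Theorem \ref{thm:haagerup-finite-state} is one fixed $*$-isomorphism whose image is asserted to \emph{equal} $L^{\Phi,\beta}(\U_n,\hat{\varphi})$ (not merely to be isomorphic to it), that image cannot depend on $\beta$. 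So the obstacle you flag does resolve, and both $L^{\Phi,\alpha}(\U)$ and $L^{\Phi,1}(\U)$ become the closure of one common set $\mathcal{D}$ inside $L^{\Phi,\8}_0(\mathfrak{N})$; the identity map is then the isometry and surjectivity is automatic. Two minor simplifications are available to you: the norm equality on $\mathcal{D}$ does not require Theorem \ref{thm:measure-convergence-equivalent}, since both quasi-norms are by definition restrictions of $\|\cdot\|_{\Phi,\8}$, and the initial reduction to $\tfrac12\le\alpha\le1$ is unnecessary if you cite the statement, rather than the proof, of Theorem \ref{thm:Haagerup's reduction}(ii). What the paper's more laborious route buys is the concrete operator $T$ itself, which is reused later in the proof of Theorem \ref{thm:equivalent-hagerup}.
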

\begin{proof}
From the proof of Theorem \ref{thm:eqivalent-weakspace}, we know that if
\be
T_\alpha:\mathfrak{N}^{\Phi,\8}_{\alpha,\omega}\rightarrow \Phi^{-1}(D)^{\alpha}\mathfrak{N}\Phi^{-1}(D)^{1-\alpha}
\ee
 is defined  by
\be
T_\alpha(\Phi^{-1}(D)^{\alpha}x\Phi^{-1}(D)^{1-\alpha})=\Phi^{-1}(D)^{\alpha}x\Phi^{-1}(D)^{1-\alpha},
\ee
for any $\Phi^{-1}(D)^{\alpha}x\Phi^{-1}(D)^{1-\alpha}\in\mathfrak{N}^{\Phi,\8}_{\alpha,\omega}$.
Then  $T_\alpha$ extends to an isometric isomorphism between $L^{\Phi,\8}_{\alpha,\omega}(\mathfrak{N})$ and  $L^{\Phi,\8}_0(\mathfrak{N})$. The extension
will be denoted still by $T_\alpha$. Let $T=T_1^{-1}T_\alpha$. Then $T$ is an isometric isomorphic from $L^{\Phi,\8}_{\alpha,\varphi}(\mathfrak{N})$ onto  $L^{\Phi,\8}_{1,\varphi}(\mathfrak{N})$.

Let $\{L^{\Phi,\alpha}(\U_n)\}_{n\ge1}$ be the sequence in Theorem \ref{thm:Haagerup's reduction}. Then  $L^{\Phi,\alpha}(\U_n)$ is isometric to the usual
 noncommutative Orlicz space $L^{\Phi}(\U_n)$ associated with a finite trace. From the definition of $T$ it follows that $T(L^{\Phi,\alpha}(\U_n))=L^{\Phi,1}(\U_n)$. Hence,
$T|_{L^{\Phi,\alpha}(\U_n)}$ is an isometric isomorphic from $L^{\Phi,\alpha}(\U_n)$ onto $L^{\Phi,1}(\U_n)$, for all $ n\in \mathbb{N}$ (see the proof of Theorem \ref{thm:haagerup-finite-state}). Using (ii) of Theorem \ref{thm:Haagerup's reduction}, we can extend $T$ to an isometric map from  $L^{\Phi,\alpha}(\U)$ into $L^{\Phi,1}(\U)$.
Therefore,
 \beq\label{inequality2}
  \|T(x)\|_{\Phi,1}= \|x\|_{\Phi,\alpha},\quad \forall x\in  \U_n,\; n\in \mathbb{N}.
 \eeq
 Next, we only need to prove $T$ is surjective.  It is clear that $T(L^{\Phi,\alpha}(\U))$ is closed subspace of $L^{\Phi,1}(\U)$.
 Let $x\in \U$. From the proof of Theorem \ref{thm:Haagerup's reduction} (ii), we deduce that
 $$
\lim_{n\rightarrow\8}\| \F_n(x) -x\|_{\Phi,1}=0.
 $$
By \cite[Proposition 3.3]{BCLJ}, we get $\F_n(x) \rightarrow x$ in measure.
For any  $ n\in \mathbb{N}$, there is an $y_n\in L^{\Phi,\alpha}(\U_n)$ such that $T(y_n)=\F_n(x)$.

Since $\{L^{\Phi,\alpha}(\U_n)\}_{n\ge1}$ is increasing, using \eqref{inequality2}, we obtain that
$$
\|y_n-y_m\|_{\Phi,\alpha}=\|T(y_n)-T(y_m)\|_{\Phi,1}= \|\F_n(x)-\F_m(x)\|_{\Phi,1},\quad n,m\ge1.
$$
Hence, $\{y_n\}_{n\ge1}$ is a Cauchy sequence in $L^{\Phi,\alpha}(\U)$.
 So, there exists  $y$ in $L^{\Phi,\alpha}(\U)$ such that
$$
\lim_{n\rightarrow\8}\|y_n-y\|_{\Phi,\alpha}=0
$$
and  $\lim_{n\rightarrow\8}\|T(y_n)-T(y)\|_{\Phi,1}=0$. Therefore, $T(y)=x$. We  deduce that
$$
  (\U, \|\cdot\|_{\Phi,1})\subset T(L^{\Phi,\alpha}(\U)).
$$
Thus $T(L^{\Phi,\alpha}(\U))\supset L^{\Phi,1}(\U)$, and so $T$ is surjective.
\end{proof}

\section{Dual spaces of Haagerup noncommutative Orlicz spaces}

Let  $\Phi$  be an N-function, i.e., $\Phi:[0,\8)\rightarrow [0,\8) $  satisfy the following conditions:
(1) $\Phi$ is convex, (2)
$\Phi(t)=0$~iff~$t=0$,
(3)
$\lim_{t\rightarrow0}\frac{\Phi(t)}{t}=0,~\lim_{t\rightarrow
\infty}\frac{\Phi(t)}{t}=+\infty$.

Let $\Phi'(t)$ be the left derivative of $\Phi$. Then
$\Phi'(t)$  is left  continuous, nondecreasing on  $(0,\infty)$ and  satisfies:  $0<\Phi'(t)<\infty$
for  $0<t<\infty$,  $\Phi'(0)=0$ and $\lim_{t\rightarrow
\infty}\Phi'(t)=\infty$.  The left
inverse of $\Phi'$ ($\Psi'(s)=\inf\{t>0:\Phi'(t)>s\}$ for $s>0$) will be denoted by $\Psi'$. We define a complementary N-function $\Psi$ of  $\Phi$ by
\be
\Psi(s)=\int_{0}^{s}\Psi'(v)\,dv,\qquad s\ge0.
\ee
It is clear that $\Phi$ is the complementary N-function  of $\Psi$.
We call $(\Phi,\Psi)$ is a pair of complementary N-functions.

 Let $(\Phi,\Psi)$ be a pair of complementary N-functions, with inverses $\Phi^{-1},\;\Psi^{-1}$ (which are uniquely defined on $[0,\8)$). Then
 \beq\label{eq:inverses}
 t<\Phi^{-1}(t)\Psi^{-1}(t)<2t,\qquad t>0.
 \eeq

\begin{remark}\label{rk:equivalent} The definition of the N-function is a little different from the definition in \cite{RR}.
\end{remark}
 An N-function $\Phi$ is called  to satisfy the $\bigtriangledown_{2}$-condition
 for all $t$, written  as $\Phi\in\bigtriangledown_{2}$, if there is a constant $c>1$ such that $\Phi(t) \leq\frac{1}{2c}\Phi(ct)$ for all $t\ge0$.
For  a pair of complementary N-functions $(\Phi,\Psi)$, we have that $\Phi\in\bigtriangleup_{2}$ if and only if
$\Psi\in\bigtriangledown_{2}$ (see \cite[Theorem 1.1.2]{RR}).
Recall that
\beq\label{eq:relationship-index}
1 \le a_{\Phi} \le  b_{\Phi} \le \8,\qquad\frac{1}{a_{\Phi}}+\frac{1}{b_{\Psi}}=1,\qquad\frac{1}{a_{\Psi}}+\frac{1}{b_{\Phi}}=1
\eeq

\begin{lemma}\label{lem:N-function}
If $\Phi$ is a growth function, then there exists  $r>0$ such that
$\Phi^{(r)}$ is
an N-unction.
\end{lemma}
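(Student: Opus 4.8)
The plan is to show that a single sufficiently large power $r$ simultaneously makes $\Phi^{(r)}(t)=\Phi(t^r)$ convex and forces the two limit conditions in the definition of an N-function, while the vanishing condition and the continuity/surjectivity onto $[0,\8)$ come for free. Indeed, by the standing hypothesis $\Phi\in\Delta_2\cap\Delta_\frac12$ we have $0<a_\Phi\le b_\Phi<\8$, and since $\Phi(t)>0$ for $t>0$ the identity $\Phi^{(r)}(t)=\Phi(t^r)$ shows $\Phi^{(r)}(t)=0$ exactly when $t=0$; continuity and the fact that $\Phi^{(r)}$ maps $[0,\8)$ onto $[0,\8)$ are inherited from $\Phi$ and from $t\mapsto t^r$ being an increasing homeomorphism of $[0,\8)$. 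So conditions (2) and the basic growth-function structure require no work, and only convexity (condition (1)) and the two limits in (3) remain.

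First I would record the power-type estimates encoded in the indices. By definition $a_\Phi\le \frac{t\Phi'(t)}{\Phi(t)}\le b_\Phi$ for all $t>0$, so writing $\frac{d}{dt}\log\Phi(t)=\frac{\Phi'(t)}{\Phi(t)}$ and integrating this differential inequality between $1$ and $t$ gives
\[
\Phi(1)\,t^{a_\Phi}\le \Phi(t)\le \Phi(1)\,t^{b_\Phi}\ \ (t\ge1),\qquad \Phi(1)\,t^{b_\Phi}\le\Phi(t)\le \Phi(1)\,t^{a_\Phi}\ \ (0<t\le1).
\]
Replacing $t$ by $t^r$ (equivalently, invoking the scaling $a_{\Phi^{(r)}}=r a_\Phi$ and $b_{\Phi^{(r)}}=r b_\Phi$ recorded in Section 2), I obtain $\Phi^{(r)}(t)\le \Phi(1)\,t^{r a_\Phi}$ for $0<t\le1$ and $\Phi^{(r)}(t)\ge \Phi(1)\,t^{r a_\Phi}$ for $t\ge1$.

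Next I would fix the exponent. By \cite[Theorem 1.5]{AB} there is a natural number $n$ with $\Phi^{(m)}$ a convex growth function for every $m\ge n$; since $a_\Phi>0$, I may choose a natural number $r\ge n$ that in addition satisfies $r a_\Phi>1$. For this $r$, condition (1) is immediate. For condition (3), the estimates above yield $\Phi^{(r)}(t)/t\le \Phi(1)\,t^{\,r a_\Phi-1}\to0$ as $t\to0^+$ and $\Phi^{(r)}(t)/t\ge \Phi(1)\,t^{\,r a_\Phi-1}\to\8$ as $t\to\8$, where the exponent $r a_\Phi-1>0$ drives both limits. Together with (2) and the inherited structure, this shows $\Phi^{(r)}$ is an N-function.

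The only delicate point is the justification of the logarithmic-derivative integration, namely that the pointwise index bounds integrate to the stated two-sided power bounds; this is exactly where the standing assumption that $t\Phi'(t)$ is a growth function is used, since it guarantees the differentiability and integrability of $\Phi'/\Phi$ needed to pass from the inequalities on $\frac{t\Phi'(t)}{\Phi(t)}$ to the estimates on $\Phi$. I expect this to be the main (but routine) obstacle; everything else reduces to the elementary choice of $r$ and the bookkeeping above.
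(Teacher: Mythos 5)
Your proposal is correct and follows essentially the same route as the paper: choose $r$ large enough that $\Phi^{(r)}$ is convex (via \cite[Theorem 1.5]{AB}) and $a_{\Phi^{(r)}}=ra_\Phi>1$, then integrate the logarithmic-derivative bound $\frac{s(\Phi^{(r)})'(s)}{\Phi^{(r)}(s)}\ge a_{\Phi^{(r)}}$ to obtain the power-type comparison $\Phi^{(r)}(t_2)/\Phi^{(r)}(t_1)\ge (t_2/t_1)^{a_{\Phi^{(r)}}}$, which yields both limits in condition (3). Your extra verification of conditions (1)--(2) and of the vanishing/continuity structure is routine and consistent with what the paper leaves implicit.
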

\begin{proof} Choose $r>0$ such that  $\Phi^{(r)}$ is
a convex  and $a_{\Phi^{(r)}}=ra_\Phi>1$. Let $0<t_1<t_2<\8$. Then
\be
\ln\frac{\Phi^{(r)}(t_2)}{\Phi^{(r)}(t_1)}=\int^{t_2}_{t_1}\frac{(\Phi^{(r)})'(s)}{\Phi^{(r)}(s)}ds\ge\int^{t_2}_{t_1}\frac{a_{\Phi^{(r)}}}{s}ds
=\ln\frac{t_2^{a_{\Phi^{(r)}}}}{t_1^{a_{\Phi^{(r)}}}}.
\ee
Hence,
\be
\frac{\Phi^{(r)}(t_2)}{\Phi^{(r)}(t_1)}\ge\frac{t_2^{a_{\Phi^{(r)}}}}{t_1^{a_{\Phi^{(r)}}}}.
\ee
Therefore,
\be
\lim_{t\rightarrow
\infty}\frac{\Phi^{(r)}(t)}{t}=\lim_{t_2\rightarrow
\infty}\frac{\Phi^{(r)}(t_2)}{t_2}\ge \lim_{t_2\rightarrow
\infty}\frac{\Phi^{(r)}(t_1)}{t_1^{a_{\Phi^{(r)}}}}t_2^{a_{\Phi^{(r)}}-1}=+\8
\ee
and
\be
\lim_{t\rightarrow
0}\frac{\Phi^{(r)}(t)}{t}=\lim_{t_1\rightarrow0}\frac{\Phi^{(r)}(t_1)}{t_1}\le \lim_{t_1\rightarrow
0}\frac{\Phi^{(r)}(t_2)}{t_2^{a_{\Phi^{(r)}}}}t_1^{a_{\Phi^{(r)}}-1}=0.
\ee
It follows that $\Phi^{(r)}$ is an N-function.

\end{proof}

In the rest of this paper,  we always denote by  $\Phi$ an  N-function on $(0,\8)$ such that $\Phi\in\bigtriangleup_{2}$ and by $\Psi$ its a complementary N-function.

Let $(0,\8)$ equipped with the usual Lebesgue measure $m$. We denote by $L_0(0, \8)$ the space of $m$-measurable real-valued functions $f$ on $(0, \8)$ such that $m(\{\omega \in (0,\8) :\; |x(\omega)| > s\})<\8$ for some $s$.  The K\"{o}the dual of $L^{\Phi}(0,\8)$ is defined by
\be
L^{\Phi^\times}(0,\8)=\bigg \{f \in  L_0(0, \8):\; \sup_{\|g\|_\Psi \le 1} \int_0^\8 |f(t) g(t)| dt < \infty \bigg \}.
\ee
with the norm
\be
\|f\|_{\Phi^\times}=\sup_{\|g\|_\Psi \le 1} \int_0^\8 |f(t)g(t)| dt.
\ee
Then
\beq\label{eq:equal-norm-orlicz}
\|f\|_{\Phi}\le\|f\|_{\Phi^\times}\le2\|f\|_{\Phi},\qquad f\in L^{\Phi}(0,\8).
\eeq

It is clear that
\beq\label{eq:orlicz-dual}
L^{\Phi}(0,\8)^*=L^{\Psi^\times}(0,\8),\qquad L^{\Psi^\times}(0,\8)^*=L^{\Phi}(0,\8).
\eeq
We refer to \cite{RR} for the details on Orlicz spaces.

Let   $E=L^{\Phi}(0,\8)$ and  $(\R,\nu)$ be a semifinite  von Neumann algebra. Then   $E(\R)=L^{(\Phi)}(\R)$ and $E^\times(\R)=L^{\Phi^\times}(\R)$ (see \cite[Proposition 5.3]{DDP2}).
Since  $L^{\Phi^\times}(0,\8)$  and  $L^{\Phi}(0,\8)$ are separable,
\be
L^{\Phi}(\R)^*=L^{\Psi^\times}(\R),\qquad L^{\Psi^\times}(\R)^*=L^{\Phi}(\R)
\ee
(see \cite[Theorem 5.6 and p. 745]{DDP2}). By \eqref{eq:equal-norm-orlicz}, we get
\beq\label{eq:nc-orlicz-dual}
L^{\Phi}(\R)^*=L^{\Psi}(\R)\qquad \mbox{isometrically}.
\eeq

For any $x\in L_0(\R)$,  set $\tilde{\mu}_t(x)=\frac{1}{t}\int_0^t\mu_s(x)ds$. Then $ \mu_t(x)\le\tilde{\mu}_t(x)$ for all $t>0$ and the map $x\mapsto\tilde{\mu}(x)$ is a sublinear operator from
$L_0(\R)$ to $L_0(0,\8)$.

We have that if  $1< a_{\Phi} \le b_{\Phi}<\8$, then there exists a constant $C>0$ such that
\beq\label{eq:mu-t-mu*-t-1}
\sup_{t>0}t \Phi \big [ \tilde{\mu}_t (x) \big ] \leq C \sup_{t>0} t \Phi \big [ \mu_t (x) \big ]
\eeq
for all $x \in L^{\Phi,\8}(\R)$. Consequently,
\beq\label{eq:mu-t-mu^*-t-2}
\sup_{t>0}\varphi_\Phi(t)\tilde{\mu}_t (x) \le C' \sup_{t>0}\varphi_\Phi(t) \mu_t (x),\qquad \forall x \in L^{\Phi,\8}(\R).
\eeq

We use \eqref{eq:inverses} and \eqref{eq:mu-t-mu^*-t-2} to obtain the following result.
If $1< a_{\Phi} \le b_{\Phi}<\8$, and  set
 \be
\|x\|_{\tilde{\Phi},\8}=\sup_{t>0}\frac{1}{\varphi_\Psi(t)} \int_0^t\mu_s(x)ds,\qquad \forall x \in L^{\Phi,\8}(\R),
\ee
then $\|x\|_{\tilde{\Phi},\8}$ is an equivalent norm on $L^{\Phi,\8}(\R)$.

For more details see Proposition 2.1 and Corollary 2.1 in \cite{BM1}.

 Let  $\M_1$ be a von Neumann
 subalgebra of $\M$ such that $\M_1$  is invariant under $\sigma^\varphi$. Then there is a (unique)
normal conditional expectation  $\E$ such that $\varphi\circ\E=\varphi$ (cf. \cite{T1}). Recall that the conditional expectation $\E $ extends to a contractive projection  from $L^p(\M )$ onto $L^p(\M_1)\;(1\le p<\8)$, the extension
will be denoted still by $\E$. If $1\le p,q\le\8$ and $\frac{1}{p}+\frac{1}{q}=1$, then
\begin{equation}\label{eq:conditional expectation}
  tr(\E(x)y)=tr(x\E(y)),\qquad \forall x\in L^p(\M),\;\forall y\in L^q(\M).
\end{equation}

\begin{proposition}\label{pro:conditional-expectation}
  If $1< a_{\Phi} \le b_{\Phi}<\8$, then   $\E$   extends to a  contractive projection from  $L^{\Phi,\8}(\N)$ onto $L^{\Phi,\8}(\N_1)$, where $\phi=\varphi|_{\M_1}$,  $\N_1=\M_1\rtimes_{\sigma^\phi}\mathbb{R}$ and $\N=\mathcal{M}\rtimes_{\sigma^\varphi}\mathbb{R}$.  $\check{\E}$ has the following properties:
 \begin{enumerate}[\rm(i)]
   \item If $0\le\alpha\le1$ and $D$ is positive operator in $L^1(\N_1)$, then for any $x\in\M$,
   \be
   \check{\E}(\Phi^{-1}(D)^\alpha x\Phi^{-1}(D)^{1-\alpha})=\Phi^{-1}(D)^\alpha \check{\E}(x)\Phi^{-1}(D)^{1-\alpha}.
   \ee
   \item If $\tilde{\Psi}$ is an N function such that $\tilde{\Psi}^{-1}(t)\Phi^{-1}(t)=t$ for any $t\ge0$, then for any positive operator $D$  in $L^1(\N_1)$ and $x\in L^{\Phi,\8}(\N)$,
    \be
   \check{\E}(\tilde{\Psi}^{-1}(D)^\alpha x\tilde{\Psi}^{-1}(D)^{1-\alpha})=\tilde{\Psi}^{-1}(D)^\alpha \check{\E}(x)\tilde{\Psi}^{-1}(D)^{1-\alpha}.
   \ee
  \end{enumerate}
\end{proposition}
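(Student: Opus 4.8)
The plan is to obtain $\check\E$ by lifting the state-preserving conditional expectation to the crossed products and then transporting it to the weak Orlicz space by a submajorisation argument. Since $\M_1$ is $\sigma^\varphi$-invariant and $\varphi\circ\E=\varphi$, Takesaki's theorem forces $\E\circ\sigma^\varphi_t=\sigma^\phi_t\circ\E$ for every $t$, so $\E$ commutes with the modular group and therefore lifts to a unique normal conditional expectation of $\N$ onto $\N_1$, again denoted $\check\E$, determined by $\check\E(\pi(x))=\pi(\E(x))$ for $x\in\M$ and $\check\E(\lambda(s)\,\cdot\,)=\lambda(s)\check\E(\cdot)$. This lift commutes with the dual action, is $\N_1$-bimodular, and is trace preserving, $\tau\circ\check\E=\tau$ (recall $\tau_1=\tau|_{\N_1}$ and that the two dual weights share the Radon--Nikodym derivative $D$). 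In particular $\check\E$ is a norm-one projection of $L^1(\N)$ onto $L^1(\N_1)$ and of $\N$ onto $\N_1$; on $L^1(\N)$ it remains $\N_1$-bimodular and satisfies the trace identity \eqref{eq:conditional expectation}.

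To extend $\check\E$ to $L^{\Phi,\8}(\N)$, I would use that it is simultaneously contractive on the couple $(L^1(\N),\N)$. By Calder\'on--Mityagin $K$-functional monotonicity for this couple this gives the submajorisation
\[
\int_0^t\mu_s(\check\E x)\,ds\le\int_0^t\mu_s(x)\,ds,\qquad t>0,
\]
that is $\check\E x\preccurlyeq x$, for every $x\in L^1(\N)+\N$; and the hypothesis $1<a_\Phi\le b_\Phi<\8$ guarantees (by the interpolation inclusions as in \eqref{inclution}) that $L^{\Phi,\8}(\N)\subset L^1(\N)+\N$. Under the same hypothesis the functional
\[
\|x\|_{\tilde{\Phi},\8}=\sup_{t>0}\frac{1}{\varphi_\Psi(t)}\int_0^t\mu_s(x)\,ds
\]
is an equivalent norm on $L^{\Phi,\8}(\N)$ which is manifestly monotone under submajorisation. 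Hence $\|\check\E x\|_{\tilde{\Phi},\8}\le\|x\|_{\tilde{\Phi},\8}$, so $\check\E$ restricts to a contraction (for $\|\cdot\|_{\tilde{\Phi},\8}$) of $L^{\Phi,\8}(\N)$ into itself; it fixes $L^{\Phi,\8}(\N_1)$ and is idempotent, hence is the asserted projection onto $L^{\Phi,\8}(\N_1)$.

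For the two module identities the key is the $\N_1$-bimodularity of $\check\E$ pushed from bounded to affiliated multipliers. Because $D$ is affiliated with $\N_1$, the spectral projections $e_n=e_{(1/n,n]}(D)$ lie in $\N_1$, increase strongly to $1$, and make $\Phi^{-1}(D)^\alpha e_n,\ e_n\Phi^{-1}(D)^{1-\alpha}\in\N_1$ bounded. Bimodularity for these bounded elements gives
\[
\check\E\big(\Phi^{-1}(D)^\alpha e_n\,x\,e_n\Phi^{-1}(D)^{1-\alpha}\big)=\Phi^{-1}(D)^\alpha e_n\,\check\E(x)\,e_n\Phi^{-1}(D)^{1-\alpha}.
\]
For (i), with $x\in\M$, the truncated factors converge to $\Phi^{-1}(D)^\alpha$ and $\Phi^{-1}(D)^{1-\alpha}$ in $L^{\Phi^{(1/\alpha)},\8}(\N)$ and $L^{\Phi^{(1/(1-\alpha))},\8}(\N)$, exactly as in Lemma \ref{lem:density-orlicz}; the weak H\"older inequality (Lemma \ref{lem:holder-weak}) then forces both sides to converge in $\|\cdot\|_{\Phi,\8}$, and continuity of $\check\E$ yields the identity. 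For (ii) I would run the same truncation with $\tilde\Psi^{-1}(D)$ in place of $\Phi^{-1}(D)$: the relation $\tilde\Psi^{-1}(t)\Phi^{-1}(t)=t$ makes $\tilde\Psi^{-1}(D)$ a factor from the H\"older-conjugate scale, so by Lemma \ref{lem:holder-weak} each product $\tilde\Psi^{-1}(D)^\alpha x\,\tilde\Psi^{-1}(D)^{1-\alpha}$ is a well-defined $\tau$-measurable operator lying in $L^{1,\8}(\N)$, on which $\check\E$ acts through the same submajorisation estimate.

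The delicate step is precisely this limit in (ii). Unlike (i), here $x$ runs over all of $L^{\Phi,\8}(\N)$, a space in which $S(\N)$ need not be dense, so $x$ cannot first be replaced by a convenient approximant; the identity must instead be produced from convergence in measure of the truncated products together with the measure-continuity of $\check\E$, the conjugacy $\tilde\Psi^{-1}(t)\Phi^{-1}(t)=t$ being what keeps every intermediate product inside the domain of $\check\E$. Carefully justifying these convergences, rather than the (formal) bimodular identity itself, is the main obstacle I anticipate.
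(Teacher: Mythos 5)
Your proof follows essentially the same route as the paper's: lift $\E$ to a trace- and state-preserving normal conditional expectation $\check\E$ on the crossed product (the paper cites \cite[Theorem 4.1 (iv)]{H2} for this step), extend it to $L^{\Phi,\8}(\N)$ via its simultaneous contractivity on the couple $(L^1(\N),\N)$ — your explicit submajorisation argument with the equivalent norm $\|\cdot\|_{\tilde{\Phi},\8}$ is exactly what underlies the paper's appeal to the symmetric-space/Boyd-index formulation — and obtain the module identities by approximating the unbounded density factors by bounded elements of $\N_1$ (your spectral truncations $e_n=e_{(1/n,n]}(D)$ versus the paper's abstract approximating sequences $y_n,z_n\in\N_1$) and passing to the limit with the weak H\"older inequality. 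Your closing remark about part (ii) is apt and is, if anything, more careful than the source: the paper disposes of (ii) with ``the proof of (ii) is similar,'' whereas, as you observe, there $x$ ranges over all of $L^{\Phi,\8}(\N)$ and the product $\tilde{\Psi}^{-1}(D)^\alpha x\tilde{\Psi}^{-1}(D)^{1-\alpha}$ lands only in a weak-$L^1$-type space, so the action of $\check\E$ on it requires exactly the additional convergence-in-measure justification you flag.
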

\begin{proof}
The conditional expectation  $\E:\M\rightarrow\M_1$ commutes with $\{\sigma_{t}^\varphi\}_{t\in\real}$ (cf. \cite{C}), i.e.,
\be
\E\circ\sigma_{t}^\varphi=\sigma_{t}^\varphi\circ\E, \qquad \forall t\in\real.
\ee
 Using \cite[Theorem 4.1 (iv)]{H2}, we obtain that  the
conditional expectation $\E$ extends to a normal faithful conditional expectation $\check{\E}$
from $\N=\mathcal{M}\rtimes_{\sigma^\varphi}\mathbb{R}$ onto $\N_1=\M_1\rtimes_{\sigma^\phi}\mathbb{R}$, satisfying  $\tau_1\circ\check{\E}=\tau$
and $\widehat{\phi}\circ\check{\E}=\widehat{\varphi}$, where
 $\tau_1,\;\tau,\;\varphi,\;\widehat{\varphi}$ and $\widehat{\phi}$ are as in the Section 4. By \cite[Corollary 2.3]{BM1}, we may assume that $E=L^{\Phi,\8}(0,\8)$ is a symmetric Banach function space (with equivalent norm). Since $1< p_E \le q_E<\8$  (see \cite[page 26-28]{M} or \cite[Corollary 4.6]{BCLJ}), we can extend  to a contractive projection from
$L^{\Phi,\8}(\N)$ onto $L^{\Phi,\8}(\N_1)$. The extension will be denoted by $\check{\E}$.

Next, we  prove only (i). The proof of (ii) is
similar. Since $\Phi^{-1}(D)^\alpha\in L^{\Phi^{(\frac{1}{\alpha})},\8}_0(\N_1)$ and $\Phi^{-1}(D)^{1-\alpha}\in L^{\Phi^{(\frac{1}{1-\alpha})},\8}_0(\N_1)$, there are sequences $(y_n)_{n\ge1}$ and $(z_n)_{n\ge1}$  in $\N_1$ such that $y_n\rightarrow \Phi^{-1}(D)^\alpha$ in the norm of $L^{\Phi^{(\frac{1}{\alpha})},\8}(\N_1)$ and $z_n\rightarrow \Phi^{-1}(D)^{1-\alpha}$ in the norm of $L^{\Phi^{(\frac{1}{1-\alpha})},\8}(\N_1)$. It is clear that $y_nx\rightarrow \Phi^{-1}(D)^\alpha x$ in the norm of $L^{\Phi^{(\frac{1}{\alpha})},\8}(\N)$. Using Lemma \ref{lem:holder-weak}, we deduce that
\be
y_nxz_n\rightarrow \Phi^{-1}(D)^\alpha \Phi^{-1}(D)^\alpha x\Phi^{-1}(D)^{1-\alpha}
\ee
 in the norm of $L^{\Phi,\8}(\N)$. On the other hand,  we can extend $\check{\E}$ to a contractive projection from
$L^{\Phi^{(\alpha)},\8}(\N)$ (respectively, $L^{\Phi^{(\alpha)},\8}(\N)$) onto $L^{\Phi^{(\alpha)},\8}(\N_1)$ (respectively, $L^{\Phi^{(\alpha)},\8}(\N_1)$). We still it denoted by $\check{\E}$. Hence, $y_n=\check{\E}(y_n)\rightarrow\check{\E}(\Phi^{-1}(D)^\alpha)$ in the norm of $L^{\Phi^{(\frac{1}{\alpha})},\8}(\N_1)$ and $z_n=\check{\E}(z_n)\rightarrow \check{\E}(\Phi^{-1}(D)^{1-\alpha})$ in the norm of $L^{\Phi^{(\frac{1}{1-\alpha})},\8}(\N_1)$. Thus, $\check{\E}(\Phi^{-1}(D)^{\alpha})= \Phi^{-1}(D)^{\alpha}$ and $\check{\E}(\Phi^{-1}(D)^{1-\alpha})=\Phi^{-1}(D)^{1-\alpha}$. Therefore,
 \be
 \check{\E}(\Phi^{-1}(D)^\alpha x\Phi^{-1}(D)^{1-\alpha})&=&\lim_{n\rightarrow\8} \check{\E}(y_nxz_n)
=\lim_{n\rightarrow\8}y_n\check{\E}(x)z_n \\
&=&\Phi^{-1}(D)^\alpha \check{\E}(x)\Phi^{-1}(D)^{1-\alpha}.
 \ee
\end{proof}

For $0\le\alpha\le1$ we define
\be
\E_{\alpha}:\Phi^{-1}(D)^{\alpha}\M\Phi^{-1}(D)^{1-\alpha}\rightarrow\Phi^{-1}(D)^{\alpha}\M_1\Phi^{-1}(D)^{1-\alpha}
\ee
by
\be
\E_{\alpha}(\Phi^{-1}(D)^{\alpha}x\Phi^{-1}(D)^{1-\alpha})=\Phi^{-1}(D)^{\alpha}\E(x)\Phi^{-1}(D)^{1-\alpha},\qquad \forall x\in\M.
\ee

\begin{theorem}\label{thm:conditional-hagerup}
 Let $1< a_{\Phi} \le b_{\Phi}<\8$ and $0\le\alpha\le1$. Then $\E_{\alpha}$ extends to a  contractive projection from  $L^{\Phi,\alpha}(\M)$ onto $L^{\Phi,\alpha}(\M_1)$. The extension will be denoted still by $\E$.
\end{theorem}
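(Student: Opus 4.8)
The plan is to obtain the extension as the restriction of the contractive projection $\check{\E}$ already produced in Proposition~\ref{pro:conditional-expectation}, and then to verify by a density/continuity argument that this restriction carries $L^{\Phi,\alpha}(\M)$ into $L^{\Phi,\alpha}(\M_1)$. First I would record the concrete realization of the spaces in play. By the definition of $L^{\Phi,\alpha}(\M)$ the quasi-norm $\|\cdot\|_{\Phi,\alpha}$ is literally the restriction of $\|\cdot\|_{\Phi,\8}$, so, using \eqref{eq:haagerub-subspace-weakorlicz}, the space $L^{\Phi,\alpha}(\M)$ is (isometrically) the closure of $\Phi^{-1}(D)^{\alpha}\M\Phi^{-1}(D)^{1-\alpha}$ inside $L^{\Phi,\8}(\N)$; likewise $L^{\Phi,\alpha}(\M_1)$ is the closure of $\Phi^{-1}(D)^{\alpha}\M_1\Phi^{-1}(D)^{1-\alpha}$ in $L^{\Phi,\8}(\N)$, and by Proposition~\ref{pro:Haagerup-subspace} it is a closed subspace of $L^{\Phi,\alpha}(\M)$. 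Realizing both completions as closures inside the single complete ambient space $L^{\Phi,\8}(\N)$ is what lets me apply one continuous operator to both domain and target.

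Next I would invoke Proposition~\ref{pro:conditional-expectation}: since $1< a_{\Phi} \le b_{\Phi}<\8$, the map $\check{\E}$ is a contractive projection from $L^{\Phi,\8}(\N)$ onto $L^{\Phi,\8}(\N_1)$, and part~(i) of that proposition yields $\check{\E}(\Phi^{-1}(D)^{\alpha}x\Phi^{-1}(D)^{1-\alpha})=\Phi^{-1}(D)^{\alpha}\E(x)\Phi^{-1}(D)^{1-\alpha}$ for every $x\in\M$. Thus $\check{\E}$ agrees with $\E_{\alpha}$ on the generating subspace $\Phi^{-1}(D)^{\alpha}\M\Phi^{-1}(D)^{1-\alpha}$, and the values of $\E_{\alpha}$ lie in $\Phi^{-1}(D)^{\alpha}\M_1\Phi^{-1}(D)^{1-\alpha}\subset L^{\Phi,\alpha}(\M_1)$. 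Because $\check{\E}$ is $\|\cdot\|_{\Phi,\8}$-continuous and $L^{\Phi,\alpha}(\M_1)$ is closed, this inclusion propagates to the closure, giving $\check{\E}\big(L^{\Phi,\alpha}(\M)\big)\subset L^{\Phi,\alpha}(\M_1)$; the restriction is contractive because $\check{\E}$ is. This restriction agrees with $\E_{\alpha}$ on a dense subspace and is continuous, so it is precisely the advertised extension, which I would denote $\E$.

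Finally I would check the projection and surjectivity properties. For $y\in\M_1$ one has $\E(y)=y$, so $\check{\E}$ fixes each element $\Phi^{-1}(D)^{\alpha}y\Phi^{-1}(D)^{1-\alpha}$ of the dense subspace of $L^{\Phi,\alpha}(\M_1)$; by continuity $\E$ acts as the identity on all of $L^{\Phi,\alpha}(\M_1)$. Hence $\E^{2}=\E$ and the range is exactly $L^{\Phi,\alpha}(\M_1)$, so $\E$ is a contractive projection from $L^{\Phi,\alpha}(\M)$ onto $L^{\Phi,\alpha}(\M_1)$.

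The essential content has in fact been front-loaded into Proposition~\ref{pro:conditional-expectation}, namely the boundedness of $\check{\E}$ on the weak Orlicz space and the commutation identity~(i); once those are granted, the theorem reduces to a routine continuity/density argument. Consequently the main point to be careful about is not an estimate but a bookkeeping issue: ensuring that the two abstract completions are correctly identified with closures inside one and the same space $L^{\Phi,\8}(\N)$, so that a single continuous map $\check{\E}$ can be restricted to the smaller spaces and so that closedness of $L^{\Phi,\alpha}(\M_1)$ may legitimately be used to pass the containment to the closure.
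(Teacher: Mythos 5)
Your proposal is correct and follows essentially the same route as the paper: restrict the contractive projection $\check{\E}$ of Proposition~\ref{pro:conditional-expectation} to $L^{\Phi,\alpha}(\M)$ and use the commutation identity (i) together with density to identify this restriction with the extension of $\E_{\alpha}$; you merely spell out the continuity, idempotence and surjectivity details that the paper compresses into one sentence. The only step worth making explicit (as the paper does) is that $D$, the Radon--Nikodym derivative of $\hat{\varphi}$ with respect to $\tau$, coincides with that of $\hat{\phi}$ with respect to $\tau_1$ and hence is a positive operator in $L^{1}(\N_1)$, which is the hypothesis needed to invoke part (i) of Proposition~\ref{pro:conditional-expectation}.
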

\begin{proof} Let $\check{\E}$ be the extension of $\E$ in Proposition \ref{pro:conditional-expectation}. Recall that
 the Radon-Nikodym derivative of $\hat{\phi}$ with respect to $\tau_1$ is equal to $D$,
the Radon-Nikodym derivative of $\hat{\varphi}$ with respect to $\tau$. Applying  Lemma \ref{lem:density} and \eqref{eq:haagerub-subspace-weakorlicz}, we obtain that  for all $x\in\M$,
\be
\check{\E}(\Phi^{-1}(D)^{\alpha}x\Phi^{-1}(D)^{1-\alpha})=\Phi^{-1}(D)^{\alpha}\check{\E}(x)\Phi^{-1}(D)^{1-\alpha}=\Phi^{-1}(D)^{\alpha}\E(x)\Phi^{-1}(D)^{1-\alpha}.
\ee
Consequently, $\check{\E}|_{L^{\Phi,\alpha}(\M)}$ is the desired extension of $\E_{\alpha}$.
\end{proof}

 \begin{corollary}\label{cor:Haagerup's reduction} Let $\U,\;\U_n,\;\F_n\; (\forall n\in \mathbb{N})$ be fixed as in the above section. If $1< a_{\Phi} \le b_{\Phi}<\8$ and $0\le\alpha\le1$, then
 $L^{\Phi,\alpha}(\M)$  and all   $L^{\Phi,\alpha}(\U_n)$
are 1-complemented in $L^{\Phi,\alpha}(\U)$.
 \end{corollary}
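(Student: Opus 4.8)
The plan is to deduce the corollary directly from Theorem~\ref{thm:conditional-hagerup} by taking the ambient algebra there to be $\U$ rather than $\M$. Since $\hat{\varphi}$ is a faithful normal state on $\U$, the pair $(\U,\hat{\varphi})$ satisfies exactly the standing hypotheses imposed on $(\M,\varphi)$ throughout the paper, so the whole construction of the Haagerup Orlicz space and Theorem~\ref{thm:conditional-hagerup} are available verbatim with $\U$ in place of $\M$ and $\hat{\varphi}$ in place of $\varphi$.

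First I would check that $\M$ and each $\U_n$ are von Neumann subalgebras of $\U$ invariant under the modular group $\sigma^{\hat{\varphi}}$. For $\M$ this was recorded above: $\sigma_t^{\hat{\varphi}}|_\M=\sigma_t^{\varphi}$, whence $\sigma_t^{\hat{\varphi}}(\M)=\M$ for all $t$. For $\U_n$ the second identity in \eqref{eq:equation-dual weight-condition-n} gives $\sigma_t^{\hat{\varphi}}\circ\F_n=\F_n\circ\sigma_t^{\hat{\varphi}}$; since $\U_n$ is the range of $\F_n$, every $y\in\U_n$ satisfies $\sigma_t^{\hat{\varphi}}(y)=\sigma_t^{\hat{\varphi}}(\F_n(y))=\F_n(\sigma_t^{\hat{\varphi}}(y))\in\U_n$, so $\sigma_t^{\hat{\varphi}}(\U_n)=\U_n$. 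Moreover $\F\colon\U\to\M$ and $\F_n\colon\U\to\U_n$ are the canonical state-preserving conditional expectations, by \eqref{eq:equation-dual weight-condition} and \eqref{eq:equation-dual weight-condition-n} respectively (that is, $\hat{\varphi}\circ\F=\hat{\varphi}$ and $\hat{\varphi}\circ\F_n=\hat{\varphi}$). These are precisely the data $(\M_1,\E)$ required in Theorem~\ref{thm:conditional-hagerup}.

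Then, using the hypotheses $1<a_\Phi\le b_\Phi<\8$ and $0\le\alpha\le1$, I would invoke Theorem~\ref{thm:conditional-hagerup} twice: once for the subalgebra $\M$ with expectation $\F$, and once for each subalgebra $\U_n$ with expectation $\F_n$. This yields a contractive projection of $L^{\Phi,\alpha}(\U)$ onto $L^{\Phi,\alpha}(\M)$ and, for every $n$, a contractive projection of $L^{\Phi,\alpha}(\U)$ onto $L^{\Phi,\alpha}(\U_n)$, where the relevant subspaces are exactly the isometric images furnished by Proposition~\ref{pro:Haagerup-subspace} and Theorem~\ref{thm:Haagerup's reduction}. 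A contractive idempotent onto a nonzero subspace necessarily has norm one, so each of these subspaces is $1$-complemented in $L^{\Phi,\alpha}(\U)$, which is the assertion.

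Because the analytic work is already carried out in Theorem~\ref{thm:conditional-hagerup} and, behind it, in Proposition~\ref{pro:conditional-expectation}, I do not expect a serious obstacle here; the corollary is essentially a specialization. The only point requiring care is the bookkeeping that verifies the $\sigma^{\hat{\varphi}}$-invariance of $\M$ and of each $\U_n$ together with the state-preservation of $\F$ and $\F_n$, so that the general theorem genuinely applies in the reduction setting $(\U,\hat{\varphi})$ rather than in the original setting $(\M,\varphi)$.
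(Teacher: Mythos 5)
Your proposal is correct and coincides with the paper's (implicit) argument: the corollary is stated there without a separate proof precisely because it is the specialization of Theorem~\ref{thm:conditional-hagerup} to the ambient pair $(\U,\hat{\varphi})$ with the $\sigma^{\hat{\varphi}}$-invariant subalgebras $\M$ and $\U_n$ and their state-preserving expectations $\F$ and $\F_n$. Your verification of the invariance of $\M$ and $\U_n$ and of the state-preservation of $\F$, $\F_n$ is exactly the bookkeeping the paper leaves to the reader.
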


\begin{theorem}\label{thm:equivalent-hagerup}
Let $1< a_{\Phi} \le b_{\Phi}<\8$ and   $0\le\alpha\le1$. Then   $L^{\Phi,\alpha}(\M,\varphi)$ is isomorphic to $L^{\Phi,1}(\M,\varphi)$.
\end{theorem}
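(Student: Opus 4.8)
The plan is to deduce the statement for $\M$ from the already established isomorphism for the discrete crossed product $\U=\M\rtimes_{\sigma^\varphi}G$, transporting it through the state-preserving conditional expectation $\F$ of \eqref{conditional-expectation1}. I realize all the spaces concretely inside $L^{\Phi,\8}_0(\mathfrak{N})$, where $\mathfrak{N}=\U\rtimes_{\sigma^{\hat\varphi}}\mathbb{R}$ carries the canonical trace and the Radon-Nikodym derivative $D$, and where $\N=\M\rtimes_{\sigma^\varphi}\mathbb{R}\subset\mathfrak{N}$. Writing $T_\alpha,T_1$ for the isometric identifications used in the proof of Theorem~\ref{lem:isometric}, set
\be
V_\beta=\overline{\{\Phi^{-1}(D)^{\beta}x\Phi^{-1}(D)^{1-\beta}:\;x\in\U\}},\qquad
W_\beta=\overline{\{\Phi^{-1}(D)^{\beta}x\Phi^{-1}(D)^{1-\beta}:\;x\in\M\}},
\ee
the closures being taken in $L^{\Phi,\8}_0(\mathfrak{N})$. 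Then $T_\beta$ identifies $L^{\Phi,\beta}(\U,\hat\varphi)$ isometrically with $V_\beta$ and $L^{\Phi,\beta}(\M,\varphi)$ isometrically with $W_\beta$, for $\beta\in\{\alpha,1\}$.

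First I would reinterpret Theorem~\ref{lem:isometric}: the surjectivity of $T=T_1^{-1}T_\alpha$ proved there is exactly the assertion $T_\alpha(L^{\Phi,\alpha}(\U))=T_1(L^{\Phi,1}(\U))$, that is, $V_\alpha=V_1$. Next, since $1<a_\Phi\le b_\Phi<\8$, Theorem~\ref{thm:conditional-hagerup} (equivalently Corollary~\ref{cor:Haagerup's reduction}) applies to the pair $\M\subset\U$ and produces the contractive projection $\check{\F}$ extending $\F$ to the relevant weak Orlicz spaces over $\mathfrak{N}$. By Proposition~\ref{pro:conditional-expectation}(i), applied with exponents $\beta\in\{\alpha,1\}$,
\be
\check{\F}\big(\Phi^{-1}(D)^{\beta}x\Phi^{-1}(D)^{1-\beta}\big)=\Phi^{-1}(D)^{\beta}\F(x)\Phi^{-1}(D)^{1-\beta},\qquad x\in\U,
\ee
and since $\F(\U)=\M$, the map $\check{\F}$ carries the generating set of $V_\beta$ exactly onto the generating set of $W_\beta$.

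The key step is then immediate: $\check{\F}$ is a bounded projection, so it maps closed subspaces to closed subspaces, whence $\check{\F}(V_\beta)=W_\beta$ for $\beta\in\{\alpha,1\}$. Combining this with $V_\alpha=V_1$ yields
\be
W_\alpha=\check{\F}(V_\alpha)=\check{\F}(V_1)=W_1.
\ee
Thus $T_\alpha$ and $T_1$ identify both $L^{\Phi,\alpha}(\M,\varphi)$ and $L^{\Phi,1}(\M,\varphi)$ with one and the same subspace $W_\alpha=W_1$ of $L^{\Phi,\8}_0(\mathfrak{N})$, and the restriction $T_1^{-1}T_\alpha$ is therefore an isometric isomorphism of $L^{\Phi,\alpha}(\M,\varphi)$ onto $L^{\Phi,1}(\M,\varphi)$; in particular the two spaces are isomorphic.

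I expect the main obstacle to be the bookkeeping that legitimizes these realizations. One must verify that the maps $T_\alpha,T_1$ constructed for $\U$ in Theorem~\ref{lem:isometric} restrict compatibly to the $\N$-affiliated elements, so that they really identify $L^{\Phi,\beta}(\M,\varphi)$ with $W_\beta$, and that the extension $\check{\F}$ furnished by Theorem~\ref{thm:conditional-hagerup} acts by the formula of Proposition~\ref{pro:conditional-expectation}(i) simultaneously on the weak Orlicz spaces attached to the exponents $\alpha$ and $1$. Once this compatibility is in place, the argument collapses, as above, to pushing the single equality $V_\alpha=V_1$ through the one bounded projection $\check{\F}$.
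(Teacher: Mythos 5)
Your proof is correct and is essentially the paper's own argument: the paper likewise restricts $T=T_1^{-1}T_\alpha$ from Theorem \ref{lem:isometric} to $L^{\Phi,\alpha}(\M,\varphi)$ and uses the commutation of the extended conditional expectation $\F$ with $T$ (your identity $\check{\F}(V_\beta)=W_\beta$ together with $V_\alpha=V_1$) to conclude $T(L^{\Phi,\alpha}(\M))=\F(T(L^{\Phi,\alpha}(\U)))=\F(L^{\Phi,1}(\U))=L^{\Phi,1}(\M)$. One small repair: a bounded projection need not carry arbitrary closed subspaces to closed subspaces, but the needed equality $\check{\F}(V_\beta)=W_\beta$ still follows since $\check{\F}$ is continuous, maps the dense generating set of $V_\beta$ into $W_\beta$, and fixes $W_\beta$ pointwise.
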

\begin{proof}  Let $\F:\U\rightarrow\M$ be the
 conditional expectation in   \eqref{eq:equation-dual weight-condition}.  For $0\le\alpha\le1$ we define
\be
\F_{\alpha}:\Phi^{-1}(D)^{\alpha}\U\Phi^{-1}(D)^{1-\alpha}\rightarrow\Phi^{-1}(D)^{\alpha}\M\Phi^{-1}(D)^{1-\alpha}
\ee
by
\be
\F_{\alpha}(\Phi^{-1}(D)^{\alpha}x\Phi^{-1}(D)^{1-\alpha})=\Phi^{-1}(D)^{\alpha}\F(x)\Phi^{-1}(D)^{1-\alpha},\qquad \forall x\in\U.
\ee
By Theorem \ref{thm:conditional-hagerup},  $\F_{\alpha}$ extends to a  contractive projection from  $L^{\Phi,\alpha}(\U)$ onto $L^{\Phi,\alpha}(\M)$. It will be denoted still by $\F$.
Let $T_\alpha\;(0\le\alpha\le1)$ and $T$ be as in the proof of Lemma \ref{lem:isometric}. Then
\be
\F_{\alpha}(T_\alpha(x))=T_\alpha(\F_{\alpha}(x))\qquad\mbox{and}\qquad\F_1(T_1(x))=T_1(\F_1(x)),\qquad x\in\U.
\ee
Hence, we have that $\F\circ T=T\circ\F $.
It follows that
\be
 T(L^{\Phi,\alpha}(\M))=T(\F(L^{\Phi,\alpha}(\U)))=\F(T(L^{\Phi,\alpha}(\U)))=\F(L^{\Phi,1}(\U))= L^{\Phi,1}(\M)
\ee
Therefore, $ T|_{L^{\Phi,\alpha}(\M)}:\;L^{\Phi,\alpha}(\M)\rightarrow L^{\Phi,1}(\M)$ is a isometric isomorphisim.
This gives the desired result.
\end{proof}

 We will denote by
$L^{\Phi}(\M,\varphi)$ for the  Haagerup noncommutative Orlicz space
$L^{\Phi,\alpha}(\M,\varphi)$.

\begin{lemma}\label{lem:equivalent-Orlicz-function} Let
\be\tilde{\varphi}_\Phi(t)=\left\{\begin{array}{cc}
                         \frac{t}{\varphi_\Psi(t)} &\mbox{if}\; t>0 \\
                         0 &\mbox{if}\; t=0
                       \end{array}
\right.,\qquad
\tilde{\Phi}(t)=\left\{\begin{array}{cc}
                         \frac{1}{\tilde{\varphi}_\Phi^{-1}(\frac{1}{t})} &\mbox{if}\; t>0 \\
                         0 &\mbox{if}\; t=0
                       \end{array}
\right.,
\ee
then $\tilde{\Phi}(t)$, $t\tilde{\Phi}'(t)$ are growth functions, and $\tilde{\Phi}^{-1}(t)\Psi^{-1}(t)=t$ for any $t>0$,.
 \end{lemma}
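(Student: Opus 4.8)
The plan is to reduce the whole lemma to the single closed-form identity $\tilde{\Phi}^{-1}(t)=t/\Psi^{-1}(t)$, from which part (3) is immediate and the two growth-function assertions become statements about $\Psi^{-1}$ and the complementary pair $(\Phi,\Psi)$. First I would unwind the definitions. Applying the formula defining $\varphi_\Phi$ to $\Psi$ gives $\varphi_\Psi(t)=1/\Psi^{-1}(1/t)$, hence $\tilde{\varphi}_\Phi(t)=t/\varphi_\Psi(t)=t\,\Psi^{-1}(1/t)$ for $t>0$. Writing $\sigma=1/t$ shows $\tilde{\varphi}_\Phi(t)=\Psi^{-1}(\sigma)/\sigma$; since $\Psi$ is an N-function, $\Psi^{-1}$ is concave with $\Psi^{-1}(0)=0$, so $\Psi^{-1}(\sigma)/\sigma$ is nonincreasing in $\sigma$ and therefore $\tilde{\varphi}_\Phi$ is a continuous strictly increasing bijection of $[0,\infty)$ with $\tilde{\varphi}_\Phi(0^+)=0$, $\tilde{\varphi}_\Phi(\infty)=\infty$. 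Thus $\tilde{\varphi}_\Phi^{-1}$ is well defined, and solving $y=1/\tilde{\varphi}_\Phi^{-1}(1/t)$ for $t$, together with $\tilde{\varphi}_\Phi(1/y)=(1/y)\Psi^{-1}(y)$, yields
\[
\tilde{\Phi}^{-1}(y)=\frac{1}{\tilde{\varphi}_\Phi(1/y)}=\frac{y}{\Psi^{-1}(y)},\qquad y>0 .
\]
Multiplying by $\Psi^{-1}(y)$ gives $\tilde{\Phi}^{-1}(y)\Psi^{-1}(y)=y$, which is exactly part (3).

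Next I would check that $\tilde{\Phi}$ is a growth function. Put $h(t)=t/\Psi^{-1}(t)=\tilde{\Phi}^{-1}(t)$. Concavity of $\Psi^{-1}$ through the origin makes $\Psi^{-1}(t)/t$ nonincreasing, so $h=(\Psi^{-1}(t)/t)^{-1}$ is continuous and nondecreasing, while $h(0^+)=0$ and $h(\infty)=\infty$ follow from the N-function limits $\Psi^{-1}(t)/t\to\infty$ as $t\to0^+$ and $\Psi^{-1}(t)/t\to0$ as $t\to\infty$. Hence $h$ is a growth function, and as a strictly increasing continuous bijection its inverse $\tilde{\Phi}$ is again a growth function. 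As by-products I would record that combining $\tilde{\Phi}^{-1}(t)=t/\Psi^{-1}(t)$ with \eqref{eq:inverses} gives $\tfrac12\Phi^{-1}(t)\le\tilde{\Phi}^{-1}(t)\le\Phi^{-1}(t)$, so $\tilde{\Phi}$ is equivalent to $\Phi$, and that the index computation $\frac{t(\tilde{\Phi}^{-1})'(t)}{\tilde{\Phi}^{-1}(t)}=1-\frac{t(\Psi^{-1})'(t)}{\Psi^{-1}(t)}$ together with \eqref{eq:relationship-index} yields $a_{\tilde{\Phi}}=a_\Phi$ and $b_{\tilde{\Phi}}=b_\Phi$; in particular $0<a_{\tilde{\Phi}}\le b_{\tilde{\Phi}}<\infty$, so $\tilde{\Phi}\in\Delta_2\cap\Delta_\frac{1}{2}$.

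Finally, the assertion that $t\tilde{\Phi}'(t)$ is a growth function is the main obstacle, because its monotonicity does not follow formally from that of $\tilde{\Phi}$ (note $\tilde{\Phi}$ need not be convex: the index bound only gives that $\tilde{\Phi}(t)/t$ is nondecreasing, not convexity). Here I would differentiate the parametric identity $\tilde{\Phi}\big(\Psi(s)/s\big)=\Psi(s)$: with $x=\Psi(s)/s$ this gives $\tilde{\Phi}'(x)=s^2\Psi'(s)/\big(s\Psi'(s)-\Psi(s)\big)$ and hence
\[
x\,\tilde{\Phi}'(x)=\frac{s\,\Psi(s)\,\Psi'(s)}{s\Psi'(s)-\Psi(s)} .
\]
Using the Young equality $s\Psi'(s)-\Psi(s)=\Phi(\Psi'(s))$ and the substitution $w=\Psi'(s)$, $s=\Phi'(w)$, this simplifies to $x\,\tilde{\Phi}'(x)=R(w)\big(R(w)-1\big)\Phi(w)$ with $R(w)=w\Phi'(w)/\Phi(w)\in[a_\Phi,b_\Phi]$. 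Continuity and the limits $0$ and $\infty$ are then read off from the corresponding properties of $\Phi$ and $\Phi'$, and $x$ is increasing in $w$, so the remaining point is monotonicity in $w$.

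I expect this monotonicity step to be the delicate part of the argument. It is where the standing hypothesis that $w\mapsto w\Phi'(w)$ is itself a growth function is used in an essential way: writing $P(w)=w\Phi'(w)$ one has $x\,\tilde{\Phi}'(x)=P(w)^2/\Phi(w)-P(w)$, and I would reduce the claim to a short monotonicity lemma about the complementary pair $(\Phi,\Psi)$ rather than manipulate $\tilde{\Phi}$ directly, invoking $a_\Phi\ge1$ and the growth-function property of $P$. Once that reduction is in place, $t\tilde{\Phi}'(t)$ is continuous, nondecreasing and onto $[0,\infty)$, completing the proof.
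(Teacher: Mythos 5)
Your treatment of the identity and of $\tilde{\Phi}$ itself is sound and agrees with the paper: the computation $\tilde{\Phi}^{-1}(t)=t/\Psi^{-1}(t)$, hence $\tilde{\Phi}^{-1}(t)\Psi^{-1}(t)=t$, is exactly the paper's final display, and your sandwich $\tfrac12\Phi^{-1}\le\tilde{\Phi}^{-1}\le\Phi^{-1}$ is the paper's \eqref{eq:equivalent-orlicz-functions} in inverse form (you even supply more detail than the paper's ``it is clear'' for the growth-function property of $\tilde{\Phi}$).

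The genuine gap is the last claim, that $t\tilde{\Phi}'(t)$ is a growth function, which is precisely the step you leave open. Your parametric computation $x\tilde{\Phi}'(x)=P(w)^2/\Phi(w)-P(w)$ with $P(w)=w\Phi'(w)$ is correct, but the monotonicity in $w$ is deferred to ``a short monotonicity lemma about the complementary pair'' that is never stated or proved, and the hypotheses you propose to invoke ($a_\Phi\ge1$ and $P$ a growth function) do not suffice: on any stretch where $P$ increases slowly while $\Phi$ still increases, the expression $P^2/\Phi-P$ is pulled downward (its derivative is $P'(2R-1)-R^2\Phi'$ with $R=P/\Phi$, and the only free lower bound $P'\ge\Phi'$ yields $-\Phi'(R-1)^2\le 0$, i.e.\ nothing). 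So the route you chose converts the claim into a harder, unresolved inequality. The paper avoids this entirely: from \eqref{eq:inverses} it gets $\Phi(t)\le\tilde{\Phi}(t)\le\Phi(2t)$, invokes \cite[Proposition 1.4]{AB} to conclude $\tilde{\Phi}\in\Delta_2\cap\Delta_{\frac12}$, hence $a_{\tilde{\Phi}}=\inf_{t>0}t\tilde{\Phi}'(t)/\tilde{\Phi}(t)>0$, so $t\tilde{\Phi}'(t)\ge a_{\tilde{\Phi}}\tilde{\Phi}(t)$, and reads off the growth-function property from this lower bound together with continuity. You should either adopt that equivalence-plus-indices argument (you already have all the ingredients, since you derived the sandwich and the index identities from \eqref{eq:relationship-index}) or actually state and prove the monotonicity lemma your reduction requires; as written the proposal does not establish the second assertion of the lemma.
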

\begin{proof} It is clear that $\tilde{\Phi}(t)$ is a growth function. By \eqref{eq:inverses},
\be
\begin{array}{l}
\tilde{\varphi}_\Phi(t)=t\Psi^{-1}(\frac{1}{t})=\frac{\Psi^{-1}(\frac{1}{t})}{\frac{1}{t}}\le\frac{2}{\Phi^{-1}(\frac{1}{t})}=2\varphi_\Phi(t),\\
\tilde{\varphi}_\Phi(t)=t\Psi^{-1}(\frac{1}{t})=\frac{\Psi^{-1}(\frac{1}{t})}{\frac{1}{t}}\ge\frac{1}{\Phi^{-1}(\frac{1}{t})}=\varphi_\Phi(t),\qquad \forall t>0.
\end{array}
\ee
Hence,
\be
\begin{array}{l}
\varphi_\Phi^{-1}(t)=\sup\{s:\;\varphi_\Phi(s)\le t\}\ge\sup\{s:\;\tilde{\varphi}_\Phi(s)\le t\}=\tilde{\varphi}_\Phi^{-1}(t),\\
\tilde{\varphi}_\Phi^{-1}(t)=\sup\{s:\;\tilde{\varphi}_\Phi(s)\le t\}\ge\sup\{s:\;\varphi_\Phi(s)\le \frac{t}{2}\}=\varphi_\Phi^{-1}(\frac{t}{2}),\qquad \forall t>0.
\end{array}
\ee
Therefore,
\be
\frac{1}{\varphi_\Phi^{-1}(\frac{1}{t})}\le\frac{1}{\tilde{\varphi}_\Phi^{-1}(\frac{1}{t})}\le\frac{1}{\varphi_\Phi^{-1}(\frac{1}{2t})} ,\qquad \forall t>0.
\ee
\beq\label{eq:equivalent-orlicz-functions}
\Phi(t)\le\tilde{\Phi}(t)\le\Phi(2t),\qquad \forall t>0.
\eeq
On the other hand,  any N-function in $\Delta_2$ is automatically  also in $\Delta_2\cap\Delta_\frac{1}{2}$. Using \eqref{eq:equivalent-orlicz-functions} and \cite[Proposition 1.4]{AB}, we get $\tilde{\Phi}(t)\in\Delta_2\cap\Delta_\frac{1}{2}$. Hence, $\inf_{t>0} \frac{t \tilde{\Phi}'(t)}{\tilde{\Phi} (t)}=a_{\tilde{\Phi}}>0$, and so $t \tilde{\Phi}'(t)\ge a_{\tilde{\Phi}}\tilde{\Phi} (t)$  for any $t>0$. It follows that $t\tilde{\Phi}'(t)$ is a growth function.
For any $t>0$, we have that
\beq\label{eq:inverse-equivalent-function}
\tilde{\Phi}^{-1}(t)\Psi^{-1}(t)=\frac{1}{\tilde{\varphi}_\Phi(\frac{1}{t})}\Psi^{-1}(t)=\frac{1}{\frac{1}{t}\Psi^{-1}(t)}\Psi^{-1}(t)=t.
\eeq
\end{proof}

Let $\M,\; \varphi, \;\varphi',\N,\;\N',\;\U,\;\U'$ and $\theta$ be as in the Section 4.

 \begin{theorem}\label{thm:independent of state-M} Let $1< a_{\Phi} \le b_{\Phi}<\8$.
Then
 $L^{\Phi}(\M,\varphi)$ is isometrically isomorphic to $L^{\Phi}(\M,\varphi')$.
\end{theorem}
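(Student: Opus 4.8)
The plan is to push the state–independence established on the discrete crossed product level, Theorem~\ref{thm:independent of state}, down to $\M$ by means of the conditional expectation $\F:\U\to\M$. By the convention following Theorem~\ref{thm:equivalent-hagerup} we have $L^{\Phi}(\M,\varphi)=L^{\Phi,\alpha}(\M,\varphi)$ for any fixed $\alpha\in[0,1]$, so I fix such an $\alpha$. Recall from the proof of Theorem~\ref{thm:independent of state} that $\theta:\mathfrak{N}\to\mathfrak{N}'$ is a topological $*$-isomorphism with $\theta(\pi(x))=\pi'(x)$ for $x\in\M$, with $\theta(\N)=\N'$, and with $\mu_t(\theta(a))=\mu_t(a)$ for all $a$ (see \eqref{eq:sigularvalue-equivalent}); in particular $\theta$ restricts to an isometric isomorphism of $L^{\Phi,\alpha}(\U,\hat{\varphi})$ onto $L^{\Phi,\alpha}(\U',\hat{\varphi}')$. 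Since $D$ is a positive nonsingular operator affiliated with $\N$ and $\int_0^\8\mu_t(D)\,dt=\|D\|_1<\8$, its image $\theta(D)$ is a positive nonsingular operator affiliated with $\N'=\theta(\N)$ lying in $L^1(\N')$, although in general $\theta(D)\neq D'$.

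By Proposition~\ref{pro:Haagerup-subspace} together with Theorem~\ref{thm:conditional-hagerup} and Corollary~\ref{cor:Haagerup's reduction}, the spaces $L^{\Phi,\alpha}(\M,\varphi)$ and $L^{\Phi,\alpha}(\M,\varphi')$ are isometrically identified with the ranges $\F(L^{\Phi,\alpha}(\U))$ and $\F'(L^{\Phi,\alpha}(\U'))$ of contractive projections $\F,\F'$, which exist precisely because $1<a_\Phi\le b_\Phi<\8$. The heart of the argument is to show that $\theta$ maps the generators of $L^{\Phi,\alpha}(\M,\varphi)$ into the range of $\F'$. For $x\in\M$, since $\theta$ commutes with the Borel functional calculus and with products, $\theta(\Phi^{-1}(D)^\alpha \pi(x)\Phi^{-1}(D)^{1-\alpha})=\Phi^{-1}(\theta(D))^\alpha\pi'(x)\Phi^{-1}(\theta(D))^{1-\alpha}$. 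I then apply Proposition~\ref{pro:conditional-expectation}(i) with $\theta(D)$ in the role of the density $D$ (its statement and proof require only that this operator be positive, nonsingular, in $L^1$ and affiliated with the subalgebra), together with $\F'(\pi'(x))=\pi'(x)$, to conclude $\F'(\Phi^{-1}(\theta(D))^\alpha\pi'(x)\Phi^{-1}(\theta(D))^{1-\alpha})=\Phi^{-1}(\theta(D))^\alpha\pi'(x)\Phi^{-1}(\theta(D))^{1-\alpha}$. Thus each such element is fixed by $\F'$, hence lies in $L^{\Phi,\alpha}(\M,\varphi')$.

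Because $\theta$ is isometric and $\Phi^{-1}(D)^\alpha\M\Phi^{-1}(D)^{1-\alpha}$ is dense in $L^{\Phi,\alpha}(\M,\varphi)$, it follows that $\theta\big(L^{\Phi,\alpha}(\M,\varphi)\big)\subseteq L^{\Phi,\alpha}(\M,\varphi')$. Running the same computation for $\theta^{-1}$---noting that $\theta^{-1}(D')$ is a positive nonsingular operator in $L^1(\N)$ and that $\theta^{-1}(\pi'(y))=\pi(y)$ for $y\in\M$---shows that $\theta^{-1}$ sends the dense set of generators of $L^{\Phi,\alpha}(\M,\varphi')$ into $L^{\Phi,\alpha}(\M,\varphi)$, giving the reverse inclusion. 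Hence $\theta$ restricts to a surjective isometry of $L^{\Phi,\alpha}(\M,\varphi)$ onto $L^{\Phi,\alpha}(\M,\varphi')$, which by the identification above is the sought isometric isomorphism $L^{\Phi}(\M,\varphi)\cong L^{\Phi}(\M,\varphi')$.

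The main obstacle is exactly that $\theta$ fails to intertwine the two Radon--Nikodym derivatives: from $\lambda(t)=D^{it}$ and $\theta(\lambda(t))=\pi'(u_t^*)\lambda(t)$ one gets $\theta(D^{it})=u_t^*(D')^{it}$ with $u_t=[D\varphi':D\varphi]_t$, so the image of a generator is expressed through the ``wrong'' density $\theta(D)$ and is not visibly an Orlicz element for $\varphi'$. What rescues the argument is that $\theta(D)$, though different from $D'$, remains a positive nonsingular element of $L^1(\N')$ affiliated with the subalgebra $\N'$, so the bimodule identity of Proposition~\ref{pro:conditional-expectation}(i) applies to it without change; this is where the index condition $1<a_\Phi\le b_\Phi<\8$ is indispensable, since it is what guarantees the existence of the projections $\F$ and $\F'$.
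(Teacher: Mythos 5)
Your proof is correct and follows essentially the same route as the paper: both reduce the problem to Theorem \ref{thm:independent of state} and then show $\F'(\theta(x))=\theta(x)$ by applying the bimodule identity of Proposition \ref{pro:conditional-expectation} with the transported, non-canonical density $\theta(D)$, which is exactly the point you flag as the ``obstacle.'' The only difference is cosmetic: you verify $\F'$-invariance directly on the dense set of generators via part (i) of that proposition, whereas the paper verifies it for a general element $x$ by first multiplying into $L^{1}(\M,\varphi')$ with powers of $\tilde{\Psi}^{-1}(\theta(D))$ (part (ii)) and then cancelling the invertible factors.
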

\begin{proof}  Let $\tilde{\Psi}$ be defined as in Lemma \ref{lem:equivalent-Orlicz-function}. Suppose  $0\le\alpha\le1$.
If $x\in L^{\Phi,\alpha}(\M,\varphi)$, then from the proof of Theorem \ref{thm:independent of state}, we deduce that
$\theta(x)\in L^{\Phi,\alpha}(\U',\hat{\varphi'})$. On the other hand,  there is a sequence $(x_n)_{n\ge1}$ in $\M$ such that $\Phi^{-1}(D)^\alpha x_n\Phi^{-1}(D)^{1-\alpha}\rightarrow x$ in the norm of $L^{\Phi,\alpha}(\M,\varphi)$ and
\be
D^\alpha x_nD^{1-\alpha}&=&\tilde{\Psi}^{-1}(D)^\alpha \Phi^{-1}(D)^\alpha x_n\Phi^{-1}(D)^{1-\alpha}\tilde{\Psi}^{-1}(D)^{1-\alpha}\\
& \rightarrow& \tilde{\Psi}^{-1}(D)^\alpha x\tilde{\Psi}^{-1}(D)^{1-\alpha}
\ee
in the norm of $L^{1}(\M,\varphi)$, and so $\tilde{\Psi}^{-1}(D)^\alpha x\tilde{\Psi}^{-1}(D)^{1-\alpha}\in L^{1}(\M,\varphi)$. Hence,
\be
\tilde{\Psi}^{-1}(\theta(D))^\alpha \theta(x)\tilde{\Psi}^{-1}(\theta(D))^{1-\alpha}\in L^{1}(\M,\varphi').
\ee
 Let
 $\F'$ be the conditional expectation from
$\U'$ onto $\M$ determined by
\be
\F'(\lambda(t)x)=\left\{\begin{array}{rl}
                           x & \mbox{if}\; t=0 \\
                           0 & \mbox{otherwise}
                         \end{array}
\right.,\qquad \forall x\in\M,\qquad\forall t\in G.
\ee
Then it satisfies
\be
\widehat{\varphi'}\circ\F' = \widehat{\varphi'},\qquad\sigma_{t}^{\widehat{\varphi'}}\circ\F'=\F'\circ\sigma_{t}^{\widehat{\varphi'}}, \qquad\forall t\in \real.
\ee
For $0\le\alpha\le1$, let
\be
\F'_{\alpha}:\Phi^{-1}(D')^{\alpha}\U\Phi^{-1}(D')^{1-\alpha}\rightarrow\Phi^{-1}(D')^{\alpha}\M\Phi^{-1}(D')^{1-\alpha}
\ee
be defined by
\be
\F'_{\alpha}(\Phi^{-1}(D')^{\alpha}x\Phi^{-1}(D')^{1-\alpha})=\Phi^{-1}(D')^{\alpha}\F(x)\Phi^{-1}(D')^{1-\alpha},\qquad \forall x\in\U.
\ee
Using Theorem \ref{thm:conditional-hagerup}, we get that  $\F_{\alpha}$ can to extend to a  contractive projection from  $L^{\Phi,\alpha}(\U',\hat{\varphi'})$ onto $L^{\Phi,\alpha}(\M,\varphi')$. It will be denoted still by $\F'$. Therefore, by Proposition \ref{pro:conditional-expectation},
\be
\tilde{\Psi}^{-1}(\theta(D))^\alpha \theta(x)\tilde{\Psi}^{-1}(\theta(D))^{1-\alpha}&=&\F'(\tilde{\Psi}^{-1}(\theta(D))^\alpha \theta(x)\tilde{\Psi}^{-1}(\theta(D))^{1-\alpha})\\
&=&\tilde{\Psi}^{-1}(\theta(D))^\alpha \F'(\theta(x))\tilde{\Psi}^{-1}(\theta(D))^{1-\alpha}.
\ee
Since $\theta(D)$ is an invertible positive self-adjoint operator, we get
$\F'(\theta(x))=\theta(x)$, i.e,  $\theta(L^{\Phi}(\M,\varphi))\subseteq L^{\Phi}(\M,\varphi')$. Similarly we see that
$\theta^{-1}(L^{\Phi}(\M,\varphi'))\subseteq L^{\Phi}(\M,\varphi)$. Thus $\theta^{-1}(L^{\Phi}(\M,\varphi'))= L^{\Phi}(\M,\varphi)$ and we obtain the desired result.
\end{proof}

 In the sequel we will denote $L^{\Phi}(\M,\varphi)$ simply by $L^{\Phi}(\M)$.

Let $x,y\in\M$. By \eqref{eq:inverse-equivalent-function}, we have that $x\tilde{\Phi}^{-1}(D)\Psi^{-1}(D)y\in L^1(\M)$. Hence, using \eqref{eq:trace-norm}, \eqref{eq:generalized singular-haagerup},  \cite[Lemma 2.5(vii)]{FK}, \eqref{eq:inverse-equivalent-function}
and \eqref{eq:DistrSingularEquiv}, we get that
\be
\begin{array}{rl}
|\tr(x\tilde{\Phi}^{-1}(D)\Psi^{-1}(D)y)|&\le\|x\tilde{\Phi}^{-1}(D)\Psi^{-1}(D)y\|_1\\
&=2t\mu_{2t}(x\tilde{\Phi}^{-1}(D)\Psi^{-1}(D)y)\\
&\le2t\mu_{t}(x\tilde{\Phi}^{-1}(D))\mu_{t}(\Psi^{-1}(D)y)\\
&=2\frac{1}{\tilde{\Phi}^{-1}(\frac{1}{t})} \mu_t (x\tilde{\Phi}^{-1}(D))\frac{1}{\Psi^{-1}(\frac{1}{t})} \mu_t (\Psi^{-1}(D)y)\\
&\le2\|x\|_{\tilde{\Phi},0}\|y\|_{\Psi,1}.
\end{array}
\ee
 Therefore, we can extend $\tr$ on $L^{\tilde{\Phi},0}(\M)L^{\Psi,1}(\M)$ and denote it also $tr$, i.e., the bilinear form $(x,y)\mapsto
\tr(xy)$ defines a  bilinear continuous functional on  $L^{\tilde{\Phi},0}(\M)\times L^{\Psi,1}(\M)$ and
\beq\label{eq:holder-haagerup-orlicz}
|\tr(xy)|\le 2\|x\|_{\tilde{\Phi},0}\|y\|_{\Psi,1},\qquad x\in L^{\tilde{\Phi},0}(\M),\;y\in L^{\Psi,1}(\M).
\eeq

\begin{lemma}\label{lem:dual-un-Haagerup-Orlicz} Let $\U_n,\;\tau_n,\;\F_n\; (\forall n\in \mathbb{N})$ be fixed as in the above section. If  $1< a_{\Phi} \le b_{\Phi}<\8$,
then the bilinear form $(x, y) \mapsto \tr(xy)$ defines a duality bracket between $L^{\Phi,0}(\U_n)$  and $L^{\Psi,1}(\U_n)$, for which
$L^{\Psi.1}(\U_n)$  coincides (isomorphically) with the dual of $L^{\Phi,0}(\U_n)$.
\end{lemma}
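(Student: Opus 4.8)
The plan is to reduce the statement to the already-established tracial duality \eqref{eq:nc-orlicz-dual} applied to the finite algebra $\U_n$. Since $\U_n$ is finite with tracial state $\tau_n$, Theorem \ref{thm:haagerup-finite-vn} (via Theorem \ref{thm:Haagerup's reduction}(iii) and Corollary \ref{cor:isometric}) provides isometric identifications $L^{\Phi,0}(\U_n)\cong L^{\Phi}(\U_n)$ and $L^{\Psi,1}(\U_n)\cong L^{\Psi}(\U_n)$ with the usual tracial Orlicz spaces, under which $x=a\ot\Phi^{-1}(e^\cdot)\leftrightarrow a$ and $y=b\ot\Psi^{-1}(e^\cdot)\leftrightarrow b$. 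Because $(\Phi,\Psi)$ is a complementary pair with $\Phi\in\triangle_2$, \eqref{eq:nc-orlicz-dual} gives $L^{\Phi}(\U_n)^*=L^{\Psi}(\U_n)$, the pairing being the tracial bracket $(a,b)\mapsto\tau_n(ab)$. Thus it suffices to show that the Haagerup bracket $(x,y)\mapsto\tr(xy)$ corresponds to $(a,b)\mapsto\tau_n(ab)$ under these identifications.

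First I would record well-definedness and boundedness: replacing $\Phi$ by the equivalent function $\tilde\Phi$ of Lemma \ref{lem:equivalent-Orlicz-function}, so that $L^{\tilde\Phi,0}(\U_n)=L^{\Phi,0}(\U_n)$ with equivalent norms by \eqref{eq:equivalent-orlicz-functions}, inequality \eqref{eq:holder-haagerup-orlicz} yields $|\tr(xy)|\le 2\|x\|_{\tilde\Phi,0}\|y\|_{\Psi,1}$, so $y\mapsto\tr(\cdot\,y)$ is a bounded map $\Lambda\colon L^{\Psi,1}(\U_n)\to L^{\Phi,0}(\U_n)^*$. To compute the bracket concretely I would pass to the trace picture: by Theorem \ref{thm:haagerup-finite-state} the canonical $*$-isomorphism $\theta$ carries the construction over $\hat\varphi$ onto the one over $\tau_n$, where the crossed product is $\U_n\ot L^\infty(\real)$ and $D=1\ot e^\cdot$. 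Since $\theta$ preserves generalized singular numbers it preserves $\|\cdot\|_1$, hence the distinguished trace $\tr$, so the bracket is unchanged by this passage.

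In the trace picture, writing $x=a\ot\tilde\Phi^{-1}(e^\cdot)$ and $y=b\ot\Psi^{-1}(e^\cdot)$ for $a,b\in\U_n$, the relation $\tilde\Phi^{-1}(s)\Psi^{-1}(s)=s$ of \eqref{eq:inverse-equivalent-function} gives $xy=ab\ot e^\cdot$, which lies in $L^1(\U_n)=L^1(\U_n)\ot e^\cdot$; as in \eqref{eq:D-tr} the distinguished trace satisfies $\tr(c\ot e^\cdot)=\tau_n(c)$, whence $\tr(xy)=\tau_n(ab)$. So on the dense subspaces $\U_n\tilde\Phi^{-1}(D)$ and $\Psi^{-1}(D)\U_n$ the Haagerup bracket coincides exactly with the tracial pairing; by the boundedness just established and a routine density and limiting argument, this persists on all of $L^{\Phi,0}(\U_n)\times L^{\Psi,1}(\U_n)$.

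With the bracket identified, the tracial duality transfers verbatim: $\Lambda$ is injective because $\tau_n(ab)$ separates points, and surjective because any $f\in L^{\Phi,0}(\U_n)^*$ corresponds through the isometry to some $b\in L^\Psi(\U_n)=L^\Phi(\U_n)^*$, giving $f=\Lambda(y)$ for the preimage $y$ of $b$; the norm equivalence, and hence the word \emph{isomorphically} rather than isometrically, comes only from the constants relating $\Phi$ and $\tilde\Phi$ together with \eqref{eq:equal-norm-orlicz}. I expect the main obstacle to be precisely the bracket-matching step: justifying the reduction to the trace picture (that $\theta$ intertwines the two distinguished traces and that $\tr(c\ot e^\cdot)=\tau_n(c)$), and then extending the equality $\tr(xy)=\tau_n(ab)$ off the dense subalgebra to the full spaces. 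Once this is in place, surjectivity and the two-sided norm bounds follow immediately from \eqref{eq:nc-orlicz-dual}.
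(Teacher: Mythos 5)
Your proposal is correct and follows essentially the same route as the paper: both reduce to the tracial duality $L^{\Phi}(\U_n)^*=L^{\Psi}(\U_n)$ via the finite-case identifications of Theorems \ref{thm:haagerup-finite-vn}--\ref{thm:haagerup-finite-state}, use the factorization $\tilde{\Phi}^{-1}(t)\Psi^{-1}(t)=t$ from Lemma \ref{lem:equivalent-Orlicz-function} to match the Haagerup bracket $\tr(xy)$ with the tracial pairing $\tau_n(\cdot\,g_\varphi\,\cdot)$ on the dense subspaces $\U_n\tilde{\Phi}^{-1}(D)$ and $\Psi^{-1}(D)\U_n$, and then extend by density and transfer surjectivity. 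The only cosmetic difference is that the paper computes $\tr(xDy)=\tau_n(xg_\varphi y)$ directly from \eqref{eq:tr-trace-connection} rather than passing through the explicit tensor-product picture, but this is the same mechanism.
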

\begin{proof} Let $g_{\varphi}\in L_0(\U_n,\tau_n)^+$ be the unique operator such that $\varphi(x)=\tau_n( xg_{\varphi})$ for any $x\in\U_n$. Let $\tilde{\Phi}$ be defined as Lemma \ref{lem:equivalent-Orlicz-function}. Using \eqref{eq:equivalent-orlicz-functions}, we get that $L^\Phi(\U_n)=L^{\tilde{\Phi}}(\U_n)$
and
\beq\label{eq:equivalent-orlicz-norm}
\|x\|_\Phi\le \|x\|_{\tilde{\Phi}}\le 2\|x\|_\Phi,\qquad \forall x\in  L^\Phi(\U_n).
\eeq
By Lemma \ref{lem:density-orlicz}, we know that $[\tilde{\Phi}^{-1}(g_{\varphi})^{\alpha}\U_n\tilde{\Phi}^{-1}(g_{\varphi})^{1-\alpha}]_{\tilde{\Phi}}=L^{\tilde{\Phi}}(\U_n)$ for any $0\le\alpha\le1$. Applying \eqref{eq:tr-trace-connection}, we deduce that
\beq\label{eq:dual-equivalence-tr}
\tr(xD)=\varphi(x)=\tau_n(xg_{\varphi}),\qquad\forall x\in \U_n.
\eeq
On the other hand, by \eqref{eq:inverse-equivalent-function}, we have that $\tilde{\Phi}^{-1}(D)\Psi^{-1}(D)=D$ and $\tilde{\Phi}^{-1}(g_{\varphi})\Psi^{-1}(g_{\varphi})=g_{\varphi}$.
Hence, from \eqref{eq:nc-orlicz-dual}, \eqref{eq:equivalent-orlicz-norm}, \eqref{eq:dual-equivalence-tr} and Theorem \ref{thm:haagerup-finite-state} it follows that for any $x,y\in\U_n$,
\be
\begin{array}{rl}
|\tr(x\tilde{\Phi}^{-1}(D)\Psi^{-1}(D)y)|&=|\tr(xDy)|=|\tr(yxD)|\\
&=|\tau_n(yxg_{\varphi})|=|\tau_n(xg_{\varphi}y)|\\
&=|\tau_n(x\tilde{\Phi}^{-1}(g_{\varphi})\Psi^{-1}(g_{\varphi})y)|\\
&\le C\|x\tilde{\Phi}^{-1}(g_{\varphi})\|_\Phi\|\Psi^{-1}(g_{\varphi})y\|_\Psi\\
&\le C\|x\tilde{\Phi}^{-1}(g_{\varphi})\|_{\tilde{\Phi}}\|\Psi^{-1}(g_{\varphi})y\|_\Psi\\
&=C\|x\tilde{\Phi}^{-1}(D)\|_{\tilde{\Phi},0}\|\Psi^{-1}(D)y\|_{\Psi,1}.
\end{array}
\ee
Since $\U_n\tilde{\Phi}^{-1}(D)$ and $\Psi^{-1}(D)\U_n$ are dense in $L^{\tilde{\Phi},0}(\U_n)$ and $L^{\Psi,1}(\U_n)$ respectively, it follows that for $x\in L^{\tilde{\Phi},0}(\U_n)$ and $y\in L^{\Psi,1}(\U_n)$ there are sequences $(x_n)_{n\ge1}\subset\U_n\tilde{\Phi}^{-1}(D)$ and $(y_n)_{n\ge1}\subset\Psi^{-1}(D)\U_n$ such that $\|x_n -x\|_{\tilde{\Phi},0}\rightarrow0$ and $\|y_n -y\|_{\Psi,1}\rightarrow0$  as $n\rightarrow\8$. Let $\tr(xy)=\lim_{n\rightarrow\8}\tr(x_ny_n)$. Then $\tr(xy)$ is well-defined. Hence, the map  $(x, y) \mapsto \tr(xy)$ is a bilinear form on $L^{\tilde{\Phi},0}(\U_n)\times L^{\Psi,1}(\U_n)$  and satisfies
\be
|\tr(xy)|\le C\|x\|_{\tilde{\Phi},0}\|y\|_{\Psi,1},\qquad x\in L^{\tilde{\Phi},0}(\U_n),\qquad y\in L^{\Psi,1}(\U_n).
\ee
If $y\in L^{\Psi,1}(\U_n)$, we set $\ell_y(x)=\tr(xy)$ for all $x\in L^{\tilde{\Phi},0}(\U_n)$. Then  $\ell_y\in L^{\tilde{\Phi},0}(\U_n)^*$ and $\|\ell_y\|\le C\|y\|_{\Psi,1}$.

Conversely, let $\ell\in L^{\tilde{\Phi},0}(\U_n)^*$. Since $L^{\tilde{\Phi}}(\U_n)^*=L^{\Psi}(\U_n)$,  there exists an operator $y'\in L^{\Psi}(\U_n)$ such that $\ell(x)=\tau_n(xy')$ for every $x\in L^{\tilde{\Phi}}(\U_n)$. By Lemma \ref{lem:density-orlicz}, $\ell(x)=\tau_n(xy')$ for every $x\in L^{\tilde{\Phi},0}(\U_n)$. On the other hand, $\Psi^{-1}(g_\varphi)\U_n$ is  dense in $L^{\Psi}(\U_n)$. Hence, there is a sequence $(y_n)_{n\ge1}\subset\U_n$ such that
 $\|y_n\Psi^{-1}(g_\varphi)-y'\|_{\Psi}\rightarrow0$  as $n\rightarrow\8$.  Applying Theorem \ref{thm:haagerup-finite-state}, we obtain that
\be
\|(y_n-y_m)\Psi^{-1}(D)\|_{\Psi,1}=\|(y_n-y_m)\Psi^{-1}(g_\varphi)\|_\Psi\rightarrow 0,\qquad \mbox{as}\;n\rightarrow\8\;\mbox{and}\;m\rightarrow\8,
\ee
i.e., $(y_n\Psi^{-1}(D))_{n\ge1}$ is a Cauchy sequence in $L^{\Psi,1}(\U_n)$. Therefore, there is an operator $y\in L^{\Psi,1}(\U_n)$ such that $\|y_n\Psi^{-1}(D)-y\|_{\Psi}\rightarrow0$  as $n\rightarrow\8$. Using \eqref{eq:tr-trace-connection},
we obtain that for all $x\in L^{\tilde{\Phi},0}(\U_n)$,
\be
\ell_y(x)=\tr(xy)=\lim_{n\rightarrow\8}\tr(xy_n\Psi^{-1}(D))=\lim_{n\rightarrow\8}\tau(xy_n\Psi^{-1}(g_\varphi))=\tau(xy')=\ell(x).
\ee
Thus $L^{\tilde{\Phi},0}(\U_n)^*=L^{\Psi,1}(\U_n)$. Hence, from \eqref{eq:equivalent-orlicz-functions} it  follows  that $L^{\Phi,0}(\U_n)^*=L^{\Psi,1}(\U_n)$.
\end{proof}

\begin{theorem}\label{thm:dual-converge-Haagerup-Orlicz} Let $\U,\;\U_n,\;\F_n\; (\forall n\in \mathbb{N})$ be fixed as in the above section. If  $1< a_{\Phi} \le b_{\Phi}<\8$,
then   $L^{\Phi}(\U)^*=L^{\Psi}(\U)$ with equivalent norm.
 \end{theorem}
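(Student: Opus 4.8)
The plan is to realize $L^{\Psi}(\U)$ as the dual of $L^{\Phi}(\U)$ through the trace bracket, using Haagerup's reduction to push the finite-level duality of Lemma~\ref{lem:dual-un-Haagerup-Orlicz} to the limit along the filtration $(\U_n)_{n\ge1}$. First I would record an index bookkeeping: since $1<a_\Phi\le b_\Phi<\8$, the complementarity relations \eqref{eq:relationship-index} give $1<a_\Psi\le b_\Psi<\8$, so both $\Phi,\Psi\in\triangle_2\cap\bigtriangledown_2$ and both spaces sit in the reflexive range. Combining \eqref{eq:holder-haagerup-orlicz} with the norm equivalence $\Phi\approx\tilde\Phi$ of \eqref{eq:equivalent-orlicz-functions}, the bilinear form $(x,y)\mapsto\tr(xy)$ is bounded on $L^{\Phi}(\U)\times L^{\Psi}(\U)$, producing a bounded linear map $J:L^{\Psi}(\U)\to L^{\Phi}(\U)^*$, $J(y)=\tr(\,\cdot\,y)$, with $\|J(y)\|\lesssim\|y\|_\Psi$. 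Everything then reduces to showing $J$ is onto and bounded below.

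For surjectivity, fix $\ell\in L^{\Phi}(\U)^*$. By Theorem~\ref{thm:Haagerup's reduction} each inclusion $L^{\Phi}(\U_n)\hookrightarrow L^{\Phi}(\U)$ is isometric, so $\|\ell|_{L^{\Phi}(\U_n)}\|\le\|\ell\|$, and Lemma~\ref{lem:dual-un-Haagerup-Orlicz} furnishes $y_n\in L^{\Psi}(\U_n)$ with $\ell(x)=\tr(xy_n)$ for all $x\in L^{\Phi}(\U_n)$ and $\|y_n\|_\Psi\le C\|\ell\|$. Writing $\F_m$ for the contractive projections extending the reduction conditional expectations (Corollary~\ref{cor:Haagerup's reduction}), the trace rule for conditional expectations (cf. \eqref{eq:conditional expectation}) yields, for $m\le n$ and $x\in L^{\Phi}(\U_m)$,
\[
\tr(xy_m)=\ell(x)=\tr(xy_n)=\tr\big(x\,\F_m(y_n)\big),
\]
so nondegeneracy of the finite-level pairing in Lemma~\ref{lem:dual-un-Haagerup-Orlicz} forces $\F_m(y_n)=y_m$. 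Hence $(y_n)_{n\ge1}$ is a bounded $L^{\Psi}(\U)$-martingale relative to $(\F_m)$, with $\sup_n\|y_n\|_\Psi\le C\|\ell\|$.

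The decisive step, and the one I expect to be the main obstacle, is martingale convergence: I must produce $y_\infty\in L^{\Psi}(\U)$ with $\F_n(y_\infty)=y_n$ and $\|y_\infty\|_\Psi\approx\sup_n\|y_n\|_\Psi$. Because $1<a_\Psi\le b_\Psi<\8$, the symmetric function space $L^{\Psi}(0,\8)$ belongs to $\mathrm{Int}(L^p,L^q)$ for some $1<p\le q<\8$ and has the Fatou property, so the convergence theorem for bounded $E(\R)$-martingales recalled in Section~2 (see \cite{RWX}) is the right tool. The genuine difficulty is that $(y_n)$ is a martingale of Haagerup type living in $L_0(\mathfrak N,\rho)$ rather than in a tracial $E(\R)$; I would circumvent this by transporting the statement through the isometries of Theorem~\ref{thm:Haagerup's reduction}(iii), which identify each $L^{\Psi}(\U_n)$ with the tracial Orlicz space $L^{\Psi}(\U_n)$, and through the identification of the $L^{\Psi}$-norm topology with convergence in measure supplied by Theorem~\ref{thm:measure-convergence-equivalent}, so that Cauchyness in $L^{\Psi}(\U)$ and convergence of the limit can be read off at the measure-topology level.

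Finally I would identify the functional and close the estimates. For $x\in\bigcup_n L^{\Phi}(\U_n)$, say $x\in L^{\Phi}(\U_m)$, the trace rule gives $\ell(x)=\tr(xy_m)=\tr\big(x\,\F_m(y_\infty)\big)=\tr(xy_\infty)$; since $\bigcup_n L^{\Phi}(\U_n)$ is dense in $L^{\Phi}(\U)$ by Theorem~\ref{thm:Haagerup's reduction}(ii), continuity upgrades this to $\ell=J(y_\infty)$, with $\|y_\infty\|_\Psi\lesssim\sup_n\|y_n\|_\Psi\lesssim\|\ell\|$. Injectivity of $J$ follows from the same nondegeneracy principle: if $\tr(xy)=0$ for all $x$, then $\F_n(y)=0$ for every $n$, whence $y=\lim_n\F_n(y)=0$ using that $\F_n\to\mathrm{id}$ strongly on $L^{\Psi}(\U)$. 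Consequently the unique preimage $y$ of any $\ell$ satisfies $\|y\|_\Psi\lesssim\|J(y)\|$, and together with $\|J(y)\|\lesssim\|y\|_\Psi$ this shows $J$ is an isomorphism, i.e. $L^{\Phi}(\U)^*=L^{\Psi}(\U)$ with equivalent norms.
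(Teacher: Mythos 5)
Your overall architecture coincides with the paper's: the easy containment via the H\"older-type bound \eqref{eq:holder-haagerup-orlicz}, and surjectivity by restricting a functional $\ell$ to the finite levels, invoking Lemma \ref{lem:dual-un-Haagerup-Orlicz} to produce representing elements $y_n$, checking $\F_m(y_{n})=y_m$, and passing to the limit by noncommutative martingale convergence. The step you yourself single out as the crux, however, is where your argument has a genuine gap. The convergence theorem recalled in Subsection 2.4 applies to bounded $E(\R)$-martingales over a \emph{single} semifinite algebra with a filtration of trace-preserving conditional expectations. Your proposed vehicle for reaching that setting --- the isometries of Theorem \ref{thm:Haagerup's reduction}(iii) --- identifies each $L^{\Psi}(\U_n,\hat{\varphi})$ with a tracial Orlicz space over a \emph{different} finite algebra $(\U_n,\tau_n)$; these identifications are not compatible with one another (the traces $\tau_n$ differ and there is no common ambient $E(\R)$ on that side), so they do not assemble $(y_n)$ into a tracial $E(\R)$-martingale. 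Likewise, the measure-topology identification of Theorem \ref{thm:measure-convergence-equivalent} cannot substitute for a convergence theorem: norm-boundedness of $(y_n)$ gives no Cauchyness in measure, so you have not actually produced $y_\infty$, nor the bound $\|y_\infty\|_{\Psi}\lesssim\sup_n\|y_n\|_{\Psi}$.

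The paper closes this gap by a different embedding: by \eqref{eq:haagerub-subspace-weakorlicz} each $L^{\Psi,1}(\U_n)$ sits inside the weak Orlicz space $L^{\Psi,\8}_0(\mathfrak{N})$ over the semifinite crossed product $\mathfrak{N}=\U\rtimes_{\sigma^{\widehat{\varphi}}}\mathbb{R}$ with its canonical trace $\rho$, and by Proposition \ref{pro:conditional-expectation} the $\F_n$ extend to contractive conditional expectations $\widehat{\F_n}$ from $L^{\Psi,\8}(\mathfrak{N})$ onto $L^{\Psi,\8}(\mathfrak{N}_n)$, where $\mathfrak{N}_n=\U_n\rtimes_{\sigma^{\widehat{\varphi}}}\mathbb{R}$ form an increasing filtration of the single tracial algebra $(\mathfrak{N},\rho)$. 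Since $E=L^{\Psi,\8}_0(0,\8)$ is, up to equivalent norm, a symmetric Banach function space with nontrivial Boyd indices, $(y_n)$ becomes a bounded $E(\mathfrak{N})$-martingale in the honest tracial sense, the convergence theorem of \cite{RWX} recalled in Subsection 2.4 applies verbatim, and Theorem \ref{thm:Haagerup's reduction} returns the limit to $L^{\Psi,1}(\U)$. If you replace your transport through the finite-level isometries by this single ambient embedding, the rest of your argument (the martingale property of $(y_n)$, the identification $\ell=J(y_\infty)$ on the dense union, and the two-sided norm estimates) goes through essentially as you wrote it.
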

\begin{proof}  For $y\in L^{\Psi,1}(\U)$, we define
\be
\ell_y(x)=\tr(xy),\qquad \forall x\in L^{\tilde{\Phi},0}(\U).
\ee
Then from \eqref{eq:holder-haagerup-orlicz} it follows that $\ell_y\in L^{\tilde{\Phi},0}(\U)^*$ and $\|\ell_y\|\lesssim\|y\|_{\Psi}$.

Conversely, if $\ell\in L^{\tilde{\Phi},0}(\U)^*$, then  $\ell|_{L^{\tilde{\Phi},0}(\U_n)}\in L^{\tilde{\Phi},0}(\U_n)^*$. From the proof of Lemma \ref{lem:dual-un-Haagerup-Orlicz}, we know that there is an $y_n\in L^{\Psi,1}(\U_n)$ such that $\|y_n\|\approx\|\ell\|$  and  $\ell(x)=\tr(x y_n)$   for any $x\in L^{\tilde{\Phi},0}(\U_n)$. On the other hand, using Theorem \ref{thm:conditional-hagerup}, we get that $\F_n$  extends to a  contractive projection from    $L^{\Psi,1}(\U)$ (respectively,  $L^{\tilde{\Phi},0}(\U)$) onto $L^{\Psi,1}(\U_n)$ (respectively,  $L^{\tilde{\Phi},0}(\U_n)$). We denote the extension still by $\F_n$. By \eqref{eq:equation-dual weight-condition-n}, we deduce that $\F_n \circ\F_{n+1} = \F_n$  for any $n\in\mathbb{N}$. Let $x\in L^{\tilde{\Phi},0}(\U_n)$. Then
$\tr(x y_n)=\tr(x y_{n+1})$. Choose sequences $(x_k)_{k\ge1}\subset\U_n$ and $(z_k)_{k\ge1}\subset\U_{n+1}$ such that  $x_k\tilde{\Phi}^{-1}(D)$ converges to $x$ in the norm of $L^{\tilde{\Phi},0}(\U_n)$ and  $\Psi^{-1}(D)z_k$ converges to $y_{n+1}$ in the norm of $L^{\Psi,1}(\U_{n+1})$.
Then
\be\begin{array}{rl}
     \tr(x y_n) & =\tr(x y_{n+1})=\lim_{k\rightarrow\8}\tr(x_k\tilde{\Phi}^{-1}(D)\Psi^{-1}(D)z_k)\\
      &=\lim_{k\rightarrow\8}\tr(\F_n(x_k)\tilde{\Phi}^{-1}(D)\Psi^{-1}(D)z_k)\\
      &=\lim_{k\rightarrow\8}\tr(x_k\tilde{\Phi}^{-1}(D)\Psi^{-1}(D)\F_n(z_k))\\
      &= \tr(x \F_n(y_{n+1}).
   \end{array}
\ee
Thus gives $\F_n(y_{n+1}) = y_n$ for any $n\in\mathbb{N}$, since $x$ is arbitrary element in $L^{\tilde{\Phi},0}(\U_n)$. By Proposition \ref{pro:conditional-expectation},  $\F_n$   extends to a  contractive projection  $\widehat{\F_n}$ from  $L^{\Phi,\8}(\mathfrak{N})$ onto $L^{\Phi,\8}(\mathfrak{N}_n)$, where  $\mathfrak{N}=\U\rtimes_{\sigma^{\widehat{\varphi}}}\mathbb{R}$ and $\mathfrak{N}_n=\U_n\rtimes_{\sigma^{\widehat{\varphi}}}\mathbb{R}$. Hence, $\widehat{\F_n}(y_{n+1}) = y_n$ for any $n\in\mathbb{N}$. Thus
$y=(y_n)_{n\ge1}$ is a  $L^{\Psi,\8}_0(\mathfrak{N})$-bounded martingale. Therefore, there is
  an $y_\8\in L^{\Psi,\8}(\mathfrak{N})$ such that $y_n$ converges to $y_\8$ in the norm of $L^{\Psi,\8}(\mathfrak{N})$ and $\|y_\8\|_{\Phi,\8}\lesssim\|\ell\|$ (see Subsection 2.4). By  Theorem \ref{thm:Haagerup's reduction}, $y_\8\in L^{\Psi,1}(\U)$. It follows that
  \be
  \ell(x)=\tr(yx),\qquad x\in L^{\tilde{\Phi},0}(\U).
  \ee
By Theorem \ref{thm:equivalent-hagerup}, we obtain the desired result.
\end{proof}

Using Theorem \ref{thm:dual-converge-Haagerup-Orlicz} and  Corollary \ref{cor:Haagerup's reduction}, we get the following result.

\begin{theorem}\label{thm:dual-hagerup} Let $1< a_{\Phi} \le b_{\Phi}<\8$. Then  $L^{\Phi}(\M)^*=L^{\Psi}(\M) $ with equivalent norms.
\end{theorem}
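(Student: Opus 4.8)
The plan is to deduce the duality for $L^\Phi(\M)$ from the duality for $L^\Phi(\U)$ (Theorem \ref{thm:dual-converge-Haagerup-Orlicz}) by exploiting that $L^\Phi(\M)$ sits inside $L^\Phi(\U)$ as a $1$-complemented subspace (Corollary \ref{cor:Haagerup's reduction}), the complementing projection being the extension of the canonical conditional expectation $\F:\U\to\M$ of \eqref{eq:equation-dual weight-condition}. First I would record that the index hypothesis $1<a_\Phi\le b_\Phi<\8$ forces, via the relations \eqref{eq:relationship-index}, the companion bounds $1<a_\Psi\le b_\Psi<\8$; hence Theorem \ref{thm:conditional-hagerup} applies equally to $\Psi$, and $\F$ extends to contractive projections $\F:L^\Phi(\U)\to L^\Phi(\M)$ and $\F:L^\Psi(\U)\to L^\Psi(\M)$ whose ranges are exactly the two Haagerup Orlicz spaces over $\M$. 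Throughout one works with the equivalent N-function $\tilde{\Phi}$ of Lemma \ref{lem:equivalent-Orlicz-function}, as in the proof of Theorem \ref{thm:dual-converge-Haagerup-Orlicz}, so that the pairing of \eqref{eq:holder-haagerup-orlicz} is available; since $L^{\tilde{\Phi}}(\M)=L^{\Phi}(\M)$ with equivalent norm and all parameters $\alpha$ give isomorphic spaces by Theorem \ref{thm:equivalent-hagerup}, this costs only equivalence constants.

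The crux is to show that, under the trace pairing $(x,y)\mapsto\tr(xy)$ that implements $L^\Phi(\U)^*=L^\Psi(\U)$, the adjoint of the projection $\F$ on $L^\Phi(\U)$ coincides with the projection $\F$ on $L^\Psi(\U)$; equivalently,
$$\tr(\F(x)\,y)=\tr(x\,\F(y)),\qquad x\in L^\Phi(\U),\ y\in L^\Psi(\U).$$
I would verify this first on the dense subspaces $\Phi^{-1}(D)^\alpha\U\Phi^{-1}(D)^{1-\alpha}$ and $\Psi^{-1}(D)^{1-\alpha}\U\Psi^{-1}(D)^{\alpha}$: using the module property of Proposition \ref{pro:conditional-expectation}(i), namely $\F(\Phi^{-1}(D)^\alpha x\Phi^{-1}(D)^{1-\alpha})=\Phi^{-1}(D)^\alpha\F(x)\Phi^{-1}(D)^{1-\alpha}$ and its $\Psi$-analogue, the identity reduces to the tracial conditional-expectation symmetry $\tr(\F(x)z)=\tr(x\F(z))$ on the finite algebras $\U_n$, which is \eqref{eq:conditional expectation} transported through the isometries of Theorem \ref{thm:haagerup-finite-state}. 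The continuity of $\tr$ in the norms $\|\cdot\|_{\tilde{\Phi}}$ and $\|\cdot\|_{\Psi}$ furnished by \eqref{eq:holder-haagerup-orlicz}, together with density, then extends the symmetry to all of $L^\Phi(\U)\times L^\Psi(\U)$.

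With this compatibility in hand the conclusion is the standard complemented-subspace duality. Every $\ell\in L^\Phi(\M)^*$ pulls back to $\ell\circ\F\in L^\Phi(\U)^*$, which by Theorem \ref{thm:dual-converge-Haagerup-Orlicz} is represented by some $y\in L^\Psi(\U)$ with $\|y\|_\Psi\approx\|\ell\|$; replacing $y$ by $\F(y)\in L^\Psi(\M)$ and invoking the symmetry shows $\ell(x)=\tr(x\,\F(y))$ for all $x\in L^\Phi(\M)$, so $\ell$ is implemented by an element of $L^\Psi(\M)$ of comparable norm. Conversely \eqref{eq:holder-haagerup-orlicz} shows each $y\in L^\Psi(\M)$ defines a bounded functional $x\mapsto\tr(xy)$ on $L^\Phi(\M)$ with $\|\ell_y\|\lesssim\|y\|_\Psi$, while the contractivity of $\F$ gives the reverse estimate; combined with Theorem \ref{thm:equivalent-hagerup} this yields $L^\Phi(\M)^*=L^\Psi(\M)$ with equivalent norms. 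I expect the only genuinely delicate point to be the adjoint identification $\F^{*}=\F$, i.e. justifying the trace symmetry at the Orlicz level rather than the $L^p$ level; once that is secured, the remainder is soft functional analysis.
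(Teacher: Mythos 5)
Your proposal follows exactly the paper's route: the paper derives this theorem in one line from Theorem \ref{thm:dual-converge-Haagerup-Orlicz} (duality for $L^{\Phi}(\U)$) together with Corollary \ref{cor:Haagerup's reduction} ($L^{\Phi}(\M)$ being $1$-complemented in $L^{\Phi}(\U)$ via the extension of $\F$), which is precisely the complemented-subspace argument you spell out. Your additional care in verifying the trace symmetry $\tr(\F(x)y)=\tr(x\F(y))$ at the Orlicz level is a sound filling-in of details the paper leaves implicit, not a different approach.
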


\begin{corollary}\label{cor:reflexivel-hagerup} Let $1< a_{\Phi} \le b_{\Phi}<\8$. Then  $L^{\Phi}(\M)$ is reflexive.
\end{corollary}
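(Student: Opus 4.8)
The plan is to deduce reflexivity directly from the duality already established in Theorem~\ref{thm:dual-hagerup}, by applying that theorem a second time to the complementary N-function. The first step is to observe that the hypothesis $1< a_{\Phi}\le b_{\Phi}<\8$ is \emph{self-dual}: using the index relations \eqref{eq:relationship-index}, from $a_\Phi>1$ one gets $\frac{1}{b_\Psi}=1-\frac{1}{a_\Phi}\in(0,1)$, whence $b_\Psi<\8$, and from $1<b_\Phi<\8$ one gets $\frac{1}{a_\Psi}=1-\frac{1}{b_\Phi}\in(0,1)$, whence $a_\Psi>1$. Together with $a_\Psi\le b_\Psi$ this yields $1<a_{\Psi}\le b_{\Psi}<\8$. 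In particular $b_\Psi<\8$ means $\Psi\in\Delta_2$, so $\Psi$ is itself an N-function in $\Delta_2$ whose complementary N-function is $\Phi$, and Theorem~\ref{thm:dual-hagerup} applies to $\Psi$ as well.

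Applying Theorem~\ref{thm:dual-hagerup} to both $\Phi$ and $\Psi$ then gives the two isomorphisms
\[
L^{\Phi}(\M)^*=L^{\Psi}(\M),\qquad L^{\Psi}(\M)^*=L^{\Phi}(\M),
\]
each with equivalent norms, and both realized through the bracket $(x,y)\mapsto\tr(xy)$. Composing these identifies $L^{\Phi}(\M)^{**}$ isomorphically with $L^{\Phi}(\M)$.

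The essential remaining point — and the place where a little care is needed — is to check that this composite isomorphism is exactly the canonical embedding $J\colon L^{\Phi}(\M)\to L^{\Phi}(\M)^{**}$, so that $J$ is onto. Here I would use the symmetry $\tr(xy)=\tr(yx)$ of the distinguished functional, which persists in the Orlicz setting since it already holds on the dense subspaces coming from the reduction (it is built into the pairing used in Lemma~\ref{lem:dual-un-Haagerup-Orlicz} and Theorem~\ref{thm:dual-converge-Haagerup-Orlicz}). Concretely, a functional $\ell\in L^{\Phi}(\M)^*$ is $\ell=\ell_y$ with $\ell_y(x)=\tr(xy)$ for a unique $y\in L^{\Psi}(\M)$, while a functional on $L^{\Psi}(\M)$ is $\ell_z$ with $\ell_z(y)=\tr(yz)$ for $z\in L^{\Phi}(\M)$. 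For $x\in L^{\Phi}(\M)$ one has $J(x)(\ell_y)=\ell_y(x)=\tr(xy)=\tr(yx)=\ell_x(y)$, so under the identifications $J(x)$ is precisely the functional $\ell_x$; as $x$ ranges over $L^{\Phi}(\M)$ these exhaust $L^{\Psi}(\M)^*=L^{\Phi}(\M)^{**}$. Hence $J$ is surjective and $L^{\Phi}(\M)$ is reflexive.

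I expect the main obstacle to be nothing computational but rather this last compatibility verification: confirming that the two $\tr$-pairings fit together so that the bidual identification agrees with the canonical embedding. The index bookkeeping in the first step is routine, and the double duality is immediate once the self-duality of the hypothesis is noted; the trace-symmetry argument is what guarantees surjectivity of $J$ rather than a mere isomorphism $L^{\Phi}(\M)\cong L^{\Phi}(\M)^{**}$.
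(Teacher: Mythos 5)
Your proof is correct and follows exactly the route the paper intends: Corollary~\ref{cor:reflexivel-hagerup} is stated as an immediate consequence of Theorem~\ref{thm:dual-hagerup}, and your argument — noting via \eqref{eq:relationship-index} that the hypothesis is self-dual so the duality theorem applies to $\Psi$ as well, then checking that the composite identification $L^{\Phi}(\M)^{**}=L^{\Psi}(\M)^{*}=L^{\Phi}(\M)$ agrees with the canonical embedding through the symmetry of $\tr$ — is precisely the standard filling-in of that step. The care you take with the canonical embedding (rather than settling for a mere isomorphism with the bidual) is the right point to emphasize.
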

\section{Applications}
In this section,  we always denote by  $\Phi$ an  N-function on $(0,\8)$ such that $1< a_{\Phi} \le b_{\Phi}<\8$  and by $\Psi$ its a complementary N-function.

Let $a=(a_{n})$ be a finite sequence in $L^{\Phi}(\M)$,
we define
\be
\|a\|_{L^{\Phi}(\M,\el_{c}^{2})}= \Big \| \Big ( \sum_n
|a_{n}|^{2} \Big )^{1/2} \Big \|_{L^{\Phi}(\M)},\qquad
\|a\|_{L^{\Phi}(\M,\el_{r}^{2})} = \Big \| \Big ( \sum_{n
} |a_{n}^{*}|^{2} \Big )^{1/2} \Big \|_{L^{\Phi}(\M)},
\ee
respectively. This gives two norms on the family of all finite sequences in
$L^{\Phi}(\M)$. Indeed, let $\B(\el^{2}) $ be the algebra of all bounded operators on
$\el^{2}$ with the usual trace $\Tr$. We consider the von Neumann
algebra tensor product $\M\otimes\B(\el^{2})$, equipped with the tensor
product weight  $\varphi \otimes \Tr$. It is clear that $\varphi \otimes \Tr$ is a faithful normal semi-finite weight, but  it is no longer a state.
Therefore, to continue the discussion in the case of states, we will consider $\B(\el^{2}_m)$  with an arbitrary positive integer $m$ instead of
$\B(\el^{2})$. We still denote $\B(\el^{2}_m)$  by $\B(\el^{2})$.   Let $\psi=\varphi \otimes \Tr$. Then the corresponding
one parameter automorphism group is
\be
\sigma_{t}^\psi = \sigma_{t}^\varphi\otimes id_{{\mathcal{B}}(\el^{2})},\qquad t \in \real.
\ee
Hence,
\be
[\M\otimes{\mathcal{B}}(\el^{2})]\rtimes_{\sigma^\psi}\real= (\mathcal{M}\rtimes_{\sigma^\varphi}\real)\otimes{ \mathcal{B}}(\el^{2})= \N\otimes{ \mathcal{B}}(\el^{2}).
\ee
The canonical faithful normal semifinite  trace
$\nu$ on $[\M\otimes{\mathcal{B}}(\el^{2})]\rtimes_{\sigma^\psi}\real$ is the tensor product $\tau\otimes\Tr$
(recalling that $\tau$ is the canonical trace on $\N$). Let $L^{\Phi}(\M\otimes{\mathcal{B}}(\el^{2}))$ denote
the Haagerup $L^\Phi$-space associated with $\tau\otimes\Tr$. Observe that the distinguished
tracial functional on $L^1(\M\otimes{\mathcal{B}}(\el^{2}))$ is equal $\tr\otimes\Tr$.

 Now, any finite sequence $a=(a_{n})_{n\ge1}$ in $L^{\Phi}(\M)$ can be regarded as an element in
$L^{\Phi}(\M\otimes {\mathcal{B}}(\el^{2}))$ via the
following map
\be
a\longmapsto T(a)= \left(
\begin{matrix}
a_{1} & 0 & \ldots \\
a_{2} & 0 & \ldots \\
\vdots & \vdots & \ddots
\end {matrix}
\right ),
\ee
that is, the matrix of $T(a)$ has all vanishing entries except those
in the first column which are the ${a_{n}}$'s. Such a matrix is
called a column matrix, and the closure in
$L^{\Phi}(\M\otimes {\mathcal{B}}(\el^{2}))$ of all column
matrices is called the column subspace of
$L^{\Phi}(\M\otimes {\mathcal{B}}(\el^{2}))$. Since
\be
\|a\|_{L^{\Phi}(\M,\el_{c}^{2})}=\||T(a)|\|_{L^{\Phi}(\M\otimes
{\mathcal{B}}(\el^{2}))}= \|T(a)\|_{L^{\Phi}(\M\otimes
{\mathcal{B}}(\el^{2}))},
\ee
then $\|.\|_{L^{\Phi}(\M,\el_{c}^{2})}$ defines a norm
on the family of all finite sequences of $L^{\Phi}(\M)$.
The corresponding completion $L^{\Phi}(\M,\el_{c}^{2})$ is a Banach space. It is clear that a sequence
$a=(a_{n})_{n\ge1}$ in $L^{\Phi}(\M)$ belongs to
$L^{\Phi}(\M, \el_{c}^{2})$ if and only if
\be
\sup_{n\geq1} \Big \| \Big ( \sum_{k=1
}^{n}|a_{k}|^{2} \Big )^{1/2} \Big \|_{L^{\Phi}(\M)}<\infty.
\ee
If this is the case, $\big ( \sum_{k=1
}^{\infty}|a_{k}|^{2} \big )^{1/2}$ can be appropriately defined as an
element of $L^{\Phi}(\M)$. Similarly, $
\|.\|_{L^{\Phi}(\M,\el_{r}^{2})}$ is also a norm on the family
of all finite sequence in $L^{\Phi}(\M)$, and the corresponding completion $L^{\Phi}(\M, \el_{r}^{2})$ is a Banach space, which is isometric to the row subspace of
$L^{\Phi}(\M\otimes {\mathcal{B}}(\el^{2}))$ consisting of
matrices whose nonzero entries lie only in the first row. Observe
that the column and row subspaces of $L^{\Phi}(\M\otimes
{\mathcal{B}}(\el^{2}))$ are complemented by
in \cite[Lemma 1.2]{JX} and \cite[Theorem 4.8]{D}.

We also need $L^\Phi_d(\M)$, the space of all sequences $a=(a_m)_{m\ge1}$ in $L^\Phi(\M)$ such that
$$
\|a\|_{L^{\Phi}_d({\mathcal{M}})}=\|diag(a_m)\|_{L^{\Phi}({\mathcal{M}}\otimes
{\mathcal{B}}(\ell_{2}))}<\8,
$$
where
$diag(a_m)$  is
the matrix with the $a_m$ on the diagonal  and zeroes
elsewhere.

\subsection{Noncommutative martingale inequalities}

The aim of this section is  using Theorem
\ref{thm:Haagerup's reduction} to extend some martingale inequalities in the tracial
case to  the nontracial case, i.e. to extend some results obtained in \cite{H2} in the case $L_p$-case to the Orlicz space case.

We keep all notations introduced in the preceding sections, we fix an increasing filtration $(\M_m)_{m\ge1}$ of von Neumann subalgebras of $\M$
whose union is $w^*$-dense in $\M$. Assume that for each $m\in\mathbb{N}$ there exists a normal
faithful conditional expectation $\E_m$ from $\M$ onto $\M_m$ such that
\beq\label{eq:equality-condition}
\varphi\circ\E_m=\varphi.
\eeq
 It is well known that such an $\E_m$ is unique and satisfies
\beq\label{eq:modular automorphism-invariant}
 \sigma_{t}^\varphi\circ\E_m=\E_m \circ\sigma_{t}^\varphi,\;\qquad \forall t\in\real.
\eeq
Recall that the existence of a conditional expectation $\E_m$ satisfying \eqref{eq:equality-condition} is equivalent
to the $\sigma_{t}^\varphi$-invariance of $\M_m$ (see \cite{T1}). The restriction of $\sigma_{t}^\varphi$ to $\M_m$ is the modular automorphism group of $\varphi|_{\M_m}$. We will not differentiate $\varphi$ and $\sigma_{t}^\varphi$ from their restrictions
to $\M_m\;(m\in\mathbb{N})$. We have that
\beq\label{eq:condition-expectations-eq}
\E_m \circ\E_k = \E_m \circ \E_k = \E_{\min(m,k)},\qquad \forall m,\; k\in \mathbb{N}.
\eeq
Let $G=\cup_{n\ge1}2^{-n}\mathbb{Z}$ (fixed throughout this sections). Set
\be
\U(\M) =\M\rtimes_{\sigma^\varphi} G,\qquad \U(\M_m) =\M_m\rtimes_{\sigma^\varphi} G,\qquad \forall m\in\mathbb{N}.
\ee
By \eqref{eq:condition-expectations-eq}, we consider $\U(\M_m)$   as a von Neumann subalgebra of
$\U(\M)$.
Let $\widehat{\varphi}$ be the dual weight of $\varphi$ on $\U(\M)$. Then $\widehat{\varphi}$ is again a faithful
normal state on $\U(\M)$ whose restriction on $\M$ is $\varphi$. We have that $\widehat{\varphi}|_{\U(\M_m)}$ is equal to the dual weight of $\varphi|_{\M_m}$.
 As usual, we
denote this restriction still by $\widehat{\varphi}$. The increasing sequence of the von Neumann
subalgebras of $\U(\M)$ constructed in Section 4 relative to $\M$ is denoted by
$(\U_n(\M))_{n\ge1}$, and that relative to $\M_m$ by $(\U_n(\M_m))_{n\ge1}$. All $\U_n(\M)$ and $\U_n(\M_n)$
are von Neumann subalgebras of $\U(\M)$. It is clear that
\beq
\U_n(\M_m) = \U_n(\M_m)\cap \U_n(\M),\qquad  n \in \mathbb{N}.
\eeq
Let $\F:\U(\M) \rightarrow\M$ be the conditional expectation defined by \eqref{conditional-expectation1}. Then $\F|_{\U(\M_n)}$  is the corresponding conditional expectation from $\U(\M_m)$ onto $\M_m$, its again denoted by $\F$. Let $\F_n: \U(\M)\rightarrow\U_n(\M)$ be the conditional expectation
constructed in Section 4. On the other hand,  $\E_m$ extends to a normal faithful conditional expectation  $\hat{\E}_m$  from $\U(\M)$ onto
$\U(\M_m)$ such that
\be
\hat{\varphi}\circ\hat{\E}_m=\hat{\varphi},\qquad \sigma_{t}^{\widehat{\varphi}}\circ\hat{\E}_m=\hat{\E}_m\circ\sigma_{t}^{\widehat{\varphi}}, \qquad\forall t\in \real.
\ee
Hence,
\beq\label{eq:condtional expectations-conditional expectations}
\hat{\E}_m\circ\F=\F\circ\hat{\E}_m,\qquad \hat{\E}_m\circ\F_n=\F_n\circ\hat{\E}_m,\qquad n,\;m \in \mathbb{N}.
\eeq
Therefore, $\U(\M_m)$ and $\U_n(\M)$ are respectively invariant under $\hat{\E}_m$ and $\F_n$ for
all $n,\;m \in \mathbb{N}$; moreover, $\F_n|_{\U(\M_m)}: \U(\M_m)\rightarrow \U_n(\M_m)$ and $\hat{\E}_m|_{\U_n(\M)}:\U_n(\M)\rightarrow\U_n(\M_m)$ are normal faithful conditional expectations. In this section, we will not distinguish
these conditional expectations and their respective restrictions. All these maps preserve the dual state $\hat{\varphi}$ (for more details see \cite[Section 6]{H2}).

Let $1< a_{\Phi} \le b_{\Phi}<\8$. By Proposition \ref{pro:Haagerup-subspace}, $L^\Phi(\M_m)$ is naturally identified as a subspace of
$L^\Phi(\M)\;(m \in \mathbb{N})$. Using Theorem \ref{thm:conditional-hagerup}, we obtain that $\E_m$ ($
\E_m(\Phi^{-1}(D)^{\frac{1}{2}}x\Phi^{-1}(D)^{\frac{1}{2}})=\Phi^{-1}(D)^{\frac{1}{2}}\E_m(x)\Phi^{-1}(D)^{\frac{1}{2}},\; \forall x\in\M$)
extends to a positive contractive projection from $L^\Phi(\M)$ onto $L^\Phi(\M_m)\;(m \in \mathbb{N})$. From the prof of Theorem \ref{thm:conditional-hagerup}, we know that  if  $1\le r,p,q<\infty$ such
that $r^{-1}+p^{-1}+q^{-1}\le1$, then
\beq\label{eq:modular-property}
\E_m(axb) = a\E_m(x)b,\qquad a \in L^{\Phi^{(r)}}(\M_m), \;b \in L^{\Phi^{(p)}}(\M_m ), \;x \in L^{\Phi^{(q)}}(\M).
\eeq
 $L^\Phi(\U_n(\M))$ and  $L^\Phi(\U_n(\M_m))$
 are naturally identified as subspaces of $L^\Phi(\U(\M))$. Thus, $L^\Phi(\U(\M))$ is the largest
space among all these noncommutative $L^\Phi$-spaces. Since $\F,\;\F_n$ and $\E_n$ preserve $\hat{\varphi}$
and commute with $\sigma_{t}^{\widehat{\varphi}}$, these conditional expectations extend to positive contractive
projections on $L^\Phi(\U(\M))$; moreover, their extensions satisfy
the equality \eqref{eq:modular-property}. Since all these conditional expectations commute, so
do their extensions. As usual, we use the same symbol to denote a map and its
extensions.

A non-commutative $L^{\Phi}$-martingale with respect to $(\M_{m})_{m\ge1}$ is a sequence $x=(x_{m})_{m\ge1}$
such that $x_{m} \in L^{\Phi}(\M_m)$ and
\be
\E_n(x_{m+1})=x_m, \qquad \forall m\in \mathbb{N}.
\ee
Let $\|x\|_{L^{\Phi}({\mathcal{M}})}=\sup_{m\ge1}\|x_{m}\|_{L^{\Phi}({\mathcal{M}})}$. If $\|x\|_{L^{\Phi}({\mathcal{M}})}
<\infty$, then $x$ is said to be a bounded $L^{\Phi}$-martingale.
The martingale difference sequence of $x$, denoted
by $dx=(dx_{m})_{m\ge1},$ is defined as
$$
dx_{1}=x_{1},\qquad dx_{m}=x_{m}-x_{m-1},\qquad m\ge2.
$$
 A martingale $x$ is called a finite martingale if there exists $N$ such that $dx_m = 0$
for all $m\ge N$
\begin{remark}\label{re:conver}
\begin{enumerate}[\rm (i)]

\item Let $x_{\infty}\in L^{\Phi}({\mathcal{M}})$. Set
$x_{m}={\mathcal{E}}_{m}(x_{\infty})$ for all $m\geq1$. Then
$x=(x_{m})$ is a bounded $L^{\Phi}$-martingale and
$\|x\|_{L^{\Phi}({\mathcal{M}})}=\|x_{\infty}\|_{L^{\Phi}({\mathcal{M}})}$.

\item Suppose $\Phi$ is an N-function with $1<a_{\Phi}\le
b_{\Phi}<\infty$. Then $L^{\Phi}(\M)$ is reflexive. By the standard argument we conclude that any bounded
noncommutative martingale $x=(x_{m})$ in $L^{\Phi}(\M)$
converges to some $x_{\infty}$ in $L^{\Phi}(\M)$ and
$x_{m}=\E_{m}(x_{\infty})$ for all $m\ge1$.
\end{enumerate}
\end{remark}
Let $x$ be a noncommutative martingale. Set
$$
S_{c,m}(x)= \Big ( \sum_{k= 1  }^{m}|dx_{k}|^{2} \Big )^{1/2} \qquad \mbox{and}
\qquad S_{r,m}(x)= \Big ( \sum_{k=1}^{m}|dx_{k}^{*}|^{2} \Big )^{1/2}.
$$
By the preceding discussion, $dx$ belongs to
$L^{\Phi}({\mathcal{M}},\el_{c}^{2})$ (resp.
$L^{\Phi}({\mathcal{M}}, \el_{r}^{2}))$ if and only if $(S_{c,m}(x))_{m\ge1}$
(resp. $(S_{r,m}(x))_{m\ge1}$) is a bounded sequence in
$L^{\Phi}({\mathcal{M}})$; in this case,
\be
S_{c}(x)= \Big ( \sum_{k= 1    }^{\infty}|dx_{k}|^{2} \Big )^{1/2} \qquad
\mbox{and} \qquad S_{r}(x)= \Big ( \sum_{k= 1
}^{\infty}|dx_{k}^{*}|^{2} \Big )^{1/2}
\ee
are elements in $L^{\Phi}({\mathcal{M}})$.

We define $\mathcal{H}_{c}^{\Phi}({\mathcal{M}})$ (resp. $\mathcal{H}_{r}^{\Phi}({\mathcal{M}})$)
to be the space of all $L^{\Phi}$-martingales with respect to
$({\mathcal{M}}_{m})_{m\ge1}$ such that $dx \in
L^{\Phi}({\mathcal{M}}, \el_{c}^{2})$ (resp. $dx \in
L^{\Phi}({\mathcal{M}}, \el_{r}^{2})$ ), equipped with the norm
\be
\|x\|_{\mathcal{H}_{c}^{\Phi}({\mathcal{M}})}=\|dx\|_{L^{\Phi}({\mathcal{M}}, \el_{c}^{2})
} \qquad \big ( \mbox{resp.} \;
\|x\|_{\mathcal{H}_{r}^{\Phi}({\mathcal{M}})}=\|dx\|_{L^{\Phi}({\mathcal{M}}, \el_{r}^{2}) } \big ).
\ee
$\mathcal{H}_{c}^{\Phi}({\mathcal{M}})$ and $\mathcal{H}_{r}^{\Phi}({\mathcal{M}})$ are
Banach spaces. Note that if $x\in \mathcal{H}_{c}^{\Phi}({\mathcal{M}}),$
\be
\|x\|_{\mathcal{H}_{c}^{\Phi}({\mathcal{M}})}=\sup_{m\ge1}\|S_{c,m}(x)\|_{L^{\Phi}({\mathcal{M}})}
=\|S_{c}(x)\|_{L^{\Phi}({\mathcal{M }})}.
\ee
The similar equalities hold for $\mathcal{H}_{r}^{\Phi}({\mathcal{M}}).$

Now, we define the Hardy spaces of noncommutative martingales as follows:
If $b_{\Phi} < 2,$
\be
\mathcal{H}^{\Phi}({\mathcal{M}}) = \mathcal{H}_{c}^{\Phi}({\mathcal{M}}) + \mathcal{H}_{r}^{\Phi}({\mathcal{M}}),
\ee
equipped with the norm
\be
\|x\|_{\mathcal{H}^{\Phi}}=\inf \{ \|y\|_{ \mathcal{H}_{c}^{\Phi}({\mathcal{M}})}+\|z\|_{\mathcal{H}_{r}^{\Phi}({\mathcal{M}})}:
x=y+z,\; y \in \mathcal{H}_{c}^{\Phi}({\mathcal{M}}),\; z \in
\mathcal{H}_{r}^{\Phi}({\mathcal{M}})\}.
\ee
If $2< a_{\Phi}$,
$$
\mathcal{H}^{\Phi}({\mathcal{M}}) = \mathcal{H}_{c}^{\Phi}({\mathcal{M}}) \cap
\mathcal{H}_{r}^{\Phi}({\mathcal{M}}),
$$
equipped with the norm
$$
\|x\|_{\mathcal{H}^{\Phi}}=\max \{ \|x\|_{\mathcal{H}_{c}^{\Phi}({\mathcal{M}})},\;
\|x\|_{\mathcal{H}_{r}^{\Phi}({\mathcal{M}})} \}.
$$
Let
\be
\N=\mathcal{M}\rtimes_{\sigma^\varphi}\real,\qquad \N_m=\M_m\rtimes_{\sigma^\phi}\real,\qquad m\in\mathbb{N}
\ee
and $\tau$ be the canonical  normal semi-finite faithful trace  on $\N$. It is clear that $\E_m$ extends to a  conditional expectation  $\check{\E}_m$  from $L^{\Phi,\8}({\mathcal{N}})$ onto
$L^{\Phi,\8}({\mathcal{N}}_m)$ such that
\beq\label{eq:condiional-cross-product}
\check{\E}_m \circ\check{\E}_k = \breve{\E}_m \circ \check{\E}_k = \check{\E}_{\min(m,k)},\qquad \forall m,\; k\in \mathbb{N}.
\eeq
We now consider the conditioned versions of square functions and Hardy spaces developed
in \cite{JX}. Let $a=(a_m)_{m\ge1}$ be a finite sequence in $L^1(\N)\cap\N$. We define
(recalling $\check{\E}_0=\check{\E}_1$)
$$
\|a\|_{L^{\Phi,\8}_{cond}(\M, \el_{c}^{2})}=\| \Big ( \sum_{n\ge1}\check{\E}_{n-1}(|a_{n}|^{2}) \Big )^{\frac{1}{2}}\|_{L^{\Phi,\8}({\mathcal{N}})}.
$$
It is shown in \cite{RW} that $\|\cdot\|_{L^{\Phi,\8}_{cond}(\N, \el_{c}^{2})}$ is a norm on the vector space of all finite
sequences in $L^1(\N)\cap\N$. We define $L^{\Phi,\8}_{cond}(\N, \el_{c}^{2})$ is the corresponding completion. Similarly, we
define the conditioned row space $L^{\Phi,\8}_{cond}(\N, \el_{r}^{2})$.  Note that $L^{\Phi,\8}_{cond}(\N, \el_{c}^{2})$ (resp. $L^{\Phi,\8}_{cond}(\N, \el_{r}^{2})$)
is the conditioned version of $L^{\Phi,\8}(\N, \el_{c}^{2})$ (resp. $L^{\Phi,\8}(\N, \el_{r}^{2})$). The space
$L^{\Phi,\8}_{cond}(\N, \el_{c}^{2})$ ( resp. $L^{\Phi,\8}_{cond}(\N, \el_{r}^{2})$) can be viewed as a closed
subspace of the column (resp. row) subspace of $L^{\Phi,\8}({\mathcal{N}}\otimes {\mathcal{B}}(\ell_{2}(\mathbb{N}^2)))$.
We refer
to \cite{J,JX,RW} for more details on this.

Notice that $\check{\E}_m|_{L^{\Phi}(\M)}=\E_m\;( m\in \mathbb{N})$. For any finite sequence $a=(a_{n})_{m\ge1}$ in $L^{\Phi}(\M)$,
we define
\be
\|a\|_{L^{\Phi}_{cond}(\M,\el_{c}^{2})}= \Big \| \Big ( \sum_{n\ge1}\E_{n-1}(
|a_{n}|^{2}) \Big )^{1/2} \Big \|_{L^{\Phi}(\M)}=\| \Big ( \sum_{n\ge1}\check{\E}_{n-1}(|a_{n}|^{2}) \Big )^{\frac{1}{2}}\|_{L^{\Phi,\8}({\mathcal{N}})},\\
\|a\|_{L^{\Phi}_{cond}(\M,\el_{r}^{2})} = \Big \| \Big ( \sum_{n\ge1
}\E_{n-1}( |a_{n}^{*}|^{2}) \Big )^{1/2} \Big \|_{L^{\Phi}(\M)}=\| \Big ( \sum_{n\ge1}\check{\E}_{n-1}(|a_{n}^*|^{2}) \Big )^{\frac{1}{2}}\|_{L^{\Phi,\8}({\mathcal{N}})},
\ee
respectively. Then this gives two norms on the family of all finite sequences in
$L^{\Phi}(\M)$. Let  $L^{\Phi}_{cond}(\M, \el_{c}^{2})$ and $L^{\Phi}_{cond}(\M, \el_{r}^{2})$ be the corresponding completions.  Then
$L^{\Phi}_{cond}(\M, \el_{c}^{2})$ ( resp. $L^{\Phi}_{cond}(\M, \el_{r}^{2})$)  can  be viewed
as a subspace of $L^{\Phi}({\mathcal{M}}\otimes {\mathcal{B}}(\ell_{2}(\mathbb{N}^2)))$ as column (resp. row) vectors.

For a finite noncommutative $L^\Phi$-martingale  $x=(x_{m})_{m\geq 1}$ define (with $\E_{0}=\E_1$)
$$
\|x\|_{\h_c^{\Phi}({\mathcal{M}})}=\|dx\|_{L^{\Phi}_{cond}(\M, \el_{c}^{2})}\qquad \mbox{and}
\qquad
 \|x\|_{\h_r^{\Phi}({\mathcal{M}})}= \|dx\|_{L^{\Phi}_{cond}(\M, \el_{r}^{2})}.
$$
Let $\h_c^{\Phi}({\mathcal{M}})$ and $\h_r^{\Phi}({\mathcal{M}})$ be the corresponding completions. Then $\h_c^{\Phi}({\mathcal{M}})$ and $\h_r^{\Phi}({\mathcal{M}})$ are Banach
spaces. We define the column and row conditioned square functions as follows. For any finite
martingale $x = (x_m)_{m\ge1}$ in $L^\Phi(M)$, we set
$$
s^c (x)= \Big ( \sum_{k\ge1 }\E_{k-1}(|dx_{k}|^{2}) \Big )^{\frac{1}{2}} \qquad \mbox{and}
\qquad s^r (x)= \Big ( \sum_{k\ge1}\E_{k-1}(|dx_{k}^{*}|^{2}) \Big )^{\frac{1}{2}}.
$$
Then
$$
\|x\|_{\h_c^{\Phi}({\mathcal{M}})} = \|s^c (x)\|_{L^{\Phi}({\mathcal{M }})} \qquad \mbox{and}
\qquad \|x\|_{h_r^{\Phi}({\mathcal{M}})} = \|s^r (x)\|_{L^{\Phi}({\mathcal{M }})}.
$$
Let $x=(x_{m})_{m\ge1}$ be a finite $L^{\Phi}$-martingale, we set
$$
s^d (x)=diag(|dx_n|).
$$
We note that
$$
\|s^d (x)\|_{L^{\Phi}({\mathcal{M}}\otimes
{\mathcal{B}}(\ell_{2}))}=\|dx_n\|_{L^{\Phi}_d({\mathcal{M}})}
$$
Let $\h_d^\Phi(\M)$ be the subspace of $L^{\Phi}_d({\mathcal{M}})$ consisting of all martingale difference sequences.

Recall that the mixture conditioned $\mathcal{H}^\Phi$-spaces of noncommutative martingales are defined as follows: For $2< a_{\Phi}$, $\h^\Phi (\M) = \h_c^\Phi (\M) \cap \h_r^\Phi (\M) \cap \h_d^\Phi (\M)$ equipped with the norm $\| x \|_{\h^\Phi} = \| x\|_{\h_c^\Phi} + \|x\|_{\h_r^\Phi} + \|x\|_{\h_d^\Phi}$; for $b_{\Phi} < 2$,
\be
\h^\Phi (\M) = \{ x= y +z + w: y \in \h_c^\Phi (\M),\; z \in \h_r^\Phi (\M),\; w \in \h_d^\Phi (\M)\}
\ee
equipped with the norm $\| x \|_{\h^\Phi} = \inf_{x = y + z + w} \{ \| y\|_{\h_c^\Phi} + \|z\|_{\h_r^\Phi} + \|w\|_{\h_d^\Phi} \},$ where the infimum is taken over all decompositions $x= y +z +w$ with $y \in \h_c^\Phi (\M)$, $z \in \h_r^\Phi (\M)$, and $w \in \h_d^\Phi (\M)$.

First, we consider  the inequalities of noncommutative  martingale transforms. These inequalities are first proved in \cite{PX1} for  the tracial $L^p(\M)\;(1<p<\8)$ case, and then extended to the general $L^p(\M)\;(1<p<\8)$ case in \cite{JX}. For the symmetric spaces case see \cite{DPPS}.

\begin{theorem}\label{thm:martingale-transformation} There is a constant $C_\Phi$ such that for any finite noncommutative $L^\Phi$-martingale  $x=(x_{m})_{m\geq 1}$,
\be
\|\sum_{m\ge1}\varepsilon_mdx_m\|_{L^{\Phi}({\mathcal{M}})}\le C_\Phi\|x\|_{L^{\Phi}({\mathcal{M}})},\qquad \varepsilon_m=\pm1.
\ee
\end{theorem}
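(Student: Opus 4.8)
The plan is to use the Haagerup reduction (Theorem \ref{thm:Haagerup's reduction}) together with the conditional-expectation machinery of this section to transfer the transform inequality from the finite (tracial) setting, where it is classical, to $L^\Phi(\M)$. The starting point is the tracial case: in a finite von Neumann algebra with a normal faithful tracial state, for a filtration with trace-preserving conditional expectations, the transform by signs $\varepsilon_m=\pm1$ is bounded on the tracial noncommutative Orlicz space $L^\Phi$, because $1<a_\Phi\le b_\Phi<\8$ forces the Boyd indices of the associated symmetric space to satisfy $1<p_E\le q_E<\8$; this is the content of the symmetric-space transform theorem (cf. \cite{DPPS}, and \cite{PX1,JX} in the $L^p$-case). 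In particular, since by Theorem \ref{thm:Haagerup's reduction}(iii) each $L^\Phi(\U_n(\M))$ is isometric to the usual tracial Orlicz space, the inequality with a constant $C_\Phi$ depending only on $\Phi$ holds for every finite martingale adapted to a filtration of $\U_n(\M)$.

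Next I would fix a finite $L^\Phi$-martingale $x=(x_m)_{m\ge1}$ relative to $(\M_m)_{m\ge1}$ and regard it inside the largest space $L^\Phi(\U(\M))$, using Proposition \ref{pro:Haagerup-subspace} to embed $L^\Phi(\M_m)\subset L^\Phi(\U(\M_m))\subset L^\Phi(\U(\M))$ isometrically. Applying the conditional expectation $\F_n$ from Section 4, set $\F_n(x)=(\F_n(x_m))_{m\ge1}$. The commutation relations \eqref{eq:condtional expectations-conditional expectations} give $\hat{\E}_m\circ\F_n=\F_n\circ\hat{\E}_m$, whence $\F_n(x_m)\in L^\Phi(\U_n(\M_m))$ and $\hat{\E}_m(\F_n(x_{m+1}))=\F_n(\E_m(x_{m+1}))=\F_n(x_m)$. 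Thus $\F_n(x)$ is a finite $L^\Phi(\U_n(\M))$-martingale relative to the finite filtration $(\U_n(\M_m))_{m\ge1}$, its difference sequence is $(\F_n(dx_m))_{m\ge1}$, and by linearity of $\F_n$ the transform satisfies $\sum_m\varepsilon_m\F_n(dx_m)=\F_n\big(\sum_m\varepsilon_m dx_m\big)$. The finite (tracial) case therefore yields
\be
\Big\|\F_n\Big(\sum_{m\ge1}\varepsilon_m dx_m\Big)\Big\|_{L^\Phi(\U_n(\M))}\le C_\Phi\,\|\F_n(x)\|_{L^\Phi(\U_n(\M))}=C_\Phi\sup_{m\ge1}\|\F_n(x_m)\|\le C_\Phi\,\|x\|_{L^\Phi(\M)},
\ee
the last inequality because $\F_n$ is contractive on $L^\Phi(\U(\M))$ and the embedding of $L^\Phi(\M)$ is isometric.

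Finally I would let $n\to\8$. Writing $y=\sum_m\varepsilon_m dx_m\in L^\Phi(\M)\subset L^\Phi(\U(\M))$, Theorem \ref{thm:Haagerup's reduction}(ii) gives that $\bigcup_n L^\Phi(\U_n(\M))$ is dense in $L^\Phi(\U(\M))$ and, since each $\F_n$ restricts to the identity on $L^\Phi(\U_k(\M))$ for $n\ge k$, the uniformly contractive projections $\F_n$ converge to the identity on this dense union; hence $\F_n(y)\to y$ in the norm of $L^\Phi(\U(\M))$ (this also follows from the convergence-in-measure description of Theorem \ref{thm:measure-convergence-equivalent} together with $\F_n\to\mathrm{id}$ $\sigma$-strongly). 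Passing to the limit in the displayed estimate and using once more that $L^\Phi(\M)$ sits isometrically in $L^\Phi(\U(\M))$, I obtain $\|y\|_{L^\Phi(\M)}=\|y\|_{L^\Phi(\U(\M))}=\lim_n\|\F_n(y)\|_{L^\Phi(\U_n(\M))}\le C_\Phi\|x\|_{L^\Phi(\M)}$, which is the asserted inequality. I expect the main obstacle to lie in the first step, namely securing the tracial transform inequality on $L^\Phi$ with a constant depending only on $\Phi$: one must check that $L^\Phi(0,\gamma)$ with $1<a_\Phi\le b_\Phi<\8$ meets the hypotheses (nontrivial Boyd indices, or the relevant UMD/interpolation requirement) under which the symmetric-space result of \cite{DPPS} applies, and that the isometric identification in Theorem \ref{thm:Haagerup's reduction}(iii) genuinely transports the \emph{filtered} structure (the expectations $\hat{\E}_m|_{\U_n(\M)}$) to the tracial side, not merely the Banach-space norm; the passage to the limit is comparatively routine once uniform boundedness of the transform constant across the $\U_n(\M)$ is in hand.
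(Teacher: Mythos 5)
Your proposal is correct and follows essentially the same route as the paper: lift $x$ to $L^\Phi(\U(\M))$, apply $\F_n$ to reduce to a martingale over the finite (tracial) algebra $\U_n(\M)$ where the transform inequality is supplied by \cite[Proposition 4.16]{DPPS}, use the commutation relations \eqref{eq:condtional expectations-conditional expectations} to identify $\F_n\big(\sum_m\varepsilon_m dx_m\big)$ with the transform of $\F_n(x)$, and pass to the limit $n\to\infty$. The only (harmless) difference is in the limiting step, where you invoke density of $\bigcup_n L^\Phi(\U_n(\M))$ plus uniform contractivity of the $\F_n$, whereas the paper appeals to mean convergence of the bounded martingale $(\F_n(z))_{n\ge1}$ via reflexivity.
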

\begin{proof} Let $x$ be a finite martingale  in $L^\Phi(\M)$ relative to $(\M_m)_{m\ge1}$.
Lifting $x$ to $L^\Phi(\U(\M))$, we consider $x$ as a martingale relative to $(\U(M_m))_{m\ge1}$. Set
\be
x^{(n)}=\F_n(x)=(\F_n(x_m))_{m\ge1},\qquad n\in \mathbb{N}.
\ee
Then $x^{(n)}$ is a martingale in $L^\Phi(\U_n(\M))$ relative to $(\U_n(\M_m))_{m\ge1}$. Notice that $\U_n(\M)$
admits a normal faithful finite trace $\varphi_n$. Using  the construction of $\varphi_n$ in section 4  and \eqref{eq:equality-condition}, we know that
$\varphi_n$ is invariant under $\hat{\E}_m$ for all $m\in \mathbb{N}$. Since  the martingale
structure determined by $(\U_n(\M_m))_{m\ge1}$ in $L^\Phi(\U_n(\M))$ coincides with that in the
tracial noncommutative $L^\Phi$-space of $\U_n(\M)$,
by \cite[Proposition 4.16]{DPPS}, we have that
\be
\|\sum_{m\ge1}\varepsilon_mdx_m^{(n)}\|_{L^\Phi(\U_n(\M))}\le C_\Phi\|x^{(n)}\|_{L^\Phi(\U_n(\M))},\qquad \varepsilon_m=\pm1.
\ee
Set $z=\sum_{m\ge1}\varepsilon_mdx_m$. Applying \eqref{eq:condtional expectations-conditional expectations}, we get that $\F_n(z)=\sum_{m\ge1}\varepsilon_mdx_m^{(n)}$.
On the other hand, $(\F_n(z))_{n\ge1}$ is a martingale in $L^\Phi(\U(\M))$ with respect to $(\U_n(\M))_{n\ge1}$.  Therefore,
$\F_n(z)\rightarrow z$ in $L^\Phi(\U(\M))$ as $n\rightarrow\8$ (see (ii) of Remark \eqref{re:conver}). Similarly, $x^{(n)}=\F_n(x)\rightarrow z$ in $L^\Phi(\U(\M))$ as $n\rightarrow\8$. Hence,
\be
\|\sum_{m\ge1}\varepsilon_mdx_m\|_{L^{\Phi}({\mathcal{M}})}\le C_\Phi\|x\|_{L^{\Phi}({\mathcal{M}})}.
\ee
\end{proof}

Second, we will extend the noncommutative Stein inequality to the $L^\Phi(\M)$ case.  For the tracial $L^p$-space  case see  \cite{PX1}, for the Haagerup $L^p$-space case see Junge/Xu \cite{JX} and for the symmetric space see \cite{B}.

\begin{lemma}\label{lem:stein-convergence}  Let $a=(a_m)_{m\ge1}\in L^\Phi(\M;\ell_2)$. Set
\be
a^{(n)}=\F_n(a)=(\F_n(a_m))_{m\ge1},\qquad n\in \mathbb{N}.
\ee
Then $a^{(n)}\rightarrow a$ in $L^\Phi(\U(\M);\ell_2)$ as $n\rightarrow\8$. A similar statement holds for the row
space.
\end{lemma}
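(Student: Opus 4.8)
The plan is to transfer the statement to a single Haagerup Orlicz space over an amplified algebra and then invoke the convergence already established in the reduction theorem. Recall that a finite sequence $a=(a_m)_{m\ge1}$ in $L^{\Phi}(\M)$ is identified with the column matrix $T(a)\in L^{\Phi}(\M\otimes\B(\el^2))$, and that under this identification the entrywise action of $\F_n$ coincides with $\tilde\F_n:=\F_n\otimes\mathrm{id}_{\B(\el^2)}$. The point is that $(\M\otimes\B(\el^2),\varphi\otimes\Tr)$ again fits the reduction framework of Section 4: one has $\U(\M)\otimes\B(\el^2)=(\M\otimes\B(\el^2))\rtimes_{\sigma^\psi}G$ with $\psi=\varphi\otimes\Tr$, the finite subalgebras are $\U_n(\M)\otimes\B(\el^2)$, and $\tilde\F_n$ is precisely the corresponding reduction conditional expectation. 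Since $\psi$ is a weight rather than a state when $\dim\el^2=\infty$, I would first replace $\B(\el^2)$ by a finite matrix block $\B(\el^2_m)$; this is harmless because finite sequences are dense in $L^{\Phi}(\M,\el_c^2)$, and a finite sequence lives in such a block, on which $\varphi\otimes\Tr$ is a (multiple of a) faithful normal state.

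First I would record that each $\tilde\F_n$ is a contractive projection on $L^{\Phi}(\M\otimes\B(\el^2_m))$, as furnished by Theorem \ref{thm:conditional-hagerup} applied to the amplified algebra; hence the maps $a\mapsto a^{(n)}$ are uniformly bounded on $L^{\Phi}(\M,\el_c^2)$. By a standard $3\varepsilon$ argument this reduces the claimed convergence to the dense set of finite sequences, and therefore, via the column embedding, to proving $\tilde\F_n(\xi)\to\xi$ in $L^{\Phi}(\M\otimes\B(\el^2_m))$ for every $\xi$ in that single Haagerup Orlicz space.

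The latter convergence is exactly the content of the argument proving part (ii) of Theorem \ref{thm:Haagerup's reduction}, now run over the amplified algebra. On the dense generating set $\Phi^{-1}(D)^{\alpha}(\M\otimes\B(\el^2_m))\Phi^{-1}(D)^{1-\alpha}$, with $D$ the relevant Radon--Nikodym derivative, one uses Proposition \ref{pro:conditional-expectation}(i) to pull $\tilde\F_n$ through the factors $\Phi^{-1}(D)^{\alpha}$, the $\sigma$-strong convergence $\tilde\F_n(x)\to x$ from the reduction theorem, Lemma \ref{lem:convergence} to pass to $L^{\Phi^{(1/\alpha)},\infty}$-convergence of $\Phi^{-1}(D)^{\alpha}\tilde\F_n(x)$, and finally the weak H\"older inequality of Lemma \ref{lem:holder-weak} together with Theorem \ref{thm:measure-convergence-equivalent} to obtain $L^{\Phi,\alpha}$-convergence; extending from these generators to all $\xi$ by the uniform contractivity gives $\tilde\F_n(\xi)\to\xi$. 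Tracing the identifications back yields $a^{(n)}\to a$ in $L^{\Phi}(\U(\M),\el_c^2)$, and the row case follows verbatim by using the row embedding together with Remark \ref{rk:equivalent} (which exchanges $\alpha$ and $1-\alpha$), or by taking adjoints. The main obstacle I anticipate is organizational rather than computational: making rigorous the identification of $\F_n\otimes\mathrm{id}$ with a genuine reduction conditional expectation in the presence of the non-finite weight $\varphi\otimes\Tr$, which is precisely what forces the truncation to $\B(\el^2_m)$ and the preliminary reduction to finite sequences.
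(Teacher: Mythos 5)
Your proof is correct, and it begins exactly where the paper's does: identify $a$ with a column matrix in $L^{\Phi}(\U(\M)\otimes\B(\ell^{2}))$ (truncated to $\B(\ell^{2}_{m})$ so that $\varphi\otimes\Tr$ stays a state, exactly as the paper arranges at the start of Section 6) and observe that $a^{(n)}=(\F_{n}\otimes\mathrm{id}_{\B(\ell^{2})})(a)$. Where you diverge is the convergence step. The paper finishes in one line: $(\F_{n}\otimes\mathrm{id})_{n\ge1}$ is an increasing sequence of conditional expectations on the amplified algebra, so $(a^{(n)})_{n\ge1}$ is a bounded martingale in the reflexive space $L^{\Phi}(\U(\M)\otimes\B(\ell^{2}))$, and the martingale mean convergence of Remark \ref{re:conver}(ii) gives $a^{(n)}\to a$. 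You instead re-run the density argument behind Theorem \ref{thm:Haagerup's reduction}(ii) over the amplified algebra --- uniform contractivity of $\F_{n}\otimes\mathrm{id}$ via Theorem \ref{thm:conditional-hagerup}, the modularity property of Proposition \ref{pro:conditional-expectation}(i), the $\sigma$-strong convergence $\F_{n}(x)\to x$, and Lemmas \ref{lem:convergence} and \ref{lem:holder-weak}, followed by two $3\varepsilon$ reductions (from general $a$ to finite sequences, and from the generating set $\Phi^{-1}(D)^{\alpha}(\M\otimes\B(\ell^{2}_{m}))\Phi^{-1}(D)^{1-\alpha}$ to all of the column subspace). Both routes are sound. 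The paper's argument is shorter but leans on the reflexivity of $L^{\Phi}$, i.e.\ on the duality theory of Section 5; yours is longer but uses only the Section 4 reduction machinery, so it would survive independently of Corollary \ref{cor:reflexivel-hagerup}. Your invocation of Theorem \ref{thm:measure-convergence-equivalent} at the end of the chain is superfluous (the $L^{\Phi,\alpha}$-norm is by definition the $L^{\Phi,\infty}$-norm on the relevant subspace) but harmless.
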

\begin{proof}
Since $L^{\Phi}(\M, \el_{c}^{2})$ ( respectively, $L^{\Phi}(\U(\M), \el_{c}^{2})$)  can  be viewed
as a subspace of $L^{\Phi}({\mathcal{M}}\otimes {\mathcal{B}}(\ell_{2}))$ ( respectively, $L^{\Phi}(\U(\M)\otimes {\mathcal{B}}(\ell_{2}))$ ) as column  vectors. It is clear that
\be
a^{(n)}=\F_n\otimes id_{{\mathcal{B}}(\ell_{2})}(a).
\ee
On the other hand, $(\F_n)_{n\ge1}$ is an increasing sequence of conditional expectations. Using the martingale mean convergence in (ii) of Remark \ref{re:conver}, we obtain the desired result.
\end{proof}

\begin{theorem}\label{thm:stein inequality} There is a constant $\alpha_\Phi$ such that for any finite sequence $(a_m)_m$ in  $L^\Phi(\M)$,
\be
\|(\sum_{m\ge1}|\E_m(a_m)|^2)^\frac{1}{2}\|_{L^{\Phi}({\mathcal{M}})}\le \alpha_\Phi\|(\sum_{m\ge1}|a_m|^2)^\frac{1}{2}\|_{L^{\Phi}({\mathcal{M}})}.
\ee
Consequently, $\H_c^\Phi(\M)$ and $\H_r^\Phi(\M)$ are complemented in $L^{\Phi}(\M, \el_{c}^{2})$ and $L^{\Phi}(\M, \el_{r}^{2})$, respectively.
\end{theorem}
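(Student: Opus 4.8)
The plan is to mimic the reduction argument used for Theorem \ref{thm:martingale-transformation}, transferring the inequality from the finite algebras $\U_n(\M)$, where the tracial Stein inequality is available, up to $L^\Phi(\M)$. First I would lift the finite sequence $a=(a_m)_{m\ge1}$ to $L^\Phi(\U(\M),\el_c^2)$ and set $a^{(n)}=\F_n(a)=(\F_n(a_m))_{m\ge1}$ for each $n$. Since $\U_n(\M)$ carries the normal faithful finite tracial state $\varphi_n$, and (by the construction of $\varphi_n$ in Section 4 together with \eqref{eq:equality-condition}) $\varphi_n$ is invariant under each $\hat{\E}_m$, the filtration $(\U_n(\M_m))_{m\ge1}$ inside $\U_n(\M)$ determines the same martingale structure as in the tracial noncommutative Orlicz space $L^\Phi(\U_n(\M))$. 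Hence the tracial Stein inequality for symmetric spaces (see \cite{B}, and \cite{PX1,JX} for the $L^p$ case) applies to $a^{(n)}$ relative to this filtration, yielding a constant $\alpha_\Phi$ with
\[
\Big\|\big(\sum_{m\ge1}|\E_m(\F_n(a_m))|^2\big)^{1/2}\Big\|_{L^\Phi(\U_n(\M))}\le\alpha_\Phi\Big\|\big(\sum_{m\ge1}|\F_n(a_m)|^2\big)^{1/2}\Big\|_{L^\Phi(\U_n(\M))}.
\]

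Next I would rewrite both sides in terms of the global column norm on $L^\Phi(\U(\M),\el_c^2)$. Using the commutation relations \eqref{eq:condtional expectations-conditional expectations}, namely $\hat{\E}_m\circ\F_n=\F_n\circ\hat{\E}_m$, one has $\E_m(\F_n(a_m))=\F_n(\E_m(a_m))$ for every $m$, so the left-hand side equals $\|\F_n(b)\|_{L^\Phi(\U(\M),\el_c^2)}$, where $b=(\E_m(a_m))_{m\ge1}$ is a finite sequence in $L^\Phi(\M)$ since each $\E_m(a_m)\in L^\Phi(\M_m)\subset L^\Phi(\M)$; the right-hand side is $\|\F_n(a)\|_{L^\Phi(\U(\M),\el_c^2)}$. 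As both $a$ and $b$ lie in $L^\Phi(\M,\el_c^2)$, Lemma \ref{lem:stein-convergence} applies to each, giving $\F_n(a)\to a$ and $\F_n(b)\to b$ in $L^\Phi(\U(\M),\el_c^2)$. Passing to the limit $n\to\8$ in the displayed inequality then yields $\|b\|_{L^\Phi(\M,\el_c^2)}\le\alpha_\Phi\|a\|_{L^\Phi(\M,\el_c^2)}$, which is the asserted Stein inequality; the row case is identical, using the row version of Lemma \ref{lem:stein-convergence}.

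For the complementation consequence, I would introduce the map $P$ on $L^\Phi(\M,\el_c^2)$ given by $P(a)_m=\E_m(a_m)-\E_{m-1}(a_m)$ (with $\E_0=\E_1$). Each $P(a)_m$ lies in $L^\Phi(\M_m)$ and satisfies $\E_{m-1}(P(a)_m)=0$, so $P(a)$ is a column martingale difference sequence; conversely $P$ fixes every such difference sequence, so $P$ is idempotent with range isometrically $\H_c^\Phi(\M)$. Boundedness of $P$ follows from the Stein inequality applied to $(a_m)_m$ together with its reindexed version, which controls $\|(\E_{m-1}(a_m))_m\|_{\el_c^2}$ by $\alpha_\Phi\|(a_m)_m\|_{\el_c^2}$, giving $\|P\|\le 2\alpha_\Phi$. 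Thus $\H_c^\Phi(\M)$ is complemented in $L^\Phi(\M,\el_c^2)$, and symmetrically $\H_r^\Phi(\M)$ in $L^\Phi(\M,\el_r^2)$.

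The main obstacle I anticipate is the passage to the limit: one must ensure the column norm is genuinely continuous along the approximating sequence, so that the finite-level inequalities survive as $n\to\8$. This rests on Lemma \ref{lem:stein-convergence} delivering norm convergence (not merely weak convergence) for both $a$ and $b$, which in turn depends on the reflexivity of $L^\Phi$ (Corollary \ref{cor:reflexivel-hagerup}) and the martingale mean convergence of Remark \ref{re:conver}. A secondary point requiring care is the isometric identification of the column norms in $L^\Phi(\U_n(\M))$ and in $L^\Phi(\U(\M))$, which must be furnished through the embedding $L^\Phi(\U_n(\M))\hookrightarrow L^\Phi(\U(\M))$ supplied by Theorem \ref{thm:Haagerup's reduction}, so that the level-$n$ estimate can legitimately be read inside the larger space.
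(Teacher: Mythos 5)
Your argument is correct and follows essentially the same route as the paper: lift the sequence to $L^\Phi(\U(\M),\el_c^2)$, apply the tracial Stein inequality from \cite{B} at each level $\U_n(\M)$, commute $\E_m$ with $\F_n$ via \eqref{eq:condtional expectations-conditional expectations}, and pass to the limit using Lemma \ref{lem:stein-convergence}. Your explicit construction of the bounded idempotent $P(a)_m=\E_m(a_m)-\E_{m-1}(a_m)$ for the complementation statement is the standard consequence the paper leaves implicit, and it is carried out correctly.
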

\begin{proof} We use the same method as in the proof  of Theorem \ref{thm:martingale-transformation}.
Fix a finite sequence $(a_m)_m\subset L^\Phi(\M)\subset L^\Phi(\U(\M))$. Then $(\F_n(a_m))_m\subset L^\Phi(\U_n(\M))$. Using \cite[Lemma 2.2]{B}, we get
\be
\|(\E_m(\F_n(a_m)))_{m\ge1}\|_{L^{\Phi}(\U_n(\M), \el_{c}^{2}))}\le \alpha_\Phi\|(\F_n(a_m))_{m\ge1}\|_{L^{\Phi}(\U_n(\M), \el_{c}^{2})}.
\ee
Since $\E_m(\F_n(a_m))=\F_n\E_m((a_m))$ for all $n,\;m\in \mathbb{N}$ (see \eqref{eq:condtional expectations-conditional expectations}),
\be
\|(\F_n(\E_m(a_m)))_{m\ge1}\|_{L^{\Phi}(\U_n(\M), \el_{c}^{2}))}\le \alpha_\Phi\|(\F_n(a_m))_{m\ge1}\|_{L^{\Phi}(\U_n(\M), \el_{c}^{2})}.
\ee
By Lemma \ref{lem:stein-convergence}, we obtain the desired result.
\end{proof}

Third, we consider Burkholder-Gundy inequalities of noncommutative martingales. Pisier/Xu proved these inequalities in \cite{PX1} for the tracial $L^p$-space  case  and Junge/Xu \cite{JX} proved  for the Haagerup $L^p$-space case $(1<p<\8)$. These inequalities have extended for the symmetric space and $\Phi$-moment case (see \cite{BC,BCLJ,D}).

\begin{lemma}\label{lem:Burkholder-Gundy-complemented} Let $\mathcal{H}^{\Phi}(\U(\M))$ and $\mathcal{H}^{\Phi}(\U_n(\M))\;(n\in \mathbb{N})$ be the Hardy spaces on $\U(\M)$ and $\U_n(\M)$
 relative to the filtrations $(\U(\M_m))_{m\ge1}$ and $(\U_n(\M_m))_{m\ge1}$, respectively. Then  $\mathcal{H}^{\Phi}(\M)$ and $\mathcal{H}^{\Phi}(\U_n(\M))$ are complemented
isomorphic subspaces of  $\mathcal{H}^{\Phi}(\U(\M))$.
\end{lemma}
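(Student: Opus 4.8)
The plan is to realize both $\mathcal{H}^{\Phi}(\M)$ and $\mathcal{H}^{\Phi}(\U_n(\M))$ as ranges of contractive projections of $\mathcal{H}^{\Phi}(\U(\M))$, the projections being obtained by tensoring the conditional expectations $\F\colon\U(\M)\to\M$ and $\F_n\colon\U(\M)\to\U_n(\M)$ of Section~4 with the identity on $\B(\el^2)$. First I would fix the isometric inclusions: by Proposition~\ref{pro:Haagerup-subspace} the inclusions $\M\subset\U(\M)$ and $\U_n(\M)\subset\U(\M)$ yield isometric inclusions of the corresponding Haagerup Orlicz spaces, and the same holds after tensoring with $\B(\el^2)$. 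Consequently the column and row norms of a finite sequence are the same whether computed in $L^{\Phi}(\M)$ (resp.\ $L^{\Phi}(\U_n(\M))$) or in $L^{\Phi}(\U(\M))$, so $\mathcal{H}^{\Phi}(\M)$ and $\mathcal{H}^{\Phi}(\U_n(\M))$ embed isometrically into $\mathcal{H}^{\Phi}(\U(\M))$.

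Next I would produce the projections. Applying Theorem~\ref{thm:conditional-hagerup} to $\M\otimes\B(\el^2)$ equipped with the weight $\varphi\otimes\Tr$, the maps $\F\otimes id_{\B(\el^2)}$ and $\F_n\otimes id_{\B(\el^2)}$ extend to contractive projections of $L^{\Phi}(\U(\M)\otimes\B(\el^2))$ onto $L^{\Phi}(\M\otimes\B(\el^2))$ and $L^{\Phi}(\U_n(\M)\otimes\B(\el^2))$, respectively. Since such a map acts entrywise on matrices, it sends column matrices to column matrices and row matrices to row matrices; combined with the fact that the column and row subspaces are $1$-complemented (\cite[Lemma~1.2]{JX}, \cite[Theorem~4.8]{D}), it restricts to contractive projections on $L^{\Phi}(\U(\M),\el_c^2)$ and $L^{\Phi}(\U(\M),\el_r^2)$ carrying them onto the corresponding Hilbert-space-valued spaces over $\M$ (resp.\ $\U_n(\M)$).

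Then I would check the martingale compatibility and assemble. The commutation relations \eqref{eq:condtional expectations-conditional expectations} show that $\F$ and $\F_n$ commute with the lifted expectations $\hat{\E}_m$ defining the filtrations, so $\F(dx_m)=d(\F x)_m$ and $\F_n(dx_m)=d(\F_n x)_m$ for a finite martingale $x=(x_m)$; thus $\F$ sends an $L^{\Phi}$-martingale relative to $(\U(\M_m))_{m\ge1}$ to one relative to $(\M_m)_{m\ge1}$, and $\F_n$ to one relative to $(\U_n(\M_m))_{m\ge1}$. Together with the previous step this gives, after passing to the closure of finite martingales, contractive projections $\mathcal{H}_c^{\Phi}(\U(\M))\to\mathcal{H}_c^{\Phi}(\M)$ and $\mathcal{H}_r^{\Phi}(\U(\M))\to\mathcal{H}_r^{\Phi}(\M)$, and likewise onto the spaces over $\U_n(\M)$. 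The two definitions of $\mathcal{H}^{\Phi}$ then cause no trouble: a single map contractive on both $\mathcal{H}_c^{\Phi}$ and $\mathcal{H}_r^{\Phi}$ is automatically contractive on their sum with the infimum norm (case $b_{\Phi}<2$) and on their intersection with the maximum norm (case $2<a_{\Phi}$). This produces the required contractive projections of $\mathcal{H}^{\Phi}(\U(\M))$ onto the isometric copies of $\mathcal{H}^{\Phi}(\M)$ and $\mathcal{H}^{\Phi}(\U_n(\M))$, which is exactly the assertion.

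The step I expect to be most delicate is the second one: justifying in the Haagerup (nontracial) framework that $\F\otimes id_{\B(\el^2)}$ genuinely restricts to a contractive projection onto $L^{\Phi}(\M,\el_c^2)$. This requires applying Theorem~\ref{thm:conditional-hagerup} to the tensor algebra carrying the semifinite weight $\varphi\otimes\Tr$ rather than a state, and then using the $1$-complementation of the column and row subspaces to transport contractivity from $L^{\Phi}(\U(\M)\otimes\B(\el^2))$ down to $L^{\Phi}(\U(\M),\el_c^2)$ and $L^{\Phi}(\U(\M),\el_r^2)$. Once this compatibility is secured, the martingale identities \eqref{eq:condtional expectations-conditional expectations} and the isometric inclusions of Proposition~\ref{pro:Haagerup-subspace} make the remaining verifications routine.
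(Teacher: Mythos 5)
Your proposal is correct and follows essentially the same route as the paper: realize $\mathcal{H}^{\Phi}(\M)$ and $\mathcal{H}^{\Phi}(\U_n(\M))$ as isometric subspaces, then use $\F\otimes id_{\mathcal{B}(\ell_2)}$ and $\F_n\otimes id_{\mathcal{B}(\ell_2)}$, together with the commutation relations \eqref{eq:condtional expectations-conditional expectations} and the identification of $L^{\Phi}(\cdot,\el_c^2)$, $L^{\Phi}(\cdot,\el_r^2)$ with the column and row subspaces of $L^{\Phi}(\cdot\otimes\mathcal{B}(\ell_2))$, to obtain the contractive projections. Your version merely spells out in more detail the contractivity of the tensored expectation and the assembly of the column and row projections into one on $\mathcal{H}^{\Phi}$, which the paper leaves implicit.
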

\begin{proof} It is clear that $\H_c^\Phi(\M)$ (respectively, $\H_r^\Phi(\M)$ ) is an isometric
subspace of $\H_c^\Phi(\U(\M))$  (respectively, $\H_r^\Phi(\U(\M))$ ). Let  $\hat{x}=(\hat{x}_{m})_{m\geq 1}$ be a martingale relative
to $(\U(\M_m))_{m\ge1}$. Set $\F(\hat{x})=(\F(\hat{x}_{m}))_{m\geq 1}$. Then $\F(\hat{x})$ is a martingale relative to
$(\M_m)_{m\ge1}$ (see \eqref{eq:condtional expectations-conditional expectations}). Note that the difference sequence of $\F(\hat{x})$  is equal to $(\F(d\hat{x}_{m}))_{m\geq 1}$.
On the other hand, we regard $L^{\Phi}(\M, \el_{c}^{2})$ ( respectively, $L^{\Phi}(\U(\M), \el_{c}^{2})$) as the column subspace of $L^{\Phi}({\mathcal{M}}\otimes {\mathcal{B}}(\ell_{2}))$ ( respectively, $L^{\Phi}(\U(\M)\otimes {\mathcal{B}}(\ell_{2}))$ ) and
\be
\F(d\hat{x}_{m}))_{m\geq 1}=\F\otimes id_{{\mathcal{B}}(\ell_{2})}(d\hat{x}).
\ee
It follows that the map $\hat{x}\mapsto\F(\hat{x})$ gives a projection from  $\H_c^\Phi(\U(\M))$  (respectively,  $\H_r^\Phi(\U(\M))$ ) onto $\H_c^\Phi(\M)$ (respectively,  $\H_r^\Phi(\M)$ ). Hence,  $\mathcal{H}^{\Phi}(\M)$ is a complemented
isomorphic subspaces of  $\mathcal{H}^{\Phi}(\U(\M))$. Similarly, by the same arguments, we get $\mathcal{H}^{\Phi}(\U_n(\M))$ is a complemented
isomorphic subspaces of  $\mathcal{H}^{\Phi}(\U(\M))$.
\end{proof}

\begin{lemma}\label{lem:Burkholder-Gundy-convergence} Let  $x\in\mathcal{H}^{\Phi}(\M)$. Then   $\F_n(x)\in\mathcal{H}^{\Phi}(\U(\M))$ for all $n\in \mathbb{N}$ and $\F_n(x)\rightarrow x$ in $\mathcal{H}^{\Phi}(\U(\M))$ as $n\rightarrow\8$.
\end{lemma}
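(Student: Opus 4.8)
The plan is to reduce the whole statement to the convergence already established in Lemma \ref{lem:stein-convergence}, handling the column and row components separately. First I would record the structural facts. Since $\F_n$ commutes with every $\hat{\E}_m$ by \eqref{eq:condtional expectations-conditional expectations}, the sequence $\F_n(x)=(\F_n(x_m))_{m\ge1}$ is again a martingale relative to $(\U(\M_m))_{m\ge1}$, and its difference sequence is $d(\F_n(x))_m=\F_n(dx_m)$; thus, viewing $dx$ as a column (resp. row) vector in $L^{\Phi}(\U(\M)\otimes\mathcal{B}(\el_2))$, one has $d(\F_n(x))=\F_n\otimes id_{\mathcal{B}(\el_2)}(dx)$. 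Because $\F_n\otimes id_{\mathcal{B}(\el_2)}$ is a contractive projection on the column and row subspaces and $dx\in L^{\Phi}(\M,\el_c^2)$ (resp. $L^{\Phi}(\M,\el_r^2)$), the vector $\F_n(dx)$ remains in the corresponding space; together with the isometric embedding of Lemma \ref{lem:Burkholder-Gundy-complemented}, this is exactly what the membership $\F_n(x)\in\mathcal{H}^{\Phi}(\U(\M))$ requires.

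Next I would split according to the two regimes in the definition of $\mathcal{H}^{\Phi}$. When $2<a_{\Phi}$ we have $\mathcal{H}^{\Phi}=\mathcal{H}_c^{\Phi}\cap\mathcal{H}_r^{\Phi}$ with the maximum norm, and since $d(\F_n(x)-x)=\F_n(dx)-dx$,
\be
\|\F_n(x)-x\|_{\mathcal{H}^{\Phi}(\U(\M))}=\max\big\{\|\F_n(dx)-dx\|_{L^{\Phi}(\U(\M),\el_c^2)},\ \|\F_n(dx)-dx\|_{L^{\Phi}(\U(\M),\el_r^2)}\big\}.
\ee
Applying Lemma \ref{lem:stein-convergence} to $a=dx$ in the column case, together with its row analogue, drives both quantities to $0$, which yields the claim.

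When $b_{\Phi}<2$ we have $\mathcal{H}^{\Phi}=\mathcal{H}_c^{\Phi}+\mathcal{H}_r^{\Phi}$ with the infimum norm, so I would fix a decomposition $x=y+z$ with $y\in\mathcal{H}_c^{\Phi}(\M)$ and $z\in\mathcal{H}_r^{\Phi}(\M)$. Then $\F_n(x)=\F_n(y)+\F_n(z)$ is again a column-plus-row decomposition in $\mathcal{H}^{\Phi}(\U(\M))$, and by the triangle inequality for the infimum norm
\be
\|\F_n(x)-x\|_{\mathcal{H}^{\Phi}(\U(\M))}\le\|\F_n(dy)-dy\|_{L^{\Phi}(\U(\M),\el_c^2)}+\|\F_n(dz)-dz\|_{L^{\Phi}(\U(\M),\el_r^2)}.
\ee
Each summand tends to $0$ by the column and row versions of Lemma \ref{lem:stein-convergence} applied to $dy$ and $dz$, which completes the argument.

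I do not expect a genuine obstacle here: the lemma is essentially a repackaging of Lemma \ref{lem:stein-convergence} through the difference-sequence description of the Hardy norms. The only points demanding care are verifying that $\F_n$ leaves the martingale and column/row structures invariant (handled by the commutation \eqref{eq:condtional expectations-conditional expectations} and the $\F_n\otimes id_{\mathcal{B}(\el_2)}$ identification) and keeping the regimes $b_{\Phi}<2$ and $2<a_{\Phi}$ separate, since they carry different norms on $\mathcal{H}^{\Phi}$.
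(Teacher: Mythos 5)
Your argument is correct and follows the same overall strategy as the paper: establish that $\F_n$ preserves the martingale and column/row structure via the commutation relations, reduce the column and row components to Lemma \ref{lem:stein-convergence}, and then treat the two regimes of $\mathcal{H}^{\Phi}$ separately. The one place you genuinely diverge is the case $b_{\Phi}<2$: the paper disposes of it by invoking density of finite $L^{\Phi}$-martingales in $\mathcal{H}^{\Phi}(\M)$ (which, to be complete, also needs the uniform boundedness of the maps $\F_n$ on the sum space --- this follows since each $\F_n$ is contractive on $\mathcal{H}_c^{\Phi}(\U(\M))$ and $\mathcal{H}_r^{\Phi}(\U(\M))$ and hence on their sum with the infimum norm), whereas you fix one decomposition $x=y+z$ with $y\in\mathcal{H}_c^{\Phi}(\M)$, $z\in\mathcal{H}_r^{\Phi}(\M)$, note that $\F_n(x)=\F_n(y)+\F_n(z)$ is an admissible decomposition of $\F_n(x)$, and estimate the infimum norm of $\F_n(x)-x$ by the two individual terms, each of which tends to $0$ by Lemma \ref{lem:stein-convergence}. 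Your route is more self-contained, since it bypasses both the density claim and the approximation step it entails; the paper's route is shorter on the page but leaves the $3\varepsilon$-argument implicit. Both are valid, and both rest on exactly the same key ingredient, namely the mean convergence $\F_n\otimes id_{\mathcal{B}(\el^2)}(a)\rightarrow a$ of Lemma \ref{lem:stein-convergence}.
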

\begin{proof} From the proof of Lemma \ref{lem:stein-convergence}, we know that if $x\in \H_c^\Phi(\M)$ (respectively, $x\in \H_r^\Phi(\M)$ ), then   $\F_n(x)\in \H_c^\Phi(\U(\M))$  (respectively, $\F_n(x)\in \H_r^\Phi(\U(\M))$ ) for all $n\in \mathbb{N}$ and $\F_n(x)\rightarrow x$ in $\H_c^\Phi(\U(\M))$  (respectively, $\H_r^\Phi(\U(\M))$ ) as $n\rightarrow\8$. In the case $2\leq a_{\Phi}$,  the result immediately holds.  Since the set of all finite $L^{\Phi}$-martingales is dense in $\mathcal{H}^{\Phi}(\M)$, the result also holds for the case  $b_{\Phi}<2$.
\end{proof}
\begin{theorem}\label{thm:Burkholder-Gundy}  There are  constants $A_\Phi,\;B_\Phi$ such that for
any finite noncommutative $L^\Phi$-martingale  $x=(x_{m})_{m\geq 1}$,
\be
A_\Phi\|x\|_{\mathcal{H}^{\Phi}}\le\|x\|_{L^\Phi(\M)}\le B_\Phi\|x\|_{\mathcal{H}^{\Phi}}
\ee
\end{theorem}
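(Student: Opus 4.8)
The plan is to follow exactly the reduction scheme already used for Theorem \ref{thm:martingale-transformation} and Theorem \ref{thm:stein inequality}: lift the martingale to the discrete crossed product $\U(\M)$, truncate by the reduction conditional expectations $\F_n$ so as to land in the \emph{finite} algebras $\U_n(\M)$, invoke the tracial Burkholder--Gundy inequality there, and then let $n\to\8$. The point is that on each $\U_n(\M)$ the Haagerup Orlicz space $L^\Phi(\U_n(\M))$ is isometric to the usual tracial noncommutative Orlicz space (Theorem \ref{thm:Haagerup's reduction}(iii)), so the symmetric-space/$\Phi$-moment Burkholder--Gundy estimates from \cite{BC,BCLJ,D} apply there directly.

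Concretely, first I would fix a finite martingale $x=(x_m)_{m\ge1}$ in $L^\Phi(\M)$ and lift it to a martingale in $L^\Phi(\U(\M))$ relative to $(\U(\M_m))_{m\ge1}$; by Proposition \ref{pro:Haagerup-subspace} this embedding is isometric for the $L^\Phi$-norm, and by Lemma \ref{lem:Burkholder-Gundy-complemented} it is (isometrically, resp.\ isomorphically in the sum case) compatible with the $\mathcal{H}^\Phi$-norm, so it suffices to establish the two-sided estimate for the lifted martingale inside $L^\Phi(\U(\M))$. Setting $x^{(n)}=\F_n(x)=(\F_n(x_m))_{m\ge1}$, Theorem \ref{thm:Haagerup's reduction}(iii) identifies $x^{(n)}$ with a martingale in the tracial Orlicz space of the finite algebra $\U_n(\M)$ relative to $(\U_n(\M_m))_{m\ge1}$; since $\varphi_n$ is invariant under each $\hat{\E}_m$, the martingale structure coincides with the tracial one, exactly as noted in the proof of Theorem \ref{thm:martingale-transformation}. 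Applying the tracial Burkholder--Gundy inequality on $\U_n(\M)$ then yields constants $A_\Phi,B_\Phi$ with
\[
A_\Phi\|x^{(n)}\|_{\mathcal{H}^\Phi(\U_n(\M))}\le\|x^{(n)}\|_{L^\Phi(\U_n(\M))}\le B_\Phi\|x^{(n)}\|_{\mathcal{H}^\Phi(\U_n(\M))}.
\]
Because $\mathcal{H}^\Phi(\U_n(\M))$ and $L^\Phi(\U_n(\M))$ sit compatibly inside $\mathcal{H}^\Phi(\U(\M))$ and $L^\Phi(\U(\M))$ (Lemma \ref{lem:Burkholder-Gundy-complemented} and Proposition \ref{pro:Haagerup-subspace}), these norms may be read in the ambient spaces. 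I would then pass to the limit: by Lemma \ref{lem:Burkholder-Gundy-convergence}, $\F_n(x)\to x$ in $\mathcal{H}^\Phi(\U(\M))$, and by the martingale mean convergence of Remark \ref{re:conver}(ii) (available since $L^\Phi(\M)$ is reflexive, Corollary \ref{cor:reflexivel-hagerup}) together with Lemma \ref{lem:stein-convergence}, $\F_n(x)\to x$ in $L^\Phi(\U(\M))$. Continuity of the norms transfers the estimate to $x$, and the embeddings return it to $L^\Phi(\M)$ and $\mathcal{H}^\Phi(\M)$.

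The step I expect to be delicate is guaranteeing that $A_\Phi,B_\Phi$ are genuinely uniform over all the finite algebras $\U_n(\M)$ and over $n$: the whole argument collapses unless the tracial Burkholder--Gundy constants depend only on the index data of $\Phi$ (equivalently on $a_\Phi,b_\Phi$) and not on the underlying finite von Neumann algebra or its filtration. This is precisely the content of the results in \cite{BC,BCLJ,D}, which I would cite in the form giving index-dependent constants. A second point requiring care is treating the regimes $b_\Phi<2$ and $2<a_\Phi$ separately, since $\mathcal{H}^\Phi$ is a sum with infimum norm in the first case and an intersection with max norm in the second; in both regimes the column and row embeddings are isometric, so the limit passage and compatibility with $\F_n$ go through identically, but in the sum case the decomposition $x=y+z$ must be tracked along the maps $\F_n$ and $\F$ as in the proof of Lemma \ref{lem:Burkholder-Gundy-complemented}.
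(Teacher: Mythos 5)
Your proposal matches the paper's proof, which is exactly the reduction scheme of Theorem \ref{thm:stein inequality}: lift to $\U(\M)$, truncate by $\F_n$ into the finite algebras $\U_n(\M)$, apply the tracial Burkholder--Gundy inequality there, and pass to the limit via Lemmas \ref{lem:Burkholder-Gundy-complemented} and \ref{lem:Burkholder-Gundy-convergence}. Your added remarks on the uniformity of the constants in $a_\Phi,b_\Phi$ and on treating the sum versus intersection regimes separately are sensible refinements of the same argument.
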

\begin{proof}
The proof is similar to that of Theorem \ref{thm:stein inequality} with the help of Lemma \ref{lem:Burkholder-Gundy-complemented} and \ref{lem:Burkholder-Gundy-convergence}.
\end{proof}

Fourth, we prove Burkholder inequalities in the Haagerup noncommutative Orlicz space case.  Junge/Xu \cite{JX} proved these inequalities for the Haagerup $L^p$-space case $(1<p<\8)$. These inequalities have extended for the symmetric space and $\Phi$-moment case (for more information see \cite{B1,D,RW,RW1,RWX}).

Since  $L^{\Phi}_{cond}(\M, \el_{c}^{2})$ ( respectively, $L^{\Phi}_{cond}(\M, \el_{r}^{2})$)  can  be viewed
as a subspace of $L^{\Phi}({\mathcal{M}}\otimes {\mathcal{B}}(\ell_{2}(\mathbb{N}^2)))$ as column (respectively, row) vectors, using  the argument in the proof of Lemma \ref{lem:Burkholder-Gundy-complemented} and \ref{lem:Burkholder-Gundy-convergence}, we obtain the following result.
\begin{lemma}\label{lem:Burkholder}
\begin{enumerate}[\rm(i)]
 \item $\h^{\Phi}(\M)$ and $\h^{\Phi}(\U_n(\M))$ are complemented
isomorphic subspaces of  $\h^{\Phi}(\U(\M))$.
 \item If $x\in\mathcal{H}^{\Phi}(\M)$, then   $\F_n(x)\in\h^{\Phi}(\U(\M))$ for all $n\in \mathbb{N}$ and $\F_n(x)\rightarrow x$ in $\h^{\Phi}(\U(\M))$ as $n\rightarrow\8$.
 \end{enumerate}
\end{lemma}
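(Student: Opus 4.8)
The plan is to run, \emph{mutatis mutandis}, the arguments already used for the square-function Hardy spaces in Lemmas \ref{lem:Burkholder-Gundy-complemented} and \ref{lem:Burkholder-Gundy-convergence}, with the ordinary column/row square functions replaced by their conditioned counterparts $s^c$, $s^r$ together with the diagonal function $s^d$. The decisive structural fact I would exploit is that $L^{\Phi}_{cond}(\M,\el_c^2)$ (resp. $L^{\Phi}_{cond}(\M,\el_r^2)$) sits as a column (resp. row) subspace of $L^{\Phi}(\M\otimes\B(\ell_2(\mathbb{N}^2)))$, while $\h_d^{\Phi}(\M)$ sits inside $L^{\Phi}(\M\otimes\B(\ell_2))$ as diagonal vectors; the same realizations hold with $\M$ replaced by $\U(\M)$ or $\U_n(\M)$.

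For part (i), first I would show that $\h_c^{\Phi}(\M)$, $\h_r^{\Phi}(\M)$ and $\h_d^{\Phi}(\M)$ embed isometrically into $\h_c^{\Phi}(\U(\M))$, $\h_r^{\Phi}(\U(\M))$ and $\h_d^{\Phi}(\U(\M))$, respectively, using Proposition \ref{pro:Haagerup-subspace} componentwise. Then, for a martingale $\hat{x}$ relative to $(\U(\M_m))_{m\ge1}$, I would consider $\hat{x}\mapsto\F(\hat{x})=(\F(\hat{x}_m))_{m\ge1}$ and note that, since $\F$ commutes with all $\hat{\E}_m$ by \eqref{eq:condtional expectations-conditional expectations}, the induced map is exactly the ampliation $\F\otimes\mathrm{id}$ acting on the conditioned column/row/diagonal vectors in the enlarged algebra. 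Because $\F\otimes\mathrm{id}$ is a contractive projection on $L^{\Phi}(\U(\M)\otimes\B(\ell_2(\mathbb{N}^2)))$ (and on $L^{\Phi}(\U(\M)\otimes\B(\ell_2))$ for the diagonal part), this yields a contractive projection from each conditioned space over $\U(\M)$ onto the corresponding space over $\M$. Assembling the $c$, $r$ and $d$ components according to the definition of $\h^{\Phi}$ in the two ranges $2<a_{\Phi}$ and $b_{\Phi}<2$ then shows that $\h^{\Phi}(\M)$ is a complemented isomorphic subspace of $\h^{\Phi}(\U(\M))$; the same argument applied to $\U_n(\M)$, whose conditioning structure is compatible by \eqref{eq:condtional expectations-conditional expectations}, handles $\h^{\Phi}(\U_n(\M))$.

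For part (ii), I would follow Lemma \ref{lem:Burkholder-Gundy-convergence}: writing $\F_n(x)=\F_n\otimes\mathrm{id}(x)$ on the column/row/diagonal realizations and invoking the martingale mean convergence of Remark \ref{re:conver}(ii)---valid because $1<a_{\Phi}\le b_{\Phi}<\8$ forces $L^{\Phi}(\M)$ to be reflexive---to get $\F_n(x)\to x$ in each of $\h_c^{\Phi}(\U(\M))$, $\h_r^{\Phi}(\U(\M))$ and $\h_d^{\Phi}(\U(\M))$. When $2<a_{\Phi}$ the conclusion is immediate from the intersection norm; when $b_{\Phi}<2$ I would first establish it for finite $L^{\Phi}$-martingales and then pass to the limit using the density of finite martingales in $\h^{\Phi}(\M)$.

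The main obstacle, I expect, is bookkeeping rather than analytic: one must verify that the conditioned square functions transform correctly under $\F$ and $\F_n$, i.e. that the conditioning expectations $\hat{\E}_{m-1}$ entering the definition of $s^c$ and $s^r$ genuinely commute with the reduction maps, so that the embedding into $L^{\Phi}(\U(\M)\otimes\B(\ell_2(\mathbb{N}^2)))$ intertwines $\F$ (resp. $\F_n$) with its ampliation. This is precisely what \eqref{eq:condtional expectations-conditional expectations} supplies, but the commutation has to be tracked carefully through the diagonal part and through the mixed-space definition of $\h^{\Phi}$ in the regime $b_{\Phi}<2$.
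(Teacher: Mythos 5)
Your proposal is correct and follows essentially the same route as the paper: the paper's own proof is precisely the one-line instruction to view $L^{\Phi}_{cond}(\M,\el_c^2)$ and $L^{\Phi}_{cond}(\M,\el_r^2)$ as column/row subspaces of $L^{\Phi}(\M\otimes\B(\ell_2(\mathbb{N}^2)))$ and rerun the arguments of Lemmas \ref{lem:Burkholder-Gundy-complemented} and \ref{lem:Burkholder-Gundy-convergence}, which is exactly what you carry out, including the use of \eqref{eq:condtional expectations-conditional expectations}, the ampliation of $\F$ and $\F_n$, the mean convergence from Remark \ref{re:conver}(ii), and the density of finite martingales in the case $b_{\Phi}<2$.
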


Now, we can get the analogues of Burkholder inequality in the setting of the Haagerup noncommutative Orlicz space by arguments similar to the proof of Theorems \ref{thm:Burkholder-Gundy}.
\begin{theorem}\label{thm:Burkholder}  There are  constants $\alpha_\Phi,\;\beta_\Phi$ such that for
any finite noncommutative $L^\Phi$-martingale  $x=(x_{m})_{m\geq 1}$,
\be
\alpha_\Phi\|x\|_{\h^{\Phi}}\le\|x\|_{L^\Phi(\M)}\le \beta_\Phi\|x\|_{\h^{\Phi}}.
\ee
\end{theorem}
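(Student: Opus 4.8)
The plan is to repeat the reduction argument already used for Theorems \ref{thm:stein inequality} and \ref{thm:Burkholder-Gundy}, transferring the known tracial (symmetric-space) Burkholder inequality from the finite algebras $\U_n(\M)$ up to $L^\Phi(\M)$ by means of Haagerup's reduction. The whole strategy is to reduce to a level where a finite trace is available, invoke the established inequality there with constants uniform in $n$, and then pass to the limit using the convergence results of Section~6.

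First I would fix a finite noncommutative $L^\Phi$-martingale $x=(x_m)_{m\ge1}$ relative to $(\M_m)_{m\ge1}$, lift it to $L^\Phi(\U(\M))$, and regard it as a martingale relative to $(\U(\M_m))_{m\ge1}$. Setting $x^{(n)}=\F_n(x)=(\F_n(x_m))_{m\ge1}$, the commutation relations \eqref{eq:condtional expectations-conditional expectations} guarantee that $x^{(n)}$ is a finite martingale in $L^\Phi(\U_n(\M))$ relative to $(\U_n(\M_m))_{m\ge1}$, whose difference sequence is $(\F_n(dx_m))_{m\ge1}$. Since each $\U_n(\M)$ is a finite von Neumann algebra equipped with the normal faithful tracial state $\varphi_n$, and since the martingale structure of $L^\Phi(\U_n(\M))$ coincides with that of the tracial noncommutative Orlicz space of $\U_n(\M)$ (as noted in the proof of Theorem \ref{thm:Burkholder-Gundy}), the tracial Burkholder inequality from \cite{RW,RWX} applies on each level, yielding constants $\alpha_\Phi,\beta_\Phi$ independent of $n$ with
$$
\alpha_\Phi\,\|x^{(n)}\|_{\h^\Phi(\U_n(\M))}\le\|x^{(n)}\|_{L^\Phi(\U_n(\M))}\le\beta_\Phi\,\|x^{(n)}\|_{\h^\Phi(\U_n(\M))}.
$$

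Next I would let $n\to\8$. On the one hand $(\F_n(x))_{n\ge1}$ is a bounded martingale in $L^\Phi(\U(\M))$, so by (ii) of Remark \ref{re:conver} it converges to $x$ in $L^\Phi(\U(\M))$; as $L^\Phi(\U_n(\M))$ embeds isometrically in $L^\Phi(\U(\M))$, the middle term converges to $\|x\|_{L^\Phi(\M)}$. On the other hand, Lemma \ref{lem:Burkholder} supplies the complemented isometric embeddings of $\h^\Phi(\M)$ and of each $\h^\Phi(\U_n(\M))$ into $\h^\Phi(\U(\M))$, together with the convergence $\F_n(x)\to x$ in $\h^\Phi(\U(\M))$. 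Passing to the limit in the displayed inequality and restricting to $L^\Phi(\M)\subset L^\Phi(\U(\M))$ then gives the asserted two-sided estimate.

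The hard part, I expect, will be the case $b_\Phi<2$, where $\h^\Phi(\M)=\h_c^\Phi(\M)+\h_r^\Phi(\M)+\h_d^\Phi(\M)$ carries an infimum norm over decompositions. One must check that the maps $\F_n$ act compatibly with this decomposition, column-, row- and diagonal-wise through $\F_n\otimes id_{\mathcal{B}(\ell_2)}$, so that an optimal (or near-optimal) decomposition of $x$ transports to a decomposition of $x^{(n)}$ without inflating $\beta_\Phi$, and that density of finite martingales in $\h^\Phi(\M)$ upgrades the estimate from finite to general martingales. Once Lemma \ref{lem:Burkholder} is available the remaining verifications are routine, and the case $2<a_\Phi$ is immediate since there $\h^\Phi(\M)$ is an intersection handled directly by the above convergences.
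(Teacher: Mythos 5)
Your proposal is correct and follows essentially the same route as the paper: the paper proves Theorem \ref{thm:Burkholder} by the reduction scheme of Theorems \ref{thm:martingale-transformation}, \ref{thm:stein inequality} and \ref{thm:Burkholder-Gundy} (lift to $\U(\M)$, project by $\F_n$ onto the finite algebras $\U_n(\M)$, invoke the tracial Burkholder inequality there, and pass to the limit), with Lemma \ref{lem:Burkholder} supplying exactly the complementation and convergence facts you identify. Your additional remarks on the $b_\Phi<2$ sum decomposition and the density of finite martingales are sensible elaborations of details the paper leaves implicit.
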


\begin{lemma}\label{lem:stein-conditional} For any finite sequence $a=(a_m)_{m\ge1}$ in $L^{\Phi}(\M)$
define
\be
Q(a)=(\E_m(a_m))_{m\ge1}\qquad \mbox{and}\qquad R(a)=(\E_{m-1}(a_m))_{m\ge1} \qquad(\E_0=\E_1).
\ee
Then $Q$ and $R$
 extend to  projections on $L^{\Phi}_{cond}(\M, \el_{c}^{2})$  and $L^{\Phi}_{cond}(\M, \el_{r}^{2})$. Consequently, $\h_c^\Phi(\M)$ and $h_r^\Phi(\M)$ are respectively complemented in $L^{\Phi}_{cond}(\M, \el_{c}^{2})$  and $L^{\Phi}_{cond}(\M, \el_{r}^{2})$.
\end{lemma}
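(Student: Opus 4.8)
The plan is to argue directly, reducing the whole statement to the Kadison--Schwarz inequality for conditional expectations together with the fact (Theorem~\ref{thm:measure-convergence-equivalent}) that the Haagerup--Orlicz norm is governed by the single number $\mu_1$, and is therefore monotone under the operator order. First I would record the purely algebraic facts on finite sequences. Since the filtration enjoys the tower property $\E_m\E_k=\E_{\min(m,k)}$ of \eqref{eq:condition-expectations-eq}, we get $Q^2=Q$, $R^2=R$, and moreover $QR=RQ=R$ because $\E_m\E_{m-1}=\E_{m-1}\E_m=\E_{m-1}$. Hence $(Q-R)^2=Q-R$, a relation I will use in the complementation step.

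Second, for boundedness I would show that $Q$ and $R$ are in fact contractive. Fix a finite sequence $a=(a_m)$. By Kadison--Schwarz for the unital completely positive map $\E_m$ one has $|\E_m(a_m)|^2=\E_m(a_m)^*\E_m(a_m)\le \E_m(|a_m|^2)$, whence $\E_{m-1}(|\E_m(a_m)|^2)\le \E_{m-1}\E_m(|a_m|^2)=\E_{m-1}(|a_m|^2)$; summing gives $0\le \sum_m\E_{m-1}(|Q(a)_m|^2)\le \sum_m\E_{m-1}(|a_m|^2)$ as positive operators. The same computation with $\E_{m-1}$ in place of $\E_m$ handles $R$, and replacing $a_m$ by $a_m^*$ (so that Kadison--Schwarz reads $\E(x)\E(x)^*\le\E(xx^*)$) handles the row square functions. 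Since $0\le X\le Y$ forces $\mu_t(X^{1/2})=\mu_t(X)^{1/2}\le\mu_t(Y)^{1/2}=\mu_t(Y^{1/2})$, and the norms $\|\cdot\|_\Phi$ and $\|\cdot\|_{\Phi,\8}$ depend only on the generalized singular numbers, I conclude $\|Q(a)\|_{L^\Phi_{cond}(\M,\el_c^2)}\le\|a\|_{L^\Phi_{cond}(\M,\el_c^2)}$, and likewise for $R$ and for the two row spaces. Contractivity on the dense subspace of finite sequences then extends $Q$ and $R$ to contractive projections on the completions.

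Third, for the complementation I would exhibit the projection onto $\h_c^\Phi(\M)$ built from $Q$ and $R$. For $m\ge2$ the operator $Q-R$ sends $a_m$ to $\E_m(a_m)-\E_{m-1}(a_m)\in L^\Phi(\M_m)$, which is annihilated by $\E_{m-1}$, i.e. it is a martingale difference; adding back the first coordinate through the contractive idempotent $P_1\colon a\mapsto(\E_1(a_1),0,0,\dots)$, which satisfies $P_1(Q-R)=(Q-R)P_1=0$ because of the convention $\E_0=\E_1$, produces a bounded idempotent $P=P_1+(Q-R)$ whose range is exactly the space of martingale difference sequences. Since $\h_c^\Phi(\M)$ is by definition the closure of these sequences and $\mathrm{ran}(P)$ is closed, $P$ is a bounded projection of $L^\Phi_{cond}(\M,\el_c^2)$ onto $\h_c^\Phi(\M)$; the identical argument with the row square function gives the projection onto $\h_r^\Phi(\M)$.

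The step I expect to be the main obstacle is justifying Kadison--Schwarz for the \emph{extended} conditional expectations, since there $a_m$ is a genuine Haagerup--Orlicz element rather than an element of $\M$, and the inequality must be read between elements of $L^{\Phi^{(1/2)}}$-type spaces. I would handle this by approximating $a_m$ by elements of $\Phi^{-1}(D)^{1/2}\M\Phi^{-1}(D)^{1/2}$ and transporting the bound from $\M$ via the module property of $\check{\E}$ in Proposition~\ref{pro:conditional-expectation}, passing to the limit with Lemma~\ref{lem:holder-weak}. Alternatively, one can bypass this point by following the pattern of Theorems~\ref{thm:martingale-transformation}--\ref{thm:Burkholder}: lift to $L^\Phi(\U(\M))$, invoke the known tracial boundedness of $Q$ and $R$ on each finite $\U_n(\M)$ (as in \cite{RW}), and pass to the limit through the $\F_n$.
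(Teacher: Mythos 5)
Your proposal follows essentially the same route as the paper: the paper likewise proves contractivity of $Q$ and $R$ by viewing the finite sequence inside $L^{\Phi,\8}_0(\N)$, applying the Kadison--Schwarz inequality $|\check{\E}_m(a_m)|^2\le\check{\E}_m(|a_m|^2)$ for the extended conditional expectations, then applying $\check{\E}_{m-1}$ and using monotonicity of the conditioned square-function norm under the operator order. Your explicit idempotent $P_1+(Q-R)$ onto martingale difference sequences merely spells out the ``Consequently'' step the paper leaves implicit, and the obstacle you flag about Kadison--Schwarz for the extensions is resolved exactly as the paper does it, by reading the inequality in the semifinite crossed product $\N$ where the $\check{\E}_m$ act on $L^{\Phi,\8}_0(\N)$.
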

\begin{proof}
Let $\N,\;\N_m$ and $\check{\E}_m$ as in \eqref{eq:condtional expectations-conditional expectations} and \eqref{eq:condiional-cross-product}. If $a=(a_m)_{m\ge1}$ is a finite sequence in $L^{\Phi}(\M)$, then $a=(a_m)_{m\ge1}\subset L^{\Phi,\8}_0(\N)$. Hence,
\be
|\check{\E}_m(a_m)|^2 = (\check{\E}_m(a_m))^*\check{\E}_m(a_m)\le \check{\E}_m(|a_m|^2)
\ee
and
\be
\check{\E}_{m-1}(|\check{\E}_m(a_m)|^2)\le\check{\E}_{m-1}(|a_m|^2).
\ee
Since $\check{\E}_m|_{L^{\Phi}(\M)}=\E_m\;( m\in \mathbb{N})$,
\be
\E_{m-1}(|\E_m(a_m)|^2)\le\E_{m-1}(|a_m|^2).
\ee
It follows that $\|Q(a)\|_{L^{\Phi}_{cond}(\M, \el_{c}^{2})}\le\|Q(a)\|_{L^{\Phi}_{cond}(\M, \el_{c}^{2})}$.

Therefore, $Q$ extends to a contractive projection on $L^{\Phi}_{cond}(\M, \el_{c}^{2})$. Similarly,
$R$ extends to a contractive projection on $L^{\Phi}_{cond}(\M, \el_{c}^{2})$.
\end{proof}

From Theorem \ref{thm:stein inequality} and Lemma \ref{lem:stein-conditional},  we obtain the following result.

\begin{proposition}\label{complement}
\begin{enumerate}[\rm(i)]
\item $\H_c^\Phi(\M),\;\H_r^\Phi(\M)$ are complemented in $L^\Phi({\mathcal{M}}\otimes {\mathcal{B}}(\ell_{2}))$.
\item $\h_c^\Phi(\M),\;\h_r^\Phi(\M)$ are complemented in $L^\Phi({\mathcal{M}}\otimes {\mathcal{B}}(\ell_{2}(\mathbb{N}^2)))$.
\end{enumerate}
\end{proposition}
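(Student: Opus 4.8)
The plan is to obtain each of the two complementation statements by composing two projections: an \emph{outer} one that exhibits the relevant column or row space as a complemented subspace of the ambient algebra, and an \emph{inner} one, supplied by the Stein-type results of this section, that projects that column/row space onto the corresponding Hardy space. No new machinery is needed beyond the projections already constructed; the content is purely the bookkeeping of composing them.

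For part (i), recall that $L^{\Phi}(\M,\el_c^2)$ is by construction isometric to the column subspace of $L^{\Phi}(\M\otimes\B(\el^2))$, and that this column subspace is the range of a bounded projection $Q_c$ on $L^{\Phi}(\M\otimes\B(\el^2))$ by \cite[Lemma 1.2]{JX} (equivalently \cite[Theorem 4.8]{D}). On the other hand, Theorem \ref{thm:stein inequality} furnishes a bounded projection $P_c$ from $L^{\Phi}(\M,\el_c^2)$ onto $\H_c^{\Phi}(\M)$. Transporting $P_c$ through the isometric identification and composing, $P_c\circ Q_c$ is a bounded idempotent of $L^{\Phi}(\M\otimes\B(\el^2))$ with range $\H_c^{\Phi}(\M)$; hence $\H_c^{\Phi}(\M)$ is complemented. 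The argument for $\H_r^{\Phi}(\M)$ is identical, using the row subspace and the row half of Theorem \ref{thm:stein inequality}.

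Part (ii) follows the same two-step scheme with the conditioned spaces replacing the full square-function spaces. By Lemma \ref{lem:stein-conditional}, the maps $Q$ and $R$ are bounded projections of $L^{\Phi}_{cond}(\M,\el_c^2)$ and $L^{\Phi}_{cond}(\M,\el_r^2)$ onto $\h_c^{\Phi}(\M)$ and $\h_r^{\Phi}(\M)$, respectively. It then remains to realize $L^{\Phi}_{cond}(\M,\el_c^2)$ itself as a complemented subspace of $L^{\Phi}(\M\otimes\B(\ell_2(\mathbb{N}^2)))$: it is viewed as column vectors inside the column subspace of $L^{\Phi}(\M\otimes\B(\ell_2(\mathbb{N}^2)))$, that column subspace is again complemented by \cite[Lemma 1.2]{JX}, \cite[Theorem 4.8]{D} (now with $\ell_2(\mathbb{N}^2)$ in place of $\el^2$), and within it the conditioned space is the range of a projection built from the conditional expectations $(\E_{n-1})_{n\ge1}$, which act boundedly on the Haagerup Orlicz spaces by Theorem \ref{thm:conditional-hagerup}; cf.\ \cite{J,JX,RW}. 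Composing this projection with $Q$ (resp.\ $R$) exhibits $\h_c^{\Phi}(\M)$ (resp.\ $\h_r^{\Phi}(\M)$) as complemented in $L^{\Phi}(\M\otimes\B(\ell_2(\mathbb{N}^2)))$.

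The main obstacle is precisely the second step of part (ii). Unlike the full column subspace, the conditioned column space $L^{\Phi}_{cond}(\M,\el_c^2)$ is a \emph{proper} closed subspace of the column subspace, so its complementation does not come for free from \cite{JX,D}; one must produce an explicit bounded idempotent onto it. This is where the structural realization of the conditioned spaces over the doubly-indexed algebra $\M\otimes\B(\ell_2(\mathbb{N}^2))$ enters, together with the boundedness of the $\E_m$ on $L^{\Phi}$ from Theorem \ref{thm:conditional-hagerup}; checking that the resulting map is simultaneously bounded and idempotent on the ambient space — and that its range is exactly $L^{\Phi}_{cond}(\M,\el_c^2)$ — is the crux, after which the composition with $Q$ and $R$ is routine.
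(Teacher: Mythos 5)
Your proof is correct and follows essentially the same route as the paper, whose own ``proof'' is just the one-line observation that the proposition follows from Theorem \ref{thm:stein inequality} and Lemma \ref{lem:stein-conditional} together with the previously noted complementation of the column/row (and conditioned) subspaces in the ambient algebra. You simply make explicit the composition of the outer and inner projections — including the one genuinely delicate point, the complementation of $L^{\Phi}_{cond}(\M,\el_c^2)$ inside $L^{\Phi}(\M\otimes\B(\ell_2(\mathbb{N}^2)))$, which the paper leaves to the citations \cite{J,JX,RW}.
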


By Theorem \ref{thm:dual-hagerup} and Proposition \ref{complement}, it follows that

\begin{theorem}\label{thm:maringale-duality}
\begin{enumerate}[\rm(i)]
\item $(\H_c^\Phi(\M))^{*}=\H_c^\Psi(\M) $ and $(\H_r^\Phi(\M))^{*}=\H_r^\Psi(\M) $ with equivalent norms.

\item $(\h_c^\Phi(\M))^{*}=\h_c^\Psi(\M)$ and $(\h_r^\Phi(\M))^{*}=\h_r^\Psi(\M) $ with equivalent norms.
\end{enumerate}
\end{theorem}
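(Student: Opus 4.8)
The plan is to deduce the duality from the concrete duality $L^\Phi(\M)^*=L^\Psi(\M)$ of Theorem \ref{thm:dual-hagerup} together with the complementation statements of Proposition \ref{complement}, via the standard principle that the dual of a complemented subspace is the range of the adjoint projection. First I would record that the index hypothesis is symmetric in $\Phi$ and $\Psi$: from \eqref{eq:relationship-index} the conditions $1<a_\Phi\le b_\Phi<\8$ force $a_\Psi=\frac{b_\Phi}{b_\Phi-1}>1$ and $b_\Psi=\frac{a_\Phi}{a_\Phi-1}<\8$, so every result of the previous sections, in particular Theorem \ref{thm:dual-hagerup} and Proposition \ref{complement}, applies equally to $\Psi$. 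Applying Theorem \ref{thm:dual-hagerup} to the $\sigma$-finite algebra $\M\otimes\B(\el^2)$ (the finite-matrix version $\B(\el^2_m)$ carrying the normal faithful state $\varphi\otimes\Tr$) then yields
\beq
L^\Phi(\M\otimes\B(\el^2))^*=L^\Psi(\M\otimes\B(\el^2))
\eeq
with equivalent norms, the pairing being the bracket $(x,y)\mapsto(\tr\otimes\Tr)(xy)$.

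For part (i) I would regard $\H_c^\Phi(\M)$ as the column subspace of martingale differences inside $L^\Phi(\M\otimes\B(\el^2))$. By Proposition \ref{complement}(i) there is a bounded projection $P$ of $L^\Phi(\M\otimes\B(\el^2))$ onto $\H_c^\Phi(\M)$; concretely it is the composition of the projection onto the first column with the martingale-difference projection $a=(a_m)\mapsto(\E_m(a_m)-\E_{m-1}(a_m))_m$, both built solely from the column structure and the conditional expectations $\E_m$, the latter bounded on the relevant spaces by Theorem \ref{thm:stein inequality}. The adjoint $P^*$ is a bounded projection on $L^\Psi(\M\otimes\B(\el^2))$, and the restriction map identifies $(\H_c^\Phi(\M))^*$, up to equivalence of norms, with the range of $P^*$.

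It remains to identify the range of $P^*$ with $\H_c^\Psi(\M)$. The key observation is that the building blocks of $P$ are self-adjoint for the bracket $(\tr\otimes\Tr)(xy)$: the column projection is visibly self-adjoint, while the trace-duality relation \eqref{eq:conditional expectation}, extended through \eqref{eq:modular-property}, gives $(\tr\otimes\Tr)(\E_m(x)y)=(\tr\otimes\Tr)(x\E_m(y))$. Hence $P^*$ coincides with the analogous projection defined directly on $L^\Psi(\M\otimes\B(\el^2))$, whose range — by Proposition \ref{complement}(i) applied to $\Psi$ — is exactly the column martingale-difference subspace $\H_c^\Psi(\M)$. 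This yields $(\H_c^\Phi(\M))^*=\H_c^\Psi(\M)$ with equivalent norms; the row statement follows verbatim after exchanging column matrices for row matrices, and part (ii) follows by the same three steps with $\B(\el^2)$ replaced by $\B(\el_2(\mathbb{N}^2))$, Proposition \ref{complement}(ii) in place of (i), and the conditioned projections $a\mapsto(\E_m(a_m))_m$, $a\mapsto(\E_{m-1}(a_m))_m$ of Lemma \ref{lem:stein-conditional}. I expect the main obstacle to be the verification that $P^*$ is genuinely the $\Psi$-projection and not merely some projection with isomorphic range: this rests entirely on the self-adjointness of the conditional expectations under the trace bracket, so one must confirm that \eqref{eq:conditional expectation}–\eqref{eq:modular-property} persist after tensoring with $\B(\el^2)$ and after passing from the $L^p$ duality to the Orlicz pairing on $L^\Phi\times L^\Psi$.
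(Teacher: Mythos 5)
Your proposal is correct and follows essentially the same route as the paper, which proves this theorem in one line by citing exactly Theorem \ref{thm:dual-hagerup} and Proposition \ref{complement}; you have merely supplied the standard complemented-subspace duality argument and the self-adjointness of the conditional expectations under the trace bracket that the paper leaves implicit.
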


Using Theorem \ref{thm:dual-hagerup}, Lemma \ref{lem:stein-conditional} and the argument in the proof of \cite[Lemma 2.3]{B1}, we get that

\begin{proposition}\label{pro:duality}
 $(\h_d^\Phi(\M))^*=\h_d^\Psi(\M)$ with equivalent norms.
\end{proposition}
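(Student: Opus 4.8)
The plan is to realize $\h_d^\Phi(\M)$ as a complemented subspace of the Haagerup Orlicz space $L^{\Phi}(\M\otimes\B(\el^2))$ on which Theorem \ref{thm:dual-hagerup} already identifies the dual, and then to transport the duality through self-adjoint projections, exactly as in \cite[Lemma 2.3]{B1}. First I would recall that the diagonal embedding $a=(a_m)\mapsto diag(a_m)$ identifies $L^{\Phi}_d(\M)$ isometrically with $L^{\Phi}(\M\otimes\D)$, where $\D\subset\B(\el^2)$ is the diagonal subalgebra, and that the diagonal compression $\E_{\D}(A)=\sum_n e_{nn}Ae_{nn}$ is the trace-preserving conditional expectation of $\M\otimes\B(\el^2)$ onto $\M\otimes\D$; hence it is a contractive projection on both $L^{\Phi}(\M\otimes\B(\el^2))$ and $L^{\Psi}(\M\otimes\B(\el^2))$, and it is self-adjoint for the bilinear pairing $(x,y)\mapsto(\tr\otimes\Tr)(xy)$. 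Applying Theorem \ref{thm:dual-hagerup} to $\M\otimes\B(\el^2)$ (working with $\B(\el^2_m)$ for a fixed finite $m$, so that $\varphi\otimes\Tr$ is a state after normalisation, just as in the opening of this section) gives $L^{\Phi}(\M\otimes\B(\el^2))^{*}=L^{\Psi}(\M\otimes\B(\el^2))$ with equivalent norms; composing with $\E_{\D}$ then yields $L^{\Phi}_d(\M)^{*}=L^{\Psi}_d(\M)$.

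Next I would build the projection onto the martingale-difference sequences. On the diagonal algebra define $\mathcal{Q}(diag(c_n))=diag\big((\E_n-\E_{n-1})(c_n)\big)$ with the convention $\E_0=0$. Since $\E_n\E_{n-1}=\E_{n-1}\E_n=\E_{n-1}$, each $\E_n-\E_{n-1}$ is idempotent, so $\mathcal{Q}$ is idempotent with range exactly the diagonal martingale differences, that is $\h_d^\Phi(\M)=\mathcal{Q}(L^{\Phi}_d(\M))$; because every $\E_n$ is self-adjoint for $\tr$ (equation \eqref{eq:conditional expectation}), $\mathcal{Q}$ is self-adjoint for the pairing. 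The boundedness of $\mathcal{Q}$ in the diagonal norm $\|\cdot\|_{L^{\Phi}_d}$ is the diagonal analogue of Lemma \ref{lem:stein-conditional} and follows from the unconditionality of martingale differences in Theorem \ref{thm:martingale-transformation} together with the Stein estimate of Theorem \ref{thm:stein inequality}; the same formula defines a bounded self-adjoint projection on $L^{\Psi}_d(\M)$ with range $\h_d^\Psi(\M)$.

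Then the duality follows by the standard complemented-subspace argument. For $b\in\h_d^\Psi(\M)$, the H\"older inequality \eqref{eq:holder-haagerup-orlicz}, applied in $L^{\Phi}(\M\otimes\B(\el^2))$, shows that
$a\mapsto\sum_m\tr(a_mb_m)=(\tr\otimes\Tr)(diag(a_m)\,diag(b_m))$
is a bounded functional on $\h_d^\Phi(\M)$ of norm $\lesssim\|b\|_{\h_d^\Psi}$. Conversely, given $\ell\in(\h_d^\Phi(\M))^{*}$ I would extend it by Hahn--Banach to $L^{\Phi}_d(\M)$, represent the extension by some $b\in L^{\Psi}_d(\M)$ via the duality established above, and then replace $b$ by $\mathcal{Q}(b)\in\h_d^\Psi(\M)$. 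Since $\mathcal{Q}$ fixes $\h_d^\Phi(\M)$ pointwise and is self-adjoint, for $a\in\h_d^\Phi(\M)$ one has $\langle a,b\rangle=\langle\mathcal{Q}(a),b\rangle=\langle a,\mathcal{Q}(b)\rangle$, so $\mathcal{Q}(b)$ still represents $\ell$, while $\|\mathcal{Q}(b)\|_{\h_d^\Psi}\lesssim\|b\|_{L^{\Psi}_d}\approx\|\ell\|$. Combining the two estimates gives $(\h_d^\Phi(\M))^{*}=\h_d^\Psi(\M)$ with equivalent norms.

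The main obstacle I anticipate is the boundedness of $\mathcal{Q}$ in the \emph{diagonal} norm $\|\cdot\|_{L^{\Phi}_d}$ rather than in the column/row conditioned norms treated directly by Lemma \ref{lem:stein-conditional}; one must verify that $\E_{\D}$ and the difference operator interact correctly and descend through the trace pairing, so that the adjoint projection lands exactly in $\h_d^\Psi(\M)$. A secondary care point is that the duality theorem is invoked for both $L^{\Phi}$ and $L^{\Psi}$ only up to equivalence of norms, so all constants must be tracked to conclude with \emph{equivalent}, not isometric, norms.
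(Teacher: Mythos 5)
Your overall architecture coincides with the paper's, which simply invokes Theorem \ref{thm:dual-hagerup}, Lemma \ref{lem:stein-conditional} and the argument of \cite[Lemma 2.3]{B1}: identify $L^{\Phi}_d(\M)$ with a complemented diagonal subspace of $L^{\Phi}(\M\otimes\B(\el^{2}))$, deduce $L^{\Phi}_d(\M)^{*}=L^{\Psi}_d(\M)$ from the duality theorem, and transport the duality to $\h_d^{\Phi}(\M)$ through a bounded self-adjoint projection onto martingale difference sequences. The H\"older estimate, the Hahn--Banach extension, and the replacement of the representing element $b$ by $\mathcal{Q}(b)$ using self-adjointness of $\mathcal{Q}$ are all carried out correctly.

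The one step whose justification does not work as written is the boundedness of $\mathcal{Q}$ on $L^{\Phi}_d(\M)$. You derive it from Theorem \ref{thm:martingale-transformation} and Theorem \ref{thm:stein inequality}, but neither controls the diagonal norm: the martingale transform inequality bounds $\|\sum_m\varepsilon_m dx_m\|_{L^{\Phi}(\M)}$ for a single martingale, and the Stein inequality bounds the column square function $\|(\sum_m|\E_m(a_m)|^2)^{1/2}\|_{L^{\Phi}(\M)}$, i.e.\ the norm of $L^{\Phi}(\M,\el_c^{2})$, not the norm $\|diag(a_m)\|_{L^{\Phi}(\M\otimes\B(\el^{2}))}$ of $L^{\Phi}_d(\M)$. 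The correct (and simpler) argument --- the genuine diagonal analogue of Lemma \ref{lem:stein-conditional} that you allude to --- is that $diag(a_n)\mapsto diag(\E_n(a_n))$ and $diag(a_n)\mapsto diag(\E_{n-1}(a_n))$ are restrictions of the state-preserving, modular-group-commuting conditional expectations of $\M\otimes\D$ onto the diagonal subalgebras $\{diag(x_n):x_n\in\M_n\}$ and $\{diag(x_n):x_n\in\M_{n-1}\}$; by Theorem \ref{thm:conditional-hagerup} each extends to a contractive projection on the corresponding Haagerup Orlicz space, so $\mathcal{Q}$, being their difference, is bounded on $L^{\Phi}_d(\M)$ (and likewise on $L^{\Psi}_d(\M)$) with range exactly the martingale difference sequences. (Your convention $\E_0=0$ only affects the first coordinate, where the map is a compression by a projection of the algebra and hence still contractive.) With that substitution the proof is complete and agrees with the paper's intended argument.
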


\begin{theorem}\label{thm:hardy-condition hardy}
 \begin{enumerate}[\rm(i)]
 \item  If $p_\Phi>2$, then $\H_c^\Phi(\M)=\h^\Phi_c(\M)\cap \h_d^\Phi(\M) $ and $\H_r^\Phi(\M)=\h^\Phi_r(\M)\cap \h_d^\Phi(\M)$ with equivalent norms.
 \item If $q_\Phi<2$, then then $\H_c^\Phi(\M)=\h^\Phi_c(\M)+\h_d^\Phi(\M) $ and $\H_r^\Phi(\M)=\h^\Phi_r(\M)+\h_d^\Phi(\M)$ with equivalent norms.
 \end{enumerate}
\end{theorem}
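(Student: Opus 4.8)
The plan is to follow the same reduction scheme used for Theorems \ref{thm:Burkholder-Gundy} and \ref{thm:Burkholder}: transfer the corresponding tracial identities from the finite algebras $\U_n(\M)$ up to $\M$ by Haagerup reduction (Theorem \ref{thm:Haagerup's reduction}) and a limiting argument. Since $p_\Phi=a_\Phi$ and $q_\Phi=b_\Phi$, the hypotheses read $a_\Phi>2$ in (i) and $b_\Phi<2$ in (ii), and in both cases $1<a_\Phi\le b_\Phi<\8$, so $L^\Phi(\M)$ is reflexive by Corollary \ref{cor:reflexivel-hagerup}. It suffices to treat the column spaces, since the row identities follow by passing to adjoints.

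First I would reduce everything to $\U(\M)$. By the isometric embeddings recorded in Lemma \ref{lem:Burkholder-Gundy-complemented} and Lemma \ref{lem:Burkholder}, the spaces $\H_c^\Phi(\M)$, $\h_c^\Phi(\M)$ and $\h_d^\Phi(\M)$ sit isometrically inside $\H_c^\Phi(\U(\M))$, $\h_c^\Phi(\U(\M))$ and $\h_d^\Phi(\U(\M))$ respectively. Hence it is enough to prove both identities for finite martingales on $\U(\M)$ relative to $(\U(\M_m))_{m\ge1}$, with constants depending only on $\Phi$.

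For a finite martingale $x$ on $\U(\M)$ I would set $x^{(n)}=\F_n(x)$, a martingale on the finite von Neumann algebra $\U_n(\M)$ with its tracial state $\tau_n$. As in the proof of Theorem \ref{thm:martingale-transformation}, the martingale structure on $\U_n(\M)$ coincides with the tracial one, so the known tracial $\Phi$-moment identities apply: for $a_\Phi>2$ one has $\H_c^\Phi(\U_n(\M))=\h_c^\Phi(\U_n(\M))\cap\h_d^\Phi(\U_n(\M))$ and for $b_\Phi<2$ one has $\H_c^\Phi(\U_n(\M))=\h_c^\Phi(\U_n(\M))+\h_d^\Phi(\U_n(\M))$, with $\Phi$-dependent constants (see \cite{RW,RWX,D}). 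Applying these to $x^{(n)}$ and letting $n\to\8$, the convergence $\F_n(x)\to x$ holds simultaneously in $\H_c^\Phi(\U(\M))$ (Lemma \ref{lem:Burkholder-Gundy-convergence}), in $\h_c^\Phi(\U(\M))$ (Lemma \ref{lem:Burkholder}), and in $\h_d^\Phi(\U(\M))$ (by the same $\F_n\otimes id$ martingale mean-convergence argument, since $\h_d^\Phi$ embeds in $L^\Phi(\U(\M)\otimes\B(\ell_2))$). Because the constants are uniform in $n$, the inequalities pass to the limit and yield (i) at once.

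The main obstacle is the sum case (ii). The inclusion $\h_c^\Phi(\M)+\h_d^\Phi(\M)\subset\H_c^\Phi(\M)$ is the easier direction, following from the finite-level estimate together with the triangle inequality and the domination of the square functions. For the reverse inclusion I would first reduce to finite martingales (dense in $\H_c^\Phi$), apply the tracial decomposition $x^{(n)}=y^{(n)}+w^{(n)}$ on $\U_n(\M)$ with $\|y^{(n)}\|_{\h_c^\Phi}+\|w^{(n)}\|_{\h_d^\Phi}\lesssim\|x^{(n)}\|_{\H_c^\Phi}\lesssim\|x\|_{\H_c^\Phi}$, and then extract weak limits $y,w$ using reflexivity of the relevant spaces. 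The delicate point is to identify $y+w=x$ after passing to the limit and to control the resulting sum-norm; this is where the uniform boundedness of the decompositions, reflexivity, and the convergence of the $\F_n$ must be combined carefully. Once this is settled, a density argument over finite martingales completes (ii).
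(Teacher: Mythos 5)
Your proof of part (i) follows the same route as the paper: reduce to the finite algebras $\U_n(\M)$ via Haagerup reduction, invoke the tracial identity there (the paper cites \cite[Proposition~2.4]{B1} where you cite \cite{RW,RWX,D}), and pass to the limit using Lemmas \ref{lem:Burkholder-Gundy-complemented}, \ref{lem:Burkholder-Gundy-convergence} and \ref{lem:Burkholder}. For part (ii), however, you take a genuinely different route. The paper does not attempt any direct decomposition at all: it simply applies (i) to the complementary $N$-function $\Psi$ (since $b_\Phi<2$ forces $a_\Psi>2$ by \eqref{eq:relationship-index}, so $\H_c^\Psi(\M)=\h_c^\Psi(\M)\cap\h_d^\Psi(\M)$) and then dualizes using Theorem \ref{thm:maringale-duality} together with Proposition \ref{pro:duality}, so that the dual of the intersection becomes the sum and reflexivity identifies $\H_c^\Phi(\M)$ with $\h_c^\Phi(\M)+\h_d^\Phi(\M)$. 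Your direct approach --- decomposing $\F_n(x)=y^{(n)}+w^{(n)}$ at each finite level with uniform constants and extracting weak limits --- is in principle viable, but it leaves unresolved exactly the point you flag as ``delicate'': identifying the weak limits, and moreover the limits $y,w$ a priori live only in $\h_c^\Phi(\U(\M))$ and $\h_d^\Phi(\U(\M))$, so you would additionally need to apply the projection $\F$ (which fixes $x$ and maps these spaces boundedly onto $\h_c^\Phi(\M)$ and $\h_d^\Phi(\M)$ by Lemma \ref{lem:Burkholder}) to land in the right spaces. The duality argument buys you a two-line proof that sidesteps all of this, at the cost of having already established the duality theorems; your argument would be the natural fallback if those were unavailable, but as written it is not complete.
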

\begin{proof} (i) The proof is the same as that of  Theorem \ref{thm:stein inequality} through using   \cite[Proposition 2.4]{B1} and Lemma \ref{lem:Burkholder-Gundy-complemented}, \ref{lem:Burkholder-Gundy-convergence} and \ref{lem:Burkholder}.

(ii) Using (i), Theorem \ref{thm:maringale-duality} and Proposition \ref{pro:duality}, we obtain the desired result.
\end{proof}

\subsection*{Acknowledgement}
This research was funded by the Science Committee of the Ministry of Science and High
Education of the Republic of Kazakhstan (Grant No. AP14870431).


\begin{thebibliography}{99}


\bibitem{AB}
A. Abdurexit, T. N. Bekjan,
 Noncommutative Orlicz-Hardy spaces associated with growth functions,
\textit{J. Math. Anal. Appl.} \textbf{420}(1) (2014), 824-834.

\bibitem{AZ} M. H. A. Al-Rashed and B. Zegarli$\rm\acute{n}$ski, Noncommutative Orlicz spaces associated to a state,  \textit{Studia Math.} \textbf{180}(3) (2007),  199-207.

\bibitem{AM} H. Araki and T. Masuda, Positive cones and $L_p$-spaces for von Neumann algebras, \textit{Publ.
Res. Inst. Math. Sci.} \textbf{18} (1982), 759-831.


\bibitem{ACA} Sh. A. Ayupov, V. I. Chilin and  R. Z. Abdullaev, Orlicz spaces associated with a semi-finite von Neumann algebra,  \textit{Comment. Math. Univ. Carolin.} \textbf{53}(4) (2012), 519-533.

\bibitem{B} T. N. Bekjan,
 $\Phi$-inequalities of non-commutative martingales,
\textit{Rocky Mountain J. Math.} \textbf{ 36} (2006), 401-412.

\bibitem{B1} T. N. Bekjan, Duality for symmetric Hardy spaces of noncommutative martingales, \textit{Math. Z.} \textbf{289} (2018), 787-802.


\bibitem{BC} T. N. Bekjan and Z. Chen,
Interpolation and $\Phi$-moment inequalities of noncommutative martingales,
\textit{Probab. Theory Relat. Fields} \textbf{152} (2012), 179-206.

\bibitem{BCLJ} T. N. Bekjan,
Zeqian Chen, Peide Liu and Yong Jiao, Noncommutative weak Orlicz
spaces and martingale inequalities, {\it Studia Math.} {\bf204} (2011),
195-212.


\bibitem{BM} T. N. Bekjan and Madi Raikhan, Interpolation of Haagerup noncommutative Hardy spaces,  \textit{Banach J. Math. Anal.} \textbf{13}(4) (2019), 798-814.

\bibitem{BM1} T. N. Bekjan and Madi Raikhan, On  noncommutative weak Orlicz-Hardy spaces,  arXiv:2109.07474 [math.OA].

\bibitem{BO} T. N. Bekjan and M. N. Ospanov, On products of noncommutative symmetric
quasi Banach spaces and applications,  \textit{Positivity} \textbf{25} (2021), 121-148.

\bibitem{BS} C. Bennett, R. Sharpley,
\textit{Interpolation of operators,}
Academic Press Inc., Boston, MA, 1988.

\bibitem{BL} J. Bergh and J. L\"{o}fstr\"{o}m, \textit{Interpolation Spaces. A Introduction},
Springer, New York, 1976.

\bibitem{Bi} A. M. Bikchentaev, On Noncommutative Function Spaces, \textit{In Selected Papers in K-Theory } (Am. Math. Soc., Providence, RI, 1992), AMS Transl. Ser. 2, \textbf{154}, 179-187.

\bibitem{C} A. Connes,Une classification des facteurs de type III,  \textit{Ann. Sci. \'{E}cole Norm. Sup.} {\bf6}(4) (1973), no. 4,
133-252.

\bibitem{DF}  A. Defant and A. Floret, \textit{Tensor norms and operator ideals}, North-Holland Mathematics
Studies  176,  North-Holland Publishing Co., Amsterdam, 1993.

\bibitem{DPPS} S. Dirksen, B. de Pagter, D. Potapov and F. Sukochev, Rosenthal inequalities in noncommutative symmetric
spaces, \textit{ J. Funct. Anal.} \textbf{261} (2011),  2890-2925.

\bibitem{D} S. Dirksen, Noncommutative and vector-valued Boyd interpolation theorems,
{\it Trans. Amer. Math. Soc.} {\bf 367} (2015), 4079-4110.


\bibitem{Di} J. Dixmier, Formes lin\'{e}aires sur un anneau d'op\'{e}rateurs, \textit{Bull. Soc. Math. France} \textbf{81} (1953), 9-39.

\bibitem{DDP1} P.G. Dodds, T.K. Dodds and B. de Pager,
\newblock Noncommutative Banach function spaces,
\textit{ Math. Zeit.} \textbf{201} (1989), 583-587.

\bibitem{DDP2} P.G. Dodds, T.K. Dodds and B. de Pager,
Noncommutative K\"{o}the duality,
\textit{ Trans. Amer. Math. Soc.} \textbf{339} (1993), 717-750.


\bibitem{FM} D. R. Farenick and S. M. Manjegani, Young's inequalities in operator algebras, {\it J. Ramanujan Math.
Soc.} {\bf20} (2) (2005) 107-124.

\bibitem{FK} T. Fack and  H. Kosaki,
Generalized $s$-numbers of $ \tau $-measurable operators, \textit{ Pac. J.
Math. } \textbf{123} (1986), 269-300.




\bibitem{H1}  U. Haagerup, `$L^{p}$- spaces associated wth an arbitrary von Neumann algegra' in
\emph{Alg\`{e}bres d'op\'{e}rateurs et leurs applications en physique math\'{e}matique} ({\em
Proc. Colloq., Marseille}, 1977), \textit{Colloq. Internat. CNRS } \textbf{274} (CNRS, Paris, 1979)  175-184.


\bibitem{H2} U. Haagerup, M. Junge, and Q. Xu,A reduction method for noncommutative Lp-spaces and applications,
\textit{Trans. Amer. Math. Soc.} {\bf 362}(4) (2010), no. 4, 2125--2165.


\bibitem{Hi} M. Hilsum, Les espaces $L^p$ d'une alg\`{e}bre de von Neumann d\'{e}finies par la deriv\'{e}e spatiale,
\textit{J. Funct. Anal.} \textbf{40} (1981), 151-169.


\bibitem{J}  M. Junge, Doob's inequality for noncommutative martingales,
\textit{J. Reine. Angew. Math.} \textbf{549} (2002) 149-190.

\bibitem{JX} M. Junge and Q. Xu, Noncommutative Burkholder/Rosenthal inequalities, \textit{Ann. Probab.} \textbf{31}(2) (2003), 948-995.

\bibitem{JX1} M. Junge and Q. Xu,
Noncommutative maximal ergodic inequalities,
\textit{J. Amer. Math. Soc.} \textbf{20}(2) (2007), 385-439.

\bibitem{Ka} N. J. Kalton, Plurisubharmonic functions on quasi-Banach spaces, \textit{Studia Math.} {\it84} (1986), 297-324.

\bibitem{Ko} H. Kosaki,  Applications of the complex interpolation method to a von Neumann algebra:
noncommutative $L^p$-spaces, \textit{J. Funct. Anal.} \textbf{56} (1984), 29-78.

\bibitem{Ku} W. Kunze, Nonommutative Orlicz spaces and generalized Arens algebras, \textit{Math. Nachr.} \textbf{147} (1990), 123-138.

\bibitem{K} H. Kosaki, Non-commutative Lorentz spaces associated with a semi-finite von Neumann
algebra and applications, {\it Proc. Japan Acad. Ser. A Math. Sci. }{\bf57}(1981), 303-306.

\bibitem  {KPS} S.G. Krein, J.I. Petunin, E.M. Semenov,
\textit{Interpolation of linear operators},
Translations of Mathematical Monographs, vol.54, Amer. Math. Soc., 1982.

\bibitem{L} L. E. Labuschagne, A crossed product approach to Orlicz spaces, \textit{Proc.  London Math. Soc.}, \textbf{107}(5) (2013), 965-1003.


\bibitem{M} L. Maligranda,
Indices and interpolation,
{\it Dissert. Math.} {\bf 234}, Polska Akademia Nauk, Inst. Mat., 1985.




\bibitem{M1} M. A. Muratov, Non commutative Orlicz Spaces, \textit{Doklady AN RUz.} \textbf{6} (1978), 11-13.

\bibitem{M2} M. A. Muratov, Luxemburg norm in Orlicz Spaces, Doklady AN RUz. 1. (1979),
p. 5-6.



\bibitem{MC1} M. A. Muratov and  V. I. Chilin, \textit{Topological algebras of measurable operators and locally measurable
operators}, \textit{J. Math. Sci.} \textbf{239} (2019), 654-705.


\bibitem{MC2} M. A. Muratov, V. I. Chilin, Algebras of measurable operators and locally measurable
operators, Institute of Math., Ukrainian Academy of Sciences, Kyev, 2007.

\bibitem  {KPS1982} S.G. Krein, J.I. Petunin, E.M. Semenov,
\textit{Interpolation of linear operators,}
Translations of Mathematical Monographs, vol.54, Amer. Math. Soc., 1982.

\bibitem{PT} G. K. Pedersen and M. Takesaki, The Radon-Nikodym theorem for von Neuman algebras,  \textit{Acta Math. }  \textbf{130}
(1973),  53-87.

\bibitem {PX1} G.Pisier and Q.Xu,
 Non-commutative martingale inequalities,
\textit{Commun. Math. Phys.} \textbf{189} (1997), 667-698.

\bibitem{PX}  G. Pisier and  Q. Xu, Noncommutative $L^{p}$-spaces, \textit{In Handbook of the geometry of Banach spaces}, Vol. 2 (2003), 1459-1517.

\bibitem{RR}
M. Rao and Z.  Ren,
  \textit{Application of Orlicz Spaces},
  New York: Marcel Dekker, 2002.


\bibitem{RW} N. Randrianantoanina and L. Wu, Martingale inequalities in noncommutative symmetric spaces,
\textit{ J. Funct. Anal.} \textbf{  269} (2015), 2222-2253.

\bibitem{RW1} N. Randrianantoanina and L. Wu, Noncommutative Burkholder/Rosenthal inequalities associated with convex functions, \textit{Ann. Inst. H. Poincar\'{e}
Probab. Statist.} \textbf{53} (4) (2017),  1575-1605.

\bibitem{RWX} N. Randrianantoanina, L. Wu, Q. Xu,
Noncommutative Davis type decompositions and applications,
\textit{J. London Math. Soc.} \textbf{99} (2019), 97-126.



\bibitem{S} \c{S}. Str\u{a}til\u{a}, \textit{Modular theory in operator algebras}, Editura Academiei Republicii Socialiste
Rom\^{a}nia, Bucharest, 1981.


\bibitem{T}  M. Takesaki, Duality for crossed products and the structure of von Neumann algebras  of type {\rm III},
 \textit{Acta Math.} \textbf{131} (1973),  249-310.
\bibitem{T1} M. Takesaki, Conditional expectations in von Neumann algebras, \textit{J. Funct. Anal.}, \textbf{9} (1972), 306-
321.



\bibitem{Ti} O. E. Tikhonov, \textit{Continuity of operator functions in topologies connected to a trace on a von Neumann algebra},  Izv. Vyssh. Uchebn. Zaved. Mat., 1987, no. 1, 77-79; Soviet Math. (Iz. VUZ) \textbf{31}(1) (1987), 110-114.

 \bibitem{Te} M. Terp, \textit{$L^{p}$-spaces associated wth an arbitrary von Neumann algegras, Notes},
 Math. Institute, Copenhagen Univ., 1981.


\bibitem{Te1} M. Terp,Interpolation spaces between a von Neumann algebra and its predual. \textit{J. Operator
Theory} \textbf{8} (1982), 327-360.

\bibitem{Tr} N. V. Trunov, To the theory normal weights on von Neumann algebras, \textit{Izv. Vuzov.
Math.}, no. 8, 1982,  61-70.


\bibitem{X} Q. Xu, Analytic functions with values in lattices and symmetric spaces of
measurable operators, {\it Math. Proc. Camb. Phil. Soc.} {\bf 109} (1991), 541-563.


\bibitem{Xu1} Q. Xu, Applications du th\'{e}or\`{e}me de factorisation pour des \`{a} fonctions o valeurs in lattices  op\'{e}ratours.
  {\it Studia Math} {\bf95} (1990), 273-292.

\end{thebibliography}
\end{document}